\definecolor{blue}{rgb}{0,0,0.7}
\definecolor{red}{rgb}{0.75, 0, 0}
\newtheorem{theorem}{Theorem}[section]
\newtheorem{lemma}[theorem]{Lemma}
\newtheorem{proposition}[theorem]{Proposition}
\newtheorem{corollary}[theorem]{Corollary}
\newtheorem{conjecture}[theorem]{Conjecture}
\newtheorem{definition}[theorem]{Definition}
\newtheorem{remark}[theorem]{Remark}
\newcommand{\bpf}{\begin{proof}}
\newcommand{\epf}{\end{proof}}
\newcommand{\bs}{\begin{split}}
\newcommand{\es}{\end{split}}
\newcommand{\be}{\begin{equation}}
\newcommand{\ee}{\end{equation}}
\newcommand{\bt}{\begin{theorem}}
\newcommand{\et}{\end{theorem}}
\newcommand{\bd}{\begin{definition}}
\newcommand{\ed}{\end{definition}}
\newcommand{\bp}{\begin{proposition}}
\newcommand{\ep}{\end{proposition}}
\newcommand{\bl}{\begin{lemma}}
\newcommand{\el}{\end{lemma}}
\newcommand{\bc}{\begin{corollary}}
\newcommand{\ec}{\end{corollary}}
\newcommand{\bcon}{\begin{conjecture}}
\newcommand{\econ}{\end{conjecture}}
\newcommand{\la}{\label}
\newcommand{\Z}{{\mathbb Z}}
\newcommand{\R}{{\mathbb R}}
\newcommand{\Q}{{\mathbb Q}}
\newcommand{\C}{{\mathbb C}}
\newcommand{\G}{{\rm G}}
\newcommand{\lra}{\longrightarrow}
\newcommand{\lms}{\longmapsto}
\newcommand{\bS}{{\Bbb S}}
\newcommand{\bg}{\begin{equation}\begin{gathered}}
\newcommand{\eg}{\end{gathered}\end{equation}}
\newcommand{\F}{{\rm F}}
\newcommand{\Y}{{\rm Y}}
\newcommand{\rE}{{\rm E}}
\newcommand{\K}{{\rm K}}
\newcommand{\N}{{\rm N}}
\newcommand{\X}{{\rm X}}
\newcommand{\old}[1]{}
\newcommand{\D}{{\rm  D}}
\newcommand{\rC}{{\rm  C}}
\newcommand{\T}{{\rm T}}
\newcommand{\A}{{\rm A}}
\newcommand{\B}{{\rm B}}
\newcommand{\U}{{\rm {U}}}
\renewcommand{\P}{\mathbb{P}}
\renewcommand{\H}{{\rm H}}
\renewcommand{\P}{{\rm P}}
\newcommand{\bsp}{\begin{split}}
\newcommand{\esp}{\end{split}}
\newcommand{\epr}{\end{proof}}
\newcommand{\bpr}{\begin{proof}}
\renewcommand{\L}{{\rm L}}
\begin{document}

\date{}

\title {Exponential volumes  of moduli spaces of hyperbolic surfaces}
\author{Alexander B. Goncharov, Zhe Sun}

\maketitle

\begin{abstract}
A decorated surface $\bS$ is an oriented topological surface  with marked points on the boundary considered modulo the isotopy. 
We consider the moduli space ${\cal M}_\bS$ of hyperbolic structures on $\bS$ with geodesic boundary, such that the 
hyperbolic structure near each marked point is a cusp,  equipped with  a horocycle.   The  space ${\cal M}_\bS$ carries a volume form $\Omega$. 
Let us fix the set  ${\K}$ of the   
 distances between  the horocycles at   the adjacent  cusps, and the set $\L$ of the geodesic lengths of  boundary circles without cusps. 
We get a subspace  ${\cal M}_\bS({\K}, \L)$  
 with the induced 
volume form $\Omega_\bS({{\K}, \L})$.  However,  
 if the cusps are present,  the volume of  the space ${\cal M}_\bS({\K}, \L)$, or its   variant without horocycles, is infinite. \vskip 2mm
 
 We introduce the {\it exponential  volume form} $e^{-{W}}\Omega$, where 
${W}$ is the {\it potential} - a positive function on  ${\cal M}_\bS$, given by the sum over the cusps of the hyperbolic areas under the horocycles. 
We show that  the following {\it exponential volume} is finite:
\be \la{VF}
\int_{{\cal M}_\bS({\K}, \L)}e^{-{W}}\Omega_\bS({{\K}, \L}).
\ee

 We suggest that  moduli spaces ${\cal M}_\bS({\K}, \L)$  with the exponential volume forms  are the true 
 analogs of  moduli spaces ${\cal M}_{g,n}$ with the Weil--Petersson volume form, e.g.  relevant to the open string theory. \vskip 2mm

 We  prove    
  {\it unfolding formulas}, expressing   integrals $\int_{{\cal M}_\bS({\K}, \L)}f\ e^{-{W}}\Omega_\bS({{\K}, \L})$, where $f$ is an integrable  function, 
  as  finite sums of similar integrals 
  for  the three  {\it elementary  decorated  surfaces}.  They  generalise Mirzakhani's recursions for the 
 volumes of  moduli spaces of hyperbolic surfaces. 
 \vskip 1mm
 
 We show that exponential volumes  for  elementary decorated surfaces give rise to a commutative  algebra ${\cal E}$, which we call 
 the {\it  positive Hecke-Whittaker algebra for ${\rm PGL}_2(\R)$}.   
Exponential volumes  for all decorated surfaces and  unfolding formulas   extend the algebra ${\cal E}$ to all decorated surfaces.

 \end{abstract}

\tableofcontents

   \section{Introduction}
\subsection{Exponential volumes of  moduli spaces of ideal hyperbolic surfaces}

\subsubsection{Ideal hyperbolic surfaces.} A {\em decorated surface} $\bS$ is   
 a connected oriented topological surface  with boundary circles, and  a finite number of marked points on the boundary,  considered modulo isotopy. We sometimes refer to the boundary circles without  marked points as {\it punctures}, see Figure \ref{figure:dsurf}, and to the marked points as {\it cusps}.
\begin{figure}[ht]
	\centering
	\includegraphics[scale=0.4]{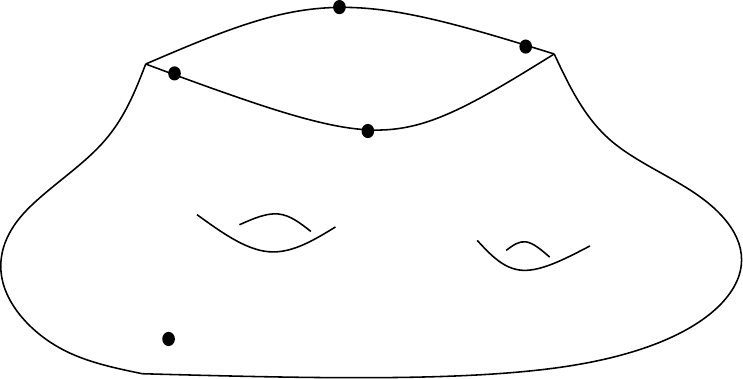}
	\small
	\caption{A decorated surface with a puncture and four marked points.}
	\label{figure:dsurf}
\end{figure}

Denote by $\D_n^*$  a once punctured disc with $n$ marked boundary points, see the left picture on Figure \ref{figure:crown}. 

A {\em hyperbolic crown} is the decorated surface $\D_n^*$  with a hyperbolic structure, which  is bordered on the one side by the {\em crown end} - a collection of bi-infinite geodesics such that each adjacent pair forms a cusp -  and  on the other side by the unique geodesic, called the {\em neck geodesic}, plus a horocycle near each cusp, see the middle picture on Figure \ref{figure:crown}. The universal cover of the hyperbolic crown is shown on the right of Figure \ref{figure:crown}. A fundamental domain for the deck transformation action of the fundamental group $\Z$ 
 is a geodesic $(n+3)-$gon in the hyperbolic plane, bounded by the universal cover $\widetilde \gamma$ of the neck geodesic $\gamma$. 

\begin{figure}[ht]
	\centering
	\includegraphics[scale=0.4]{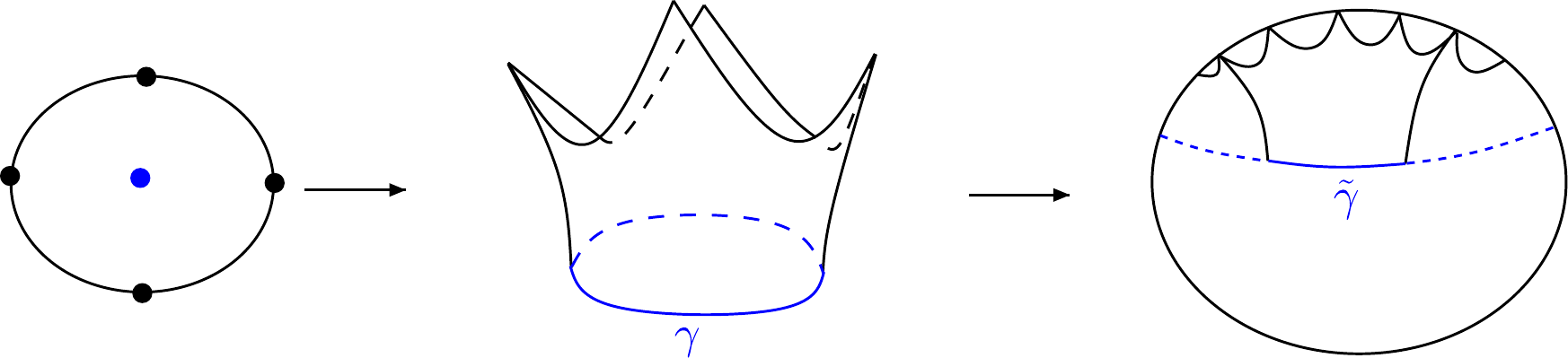}
	\small
	\caption{An ideal hyperbolic structure on a punctured disc with $4$ marked points is a hyperbolic  crown with $4$ cusps and the neck geodesic $\gamma$, and  a choice of a horocycle at each cusp, omitted on the picture. }
	\label{figure:crown}
\end{figure}

\bd \la{D1.1} An {\em ideal hyperbolic structure} on a decorated surface $\bS$ is  a hyperbolic metric 
 of curvature $-1$ on $\bS$ with the following boundary structure, illustrated on Figure \ref{figure:crsurf}:  

\begin{itemize}

 \item Each boundary circle without marked points  is a geodesic.

\item  The hyperbolic metric near each boundary component with marked points is a crown end. 

\item A choice of a horocycle $h_p$ near each cusp $p$.

\end{itemize}
\ed

\begin{figure}[ht]
	\centering
	\includegraphics[scale=0.35]{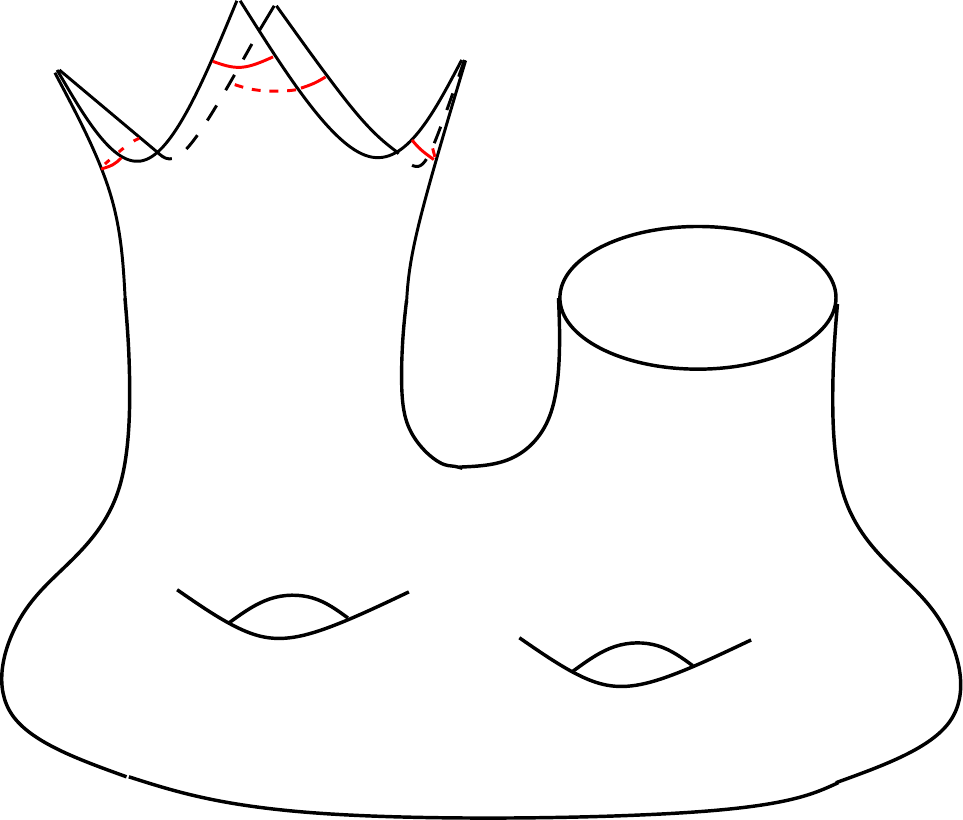}
	\small
	\caption{An ideal hyperbolic surface. The horocycles around the cusps are shown in red.}
	\label{figure:crsurf}
\end{figure}


Let us now introduce  the  moduli spaces which we study in the paper.

\begin{definition} \la{Thor}

The {Teichm\"uller space} $\mathcal{T}_\bS$   of a decorated surface $\bS$ parametrises ideal hyperbolic structures on $\bS$, up to orientation and boundary preserving homeomorphisms of $\bS$ isotopic to the identity.

The {pure mapping class group} ${\rm Mod}(\bS)$  is the quotient of the group of orientation $\&$ boundary preserving homeomorphisms of $\bS$ modulo the ones   isotopic to the  identity. 
The {moduli space}  of    $\mathcal{M}_\bS$ is the quotient
$$
\mathcal{M}_\bS:= \mathcal{T}_\bS/{\rm Mod}(\bS).
$$
\end{definition}

Here are a few examples and comments.

\begin{enumerate}

\item 
 When $\bS$ is a disc with $n$ marked points, an ideal  hyperbolic structure on $\bS$ describes a collection of $n$ horocycles  in the hyperbolic disc. 
Each horocycle defines a point on the boundary circle, and we  connect each pair of  adjacent points by a bi-infinite geodesic. 
The Teichm\"uller space ${\cal T}_\bS$ parametrises ideal $n-$gons  with  horocycles at the  vertices, see Figure \ref{SZhe1}. The group ${\rm Mod}(\bS)$ is trivial. 
So $\mathcal{M}_\bS= \mathcal{T}_\bS$. 
\begin{figure}[ht]
\centerline{\epsfbox{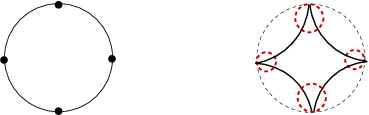}}
\caption{An ideal hyperbolic structure  with (red) horocycles  on a disc with marked points.}
\label{SZhe1}
\end{figure} 

\item  When $\bS=\D^*_n$ is a punctured disc with $n$ marked points, the Teichm\"uller space ${\cal T}_{\D_n^*}$  parametrises hyperbolic crowns   with $n$ cusps, see Figure \ref{figure:crown}.  The group ${\rm Mod}(\bS)$ is trivial. 
So $\mathcal{M}_\bS= \mathcal{T}_\bS$.

 \item  When $\bS$ does not have any crown end, the Teichm\"uller space $\mathcal{T}_\bS$ is the usual Teichm\"uller space. 

\item The classical Teichmuller space ${\cal T}^{\rm cl}_\bS$  parametrises hyperbolic structures on $\bS$ with geodesic boundary and crown ends, see  \cite[Chapter 4]{CB}.  
There is a canonical ${\rm Mod}(\bS)-$equivariant  isomorphism  
\be
{\cal T}_\bS \stackrel{\sim}{\lra} {\cal T}^{\rm cl}_\bS\times \R^{\{\mbox{\rm cusps $p$ on $\bS$}\}}.
\ee
 The projection onto  ${\cal T}^{\rm cl}_\bS$  forgets the horoarcs.    The map onto the second factor is given by the areas  of domains enclosed between the cusps $p$ and the horoarcs $h_p$. 
 It plays the crucial role in the story. 

\item Consider the decorated  Teichmuller space ${\cal T}^{\rm d}_\bS$ parametrising ideal hyperbolic structures on $\bS$  where  geodesic boundary circles are reduced to  punctures, that is have zero  length, 
and  equipped with horocycles. 
For a punctured surface without boundary we get Penner's decorated Teichmuler space \cite{Pen92}. 
 The Teichmuller spaces ${\cal T}_\bS$ and ${\cal T}^{\rm d}_\bS$ are dual to each 
other. In particular ${\rm dim}{\cal T}_\bS = {\rm dim}{\cal T}^{\rm d}_\bS$. This duality is a manifestation of the cluster duality \cite{FG03b}, see  Section \ref{Sec2}.  
\end{enumerate}
 
  A decorated surface $\bS$ carries a collection of  isotopy classes of simple loops,   isotopic to the boundary circles, with or without marked points. We call them {\it boundary loops} of $\bS$. Given an ideal hyperbolic structure on $\bS$, 
each boundary loop is represented by the {\it neck geodesic}. If a boundary component is not a crown, the neck geodesic is  the {\it boundary geodesic loop}. 
We denote by $m$ the number of boundary components of $\bS$,  by 
$l_1, \ldots, l_m$ the lengths of the neck geodesics, and  by $r$ the number of boundary components with marked points. 
 
 For each boundary geodesic   connecting two adjacent cusps  at the ends of a boundary interval $\F$, we define its length $\kappa_{\F}\in \R$ as the  distance between the two horocycles at the cusps, taken with the minus sign if the horocycles overlap. We call it the {\it signed distance}. 
 Let us define the {\it $\K-$coordinate}\footnote{The reason for the factor $-1$ will be clarified later, see Definition \ref{OMD}.} 

\be \la{KF}
{\rm K}_{\F}:= e^{-\kappa_{\F}}.
\ee

Take an ideal hyperbolic structure on a decorated surface $\bS$ with  ideal crowns $\rC_1,\ldots, \rC_r$. Given an ideal crown $\rC_i$  with  $n_i$ cusps, there are   ${\rm K}-$coordinates ${\rm K}_{\rC_i, j}$, $1 \leq j \leq n_i$ at the boundary intervals of the crown.  They are positive numbers. We denote by ${\K}$ the collection of these numbers:
\be \la{KC}
{\K}:= \{{\K}_{{\rC_i}}\}, \qquad {\K}_{{\rC_i}}:=({\K}_{{\rC}_{i, 1}},...,  {\K}_{{\rC}_i, n_i})\in {\Bbb R}^{n_i}_{>0}. 
\ee

 We denote the  geodesic lengths of the  boundary geodesic circles   by 
 \be \la{spL}
 \L=(l_{r+1},...,l_{m})\in \R_{\geq 0}^{m-r}.
 \ee

\begin{definition} \la{DEFMKL}
The Teichm\"uller space  ${\cal T}_\bS({\K}, \L)$ is the subspace of the Teichm\"uller space ${\cal T}_\bS$  with the given set ${\K}$ of  ${\rm K}-$coordinates (\ref{KC}), 
 and  the given set $\L$ of  lengths of the  boundary geodesic circles (\ref{spL}).
 
 The 
moduli space $\mathcal{M}_\bS(\K, \L)$ is the quotient:
\be \la{MS}
\begin{split}
&\mathcal{M}_\bS({\K}, \L):= \mathcal{T}_\bS({\K}, \L)/{\rm Mod}(\bS).\\
\end{split}
\ee
We denote by $\mathcal{T}_\bS({\K})$ and $\mathcal{M}_\bS({\K})$  the Teichm\"uller and moduli space when $\L$ is not fixed. 
\ed
If $\bS$ is a genus $g$ surface without  marked points and with 
$m$ boundary circles of zero lengths, we get the moduli space     ${\cal M}_{g,m}$.

  \subsubsection{The exponential volumes.} 
  
  Recall that an ideal hyperbolic structure on $\bS$ carries a horoarc $h_p$ at each cusp $p$. We denote by $W_p$ the area enclosed by the horoarc  $h_p$. We call it   the {\it local potential at the cusp $p$}, or just the potential at the cusp $p$. The {\it potential} $W_\bS$ is a function on the Teichm\"uller space ${\cal T}_\bS$ given by  the sum of the  potentials at all cusps  on $\bS$:
 $$
 W_\bS := \sum_{p}W_p.
 $$
 Both the local potentials $W_p$ and the potential $W_\bS$ are ${\rm Mod}(\bS)-$invariant. Therefore they are functions on the moduli space (\ref{MS}).

 Given an orientation of the Teichm\"uller space $\mathcal{T}_\bS$, there  is canonical  
 volume form $\Omega_\bS$  on  $\mathcal{T}_\bS$, with positive integrals over  discs. 
 It has a simple expression (\ref{CVF})  in the cluster Poisson coordinates. 
It is ${\rm Mod}(\bS)-$invariant, and  induces a volume form $\Omega_\bS({{\K}, \L})$ on the moduli space $\mathcal{M}_\bS({\K}, \L)$ as in equation (\ref{RVF2}).
When $\bS$ has no marked points, it is propotional to the  Weil--Petersson volume form. The exponential volume form is a positive measure on the oriented Teichm\"uller space $\mathcal{T}_\bS$ given by 
\be \la{6}
e^{-W_\bS}\Omega_\bS({{\K}, \L}).
\ee

\bd The  {exponential  volume}  
of the moduli space $\mathcal{M}_\bS({\K}, \L)$ is given by
\be \la{EXV}
{\rm Vol}_{{\mathcal{E}}}\mathcal{M}_\bS({\K}, \L):=\int_{\mathcal{M}_\bS({\K}, \L)} e^{-W_\bS}\Omega_\bS({{\K}, \L}).
\ee
\end{definition}

\subsubsection{Mirzakhani's volume formula.} When $\bS=S$ has no    marked points, the potential ${W}_S$ is zero, and the set $\K$ is empty. Then we write $\Omega_S(\L)$ for the cluster form ({\ref{6}). 
It is proportinal to  the Weil--Petersson volume form: 
\be \la{WPCL}
\Omega_S(\L) = d_S\Omega^{\rm WP}_S(\L), \ \ \ \ d_S \in \Q^*_+,
\ee
 for some positive rational constant $d_S$.\footnote{
Using \cite[Lemma 3.8 and Theorem 3.9]{ABCGLW20}, we have $d_S=2^{-2g+3-n}= 2^{\chi_S+1}$.
If $g=0$,  see also formula (\ref{WPCLn}) in Appendix \ref{APPB}.} So we get  the Weil-Peterssen volume of  the moduli space $\mathcal{M}_S(\L)$ of  genus $g$ hyperbolic surfaces with fixed boundary geodesic lengths $\L = (l_1, ..., l_m)$:
$$
{\rm Vol}_{\rm WP}(\mathcal{M}_S)(\L):= \int_{\mathcal{M}_S(\L)} \Omega^{\rm WP}_S(\L).
$$ 
By Mirzakhani's theorem \cite[Theorem 1.1]{Mir07b} it  is an even polynomial   in $l_i$ of degree $6g-6+2m$: 
\be \la{V1}
{\rm Vol}_{\rm WP}(\mathcal{M}_S(\L)) = \sum_{d_1+ \ldots + d_m \leq d} {\cal V}_{g, d_1, ..., d_m}l_1^{2d_1} \ldots  l_m^{2d_m}, \ \ \ \ d:= 3g-3+m = \frac{1}{2}{\rm dim}_\R{\cal M}_S(\L).
\ee
Here $d_i\geq 0$ are  integers. Setting $|d|:= d_1+\ldots + d_m$ and $q:=d-|d|$, we have
\be \la{NV}
 {\cal V}_{g, d_1, ..., d_m} \in \pi^{2q}{\Bbb Q}.
\ee
 By \cite{Mir07b},  we have 
\be \la{WCM}
 {\cal V}_{g, d_1, ..., d_m}  = \frac{1}{2^{|d|}|d|! q!}\cdot \int_{\overline {\cal M}_{g,m}}\psi_1^{d_1}\cdots \psi_m^{d_m}\omega^{q}.
\ee
Here $\psi_i$ is the first Chern class of the line bundle on $\overline {\cal M}_{g,m}$ given by the tangent space at the $i-$th puncture on the curve, and $\omega$ is the Weil--Petersson symplectic form on $\overline {\cal M}_{g,m}$.

So  the leading coefficients  are rational numbers - 
the intersection numbers of the $\psi-$classes on $\overline {\cal M}_{g,m}$.

\subsubsection{The first main result.} If $\bS$ has marked points, the volume  is always infinite. In contrast with this we prove  in Section \ref{Sec3}:

\bt \la{MTHO1} For any decorated surface $\bS$, the exponential volume ${\rm Vol}_{{\mathcal{E}}}\mathcal{M}_\bS({\K}, \L)$  is finite. 
\et 

 Theorem \ref{MTHO1} supports the main idea of this paper: 
\vskip 1mm

{\it The moduli spaces ${\cal M}_{\bS}({\K}, \L)$ with the exponential volume forms are the true analogs of  the moduli spaces ${\cal M}_{g,n}$ and  ${\cal M}_S(\L)$ 
with the Weil--Petersson volume forms}. 
\vskip 1mm

For instance the Weil--Petersson volume forms are essential in the string theory \cite{P81} as  the background measures for the 
correlation functions/volume forms   on ${\cal M}_{g,m}$. 
The exponential volume form should play a similar role in the open string theory. 

\subsection{The Laplace transform of exponential volumes and ${\cal B}-$functions} \la{SECTT1.4}

\subsubsection{The Laplace transform of  exponential volumes.}    
 Take a collection of  complex numbers 
 $$ 
  s=(s_1, \ldots, s_m)\in \C^m.   
 $$
Recall the set of parameters $\K$ from  (\ref{KC}). Denote by
 $$
\L= (l_1, \ldots, l_m)\in \R_{+}^m
$$ 
the lengths of all neck geodesics, including the boundary geodesics.   
We denote by $\Omega_\bS(\K)$ the volume form on the space $\mathcal{M}_\bS({\K})$, where $\L$ is not fixed. 
Consider the following  integral:
 \be \la{EIIa}
\begin{split}
{\mathcal{L}}_\bS({\K};  s_1, ..., s_m, \hbar) :=&   
\int_{{\cal M}_\bS}e^{-W/\hbar}\ e^{-(l_1 s_1 + \ldots +l_m s_m)/2}\Omega_\bS({\K}). \\
\end{split}
\ee

  \bl \la{MTHOEE*} For any decorated surface $\bS$,  the   integral  (\ref{EIIa}) is convergent for ${\rm Re}(s_i)\geq 0$, and has an analytic continuation 
  to a meromorphic function in $s$.
 \el
 
 \bpr This is proved in Theorem \ref{MTHOE}. Alternatively, the convergence of the integral for ${\rm Re}(s_i)\geq 0$ follows from the finiteness of the exponential volume, see Theorem \ref{MTHO1}. Then the analytic continuation and its properties follow from 
  the standard properties of the distribution $x_+^\lambda$ on $\R$, applied to $\L_i^{s_i}$ \cite{GF1}. 
 \epr

The function (\ref{EIIa}) generalizes  the  Laplace transform of the classical  volumes  ${\rm Vol}(\mathcal{M}_S(\L))$ in (\ref{V1}). 
 
 Let us note that Mirzakhani's recursions  for these  
  volumes   
   are equivalent 
  to the    Eynard--Orantin    Topological Recursion  for the Laplace transform of the volumes, for a certain spectral curve \cite{EO07}. 
  The complex variables $s_1, \ldots , s_m$ become  the points on the spectral curve.

 \subsubsection{${\cal B}-$function of a decorated surface $\bS$.}  It is  a similar integral:
\be \la{EIa}
\begin{split}
{\mathcal{B}}_\bS({\K};  s_1, ..., s_m; \hbar) :=
&
\int_{{\cal M}^{\circ \circ}_\bS}e^{-W/\hbar}\ e^{-(l_1 s_1 + \ldots +l_m s_m)/2}\Omega_\bS({\K}).\\
\end{split}
\ee 
 The integration is over the $2^{r}:1$ ramified cover  ${\cal M}^{\circ \circ}_\bS$ of $ {\cal M}_\bS$, obtained by specifying 
 an eigenvalue of the monodromy around each crown neck geodesic.   
Their logarithms $l_1, ..., l_r$ can be any real numbers.  So
 $$
(l_1, ..., l_r, l_{r+1}, ..., l_{m}) \in \R^r\times \R_+^{m-r}.
$$

  The ${\cal B}-$function  is the sum of  ${\cal L}-$functions, over all $2^r$ ways to put the signs:
  $$
  {\mathcal{B}}_\bS({\K}; s_1, ..., s_m;\hbar)  = \sum {\mathcal{L}}_\bS({\K}; \pm s_1, ..., \pm s_r,  s_{r+1}, ..., s_m; \hbar).   
  $$ 
Below we often specialize  $\hbar=1$, skipping $\hbar$ from the notation. 
 
\vskip 2mm

   Recall the modified Bessel function of the second kind, which we refer to below as the Bessel function\footnote{The modified Bessel function of the second kind is often denoted by $ K_s(z)$. We have 
    $J_s(z) = 2 K_{-s}(2\sqrt{z})$.}: 
\be \la{f22}
\begin{split}
J_s(z):=&\int_{0}^\infty {\rm exp}\Bigl({-\sqrt{z}(\lambda+\lambda^{-1}} )\Bigr)\lambda^{s} d\log {\lambda}\\
=&z^{-s/2}\cdot \int_{0}^\infty {\rm exp}\Bigl({-t-\frac{z}{t}} \Bigr)t^{s} d\log {t}.\\
\end{split}
\ee
Note that it is an even function in $s$: $J_s(z)=J_{-s}(z)$. \vskip 2mm

There are important special cases of the function ${\cal B}_\bS$, when it reduces to the Bessel function.  

\begin{enumerate}

\item 
  When $\bS= \D_1^*$ is   a punctured disc with one cusp,  we get  the Bessel function  
\be \la{213}
  \mathcal{B}_{\D^*_1}({\K}; s) =  2 J_{s}(\K).
\ee
Indeed, the moduli spaces of enhanced ideal hyperbolic structures on $\D_1^*$ is parametrised by pairs of
real numbers $(\kappa, l)$, where $\kappa $ is the length of the crown geodesic between its intersections with the horocycle, and $l\in \R$ is the signed length of the neck geodesic. We use the exponential coordinates 
\be
\K= e^{-\kappa}, \ \L=e^{l}.
\ee
 The potential function  at the cusp is calculated in Proposition \ref{EXVV} below: 
\be \la{EXVtI*}
\begin{split}
W_{  {\D_1^*} }(\K, l)  = \ &\K^{1/2} (e^{l/2} +e^{-l/2} ).\\
\end{split}
\ee
The volume form $\Omega_{  {\D_1^*}}= d\log \L^{1/2}$. Therefore by the very definition of the   ${\cal B}-$function:
\be \la{FBFa}
  \mathcal{B}_{\D^*_1}({\K}; s) := 2   \int_{-\infty}^\infty {\rm exp}(-\K^{1/2} (e^{l/2} +e^{-l/2} ))e^{-ls/2}d(l/2) =2J_{s}(\K).
\ee
 
 \item  When  $\bS= \D_n^*$ is   a punctured disc with $m$ cusps,  the  ${\cal B}-$function is a product of  Bessel functions:
\be \la{GC}
 \mathcal{B}_{\D^*_n}({\K_1, ..., \K_n}; s) =  2 J_{s}(\K_1)\cdots J_{s}(\K_n). 
\ee
We prove this in Example 1 of Section \ref{SecEVII}.

\end{enumerate}

\subsubsection{The function ${\cal B}_\bS$ via $\A-$model.}  a) According to Givental \cite{G96}, see also \cite{G97}, the Bessel function $J_s(\K)$ solves the quantum differential equation for the ${\Bbb C}^\times-$ equivariant quantum cohomology 
 ${\rm QH}^*_{{\Bbb C}^\times}({\Bbb C}{\rm P}^1)$, where $\C^\times$ acts by preserving  points $\{0, \infty\} \subset \C\P^1$.  Here $\K$ is the K\"ahler parameter, and $s$ the $\C^\times-$equivariant parameter.\footnote{Givental \cite{G96}  addressed the case of the flag variety for ${\rm GL}_k$,  rather than just $\P^1$.}
 \vskip 2mm
 
b)  Givental's theorem, combined with  (\ref{GC}),  tells that the function  $\mathcal{B}_{\D^*_n}({\K_1, ..., \K_n}, s)$  solves 
 the quantum differential equation for the ${\Bbb C}^\times-$equivariant quantum cohomology
  $$
 {\rm QH}^*_{{\Bbb C}^\times}({\Bbb C}{\rm P}^1\times \cdots \times {\Bbb C}{\rm P}^1)
 $$ 
  of the product of $n$ copies of ${\Bbb C}\P^1$. Here $\K_1, ..., \K_n$ are  K\"ahler parameters, and $s$ the equivariant parameter. 
  In particular,  the function ${\cal B}_\bS$ is related to an $\A-$model interpretation. 
   \vskip 2mm

c) If $\bS=S$ is a genus $g$ surface with $m$ punctures,   Mirzakhani's formula  (\ref{WCM}) calculates the volume  in terms of the $\A-$model   on ${\cal M}_{g,m}$.  \vskip 2mm
 
  It would be very interesting to find an $\A-$model interpretation 
 of the function ${\cal B}_\bS$ for the general $\bS$.
 The key problem is that  in general the group ${\rm Mod}(\bS)$ is infinite, while  in the  examples a), b) it is trivial. \\

\subsection{Neck recursion formula} \la{UF} 
Our next goal is a recursion formula which allows us to calculate the exponential volume integrals.  

Let us  cut   $\bS$ along a neck geodesic $\ell$:
 \be
 \bS = {\rm D}_{\ell} \cup \bS_{\ell}; \quad \bS_{\ell}:= \bS-{\rm D}_{\ell}. 
 \ee
 Here  ${\rm D}_{\ell}$ is a decorated surface  given by a punctured disc with $k$ cusps. 
 Denote by $l$ the length of   $\ell$. 
 Since the neck geodesic $\ell$ is a boundary component of both surfaces   ${\rm D}_{\ell}$ and $\bS_{\ell}$, 
  the exponential volumes ${\rm Vol}_{\cal E}({\cal M}_{{{\rm D}}_{\ell}}) $ and  ${\rm Vol}_{\cal E}({\cal M}_{\bS_{\ell}}) $  depend on $l$.  
 
\bt \la{NRF2}
Let $\rC_{\bS, \ell}:=2^{-\mu_\ell} c_{\bS, \ell}$ where $\mu_\ell$ is the number of one-holed tori cut off by $\ell$ and $c_{\bS, \ell}$ is a positive constant depending only on $\bS$ and $\ell$ defined in Proposition \ref{propcc}.
 One has the following {neck recursion formula}:
 \be \la{NRF1}
 \begin{split}
& {\rm Vol}_{\cal E}({\cal M}_\bS)(\K,\L) = \rC_{\bS, \ell} \int^\infty_{0} {\rm Vol}_{\cal E}({\cal M}_{{{\rm D}}_{\ell}})(\K_{\rm D_\ell})  \cdot {\rm Vol}_{\cal E}({\cal M}_{\bS_{\ell}})(\K',\L') \ ldl,\\
\end{split}
 \ee
  \be \la{NRF2*}
 \begin{split}
  & {\cal L}_\bS({\K}; s_1, ..., s_m) = \rC_{\bS, \ell} \int^\infty_{0} {\rm Vol}_{\cal E}({\cal M}_{{{\rm D}}_{\ell}})(\K_{\rm D_\ell})  \cdot {\cal L}_{\bS_{\ell}}(\K'; s_1, ...,  s_{m}) \ l dl.\\
\end{split}
 \ee
 \et

 Theorem \ref{NRF2} is proved in Section \ref{ssscs}.

 Theorem \ref{NRF2}, combined with Mirzakhani's formula (\ref{V1}), has the following two applications.\vskip 2mm

1. It 
allows us to express   
the function ${\cal B}_\bS({\K}; s)$ as a  linear combination with coefficients in the ring \be \la{RR}
 \Q[\pi^2, s_{r+1}^{-1}, ... , s_m^{-1}]
\ee
of  the product of odd derivatives  of Bessel functions, generalizing formula (\ref{GC}). 
Precisely, consider the following  differential operator in $s_1, ..., s_r$, where  
  ${\cal V}_{g, d_1, ..., d_m}\in \Q[\pi^2]$ were defined in (\ref{V1}):
\be \la{DS}
 {\cal D}_\bS:=  \sum_{d_1+ ... + d_m\leq 3g-3+m} 2^{m} \ {\cal V}_{g, d_1, ..., d_m} 
\Bigl(-2\frac{d}{ds_1}\Bigr)^{2d_1+1}\cdots  \Bigl(-2\frac{d}{ds_r}\Bigr)^{2 d_r+1}\cdot  \prod_{j=r+1}^m\frac{(2d_j)!}{s_j^{2d_j+1}}.  
\ee
 Denote by ${\bf K}_j$ the product of the ${\rm K}-$coordinates at the $j-$th boundary component of $\bS$.

 \bt \la{MTHOEE} The function ${\cal B}_\bS$  is obtained by applying  operator (\ref{DS}) to the product of  Bessel functions:
 \be \la{MFOEaaa}
\begin{split}
 &{\cal B}_{\bS}({\K}, s_1, ..., s_r) =   {\rm C}_{\bS} \cdot {\cal D}_{\bS} 
 \ \prod_{j=1}^r 
    J_{s_j}({\bf K}_j), \ \ \ \  {\rm C}_\bS:= \prod_{i=1}^r \rC_{\bS,\ell_i}.\\
\end{split}
 \ee
 \et

 Theorem \ref{MTHOEE} is proved in Section   \ref{SecEVII}.   \vskip 2mm
 
2.  Setting $s_1=...=s_r=0$,  Theorem \ref{MTHOEE} allows us to express the exponential volumes 
 as linear combinations with coefficients in the ring (\ref{RR}) 
   of  odd  derivatives   of   {\it Bessel  like} integrals at $s=0$, see Theorem \ref{MTHO} combined with formula (\ref{UsI}).   
 Each of the terms is manifestly a period of  exponential mixed motive, discussed in Section \ref{ssec1.3}. \vskip 2mm

   Our main result is a different way to express the exponential volume integrals via integrals over the products of the moduli spaces with simpler topology, which we call {\it unfolding formulas.}

 \subsection{Unfolding  exponential volumes} \la{UF}

 The unfolding  starts from a choice of a cusp $p$ on $\bS$, and consists of two steps.
   
  \subsubsection{Reduction to the case when $p$ is single cusp on a crown.} There is a unique up to an isotopy  arc $\pi_p$ from the cusp $p$ to itself, which is homotopic to the 
   boundary component containing  $p$. Cutting  $\bS$ along  it, we get a disjoint union of a polygon $\P_p$ and a decorated surface $\bS_p = \bS-\P_p$:    
$$
   \bS - \pi_p = \P_p \cup \bS_p.
$$
   The surface $\bS_p$  inherits  a single cusp $p$  on the crown ${\rm C}_p$ bounded by $\pi_p$. 
 If the crown of  $\bS$ containing the cusp $p$ has $n$ sides with the $\K-$coordinates $\K_1, ..., \K_n$ assigned to them, the polygon $\P_p$ is an $(n+1)-$gon with the $\K-$coordinates $\K_1, ..., \K_n, \K$. The decorated surface $\bS_p$ has a single boundary interval on the crown ${\rm C}_p$ containing $p$, with the $\K-$coordinate $\K$ assigned to it.

   \bp \la{2.9}
   One has 
   \begin{equation} \la{recurP}
\begin{aligned}
&{\rm Vol}_{\cal E}({\cal M}_\bS)= \int_{\mathbb{R}_+}  
{\rm{Vol}}_{\mathcal{E}}({\cal M}_{  {\P_p}})(\K_1, \ldots , \K_n, \K) \cdot{ \rm{Vol}}_{\mathcal{E}}({\cal M}_{\bS_p})(\K, \ldots)\   d\log \K. \\
\end{aligned}
\end{equation}
   \ep
   
 Proposition \ref{2.9} follows immediately from  
  the cutting and gluing formulas for the exponential volume forms in Sections \ref{subsectioncg} and \ref{cutting}. 
  \vskip 1mm
  
  From now on, we assume that $p$ is a single cusp on a certain crown of $\bS$, denoted by ${\rm C}_p$. 
\subsubsection{Trouser legs and ideal triangles at a  cusp $p$.}

  \begin{figure}[ht]
\centerline{\epsfbox{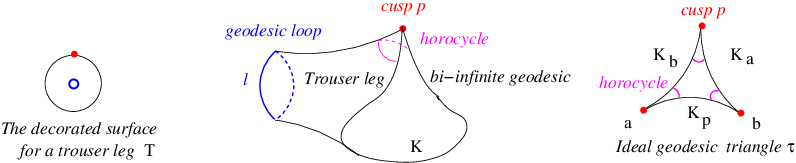}}
\caption{ A {\it trouser leg ${\rm T}$} is a decorated surface given by an annulus  with one special boundary  point. A {\it geodesic trouser leg ${\rm T}$} carries an ideal hyperbolic structure with a geodesic loop $\ell_\T$ and a cusp with a horocycle. The length of $\ell_\T$ is $l$. The {\it horocycle length} of the bi-infinite  geodesic is $\log \K^{-1}$.  An {\it ideal geodesic triangle $\tau$} with horocycles has sides of  {\it horocycle lengths}  $(\log \K^{-1}_a, \log \K^{-1}_b, \log \K^{-1}_p)$. }
\label{sz7}
\end{figure}

 We call a decorated surface $\bS$ is {\it elementary}, if the  moduli space ${\cal M}_\bS(\K, \L)$ is a point. There are three of them: 
a punctured disc $\D_1^*$ with a special point,  a triangle, and a pair of pants. 
We refer to  the first two, embedded as decorated surfaces into a  decorated surface $\bS$,  as {\it trouser legs} and {\it ideal triangles}, respectively. 
 So the {elementary decorated surfaces}  containing the  cusp $p$, see   Figure \ref{sz7}, are:

\begin{enumerate}

\item    {\it Trouser legs} $\T\subset \bS$,    
 containing the cusp $p$.
 
\item {\it Ideal  triangles} $\tau\subset \bS$ and containing the cusp $p$.

\end{enumerate}

 \subsubsection{Unfolding for a crown with a  single cusp $p$.}

  The moduli spaces   ${\cal M}_\tau(\K_a, \K_b, \K_p) $ and   ${\cal M}_{\T}(\K, l)$ of ideal hyperbolic structures on a triangle $\tau(\K_a, \K_b, \K_p)$ and a 
  trouser leg $\T(\K,l)$ with  $\K-$coordinates $(\K_a, \K_b, \K_p)$ and  $(\K, l)$,   see Figure \ref{sz7},  are  points.  
 Yet their exponential volumes   are non-trivial functions  of  $\K-$coordinates denoted by ${\cal E}_\tau$ and ${\cal E}_{\T}$. 
Precisely, let  $W_\tau$ and $W_\T$ be 
 the potentials of a triangle $\tau$ and a trouser leg $\T$. Then by Corollaries \ref{cortri} and \ref{corcrown}
 \be
\la{eqelevol}
\begin{split}
&{\cal E}_{\tau}(\K_a, \K_b, \K_p):=  e^{-W_{\tau}};  \ \ \ \ \ \ \ \ \ \ 
  {\cal E}_{\rm T}(\K, l):=   e^{-W_{\rm T}}. \\
 \end{split}
 \ee

 Next, let   $W_{\tau, p}$  be the partial potential of a triangle $\tau$ at the cusp $p$. 
 For a trouser leg $\T$ with a cusp  $p$, let  $Q_{{\rm T}}(\K, l)$ be  the area of the triangle cut by the horocycle $h_p$ at  $p$, 
 the boundary geodesic $\beta_p$, and the geodesic $\ell_{p\beta^+}$ from $p$ circling around the geodesic boundary loop $\beta = \ell_\T$ of $\T$, see Figure \ref{figure:hp1*}.  
 We have:

\begin{figure}[ht]
	\centering
	\includegraphics[scale=0.3]{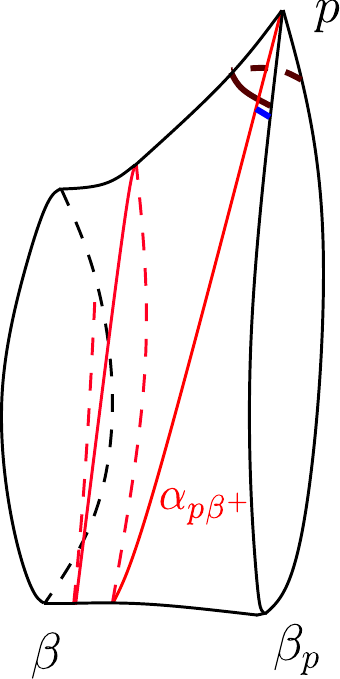}
	\small
	\caption{The trouser leg $\T= \T(\K,l)$ at the cusp $p$ and the  function $Q_{{\rm T}}(\K, l)$.}
	\label{figure:hp1*}
\end{figure}

\bp \la{EXVV} For an ideal triangle $\tau(\K_a, \K_b, \K_p)$ and a trouser leg $\T(\K,l)$ with a cusp $p$ 
 one has 
\be \la{gapfunctI}
\begin{split}
 {W}_{\tau, p}(\K_a, \K_b, \K_p)\ &=  \  \Bigl(\frac{\K_a\K_b}{\K_p}\Bigr)^{1/2}, \\
W_{\tau}(\K_a, \K_b, \K_p)  \ &:= \ {W}_{\tau, p} + {W}_{\tau, a} +{W}_{\tau, b},  \\
Q_{{\rm T}}(\K, l)  \ &=\K^{1/2}e^{-l/2},\\
W_{  {\T} }(\K, l) \ &= \ \K^{1/2} (e^{l/2} +e^{-l/2} ).\\
 \end{split}
\ee
\ep
 For the formula in the first line see   (\ref{FPo}); it goes back to \cite{GS13}, see also \cite[Example 3.16]{GS19}. The formula in the second line is just a definition. The last two formulas are proved in  Lemmas \ref{L5.31} and \ref{L5.3}. \vskip 2mm

Given a cusp $p$, we introduce   {\it recursion kernel functions ${\cal R}_{\zeta, p}$} for the ideal triangles $\zeta=\tau(\K_a, \K_b, \K_p)$ and trouser legs $\zeta ={\rm T}({\K},l)$ containing $p$:  
\be \la{gapfunctII}
\begin{split}
&{\cal R}_{\tau, p}(\K_a, \K_b, \K_p):=  {W}_{\tau, p}\ \ \stackrel{(\ref{gapfunctI})}{=}  \  \ \Bigl(\frac{\K_p}{\K_a\K_b}\Bigr)^{-1/2}, \\\
&{\cal R}_{{\rm T}, p}(\K, l):=  
\left\{ \begin{array}{lll} 
 { \K}^{1/2} (e^{l/2} +e^{-l/2} ), \ \ \  \  \ \ \  \ \  \mbox{if the geodesic loop $\ell_\T \subset  \partial \bS$.}\\ 
2^{1-\mu_{{\rm T}}}{\K}^{1/2}e^{-l/2}, \   \  \ \ \ \    \qquad  \mbox{if  the geodesic loop $\ell_\T\not \subset \partial \bS$. } \\ 
\end{array}\right.\\
 \end{split}
\ee
Here $\mu_{{\rm T}}$ is the number of one-holed tori cut out by a trouser leg $\rm T$. 
\vskip 2mm

A function $f$ on the moduli space ${\cal M}_\bS({\K}, \L)$ is just a ${\rm Mod}(\bS)-$invariant function  on the Teichm\"uller space ${\cal T}_\bS({\K}, \L)$. So it provides a function on Teichmuller  spaces ${\cal T}_{\bS - \tau}$ and ${\cal T}_{\bS - \T}$, also denoted by $f$.

\vskip 2mm

Given an elementary surface $\zeta\subset \bS$ containing the cusp $p$, consider the fibered product of  Teichmuller spaces   over the base ${\cal C}_{\zeta, \bS}$, 
provided by the common boundary components of $\zeta$ and $\bS-\zeta$:
\be \la{FP}
 {\cal T}_{\zeta}\times_{{\cal C}_{\zeta, \bS}} {\cal T}_{\bS-\zeta }.
 \ee
The base ${\cal C}_{\zeta, \bS}$ is parametrised as follows, where  $\ell_\T$ is the geodesic boundary loop of a trouser leg $\T\subset \bS$: 
  
\begin{itemize}

\item  If $\zeta = \tau$:  by the  $\K-$coordinates on internal edges of $\zeta$. 
  
  These are   $\K_a$ if there is one internal edge, and $(\K_a, \K_b)$ if there are two. 
  
\item  If $\zeta =\T$: by the $\K-$coordinate $\K_a$ if  $\ell_\T \subset \partial \bS$, 
and by the $(\K_a, l, \theta)$ if $\ell_\T \not \subset \partial \bS$.

In the latter case,  $(\theta, l)$ are  the  Fenchel--Nielsen coordinates related to the loop $\ell_\T$.
\end{itemize}

Let $c_{\bS,\ell_\T}$ be some positive rational constant depending only on $\bS$ and $\ell_\T$ as in 
 Proposition \ref{propcc}. There is the following  canonical volume form on the base ${\cal C}_{\zeta, \bS}$:
\be \la{formvol}
\begin{split}
& {\rm vol}_{{\cal C}_{\zeta, \bS}} =  \left\{ \begin{array}{lll} 
  d\log \K_a   \ \ \  \  \ \  \ \ \  \ \  \  \  \ \ \    \  \ \ \ \ \ \mbox{if $\zeta = \T$ and  $\ell_\T\subset \partial \bS$,}\\
 c_{\bS,\ell_\T}  \cdot dl \wedge d\theta\wedge d\log \K_a
  \   \     \mbox{if $\zeta = \T$ and   $\ell_\T \not \subset \partial \bS$,} \\ 
    d\log \K_a  \  \ \ \ \  \ \  \  \ \  \ \  \  \ \  \  \ \   \ \ \ \ \mbox{if $\zeta = \tau$ with  one   internal side with coordinate $\K_a$,}\\
   d\log \K_a \wedge d\log \K_b     \ \  \ \  \ \ \ \  \ \mbox{if $\zeta = \tau$ is with  two  internal sides with coordinates $\K_a, \K_b$.}\\
\end{array}\right.\\
\end{split}
\ee

We   use the  shorthand  for the exponential volume form on the space ${\cal M}_\bS({\K}, \L)$  from Definition \ref{DEFMKL}:  
\be \la{OKL}
{\Bbb E}_{\bS}:= {\Bbb E}_{\bS}(\K, \L) := 
 e^{-W_\bS} \ \Omega_\bS({\K}, \L).
\ee
 
The pure mapping class  group ${\rm Mod}(\bS)$ fixes the cusps. So 
 it acts  on 
  isotopy classes of elementary decorated surfaces $\zeta$ containing the cusp $p$. There are finitely many ${\rm Mod}(\bS)-$orbits. Indeed, an orbit is determined by the topology of the surface $\bS-\zeta$, and there are only finitely many topological types of surfaces. 
 Let us consider the following finite set.
  
 \begin{itemize} 
 
 \item The set  $[{\cal H}_{{\zeta}, p}]$  of  ${\rm Mod}(\bS)-$orbits on the set of isotopy classes of   elementary decorated surfaces  ${\zeta}\subset \bS$ containing the cusp $p$.  
 Here if $\zeta$ is an ideal  triangle, we assume that  its  side opposite to $p$  is a boundary interval of $\bS$. We call such triangles the {\it $p-$narrowest triangles}. 
\end{itemize}

Denote by ${\rm Stab}_\zeta$ the stabiliser of the elementary decorated surface $\zeta\subset \bS$ in ${\rm Mod}(\bS)$. Let us set 
\be \la{MZETA}
{\cal M}_{\bS, \zeta}(\K, \L):= {\cal T}_{\bS}(\K, \L)/{\rm Stab}_\zeta.
\ee

A function $f$ on the moduli space ${\cal M}_\bS({\K}, \L)$ is  a ${\rm Mod}(\bS)-$invariant function  on the Teichm\"uller space ${\cal T}_\bS({\K}, \L)$. So it provides a function on the fibered product (\ref{FP}),  also denoted by $f$.

\begin{theorem} \la{T5.5I}
For any  decorated surface $\bS$,  a crown with  a single cusp $p$, and any smooth function $f$ on the space ${\cal M}_\bS({\K}, \L)$,  we have the following recursion formula:
\begin{equation} \la{recur**}
\begin{aligned}
& \int _{{\cal M}_\bS({\K}, \L)}f  \ W_p\  {\Bbb E}_\bS \ = \ \sum_{[{\zeta}]\in [ {\cal H}_{{\zeta},p}]}   
 \int_{  {\cal M}_{\bS, \zeta}(\K, \L)}   f\cdot      {\cal R}_{{\zeta}, p}\  {\cal E}_{\zeta}
   \cdot \ {\Bbb E}_{\bS-{\zeta}}\wedge {\rm vol}_{{\cal C}_{\zeta, \bS}}.\\
  \end{aligned}
\end{equation}
\end{theorem}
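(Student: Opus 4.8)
The plan is to realize the left-hand integral as an integral over the Teichm\"uller space ${\cal T}_\bS(\K,\L)$ modulo ${\rm Mod}(\bS)$, and to unfold it using the combinatorics of elementary decorated surfaces containing the cusp $p$. The factor $W_p = \sum_{\zeta}W_{\zeta,p}$ is the key: the local potential at $p$ decomposes as a sum over the elementary pieces $\zeta$ of an ideal triangulation adapted to $p$, and each summand $W_{\zeta,p}$ is, up to the universal constant recorded in ${\cal R}_{\zeta,p}$, the quantity that lets us change variables from "which elementary surface sits at $p$" to an honest coordinate. Concretely, I would first fix a point of ${\cal M}_\bS(\K,\L)$, i.e.\ an ideal hyperbolic structure, and observe that the horocyclic region at $p$ is tiled by the elementary subsurfaces $\zeta$ (triangles and trouser legs) incident to $p$ in the associated triangulation; summing the areas gives $W_p = \sum_\zeta W_{\zeta,p}$. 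This turns $\int f\, W_p\, {\Bbb E}_\bS$ into $\sum_\zeta \int f\, W_{\zeta,p}\, {\Bbb E}_\bS$, where now $\zeta$ ranges over \emph{all} isotopy classes of elementary surfaces at $p$, not orbit representatives.

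Next I would organize the sum over all isotopy classes of $\zeta$ into ${\rm Mod}(\bS)$-orbits. For a fixed orbit $[\zeta]\in[{\cal H}_{\zeta,p}]$, the sum $\sum_{\zeta'\in[\zeta]}\int_{{\cal M}_\bS}f\,W_{\zeta',p}\,{\Bbb E}_\bS$ telescopes (by the standard unfolding trick for group actions: $\sum_{\gamma\in{\rm Mod}(\bS)/{\rm Stab}_\zeta}\int_{{\cal M}_\bS}g\circ\gamma = \int_{{\cal T}_\bS/{\rm Stab}_\zeta}g = \int_{{\cal M}_{\bS,\zeta}}g$) into a single integral over ${\cal M}_{\bS,\zeta}(\K,\L)={\cal T}_\bS(\K,\L)/{\rm Stab}_\zeta$ of $f\cdot W_{\zeta,p}\cdot{\Bbb E}_\bS$, where now $\zeta$ is a \emph{fixed} elementary subsurface with the given combinatorial type. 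This uses ${\rm Mod}(\bS)$-invariance of $f$, $W_\bS$, and $\Omega_\bS$, hence of ${\Bbb E}_\bS$, together with the fact that $W_{\gamma\zeta,p}=W_{\zeta,p}\circ\gamma^{-1}$.

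The heart of the argument is then the factorization of ${\cal M}_{\bS,\zeta}$ along the cut. Cutting $\bS$ along the internal edges of $\zeta$ (the sides of $\zeta$ not on $\partial\bS$) decomposes $\bS$ into $\zeta$ and $\bS-\zeta$, and the Teichm\"uller space fibers over the gluing data ${\cal C}_{\zeta,\bS}$ listed before the theorem: $\K$-coordinates on the internal edges when $\ell_\T\subset\partial\bS$ or $\zeta$ is a triangle, and additionally the Fenchel--Nielsen twist $\theta$ (and length $l$) when $\ell_\T\not\subset\partial\bS$. Because ${\cal M}_\zeta(\K,\L)$ is a point, the fiber over ${\cal C}_{\zeta,\bS}$ is just ${\cal T}_{\bS-\zeta}$ with its boundary lengths determined by the gluing; so $\Omega_\bS$ splits as $\Omega_{\zeta}\cdot\Omega_{\bS-\zeta}\wedge{\rm vol}_{{\cal C}_{\zeta,p}}$ up to the positive rational constant $c_{\bS,\ell_\T}$ of Proposition \ref{propcc} (this is where the explicit form of $\Omega_\bS$ in cluster Poisson coordinates, and the Fenchel--Nielsen vs.\ cluster-coordinate comparison, enter). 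Simultaneously $W_\bS = W_\zeta + W_{\bS-\zeta}$ (areas are additive across a geodesic cut), so $e^{-W_\bS}=e^{-W_\zeta}e^{-W_{\bS-\zeta}}={\cal E}_\zeta\cdot e^{-W_{\bS-\zeta}}$, and $W_{\zeta,p}={\cal R}_{\zeta,p}$ by definition (\ref{gapfunctII}). Assembling these pieces converts $\int_{{\cal M}_{\bS,\zeta}}f\,W_{\zeta,p}\,{\Bbb E}_\bS$ into $\int f\cdot{\cal R}_{\zeta,p}\,{\cal E}_\zeta\cdot{\Bbb E}_{\bS-\zeta}\wedge{\rm vol}_{{\cal C}_{\zeta,p}}$, which is exactly the summand on the right of (\ref{recur**}). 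Summing over $[\zeta]$ completes the proof.

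\textbf{Main obstacle.} I expect the delicate point to be the precise bookkeeping of constants and of the measure on ${\cal C}_{\zeta,\bS}$ — in particular pinning down $c_{\bS,\ell_\T}$ and the factor $2^{1-\mu_{\rm T}}$ in ${\cal R}_{{\rm T},p}$ when the trouser-leg loop $\ell_\T$ is interior (the two-to-one ambiguity in choosing an eigenvalue of the monodromy around a one-holed-torus neck, and the Fenchel--Nielsen Jacobian), rather than any conceptual difficulty: the additivity of $W$, the unfolding identity, and the product structure of ${\cal T}_\bS$ along a cut are all clean, but matching normalizations so that the cluster volume form $\Omega_\bS$ factors exactly as claimed requires care, and this is presumably why Proposition \ref{propcc} and Proposition \ref{EXVV} are invoked.
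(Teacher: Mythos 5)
Your overall architecture matches the paper's: decompose $W_p$ into contributions of elementary surfaces at $p$, unfold the sum over each ${\rm Mod}(\bS)$-orbit into an integral over ${\cal M}_{\bS,\zeta}={\cal T}_\bS/{\rm Stab}_\zeta$, then factorize $e^{-W_\bS}\Omega_\bS$ under cutting using the additivity of the potential and Propositions \ref{PROP2.9}, \ref{propcc}. The second and third steps are essentially the paper's argument. But there is a genuine gap in your first step: the identity $W_p=\sum_\zeta W_{\zeta,p}$, with $\zeta$ running over \emph{all} isotopy classes of trouser legs and $p$-narrowest ideal triangles containing $p$, is not obtained by "tiling the horocyclic region by the elementary subsurfaces incident to $p$ in the associated triangulation." A fixed ideal triangulation gives only a finite decomposition of $W_p$ (the formula (\ref{fw})), its cells are triangles rather than trouser legs, and it cannot produce a sum indexed by all isotopy classes — which is exactly what your subsequent orbit-unfolding step needs. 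The correct input is the generalized McShane identity (Theorem \ref{MSRF1}): for a fixed ideal hyperbolic structure, each embedded trouser leg, respectively $p$-narrowest triangle, containing $p$ cuts out a specific gap on the horoarc $h_p$, these gaps are pairwise disjoint, and their total length equals $H_p=W_p$ because the complementary set — points of $h_p$ lying on complete simple geodesics from $p$ — has measure zero. That last statement is the Birman--Series theorem extended to ideal hyperbolic surfaces (Theorem \ref{thm:bs}, proved in Appendix A), and it is the nontrivial geometric content your argument silently assumes.

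Relatedly, the features you defer as "bookkeeping" are in fact dictated by the McShane identity rather than by normalization of measures: the factor $2$ in ${\cal R}_{\T,p}$ for interior trouser legs arises because an unoriented interior geodesic $\beta$ is counted with both orientations (two gaps $Q(\beta,\beta_p)$), the kernel $R=\K^{1/2}(\L^{1/2}+\L^{-1/2})$ versus $Q=\K^{1/2}\L^{-1/2}$ distinguishes $\ell_\T\subset\partial\bS$ from $\ell_\T\not\subset\partial\bS$, only $p$-narrowest triangles contribute (otherwise gaps would be double counted, cf.\ Figure \ref{figure:tce}), and the factor $2^{-\mu_\T}$ comes from the hyperelliptic involution when $\T$ cuts off a one-holed torus, as in Mirzakhani's unfolding. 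Without establishing (\ref{MSRF2}) — or citing it — the passage from $\int f\,W_p\,{\Bbb E}_\bS$ to the sum of orbit integrals does not go through, so the proposal as written is incomplete at its key step.
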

Here we use the two projections from (\ref{MZETA}) onto  ${\cal M}_{\zeta}$ and ${\cal M}_{\bS-\zeta }$ to pull back the function    
${\cal R}_{{\zeta}, p} \ {\cal E}_{\zeta}$ and the form $ {\Bbb E}_{\bS-{\zeta}}$, respectively.
  We elaborate  formula (\ref{recur**}) in Theorem \ref{T5.5}.

\subsubsection{Comments on the unfolding formula and its structure.}   
\begin{enumerate}

\item  The right hand side of  formula (\ref{recur**}) is  obtained as follows. Cut out  from $\bS$ an {elementary decorated surface $\zeta$} containing the cusp $p$. Multiply $f$ by 
the exponential volume function ${\cal E}_\zeta$ and by the {recursion kernel} function ${\cal R}_{\zeta, p}$, provided by the McShane identity.   Multiply the resulting function by the exponential volume form ${\Bbb E}_{\bS-\zeta}$. Induce the obtained  form to the  fiber product (\ref{FP}), multiply by the form ${\rm vol}_{{\cal C}_{\zeta, \bS}}$ lifted from the base 
${\cal C}_{\zeta, \bS}$, and 
 integrate the resulting volume form.

\item All but one term of the right hand side of  formula (\ref{recur**}) are topologically simpler than the left hand side. Let us describe the exceptional term. 
Given a crown with a single cusp $p$, let us cut the surface along the neck geodesic loop $\ell_p$ 
 around  this cusp. The obtained surface has two components. The one containing the cusp $p$ is called the {\it neck trouser leg},  and denoted by $\T_{\ell_p}$.  
 It makes sense to subtract the term corresponding to  the {neck trouser leg}  $\T_{\ell_p}$. The resulting formula, called the {\it reduced unfolding formula},  is presented in Section \ref{Sect5.2}.

 \item When $\bS=S$  has no cusps, Mirzakhani  \cite{Mir07a} proved recursion formulas for the volumes of moduli spaces ${\cal M}_S(\L)$ of hyperbolic surfaces 
   using  McShane identities  and   a variant of unfolding.  
  Theorem \ref{T5.5I} can be viewed as a generalization of  Mirzakhani's recursions  for the volumes. Indeed:

 \begin{itemize}

\item  Mirzakhani's recursion is a sum over all topological types of embedded into  $S$ {\it pairs of pants} 
 containing a given boundary circle.
 \item Recursion (\ref{recur**}) is a sum over all topological types of   {\it  ideal triangles} and {\it trouser legs} in $\bS$   containing the cusp $p$.  
 
 \end{itemize}

So in both cases we sum over all topological types of elementary surfaces  containing either the given boundary circle, or a given cusp.

\item  If $\zeta = \T$ is a trouser leg with $\ell_\T \not \subset \partial \bS$, and the function $f$ does not depend on the angle parameter $\theta$ of $\ell_\T$, we can integrate $d\theta$, getting  the 2-form ${\rm vol}_{{\cal C}_{\zeta, \bS}} = ldl\wedge d\log \K$. In particular, this is so when $f$ is a function of the length of the neck geodesic at the cusp $p$, 
e.g. is the ${\cal B}-$function of $\bS$.  

\item  If $\zeta$ is an ideal triangle, the sum in  formula (\ref{recur**}) is over $p-$narrowest ideal triangles with the vertex  $p$. 
There are two options: 

i) The cusp $p$ and the opposite  side $ab$ lie on different   boundary components, see Figure \ref{figure:cuttri}. In this case  the number of connected components of $\bS$ does not change. 

 \begin{figure}[ht]
	\centering
	\includegraphics[scale=0.3]{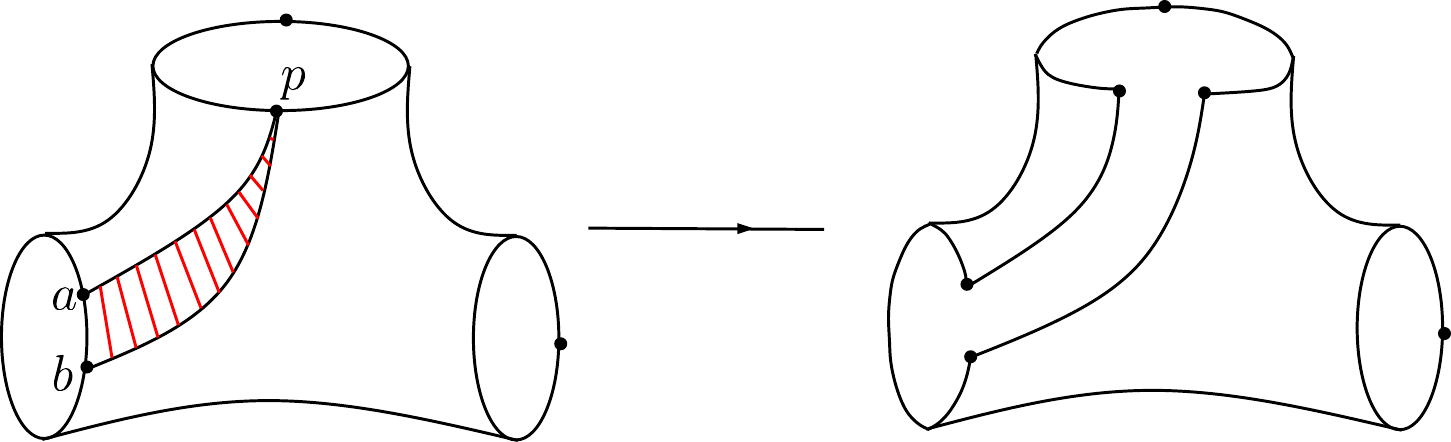}
	\small
	\caption{Cutting an ideal triangle  with the cusp $p$ on one boundary component, and the side $ab$ on the other does not change the number of components of $\bS$.}
	\label{figure:cuttri}
\end{figure}

ii) The cusp $p$ is on the same boundary component as the  side $ab$. 
In fact the  side $ab$ can have both of its vertices at $p$, see  the right picture on Figure 
\ref{figure:crpp}. The left picture helps to imagine such a cut. 
In this case we increase the number of connected components of $\bS$ by one.  

 \begin{figure}[ht]
	\centering
	\includegraphics[scale=0.3]{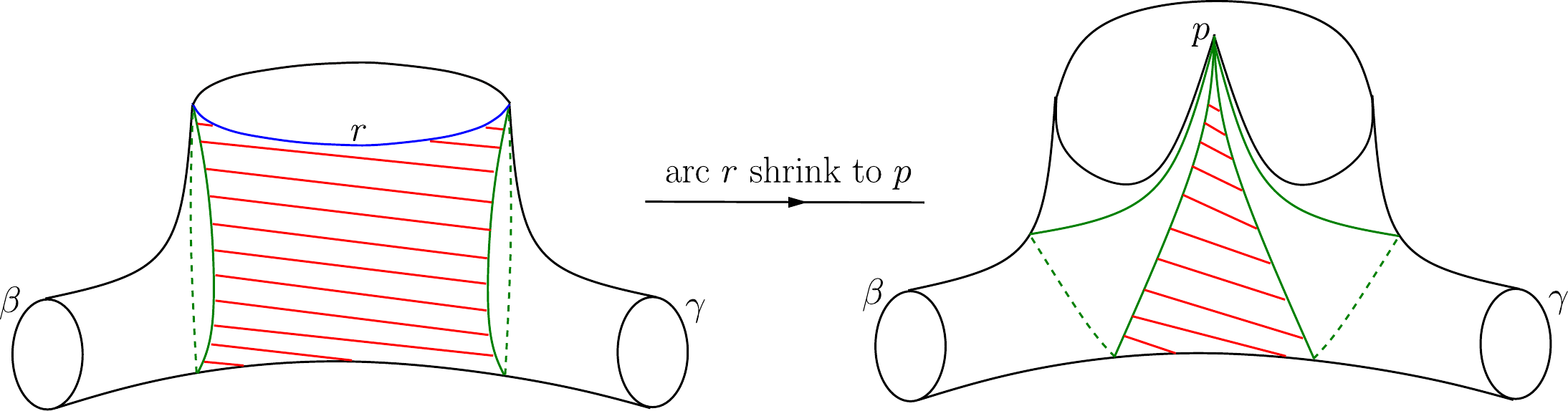}
	\small
	\caption{Cutting an ideal triangle  with the cusp $p$ and the side $ab$ on the same  boundary component. This  increases the number of connected components of $\bS$ by one.}
	\label{figure:crpp}
\end{figure}

\item Theorem \ref{T5.5I}  is proved in Section \ref{Sec4}.  The proof   consists of  
two  ingredients:
 \begin{itemize}

\item  Generalized McShane identities \cite{McS91}, \cite{McS98} for ideal hyperbolic surfaces, see Section \ref{Sec4.1}.   

\item The  {\it factorization property} of exponential volume forms, see also (\ref{FEV})  for the elaborated form:
\be
\begin{split}
&i_\zeta^*({\Bbb E}_\bS) ={\cal E}_\zeta \cdot {\Bbb E}_{\bS-\zeta} \wedge {\rm vol}_{{\cal C}_{\zeta, \bS}}.\\
\end{split}
\ee

The additivity of  the potential $W_\bS$ under the cutting of  $\bS$ 
 is crucial  
 for  both of them. 
 \end{itemize}

\item  Unfolding formulas (\ref{recur**})   are of independent interest on their own: \vskip 2mm

\begin{itemize}

\item 
They show effectively that the exponential volumes are functions of algebraic geometric origin -   periods of variations of exponential motives, see Section \ref{ssec1.3}. \vskip 2mm

\item  Mirzakhani's recursion  for the 
  volumes ${\rm Vol}({\cal M}_S)(\L)$ is  a primary example of the Topological Recursion  \cite{EO07}. 
   Its combinatorial skeleton    is the same:   the sum over all topologically different embedded pairs of pants containing a  given boundary circle.

One should have an open string analog of  Topological Recursion, with the combinatorial skeleton given by the sum over 
topological types of ideal triangles/trouser legs containing a  cusp.  
\end{itemize}

\item  The surfaces $\bS - \T$ and $\bS-\tau$ 
can have one or two  components. At least one of them has cusps. We pick a cusp to perform unfolding. If one of the components does not have cusps, it has a boundary circle, and we perform 
Mirzakhani's recursion  at this circle. Keep doing this, we   decompose  $\bS$ into a finite collection of triangles $\tau_i$, trouser legs $\T_j$ and pairs of pants ${\cal P}_k$, glued  
according to a gluing pattern $\gamma$, which tells which pairs of sides/boundary loops  have to be glued. Schematically, 
$$
\bS = \tau_1 \cup_\gamma \cdots \cup_\gamma \tau_a \cup_\gamma \T_1 \cup_\gamma \cdots \cup_\gamma \T_b \cup {\cal P}_1 \cup_\gamma \cdots \cup_\gamma{\cal P}_c. 
$$
This allows us to present the moduli space ${\cal M}_\bS$ as a fibered product of the elementary ones 
\be \la{DEC}
{\cal M}_\bS = {\cal M}_{\tau_1} \ast \cdots \ast {\cal M}_{\tau_a} \ast {\cal M}_{\T_1} \ast \cdots \ast {\cal M}_{\T_b} \ast 
{\cal M}_{{\cal P}_1} \ast \cdots \ast {\cal M}_{{\cal P}_c}. 
\ee
If $f$ is a regular function, then, by complexifying all factors in (\ref{DEC}), each of the resulting integrals is on the nose an exponential period, as  explained in Section \ref{ssec1.3}.

    \item {\it Recursion for a crown with a  single cusp $p$.} 
Formula  (\ref{recur**})  gives  an unfolding of the integral of $f{\Bbb E}_\bS$, 
 getting  factors  $f/W_p$ on the right. 
 If $f=1$, we get an unfolding for the exponential volume of $\bS$.  
 However, unlike Mirzakhani's recursion,   it      has a factor $1/W_p$ which 
  does not  factorise into a product of the ones lifted 
  from ${\cal M}_{\zeta}$ and ${\cal M}_{\bS-\zeta}$.

  Here is how we can treat this problem by  introducing parameters $\alpha_p$ at the potentials at the cusps $p$. 
 We  introduce a modification of the potential  depending on  parameters $\alpha_p\in \R_{>0}$ at the cusps $p$:
\be
\begin{split}
&\widetilde W_\bS:= \sum_p \alpha_pW_p, \ \ \ \widetilde {\Bbb E}_\bS:= e^{-\widetilde W_\bS}\Omega_\bS.\\
\end{split}
\ee
Then  
\be
-\frac{d}{d\alpha_p}\int _{{\cal M}_\bS({\K}, \L)}f \ \widetilde {\Bbb E}_\bS=   \int _{{\cal M}_\bS({\K}, \L)}f \ W_p \widetilde {\Bbb E}_\bS.
\ee

Therefore Theorem \ref{T5.5I} implies immediately the following.
\begin{theorem} \la{recur****}
Under the same assumptions as in Theorem \ref{T5.5I},  we have the following recursion:
\begin{equation} \la{recur**!}
 -\frac{d}{d\alpha_p}\int _{{\cal M}_\bS({\K}, \L)}f \ \widetilde {\Bbb E}_\bS\ = \  \mbox{\rm $\alpha_p-$modified right hand side of } \ (\ref{recur**}).
\end{equation}
Here on the right hand side   we use everywhere the modified exponential factors $e^{-\widetilde W_*}$. 

Note  
that for the cusps $p', p''$  on $\bS-\zeta$ matching the cusp $p$ on $\bS$ we have $\alpha_{p'}=\alpha_{p''}$. 
\end{theorem}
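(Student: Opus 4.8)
The plan is to deduce the statement from Theorem \ref{T5.5I} by differentiating under the integral sign and observing that the proof of Theorem \ref{T5.5I} is insensitive to the rescaling of the local potentials.

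First I would establish the derivative identity displayed just before the statement. Fix all the parameters $\alpha_q$ and let $\alpha_p$ vary over a compact subinterval $[\veps, M]\subset \R_{>0}$. Since $\widetilde W_\bS = \alpha_p W_p + \sum_{q\neq p}\alpha_q W_q$ and $\alpha_p W_p$ is the only summand containing $\alpha_p$, the integrand $f\,e^{-\widetilde W_\bS}\Omega_\bS$ is smooth in $\alpha_p$ with $\partial_{\alpha_p}\bigl(f\,e^{-\widetilde W_\bS}\bigr) = -f\,W_p\,e^{-\widetilde W_\bS}$. Because $W_p\geq 0$, on $[\veps,M]$ we have the bound $W_p e^{-\widetilde W_\bS}\leq W_p e^{-\veps W_\bS}\leq \frac{2}{e\veps}\,e^{-(\veps/2)W_\bS}$, and $\int_{{\cal M}_\bS(\K,\L)} |f|\,e^{-(\veps/2)W_\bS}\Omega_\bS<\infty$ by Theorem \ref{MTHO1} applied after rescaling the $\K$-coordinates (scaling each local potential by a positive constant amounts to scaling the relevant $\K$'s). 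Hence differentiation under the integral sign is legitimate and
\[
-\frac{d}{d\alpha_p}\int_{{\cal M}_\bS(\K,\L)} f\,\widetilde{\Bbb E}_\bS \;=\; \int_{{\cal M}_\bS(\K,\L)} f\,W_p\,\widetilde{\Bbb E}_\bS .
\]

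Next I would observe that the proof of Theorem \ref{T5.5I} applies verbatim with $W_\bS$ replaced by $\widetilde W_\bS$. That proof rests on two ingredients: the generalized McShane identity, which expresses the local potential $W_p$ as a convergent sum over the elementary subsurfaces $\zeta\subset\bS$ containing $p$ of the recursion kernels ${\cal R}_{\zeta,p}$; and the factorization property of the exponential volume form. The McShane identity is a statement of pure hyperbolic geometry about lengths and areas near $p$ and involves no potential weighting, so it is unchanged — in particular the kernels ${\cal R}_{\zeta,p}$ are untouched, being the McShane decomposition of the unweighted $W_p$. The factorization property uses only the additivity of the total potential under cutting along (the boundary of) $\zeta$. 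Since $\widetilde W_\bS=\sum_q\alpha_q W_q$ and cutting merely redistributes the cusps, with the $\alpha$-label of a cusp inherited by the cusps it breaks into — this is exactly the condition $\alpha_{p'}=\alpha_{p''}$ noted in the statement — we still have $\widetilde W_\bS = \widetilde W_\zeta + \widetilde W_{\bS-\zeta}$, hence $i_\zeta^*\widetilde{\Bbb E}_\bS = \widetilde{\cal E}_\zeta\cdot\widetilde{\Bbb E}_{\bS-\zeta}\wedge {\rm vol}_{{\cal C}_{\zeta,p}}$ with $\widetilde{\cal E}_\zeta:=e^{-\widetilde W_\zeta}$. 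Running the argument of Theorem \ref{T5.5I} (unfold over the ${\rm Mod}(\bS)$-orbits using the McShane decomposition of $W_p$, then apply this factorization) produces exactly formula (\ref{recur**}) with every exponential factor $e^{-W_*}$ replaced by $e^{-\widetilde W_*}$.

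Combining the two steps, the left-hand side of (\ref{recur**!}) equals $\int f\,W_p\,\widetilde{\Bbb E}_\bS$, which equals the $\alpha_p$-modified right-hand side of (\ref{recur**}), proving the theorem. The only point that genuinely requires care is the justification of differentiation under the integral sign, carried out above; the rest is an inspection of the proof of Theorem \ref{T5.5I}, which goes through word for word once one notes that multiplying each local potential by a positive constant preserves both the McShane identity and the additivity of the potential under cutting (the latter exactly via the $\alpha$-matching condition).
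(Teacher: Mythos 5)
Your proposal is correct and follows essentially the same route as the paper: differentiate under the integral sign to bring down $W_p$, then note that the proof of Theorem \ref{T5.5I} (McShane identity for the unweighted $W_p$ plus factorization of the exponential volume form via additivity of the potential under cutting) goes through verbatim for the $\alpha$-modified potentials thanks to the matching condition $\alpha_{p'}=\alpha_{p''}$. The paper treats this as immediate, so your dominated-convergence justification (with the rescaling observation reducing $\int e^{-cW_\bS}\Omega_\bS$ to Theorem \ref{MTHO1}) merely supplies detail the paper omits, modulo the implicit integrability of $f$ that both you and the paper assume.
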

The integration over $\alpha_p$ recovers the exponential volume since,  
  as $\alpha_p\to \infty$ it exponentially decays:
\be \la{8/31/24}
-\int_{\alpha_p}^\infty \Bigl(\frac{d}{d\alpha_p}\int _{{\cal M}_\bS({\K}, \L)}f \ \widetilde {\Bbb E}_\bS\Bigr)  d\alpha_p = \int _{{\cal M}_\bS({\K}, \L)}f \ \widetilde {\Bbb E}_\bS.
\ee
So  using  recursion formula (\ref{recur**!}),   and  integrating over $\alpha_p$ using (\ref{8/31/24}), 
  we get  a recursion for the exponential volumes, 
  where the right hand side does factorise.

 Note that recursion kernels ${\cal R}_{\zeta, p}$ for all $\zeta$ but  the  one $\zeta = \T$ with $\ell_\T\subset \partial \bS$ are the local potentials of 
the elementary surface $\zeta$ at the cusp $p$, see (\ref{gapfunctII}).  
 Recursion kernels depending on the parameters $\alpha_p$ are   derivatives of the exponential volume functions for the elementary surface:
\be
\begin{split}
  &( e^{-\widetilde W_\T }Q_{\T})(\alpha_p)   =\left(-\frac{1}{2} \frac{d  }{d\alpha_p} +\frac{1}{\alpha_p}\frac{d}{dl}\right) e^{-\widetilde W_\T }(\alpha_p); \\
  &( e^{-\widetilde W_\zeta }W_{\zeta, p})(\alpha_p)   = -\frac{d  }{d\alpha_p}e^{-\widetilde W_\zeta }(\alpha_p). \\
  \end{split}
    \ee 
 
    \end{enumerate} 
    
    \subsection{Exponential volumes and   positive  Whittaker--Hecke   algebra for ${\rm SL}_2(\R)$}

 Exponential volumes of  elementary decorated surfaces with cusps -  that is a triangle $\tau$ and a trouser leg $\T$ -  are non-trivial functions ${\cal E}_\tau(\K_1, \K_2, \K_3)$ and ${\cal E}_\T(\K, \L)$. 
 We define an algebra ${\cal E}$,  consisting of functions $f$ on  $\R_{>0}$ with exponential decay at infinity,  
with the product    given by   
\be \la{PFE9xx}
(f\ast g)(\K_3):= \int_{\R_{>0}\times \R_{>0}} {\cal E}_\tau(\K_1, \K_2, \K_3)f(\K_1)g(\K_2) d\log\K_1 d\log \K_2.
\ee
    The product  is evidently commutative. Its  associativity is non-trivial, and will be discussed momentarily. 
    
    The spectrum of the algebra ${\cal E}$ is described as follows. The Mellin transform of the function ${\cal E}_\T(\K, \L)$ is the Bessel function ${\cal B}_\T(\K, s) = \int_{\R_{>0}} {\cal E}_\T(\K, \L)\L^{s/2}d\log \L $.      
 We prove that it  satisfies the  product formula 
\be \la{PF10xx}
 {\cal B}_\T(\K_1, s)\cdot {\cal B}_\T(\K_2, s) = \int_{\R_{>0}} {\cal E}_\tau(\K_1, \K_2, \K){\cal B}_\T(\K, s)d\log \K.
\ee   
 It implies that the function ${\cal B}_\T(\K, s)$ provides a homomorphism $\psi_s$ of the algebras $({\cal E}, \ast)$ to $\C$:
\be \la{PF13xx}
\psi_{s}(f):=  \int_{\R_{>0}} f(\K){\cal B}_\T(\K, s)d\log \K.
\ee

 Let $\bS$ be a decorated surface with trivial mapping class group ${\rm Mod}(\bS)$, that is either  $\D_m^*$ or a  polygon $\P_n$. Cutting $\bS$ into a collection  $\{\bS_\alpha\}$ of smaller decorated surfaces  
 gives rise to a formula expressing the exponential volume of the moduli space  ${\cal M}_\bS(\K, \L)$ as an integral of the product of exponential volumes of ${\cal M}_{\bS_\alpha}(\K', \L')$.  
 Cutting $\bS$ into two different collections $\{\bS_\alpha\}$ and$\{\bS_\beta\}$ provides an identity between the integrals. 
 The associativity of the product  and formula (\ref{PF10xx}) are examples, see Figures \ref{RT12} and \ref{RT10a}. 
   \begin{figure}[ht]
\centerline{\epsfbox{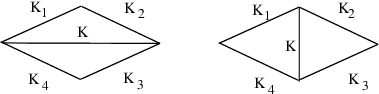}}
\caption{The associativity of the $\ast-$product results from two ways to cut a rectangle.}
\label{RT12}
\end{figure}

 
    \begin{figure}[ht]
\centerline{\epsfbox{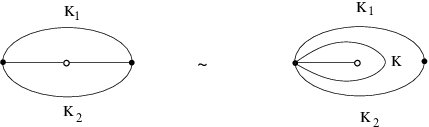}}
\caption{The product formula (\ref{PF10xx})  results from cutting  $\D_2^*$ in two ways.}
\label{RT10a}
\end{figure} 

 Identities between  integral formulas resulting    from  cutting  a general $\bS$   follow formally from this.

Formulas (\ref{PFE9xx}) - (\ref{PF13xx})
are similar  to   formulas in the classical theory of  spherical functions \cite{Ge50}.   
The Hecke algebra of spherical functions is 
given by  compactly supported functions on ${\rm SL}_2(\R)$,  invariant under the left and right actions of  ${\rm SO}(2)$, with the convolution product. 
It is a commutative algebra, and its characters are described by {\it zonal spherical functions} similarly to (\ref{PF13xx}).

In our case {\it Whittaker functions}  on  ${\rm SL}_2(\R)$ play the role of spherical functions,  covariant under the left and right action of the unipotent subgroup 
$\N(\R)\subset {\rm SL}_2(\R)$ by a given non-trivial character. 
The Bessel function  ${\cal B}_\T(\K, s)$ is the restriction of the zonal Whittaker function for the principal series representation $V_s$ of ${\rm PGL}_2(\R)$ to the Cartan subgroup. 
The key  difference is that  the convolution of such functions is ill-defined due to non-compactness of   ${\rm N}(\R)$. Remarkably, one can still define the product   
for   restrictions of  Whittaker functions to the positive diagonal matrices by formula (\ref{PFE9xx}). This suggests the name {\it positive Whittaker--Hecke algebra} for the algebra ${\cal E}$. 
We will continue the discussion in Section \ref{WHalgebra}.

    \subsection{Exponential volumes are exponential periods} \la{ssec1.3}
  
We show that the exponential volumes of moduli spaces are functions of algebraic-geometric origin. 
Here the space ${\cal P}_\bS$ is essential, providing  the complexification of the Teichm\"uller space, its volume form, and the potential $W$. \vskip 1mm

We start with a data
\be \la{DEM}
(X, W, f, \Omega; \gamma), 
\ee
where $X$ is a regular 
  variety  over $\Bbb Q$,  $W$ and $f$ is a regular function on $X$,  $\Omega_X$  is an algebraic volume form  on $X$ with logarithmic singularities,  and $\gamma$ is a possibly non-compact cycle  of real dimension ${\rm dim}(X)$ such that the map $W:\gamma \to \C$ is proper,  
and  ${\rm Re} \ W\to +\infty$. Then we can consider  the following integral
 \be \la{ETI}
 \int_{\gamma\subset X(\C)}e^{-W} f \ \Omega_X.
\ee
The conditions on the cycle $\gamma$  guarantee 
 that the integral is convergent: the integrand exponentially decays at infinity of the cycle $\gamma$. 
The numbers which we can get this way are called the {\it exponential periods over $\Q$}. If the data  (\ref{DEM}) depends on parameters $\K$ parametrised by a  variety ${\cal K}$, 
then the functions in $\K$ given by integrals (\ref{ETI}) are called the {\it periods of   variations of exponential motives}. 
\vskip 2mm

The exponential volume function ${\rm Vol}_{{\mathcal{E}}}\mathcal{M}_\bS({\K}, \L)$ is defined by an apparently similar data 
$$
({\cal P}_{\bS},  W,  1,  \Omega_\bS; \gamma={\cal T}^{\rm en}_\bS).
$$
However this data is ${\rm Mod}(\bS)-$invariant.  If the   group ${\rm Mod}(\bS)$ is finite, 
 it defines an exponential period on the nose. Otherwise the integral  diverges. So  we have to integrate over a fundamental domain for the action 
of  ${\rm Mod}(\bS)$ on the Teichm\"uller space.    
Yet there is no natural  choice of the fundamental domain. 
\vskip 2mm

Here is a classical analogy. It is  known that for the standard invariant volume form $\omega$ on ${\rm SL}_n$ we have
\be \la{ZV}
{\rm Volume}_\omega\Bigl({\rm SL}_n(\R)/{\rm SL}_n(\Z)\Bigr) := \int_{{\rm SL}_n(\R)/{\rm SL}_n(\Z)}\omega  = \zeta(2) \zeta(3) \cdots \zeta(n).
\ee
However it is not straightforward to give an algebraic-geometric interpretation of the volume. 
Yet  its value is  manifestly a mixed Tate 
period over ${\rm Spec}(\Z)$. 

\bt \la{MTHO11}The exponential volume  ${\rm Vol}_{{\mathcal{E}}}\mathcal{M}_\bS({\K}, \L)$  is a period of a variation of  exponential motives. 
\et

We give two proofs of Theorem \ref{MTHO11}, each providing an explicit way to write the exponential volume as a finite sum of 
exponential periods. 

\begin{enumerate}

\item  We  calculate the   ${\rm Vol}_{{\mathcal{E}}}\mathcal{M}_\bS({\K}, \L)$   
using  {neck  recursion formula} (\ref{NRF1})  and  Mirzakhani's formula (\ref{V1}). 

\item We  apply  {unfolding formulas} from Theorem \ref{T5.5I} to calculate inductively 
 the exponential volume  ${\rm Vol}_{{\mathcal{E}}}\mathcal{M}_\bS({\K}, \L)$ as a finite sum of  integrals of type  (\ref{ETI}). 

\end{enumerate}

Both  proofs are based on different variants of unfolding in the Teichm\"uller theory.

 \vskip 1mm

Finally, we want to  note  the  analogy between the Rankin-Selberg 
method in Number Theory and the unfolding  in the Teichm\"uller theory.

  \subsubsection{Organization of the paper.}  In Section \ref{Sec2} we prove   the crucial   
   cutting and gluing formulas for unfolding exponential volume forms. 
   
In Section \ref{SECT2.1}  we discuss  the moduli space ${\cal P}_{\bS}$ -  the algebraic-geometric avatar of the Teichm\"uller space of ideal hyperbolic structures on $\bS$, and recall  the cluster Poisson coordinates 
   $\{\B_\F, X_{\rm E}\}$ on it assigned to an ideal triangulation of $\bS$. Then we define  
   the local potentials ${W}_p$ and  the regular functions ${\rm K}_\F$  assigned to the boundary intervals of $\bS$, and calculate them in the cluster Poisson 
   coordinates. Restricting the functions ${\rm K}_\F$ to the positive locus we recover  functions (\ref{KF}). After these preparations,  we prove in Section \ref{subsectioncg} the 
   cutting and gluing formulas for the exponential volume forms. 
   
 In Section \ref{SecEVII} we calculate  ${\cal B}-$functions and 
  prove Theorem \ref{MTHO1} - the exponential volume is finite.

    In Section \ref{Sec4.1} we prove Proposition \ref{EXVV} and  use it to get the generalized McShane identity. 
       
   In Section \ref{Sec4} we prove Theorem \ref{T5.5I} - the unfolding formula for the exponential volumes.

  In Section  \ref{tropical} we study the tropicalization of exponential volumes, and prove that they are   finite. 

In Section \ref{sect2.4} we show that tropical volumes of spaces of measured laminations on  punctured surfaces  are equal to Kontsevich's volumes, and so carry the same  information as the intersection theory on ${\cal M}_{g,n}$. 

In Section \ref{SECT6} we elaborate  the simplest examples of the unfolding formula. 

In Section \ref{WHalgebra} we show that exponential volumes for the elementary decorated surfaces with cusps describe a commutative associative algebra,  which we call  {\it positive Whittaker--Hecke algebra} for 
${\rm PGL}_2(\R)$. 

\subsubsection{\bf Acknowledgement.} We are very grateful to the referee for the terrific job. Essentially all referee's comments and remarks are  incorporated in the paper. 
This work was done at IHES during the Summer of 2023, and the final draft  prepared  during the Summer of 2024. 
We are grateful to IHES for the  hospitality and support. The work of AG was supported by  the NSF grants DMS-1900743 and DMS-2153059, and by 
 the Gretchen and Barry Mazur Chair at IHES and the Simons Foundation fellowship in 2023.

\section{Moduli spaces $\mathcal{P}_{\bS}$ and  Teichm\"uller spaces} \la{Sec2}

Given a group $\G$, recall  the canonical equivalence of categories:
$$
\{\mbox{{\em $\G$-local systems} ${\cal L}$ on $S$}\} \lra \{\mbox{representations $\rho: \pi_1(S,x) \to \G$  modulo the $\G-$conjugation}\}.
$$
It assigns to a $\G-$local system ${\cal L}$ on $S$ the monodromy representation $\rho: \pi_1(S, x) \lra \G$.\vskip 2mm

Now let  $\G= {\rm PGL}_2$, and  ${\cal B} :=\P^1$. 
Let   $\U$ be a maximal unipotent subgroup of $\G$, and 
${\cal A}:=\G/\U$ the decorated flag variety, 
which can be described as   
$$
{\cal A} = \Bigr((V_2-\{0\}) \times ({\rm det} V_2^*-\{0\})\Bigr) /{\Bbb G}_m.
$$
Here $V_2$ is a  two-dimensional  vector space.  
So the moduli space ${\cal A}$ parametrises pairs $(v, \omega)$ where $v$ is a non-zero vector, and $\omega$ an area form in $V_2$, 
 considered modulo the action of the multiplicative group ${\Bbb G}_m$:
 $$
 (v, \omega) \lra (\lambda v, \lambda^{-2}\omega). 
 $$
 There is a canonical function 
 $$
 \Delta: {\cal A}   \times {\cal A}    \lra \A^1, \qquad  (v_1, \omega_1) \times (v_2, \omega_2) \lms \omega_1(v_1, v_2)\omega_2(v_1, v_2).
 $$
 Given a ${\rm PGL}_2-$bundle ${\cal L}$, let  ${\cal L}_{\P^1}$ be the associated local system of projective lines, and   ${\cal L}_{\cal A}$ the associated local system of two-dimensional  vector bundles with area forms, modulo the action of the group ${\Bbb G}_m$.

\begin{definition}  \la{DEFF3.1a} \cite{FG03a}. Let $\bS$ be a decorated surface. Let ${\cal L}$ be a ${\rm PGL}_2$-local system on $\bS$.

A {\em framing} ${\cal L}$ at a puncture  on $\bS$ is a flat section of the associated local system ${\cal L}_{\P^1}$  near the puncture.\vskip 1mm

The moduli space $\mathcal{X}_{{\rm PGL}_2, \bS} = \mathcal{X}_{\bS} $ parametrises pairs $(\cal L, \beta)$,  with   a framing $\beta$  at each puncture.

\ed

Here are better versions of the moduli space  $\mathcal{X}_{\bS}$  for decorated surfaces $\bS$ with marked boundary points, introduced and studied in  \cite{GS13} and  \cite{GS19}. 
Their key advantage is the existence of the gluing maps. 

\bd \la{DEFF3.1}  Let $\bS$ be a decorated surface. Let ${\cal L}$ be a ${\rm PGL}_2$-local system on $\bS$.

A {\em decoration} on ${\cal L}$ at a marked point $p$ on $\bS$ is a flat section of the associated local system ${\cal L}_{\cal A}$ near $p$.
We assume that the decorations at each pair of adjacent marked points are in generic position. \vskip 1mm

The moduli space ${\cal P}_{{\rm PGL}_2, \bS} = \mathcal{P}_{\bS} $   parametrises triples $(\cal L, \alpha, \beta)$ where $\cal L$ is a ${\rm PGL}_2$-local system on $\bS$ with a decoration  $\alpha$  at each marked point, and  a framing $\beta$  at each puncture.

The space $\mathrm{Loc}_{{\rm PGL}_2,\bS} = \mathrm{Loc}_{\bS}$ parametrises pairs $({\cal L}, {\alpha})$,  with a decoration $\alpha$ at each marked point.

\end{definition}

So  the moduli space $\mathcal{P}_{\bS}$  parametrises  local systems  of  two-dimensional vector spaces on $\bS$ with non-zero volume forms, modulo the action of ${\Bbb G}_m$, equipped with  a flat section of the local system of lines  near each puncture,  and a pair $(v_p,\omega_p)$   at the fiber at each marked point $p$, considered up to a common  simultaneous rescaling in all fibers. The vectors $v_{p}, v_{p'}$ at adjacent marked boundary points $p, p'$ are in the generic position:  their parallel transports to the middle of the segment $p p'$ are not collinear. When the
boundary component has a single marked point $p$, the vector $v_{p}$ is  in generic  position means that monodromy around the boundary component transforms  $v_p$  to a vector $v_p'$ which is not collinear to $v_p$.

 Just like the space $\mathcal{X}_{\bS}$, the space 
 ${\cal P}_\bS$  is equipped with the action of  the group ${\rm Mod}(\bS)$, and  carries a canonical ${\rm Mod}(\bS)-$equivariant  cluster Poisson  structure, given by an infinite collection of rational coordinate systems, related by cluster Poisson transformations, which we review in Section \ref{SECT2.1}.

\subsection{Cluster Poisson coordinates, local potentials, and the cluster volume form  on  $\mathcal{P}_{\bS}$} \la{SECT2.1}

\subsubsection{Cluster Poisson coordinates on the space $\mathcal{P}_{\bS}$.} Given any local system on $\bS$, its fiber over any simply-connected domain makes sense since the fibers at any two points of the domain are canonically isomorphic by parallel transport via a path in the domain connecting the points.  In particular, we can talk about the fibers of the local system of projective lines ${\cal L}_{\P^1}$ over an edge, or over a rectangle of a triangulation of $\bS$

Pick a point $(\cal L, \alpha, \beta)\in \mathcal{P}_{\bS}$. 
Take a   boundary interval $\F$ with marked points $p, p'$ at the ends. The decorations   at the points $p, p'$  can be viewed as decorations  $(v_p, \omega_p), (v_{p'}, \omega_{p'})$ at the fiber  of ${\cal L}$ at  $\F$. We define   the  {\it pinning point} 
$p_{\F}$  at the   fiber of  the local system   ${\cal L}_{\P^1}$ at $\F$ as the projectivisation of the one dimensional subspace spanned by the 
following vector:
\be \la{pli}
p_{\F}:= \left\langle \omega_p(v_p, v_{p'})v_p +  \omega_p(v_{p'}, v_{p})v_{p'} \right \rangle.
\ee
So a point $(\cal L, \alpha, \beta)\in \mathcal{P}_{\bS}$ provides the following  distinguished points in the fibers of 
    ${\cal L}_{\P^1}$:

\begin{enumerate}

\item A decoration point  at the fiber over the marked point $p$.

\item A pinning point  at the fiber over the boundary edge $\F$.

\item A framing point  at the fiber over a  point near a puncture $o$, invariant under the local monodromy. 

\end{enumerate}

Pick an ideal triangulation ${\cal T}$ of  $\bS$, that is a triangulation with the vertices at the marked points and punctures. 
It includes the collection of {\it external edges} given by the boundary segments of $\bS$. The rest of the edges are  the {\it internal edges}. Each edge $\rm A$ of the triangulation gives rise to a rational function
$$
\X_{\rm A}: \mathcal{P}_{\bS} \lra \A^1.
$$
It is defined as follows. 
If $\rm E$ is an internal edge, there is a unique quadrilateral $Q_{\rm E}$ containing $\rm E$ as a diagonal. Let us denote its vertices by $z_1, ..., z_4$, so that $z_1$ is a vertex of the edge $\rm E$, and the order of the points follows the orientation of the quadrilateral induced by the orientation of $\bS$. Then we set 
$$
\X_{\rm E}:= r(x_1,x_2,x_3, x_4):= \frac{\omega(\widetilde x_1, \widetilde x_2)\omega(\widetilde x_3, \widetilde x_4)}{\omega(\widetilde x_2, \widetilde x_3)\omega(\widetilde x_1, \widetilde  x_4)}.
$$
where $x_i$ is the distinguished point in the fiber of the local system ${\cal L}_{\P^1}$ at  the point $z_i$. To define the cross-ratio $r(x_1,x_2,x_3,x_4)$,  we parallel transform the   points $x_i$  to a center of the quadrilateral, and pick 
 arbitrary  non-zero vectors $\widetilde x_i$  in the fiber projecting to  the points $x_i$, and use any area form $\omega$ in the fiber. 
  Then $\X_{\rm E}$  evidently does not depend on the choices. 
  
  If the edge $\rm A$ is an external edge $\F$, there is a unique triangle $t_\F$ of the triangulation with the base $\F$. Then there is a quadruple of distinguished points 
  $(x_1, x_2, x_3, x_4)$,  where $(x_1, x_3)$ are the decoration points at the vertices of $\F$, counted counterclockwise,  $x_2$ is the pinning point at the edge $\F$, and $x_4$ is the framing/decoration point over the vertex of the triangle $t$ which does not belong to $\F$. We define $\X_\F$ as the cross-ratio 
  $$
  \X_\F:=r(x_1, x_2, x_3, x_4).
  $$
 
 \bd The boundary Poisson coordinates $\B_{\F}$ are assigned to the  boundary edges $\F$  and given by 
 $$
 \B_{\F}:= \X_\F. 
 $$
 \ed
  \bt \la{CPS}
The collection of rational functions $\{\X_{\rm E}, \X_\F\}$ assigned to the edges of an ideal triangulation of $\bS$ provides a cluster Poisson structure on the space 
$\mathcal{P}_{\bS}$. The Poisson bracket is given by 
\be \la{FEPS}
\{\X_{\rm A}, \X_{\rm B}\}= \varepsilon_{\rm A B}\X_{\rm A} \X_{\rm B}, \qquad \varepsilon_{{\rm A}\rm B} = -\varepsilon_{\rm B {\rm A}}\in \Z.
\ee
\et

The Poisson tensor   $\varepsilon_{{\rm A} \B}$ is defined by 
$
\varepsilon_{{\rm A} \B} := \sum_v \delta_{v}( {\rm A}, \B).
$ 
The sum is over common vertices $v$ of the  edges ${\rm A}, \B$. We set $\delta_v({\rm A} ,\B)=0$ unless the edges ${\rm A}, \B$ are adjacent. In the latter case,   $ \delta_v({\rm A}, \B) =1$ 
 if $\B$ is after ${\rm A}$ at the vertex $v$ following the (clockwise on the pictures) orientation of $\bS$, and $-1$ otherwise. Theorem \ref{CPS} is proved in Subsection 2.6 below. \\
 
 Besides  cluster Poisson coordinates, there are  canonical functions ${\rm K}_\F$ at the boundary edges $\F$ of $\bS$. 

 \bd \la{OMD}
 Let  $(\omega_i, v_i)$ and $(\omega_{i+1}, v_{i+1})$ be the decorations at the ends of a boundary edge $\F$.  Then the  function  ${\rm K}_\F$  assigned to the  boundary edges $\F$ of $\bS$ is given by
 $$
 {\rm K}_\F :=( \omega_i(v_i, v_{i+1})\omega_{i+1}(v_i, v_{i+1}))^{-1}.
 $$
  \ed
 The function ${\rm K}_\F$ does not depend on the choice of the orientation of the edge $\rm F$. 
 
 Thanks to Definitions \ref{DEFF3.1} and \ref{OMD},  we get   a canonical regular non-zero function
 $$
{\rm K}_\F: \mathcal{P}_{\bS} \lra \A^1.
$$
More generally, given any edge $\rm E$  connecting two cusps on  $\bS$, there is a rational function $\rm {K}_{\rm E}$ on the space 
${\cal P}_{{\rm PGL}_2, \bS}$. Namely, let $(\omega_1, v_1)$ and $(\omega_2, v_2)$ be the decorations at the vertices $v_1, v_2$ of the edge $\rE$. Then 
$$
{\rm {K}}_{\rm E}:= (\omega_1(v_1, v_2)  \omega_2(v_1, v_2))^{-1}.
 $$

\subsubsection{The potential function.} For each marked point $p$ on $\bS$, there is a function $W_p$ on  $\mathcal{P}_{\bS}$, called the {\it potential at $p$}. 
It is defined as follows. Pick a non-zero volume form $\omega$ at the fiber at the marked point $p$. So the decorations at  $p$ 
 are given by pairs $(v_p,\omega)$.  Let us parallel transport the form $\omega$ to nearby points on the boundary of $\bS$ to the left and to the right of $p$. Denote by 
 $(v_-,\omega)$ and $(v_+,\omega)$  the decorations at the fibers of ${\cal L}$ 
at  the marked points $p_-$ and $p_+$ to the left and to the right of  $p$.  Then we define  
$$
W_p({\cal L}, \alpha, \beta):= \frac{\omega(v_-, v_+)}{\omega(v_p, v_-) \omega(v_p, v_+)}.
$$
 
 The potential  does not change if we multiply any of the two vectors $v_-, v_+$ by a non-zero scalar. 
 
 The potential function  also does not change under the equivalence $(v_p, \omega) \sim (\lambda v_p, \lambda^{-2}\omega)$. 
 
 Therefore the potential at $p$  is a well defined  non-vanishing regular function
 $$
 W_p: {\cal P}_{ \bS}\lra \A^1.
 $$
 Indeed,    vectors $v_p, v_-, v_+$ are non-zero, and  pairs of vectors 
 $(v_p, v_-)$ and $(v_p, v_+)$ are not  collinear. \vskip 2mm
 
 \bd The potential function $W$ is the sum of the potentials at all marked points $p$ of $\bS$:
 \be
 W:= \sum_{p}W_p.
  \ee
 \ed

\subsubsection{The cluster volume form and its sign.}  The cluster volume form $\Omega_\bS$ is a volume form with logarithmic singularities  on the moduli space $\mathcal{P}_{\bS}$. 
It is
 given by the 
   product   of the logarithmic 1-forms over all edges of a given ideal triangulation ${\cal T}$ of $\bS$,  defined up to a sign:\footnote{The extra factor $2^{\pi_0(\bS)}$ spares us from extra factor $2$ in Lemma \ref{L2.16} appearing  when the edge $\rE$ there is separating. However it results in the extra factor $2$ in front of the Bessel function in (\ref{89}).  
Note that the volume form with/without the extra factor is multiplicative: $ \Omega_{\bS_1\cup \bS_2} =  \Omega_{\bS_1}\wedge  \Omega_{\bS_2}$. }
\be \la{CVF}
 \Omega_\bS:= \pm 2^{\pi_0(\bS)} d\log \X_{\rm E_1} \wedge \ldots    \wedge d\log \X_{{\rm E}_{k}}.
\ee 
The sign of $\Omega_\bS$   depends on the choice of an order of the edges. \vskip 2mm

We take care of the sign issue of (\ref{CVF}) as follows. Denote by ${\cal E}_{\cal T}$ the set of the edges of an ideal triangulation ${\cal T}$ of $\bS$. Let 
 ${\rm Or}_{\cal T}$ be its orientation torsor. It is a $\Z/2\Z-$torsor, that is a 2-element set equipped with the non-trivial $\Z/2\Z-$action. Its elements are orderings 
 of the set ${\cal E}_{\cal T}$ modulo   even permutations. 
We denote the element corresponding to an ordering ${\rm E}_{i_1},  ...,{\rm E}_{i_{k}}$ by 
$$
\varepsilon_{\cal T}(i_1, ..., i_{k}) \in {\rm Or}_{\cal T}.
$$
Given a flip of an ideal triangulation $\varphi_{\rm E}: {\cal T} \lra {\cal T}'$ at an edge $\rE$,  there is a canonical identification of the edges of 
${\cal T}$ and ${\cal T}'$. It gives rise to the following  isomorphism of the orientation torsors
\be \la{53}
\begin{split}
&\varphi_{\rm E}:  {\rm Or}_{\cal T} \lra {\rm Or}_{\cal T'}, \\
&\varepsilon_{\cal T}(i_1, ..., i_{k})\lms -\varepsilon_{\cal T'}(i_1, ..., i_{k}).\\
\end{split}
\ee
Here $-$ amounts to the action of the element $-1 \in \Z/2\Z$. 
To state the properties of this construction,  recall the modular groupoid ${\cal G}_\bS$ of a decorated surface $\bS$.

The modular groupoid  ${\cal G}_\bS$ is a groupoid whose objects are ideal triangulations ${\cal T}$ of $\bS$, the morphisms are generated by the flips, and the relations   between the flips are generated by 

(i)  {\it Pentagon relations}: the composition of the five flips of  diagonals of a pentagon is the identity map.

(ii) {\it Square relations}: the flips at disjoint diagonals commute. 

 The mapping class group of $\bS$ is the fundamental group of the modular groupoid. The proof is deduced from Strebel's theory of quadratic differentials \cite{Str}, \cite{Ha}, see  \cite[Section 3]{FG03a} for further discussion. 
 
 \bl There is a functor from the groupoid ${\cal G}_\bS$ to the groupoid of 
$\Z/2\Z-$torsors, which  assigns to an ideal triangulation ${\cal T}$ of $\bS$ the $\Z/2\Z-$torsor ${\rm Or}_{\cal T}$, and to a flip  at an edge $\rE$ the isomorphism (\ref{53}). 
 \el 

\begin{proof} We have to prove that 
the compositions of flips for the square and pentagon relations 
induce the identity maps on the orientation torsor. 
 The composition of the five flips related to a pentagon returns back the original triangulation of the pentagon, but the order of the two internal diagonals is switched, which agrees with $(-1)^5=-1$. 
The diagonals outside of the pentagon remain intact. So the composition   acts as the  identity map on the orientation torsor. The square relations are evident. 
\end{proof}

Since all orientation torsors ${\rm Or}_{\cal T}$ are canonically isomorphic, and the isomorphisms between them are compatible with the  relations, we arrive at the canonical orientation torsor ${\rm Or}_{\bS}$ assigned to a decorated surface $\bS$.  
It allows us to introduce the cluster volume form  with values in the tensor product of the volume forms with logarithmic singularities on ${\cal P}_\bS$ 
by the orientation torsor: 
\be \la{CVF+}
\Omega_\bS\in   {\rm Or}_{\bS} \otimes_{\Z/2\Z} \Omega_{\rm log}({\cal P}_\bS).
\ee
  Here   the group $\Z/2\Z$ acts on  volume forms so that the generator acts by changing the sign of the  form.  
  
\bd   The cluster volume form $\Omega_\bS$ in (\ref{CVF+}) is given by setting
\be \la{CVF***}
\begin{split}
  &\Omega_\bS:= 2^{\pi_0(\bS)} \cdot
 \varepsilon_{\cal T}(i_1, ..., i_{k})  \otimes_{\Z/2\Z}  d\log \X_{\rm E_1} \wedge \ldots \wedge d\log \X_{{\rm E}_{k}}.\\
\end{split}
\ee 
\ed
 
 The crucial fact that the cluster volume form does not depend on the choice of an ideal triangulation ${\cal T}$ 
 is easy to check directly. It can also be deduced from the general properties of the cluster volume form on cluster varieties, which we address in the next subsection, see Lemma \ref{L2.10}.

 \subsubsection{The canonical cluster volume form on a cluster  variety.} We address the reader to \cite{FG03b} for general properties of cluster varieties.  
 The above construction, suitably modified, provides the cluster volume form on any cluster 
 variety. 
 Namely, given a seed ${\bf c}$, we introduce the orientation $\Z/2\Z-$torsor ${\rm Or}_{\bf c}$.  Its elements  $ \varepsilon_{\bf c}(i_1, ..., i_{k})$ correspond to the ordered  cluster coordinates $(\X_{i_1}, ..., \X_{i_k})$  in the seed ${\bf c}$. Interchanging two  coordinates amounts to changing the sign. 
 A mutation $\mu_k: {\bf c} \lra {\bf c}'$ gives rise to a canonical  isomorphism of $\Z/2\Z-$torsors 
 \be \la{CICS}
\begin{split}
&\mu_{k}:  {\rm Or}_{\bf c} \lra {\rm Or}_{\bf c'}, \\
&\varepsilon_{\bf c}(i_1, ..., i_{k})\lms -\varepsilon_{\bf c'}(i_1, ..., i_{k}).\\
\end{split}
\ee
 These isomorphisms satisfy the {\it standard $(h+2)-$gon relations}, discussed in 
  \cite[Proposition 1.8]{FG03b}, which are used to introduce the cluster modular groupoid. If  the quiver underlying the seed ${\bf c}$ is simply-laced, then $h=2,3$. Moreover,   for the cluster variety ${\cal P}_\bS$ 
  we recover  the square and pentagon relations. So we arrive at the $\Z/2\Z-$torsor  ${\rm Or}_{\cal X}$. 

 The group  $\Z/2\Z$ acts on  volume forms so that the generator acts by changing the sign of the form. 
 Then the cluster  volume form on a cluster Poisson variety ${\cal X}$ 
 is defined by 
 \be \la{CVF*}
\begin{split}
&  \Omega_{\cal X}\in   {\rm Or}_{\cal X} \otimes_{\Z/2\Z}  \Omega_{\rm log}({\cal X}); \\
  &\Omega_{\cal X}:=  
 \varepsilon_{\bf c}(i_1, ..., i_{k})  \otimes_{\Z/2\Z}   d\log \X_{i_1} \wedge \ldots \wedge d\log \X_{i_k}. \\
\end{split}
\ee 
 It evidently does not depend on the choice of the order of the cluster coordinates. 
 
 Similarly, the cluster  volume form on a cluster  ${\cal A}-$variety ${\cal A}$  
 is given  by
  \be \la{CVF*A}
\begin{split}
&  \Omega_{\cal A}\in   {\rm Or}_{\cal A} \otimes_{\Z/2\Z}  \Omega_{\rm log}({\cal A}); \\
  &\Omega_{\cal A}:=  
 \varepsilon_{\bf c}(i_1, ..., i_{k})  \otimes_{\Z/2\Z}   d\log \A_{i_1} \wedge \ldots \wedge d\log \A_{i_k}. \\
\end{split}
\ee 

\begin{remark} The cluster volume form $\Omega_\bS$ on the space ${\cal P}_\bS$ introduced in  (\ref{CVF***}) is equal to $2^{\pi_0(\bS)}$ times the cluster volume form 
$\Omega_{\cal X}$ on the related cluster variety. Due to the extra factor $2^{\pi_0(\bS)}$ we get  simpler constants in unfolding formulas, including the constant $1$ in  the cutting formulas (\ref{ECab})  and (\ref{EC}). 
\end{remark}

 \bl \la{L2.10} The cluster volume forms $\Omega_{\cal X}$ and ${\Omega}_{\cal A}$ are invariant under cluster mutations.
 \el
 
 \bpr i) The cluster Poisson mutation $\mu_{k}$ acts by $\X_k \lms \X_k^{-1}$ and $\X_j \lms \X_j P_j(\X_k)$, $j \not = k$, where $P_j(\X_k)$ is a Laurent polynomial in $\X_k$, see \cite[formula (13)]{FG03b}. 
 Therefore it changes the sign of the  form $d\log \X_{i_1} \wedge \ldots \wedge d\log \X_{i_n}$. 
 This sign change is compensated by the sign change in  isomorphism (\ref{CICS}).  
 
 ii) The proof in the ${\cal A}-$case is similar. The only cluster coordinate which changes under the mutation $\mu_k$ is the one $\A_k$, and the exchange relation tells 
 that $\A_k\A_k'$ is a Laurent polynomial in $\A_j$ where $j \not = k$. Therefore the form $d\log \A_{i_1} \wedge \ldots \wedge d\log \A_{i_n}$ changes sign under mutations. 
 \epr
 
  \bp \la{CLVF} The cluster volume form $\Omega_{\cal X}$ induces a canonical positive measure $\Omega_{\cal X}$ on the space of positive real points 
  ${\cal X}(\R_{>0})$. The same is true for the cluster ${\cal A}-$varieties. 
  \ep
 
 \bpr  Given a seed ${\bf c}$, the cluster coordinates provide a canonical isomorphism
 $$
 i_{\bf c}: {\cal X}(\R_{>0}) \lra \R^k.  
 $$
 Therefore an order of the cluster coordinates provides an orientation of the space ${\cal X}(\R_{>0})$. 
 Altering the order we alter the orientation by the sign of the permutation. The same sign shows up in the  cluster volume form. 
 So the integral of the cluster volume form over a compact domain is  positive. \epr

\subsubsection{Calculating the potential and  $\K-$functions.}   Take a vertex $v$  of an ideal triangulation  ${\cal T}$ of $\bS$. The orientation of $\bS$ provides the counterclockwise 
orientation of the edges  of ${\cal T}$ sharing the vertex $v$.

  \bt \la{L11}   Denote by  ${\F}, {\rm E_1},  ...,  {\rm E_k}, {\F^+}$ the edges sharing a vertex $v$ of the triangulation ${\cal T}$, ordered counterclockwise. So $\F, \F^+$ are the two external edges sharing the vertex $v$, which may coincide. 
  
   Then  the function $ {\rm K}_\F$ and the potential $W_v$ are  given by:
\be \la{kf}
{ \rm K}_{\F^+}= \B_{\F}\X_{\rm E_1} \ldots \X_{\rm E_k} \B_{\F^+}.
\ee
 \be \la{fw}
 W_v = \B_{\F} + \B_\F \X_{{\rm E}_1} + \ldots + \B_\F \X_{{\rm E}_1}  \ldots  \X_{{\rm E}_{k}}.
 \ee
  \et

  \bpr This is done by an explicit calculation. We start  from the two simplest examples of the calculation. \vskip 2mm

\subsubsection{\it The   triangle.} Let $\bS=\tau$ be a triangle. Denote by $(v_i,\omega)$ a decoration at the vertex $i$, where $i=\Z/3\Z$, and the vertices are numbered counterclockwise. Here $v_i$ are non-zero vectors in a two dimensional vector space $V_2$. There is a 
unique vector $v_{i+1}'$ proportional to $v_{i+1}$ such that $\omega(v_i, v'_{i+1})=1$, see Figure \ref{sz3}:
$$
v_{i+1}'= \frac{v_{i+1}}{\omega(v_i, v_{i+1})}.
$$
   \begin{figure}[ht]
\centerline{\epsfbox{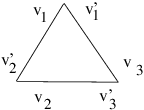}}
\caption{Defining the pinnings for a triangle.}
\label{sz3}
\end{figure} 
So the pinning vector  on the side $(i, i+1)$ of the triangle is given by 
 $$
 p_{i, i+1}= v_i+v'_{i+1}.
 $$ Therefore the $\B-$coordinate assigned to the side $(i, i+1)$ is 
 $$
 \B_{i, i+1}:= r^+(v_i, p_{i, i+1}, v_{i+1}, v_{i+2})  = 
 \frac{\omega(v_{i+1}, v_{i+2})}{\omega(v_{i}, v_{i+1})\omega(v_{i}, v_{i+2})}.
 $$ 
  This coincides with   the potential $W_{i}$ at the vertex $i$, see \cite[Example 3.16]{GS19}, confirming (\ref{fw}):
\be \la{FPo}
 W_{i} = \B_{i, i+1}.
\ee
Finally, we have the following, confirming formula (\ref{kf}):
$$
\B_{12}\B_{31}=  
 \frac{\omega(v_{2}, v_{3})}{\omega(v_{1}, v_{2})\omega(v_{1}, v_{3})} \cdot  \frac{\omega(v_{1}, v_{2})}{\omega(v_{3}, v_{1})\omega(v_{3}, v_{2})} = \omega(v_1, v_3)^{-2}.
 $$

\paragraph{\it The  rectangle.} Let $\bS$ be a rectangle  with decorations $(\omega, v_i)$ at the vertex $i$, where $i=\Z/4\Z$. 
Let ${\rm E}$ be the internal diagonal connecting  vertices $1$ and $3$.    
\begin{figure}[ht]
\centerline{\epsfbox{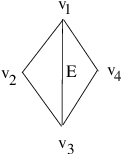}}
\caption{Relating the coordinates and local potentials for a rectangle.}
\label{sz4}
\end{figure} 
Denote the potential at the vertex $1$ of the oriented angle $314$ by
\be
\la{PEVE}
W_{1}((1,3), (1,4)):=W_{314}:=\frac{\omega(v_3, v_4)}{\omega(v_1, v_3) \omega(v_1, v_4)},
\ee
etc. Note that $W_{413}=-W_{314}$. Then 
$$
\B_{23} = W_2, \ \ \ \B_{41} = W_4, \ \ \ \B_{12}= W_{213}, \ \ \ \B_{34}= W_{431}.
$$
Calculation shows that  
$$
\X_{\rm E} = \frac{W_{314}}{\B_{12}}.
$$
Therefore we have, using the additivity of the potential, and confirming formula (\ref{fw}):
$$
W_1 =  W_{213} + W_{314} = \B_{12}+ \B_{12}\X_{\rm E}.
$$

Finally, we have the following,   confirming formula (\ref{kf}):
$$
\K_{41} = \B_{12}\X_{\rm E} \B_{41}=  \frac{\omega(v_{2}, v_{3})}{\omega(v_{1}, v_{2})\omega(v_{1}, v_{3})} \cdot \frac{\omega(v_{1}, v_{2})\omega(v_{3}, v_{4})}{\omega(v_{2}, v_{3})\omega(v_{1}, v_{4})}\cdot   \frac{\omega(v_{1}, v_{3})}{\omega(v_{4}, v_{1})\omega(v_{4}, v_{3})}= \omega(v_1,v_4)^{-2}.
$$
The case of an $n-$gon is very similar, and the general case reduces to this. \epr 

 We conclude that  the moduli space $\mathcal{P}_{ \bS}$ carries   functions of three flavors: 
 
 \begin{enumerate}
 
 \item The regular functions $ {\rm K}_{\F}$ assigned to the external edges $\F$ on $\bS$.
 
 \item The rational functions $ {\B}_{\F}$ assigned to the external edges $\F$  on $\bS$. 
 
 \item  The rational functions $ {\X}_{\rm E}$ assigned to the   internal edges $\rm E$ of a given ideal triangulation of  $\bS$.   
 
 \item The regular potential functions $W_p$ assigned to the marked boundary points $p$. 
 
 \end{enumerate}
 
 These functions are related by the  relations (\ref{kf}) and (\ref{fw}).\\

 \bp \la{PropP=X} There is a canonical isomorphism
\be \la{P=Xa}
 \mathcal{P}_{\bS} = \mathcal{X}_{\bS'}
\ee
 where the decorated surface $\bS'$ is obtained from $\bS$ by adding a marked point inside each boundary interval. 
 \ep
 
 \bpr Any $({\cal L}, \alpha, \beta)\in  \mathcal{P}_{\bS}$ gives rise to a pinning 
 point $p_\F$ in the fiber of the local system ${\cal L}_{\P^1}$ over each boundary interval $\F$, see (\ref{pli}). So we get a point  
 of $\mathcal{X}_{\bS'}$ given by the local system  ${\cal L}$ together with the pinning points $p_\F$ and the framing points. 
  
 Conversely, take the three framing lines $\L_p, \L_\F, \L_q\subset V_2$  at the ends $p, q$ of a  boundary interval  $\F$ and at $\F$.   Then  $\L_\F$ is the graph of an   isomorphism 
 $i_\F: \L_p\lra \L_q$. 
 There exist decorating pairs  $(v_p,\omega_p)$ and $(v_q,\omega_q)$ at the vertices $p,q$ such that 
 \be \la{Co}
 \omega_p(v_p, v_q) \omega_q(v_p, v_q)=1, \qquad v_q = i_\F(v_p). 
 \ee
 The pair $(v_p,\omega_p)$, and hence $(v_q,\omega_q)$, are defined uniquely up to rescaling. Indeed, if $v'_p:=  \lambda v_p$, then  $v_q'=\lambda v_q$. So there is a unique up to a sign  $\lambda\in \Bbb C^\times$ such that equation (\ref{Co}) holds. Note that $(-v_p,\omega_p)$ is obtained by 
 rescaling  of $(v_p,\omega_p)$ by $-1$.   We declare these $(v_p,\omega_p)$ and $(v_q,\omega_q)$ the decorations at the marked point $p$ and $q$. 
 So the framing lines $\{\L_p\}$  at the marked points $p$ and the pinning lines $\{\L_\F\}$  at the boundary edges $\F$ determine uniquely the decorations 
 $(v_p, \omega_p)$ at the marked points $p$.
 \epr

 \subsubsection{Proof of Theorem \ref{CPS}.} The mapping class group equivariant     
 cluster Poisson structure on $\mathcal{X}_{G,\bS'}$   was  defined in  \cite{FG03a}.  It is the one constructed in Theorem \ref{CPS}.

\subsubsection{The enhanced Teichm\"uller space ${\cal T}^{\rm en}_\bS$ is the set of real positive points of ${\cal P}_{ \bS}$.}  
 
The {\it enhanced Teichm\"uller space} ${\cal T}_\bS^e$ is  the $2^{m-r}:1$ ramified cover of the Teichm\"uller space 
 ${\cal T}_\bS$,  parametrising ideal hyperbolic structures on $\bS$ with a choice of a sign of the length of   geodesic  around each puncture.

  \bt \la{P=X} The enhanced Teichm\"uller space ${\cal T}^{\rm en}_\bS$ of a decorated surface $\bS$  is canonically isomorphic to the space of real positive points of the cluster Poisson variety 
${\cal P}_{\bS}$:
\be
{\cal T}^{\rm en}_\bS = {\cal P}_{  \bS}(\R_{>0}).
\ee
\et
 
 \bpr Proposition \ref{PropP=X} reduces the claim to the description of positive real locus of the space  $\mathcal{X}_{\bS'}$. 
 According to \cite{FG03a}, we have 
 $
 \mathcal{X}_{\bS'}(\R_{>0}) = {\cal T}^{\rm en}_{\bS'}. $
 On the other hand, we have 
 $
 \mathcal{X}_{\bS'}(\R_{>0}) =   \mathcal{P}_{\bS}(\R_{>0}). 
 $
  \epr

The  cluster Poisson coordinates  of an ideal triangulation  of $\bS$ 
 provide an isomorphism 
$
\mathcal{T}^{en}_\bS \stackrel{\sim}{\lra} \R_{>0}^N.
$ 
 
\bl The cluster volume form $\Omega_\bS$ in (\ref{CVF+}) gives rise to a positive measure on the enhanced Teichm\"uller space ${\cal T}^{\rm en}_{\bS}$.
\el

\bpr
Ordering the edges of an ideal triangulation ${\cal T}$ of $\bS$ we get the sign of the  form $\Omega_\bS$ 
 as well as an orientation of the  enhanced Teichm\"uller space  ${\cal T}^{\rm en}_{\bS}$.  Changing the order of the edges we change simulteneously the sign of the volume form and the orientation of the  enhanced Teichm\"uller space. 
 So the restriction of the  form $\Omega$ to the enhanced Teichm\"uller space provides canonical measure  on  it.  
 \epr
 
 Therefore for any continuous function $f$ on the Teichm\"uller space, and any compact domain ${\cal D}$, the integral $\int_{\cal D} f\Omega_\bS$ makes sense. 
 If $f$ is a positive function, the integral is positive.

 \subsection{The cutting and gluing maps for the moduli spaces ${\cal P}_\bS$}
 \la{subsectioncg}
 
 \subsubsection{The gluing map.} Take a decorated surface $\bS'$, possibly disconnected.  Let us glue   edges $\rm {E}'$ and ${\rm E}''$ on $\bS'$ into an edge $\rm E$ on $\bS$, getting a new decorated surface $\bS$. 
 Denote by ${\cal P}_{\bS'}^\delta$ the subspace of ${\cal P}_{\bS'}$ defined by the equation $\rm {K}_{{\rm E}'} = \rm {K}_{{\rm E}''}$. 
Then there is a gluing map \cite[Section 7.1]{GS19}:
\be \la{GM}
\gamma_{{\rm E}' {\rm E}''}: {\cal P}^\delta_{ \bS'}  \lra  {\cal P}_{   \bS}.
\ee
Namely, take a point $({\cal L}, \alpha, \beta)\in {\cal P}^\delta_{  \bS'}$. Then  we can glue uniquely 
the ${\rm PGL}_2-$local system 
${\cal L}$ on  $\bS'$  to a local system onto $\bS$ so that the decorations at the vertices $x'_i, x''_i$ on $\bS'$ matching the vertex $x_i$ on $\bS$ produce decorations at the vertices $x_i$ on $\bS$, for $i=1,2$. Indeed, take the decoration vectors $v'_1, v'_2$ on $\rE'$   and $v_1'', v_2''$ on $\rE''$. Since $\rm {K}_{{\rm E}'} = \rm {K}_{{\rm E}''}\not = 0$,  
we have  $\omega(v_1', v_2') = \pm \omega(v_1'', v_2'') \not = 0$. So  there is a unique up to multiplication by $-1$ isomorphism of  vector spaces 
$$
\varphi: {\cal L}'_{| \rm E'}\lra {\cal L}'_{|\rm E''}
$$
such  that $\varphi(v_i')= v_i''$.  Note  that decorations $(v,\omega)$ and $(-v, \omega)$ are the same by the definition. 
 \vskip 2mm

The gluing map acts on cluster Poisson coordinates as follows.  Take an  ideal triangulation of $\bS'$.  It provides   
 an ideal  triangulation  of $\bS$.  Let $\B', \B'', \X$ be the cluster Poisson coordinates assigned to the edges 
$\rm {E}', \rm {E}'', {\rm E}$.   By \cite[Lemma 7.1]{GS19}, we have  
\be \la{BB}
\gamma_{{\rm E}' {\rm E}''}^*(\X) = \B'\B''. 
\ee
The  map $\gamma_{{\rm E}' {\rm E}''}^*$ does not change  
  the other coordinates.

\subsubsection{The cutting map.} Take a decorated surface $\bS$ with an internal edge $\rm E$ connecting  {marked} points. 
Denote by $\bS'$ the decorated surface obtained by cutting $\bS$ along $\rm E$. It 
has two new boundary edges ${\rm E}'$, ${\rm E}''$. 
We define a  rational map
$
{\cal P}_{\bS} \lra {\cal P}_{\bS'} 
$  
  by restricting a local system  on $\bS$ to $\bS'$, and inducing the decorations and framing. It is well defined  if and only if  
 the decoration lines at the ends of the edge $\rE$, parallel transported   to a  point of ${\rm E}$ along the edge, 
 are different. 
  Its image   lies in the subspace ${\cal P}^\delta_{\bS'}$, defined by the equations $\K_{\rm E'}=\K_{{\rm E}''}\not = 0$. So we arrive at  the rational  {\it cutting  map}   
\be \la{cut2}
{\rm cut}_{\rm E}: {\cal P}_{\bS} \lra {\cal P}^\delta_{\bS'}.
\ee

\bl The cutting and gluing maps   ${\rm cut}_{\rm E}$ and $\gamma_{{\rm E}' {\rm E}''}$ are mutually inverse birational isomorphisms. 
\el
\bpr Follows from the very definitions. \epr

\subsubsection{Relative volume forms.} Consider a   map  assigning to a point of ${\cal P}_\bS$  the collection of its $\K-$coordinates: 
\be \la{bs}
\begin{gathered}
    \xymatrix{
b_\bS:   {\cal P}_{\bS}       \ar[r]^{}& {\cal K}_{\bS}} := {\Bbb G}_m^{\{\mbox{boundary intervals of $\bS$}\}}.
\end{gathered}
 \ee
 There is a similar  map for the surface $\bS'$, where the projection to the first factor is given by  ${\rm K}_{\rm E'}=\K_{\rm E''}$: 
 \be \la{bs75}
\begin{gathered}
    \xymatrix{
b^\delta_{\bS'}:   {\cal P}^\delta_{\bS'}       \ar[r]^{}& {\cal K}^\delta_{\bS'}} = {\Bbb G}_m\times { {\Bbb G}_m}^{\{\mbox{boundary intervals of $\bS$}\}}.
\end{gathered}
 \ee

  The  volume  form $\Omega_\bS$ together with the product of the  canonical invariant volume forms  $d\log \K$ on ${\Bbb G}_m$ 
  induce a volume form on the fibers of projection (\ref{bs}), denoted by 
  $\Omega_\bS(\K)=\Omega_\bS(\K_1,...,\K_b)$, and defined up to a sign from the equation:\footnote{The $\K$ in   $\Omega_\bS(\K)$  indicates that the    $\K-$coordinates on the decorated surface $\bS$ are frozen.}  
\be \la{RVF}
d\log \K_1 \wedge \ldots \wedge d\log \K_b \wedge \Omega_\bS(\K) = \Omega_\bS.
\ee 
  Here $b$ is the number of  boundary edges on $\bS$. 
    The sign is determined by an order of the set of boundary edges, modulo 
   even permutations. 
   Similarly, we get a  form  $\Omega^\delta_{\bS'}(\K)$ on the fibers of projection (\ref{bs75}).  
More generally, we define $\Omega_\bS(\K,\L)$ such that
\be \la{RVF2}
d\log \K_1 \wedge \ldots \wedge d\log \K_b \wedge d l_1\wedge \ldots\wedge d l_a \wedge \Omega_\bS(\K,\L) = \Omega_\bS.
\ee 
where $\L=(l_1,...,l_a)$ are the boundary geodesic circle lengths.

 \vskip 2mm

Before we proceed with the proof, let us make some preparations. 

The gluing map is described nicely in the cluster Poisson coordinates $(\B, X)$ in (\ref{BB}). The relative volume form 
$\Omega_\bS(\K)$ is defined in (\ref{RVF}) by dividing the cluster volume form $\Omega_\bS$, expressed nicely in the cluster Poisson coordinates $(\B, X)$, by the volume form 
on the frozen torus, defined via  $\K-$coordinates.

The surface $\bS'$ has 
two new boundary edges ${\rm E}'$ and ${\rm E}''$. 
We have the gluing condition: 
\be \la{KEEK}
\K_{\rm E'}=\K_{\rm E''} =\K_{\rm E}.
\ee

  Pick an ideal triangulation of $\bS$ containing the edge ${\rm E}$. It induces a triangulation of $\bS'$.  
There is a unique vertex $v_1'$ of ${\rm E}'$ such that ${\rm E}'$ is the last among the edges sharing $v_1'$  in the counterclockwise order. There is a  vertex $v_2'$ of ${\rm E}''$ with the same property.  
Denote by ${\rm F}', {\rm E}'_{1}, ... {\rm E}'_{a}, {\rm E}'$ (respectively ${\rm F}'',   {\rm E}''_{1}, ...,{\rm E}''_b,  {\rm E}''$) the edges sharing the vertex $v_1'$ (respectively $v_2'$) on  $\bS'$, counted counterclockwise. Then we have the following monomial relations:
\be \la{72}
\begin{split}
&\K_{\rm E'} \stackrel{ (\ref{kf})}{=} \B_{\rm F'} \X_{ {\rm E}'_{1}} \ldots \X_{ {\rm E}'_{a}}\B_{\rm E'},\\
&\K_{\rm E''} \stackrel{ (\ref{kf})}{=} \B_{\rm F''}\X_{ {\rm E}''_{1}} \ldots \X_{ {\rm E}''_{b}}  \B_{\rm E''} ,\\
&\X_{\rm E} \stackrel{(\ref{BB})}{=}\B_{\rm E'} \B_{\rm E''}. \\
\end{split}
\ee

Now we return to the proof. Let us determine how the volume forms behave under cutting of an edge. 
We set, using the logarithmic coordinates $dk_*:= d\log \K_*$:\footnote{Here $\delta(f)  \Omega$, where $\Omega$ is a volume form  on an $n-$dimensional manifold, is the volume form on the hypersurface $f=0$ obtained as follows: we find an $(n-1)-$form $\omega$ such that $df \wedge \omega = \Omega$, and  restrict $\omega$ to the hypersurface $f=0$. The result is independent on the choice of $\omega$, see \cite[Chapter III, section 1.2]{GF1}.}
$$
\Omega^\delta_{\bS'}:=\delta(k_{\rE'}- k_{\rE''}) \Omega_{\bS'}.
$$

 \bl \la{L2.16} Cutting a decorated surface $\bS$ along an edge $\rE$, we get
   \be \la{ECab}
\begin{split}
& \Omega_\bS =  {\rm cut}^*_{\rm E} \ \Omega^\delta_{\bS'}
\end{split}
\ee
   \el

   \begin{figure}[ht]
	\centering
	\includegraphics[scale=0.3]{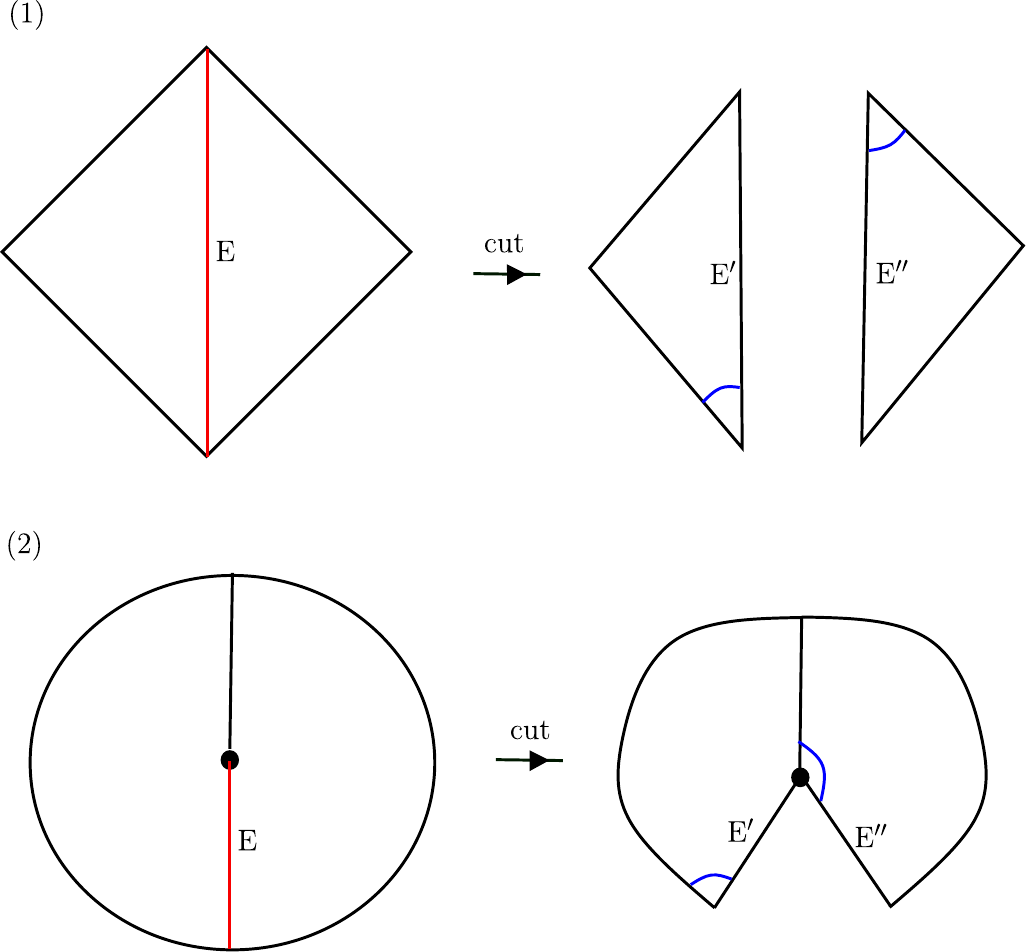}
	\small
	\caption{(1) The edge $\rE$ is separating. (2) The edge $\rE$ is non-separating.}
	\label{figure:cut}
\end{figure}

   \bpr   
When $\rE$ is separating as in Figure \ref{figure:cut}(1), let  $k', k'', b', b''$ be the logarithmic $\K$ and $\B$ coordinates for the edges $\rE'$ and $\rE''$ on $\bS'$, 
   and  $x_\rE$ the logarithmic $x-$coordinate at the edge $\rE$.  Then, see (\ref{72}):
   $$
   x_\rE = b'+b'', \ \ k'= b'+\ldots ,  \ \ \ 
   k''= b'' + \ldots .
   $$
   Here $\ldots$ stands for a sum of certain coordinates $b_i$ and $x_j$. The gluing condition is 
   $$
   0= k'-k'' = b'-b'' + \ldots . 
   $$
   Since $x_\rE= b' + b''$, we have 
   \be 
   \begin{split}
  & \Omega_\bS = d x_\rE \wedge i_{\partial/\partial x_\rE} \Omega_\bS=2d b' \wedge i_{\partial/\partial x_\rE} \Omega_\bS, \\
 & \frac{1}{2} \Omega_{\bS'} = db' db'' \wedge   i_{\partial /\partial x_\rE} \Omega_\bS,\\
 & \Omega_{\bS'} =2 d(k'-k'')\wedge db' \wedge i_{\partial /\partial x_\rE} \Omega_\bS=d(k'-k'')\wedge \Omega_\bS.\\  \end{split}
   \ee
  The lemma follows from this. 

When $\rE$ is non-separating as in Figure \ref{figure:cut}(2),  we have
   $$
   x_\rE = b'+b'', \ \ k'= b'+\ldots ,  \ \ \  
   k''= b'+b'' + \ldots .
   $$
Then the gluing condition is
\[0=k'-k''=-b''+...\]
Thus we get
   \be 
   \begin{split}
  & \Omega_\bS = d x_\rE \wedge i_{\partial/\partial x_\rE} \Omega_\bS=d b' \wedge i_{\partial/\partial x_\rE} \Omega_\bS, \\
 &  \Omega_{\bS'} = db' db'' \wedge   i_{\partial /\partial x_\rE} \Omega_\bS,\\
 & \Omega_{\bS'} =d(k'-k'')\wedge db' \wedge i_{\partial /\partial x_\rE} \Omega_\bS=d(k'-k'')\wedge \Omega_\bS.\\  \end{split}
   \ee
 \epr

We introduce the relative form $\Omega^\delta_{\bS'}(\K_\rE) $, 
so that\footnote{Strictly speaking, the relative form is the form $\Omega^\delta_{\bS'}(\K_\rE)$,  restricted to the fiber $\K_\rE=C$.} 
$$
\Omega^\delta_{\bS'}=\Omega^\delta_{\bS'}(\K_\rE) \wedge d\log \K_\rE.
$$ 
Formula (\ref{ECab}) implies the key relation:
    \be \la{ECa}
\begin{split}
& \Omega_\bS =  {\rm cut}^*_{\rm E} \ \Omega^\delta_{\bS'}(\K_\rE) \wedge d\log \K_\rE.
\end{split}
\ee

  \bp \la{PROP2.9} 
Let $\K=(\K_a)_a$ for all the boundary intervals. One has  
\be \la{EC}
\begin{split}
& \Omega_\bS(\K) =  {\rm cut}^*_{\rm E} \ \Omega^\delta_{\bS'}(\K_\rE,\K) \wedge d\log \K_\rE.
\end{split}
\ee
\ep

\bpr To calculate the  form $\Omega_\bS(\K)$, we take the form  $\Omega_\bS$, then divide it by the product of $d\log \K_a$ for all boundary intervals on $\bS$, and restrict to the fiber where 
$\K_a$'s are constants. To calculate $\Omega^\delta_{\bS'}(\K)$ we have to do exactly the same. 
\epr

   \begin{figure}[ht]
	\centering
	\includegraphics[scale=0.4]{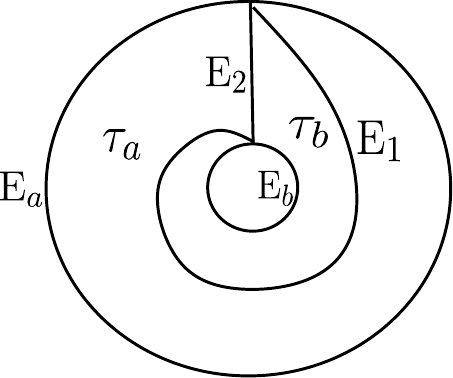}
	\small
	\caption{An example $A_{1,1}$.}
	\label{figure:a11}
\end{figure}
 \paragraph{An example: $\bS= \A_{1,1}$.} The annulus $\A_{1,1}$ is glued from two triangles $\tau_a$ and $\tau_b$ with the sides ${\rm E}_1^{(a)}, {\rm E}_2^{(a)},  {\rm E}_{a}$ and 
 ${\rm E}_1^{(b)}, {\rm E}_2^{(b)},  {\rm E}_{b}$ respectively as in Figure \ref{figure:a11}. The  logarithmic coordinates on the triangle $\tau_a$ are ${b}_1^{(a)}, {b}_2^{(a)},  {b}_{a}$ and  
 ${k}_1^{(a)}, {k}_2^{(a)},  {k}_{a}$, and similarly for the triangle $\tau_b$. The relations are 
\be 
\begin{split}
& {b}_1^{(a)} + {b}_2^{(a)} = {k}_1^{(a)}, \ \  {b}_2^{(a)} +{b}_a = {k}_2^{(a)}, \ \ \  {b}_a +{b}_1^{(a)} = {k}_a.\\
 &{b}_1^{(b)} + {b}_2^{(b)} = {k}_1^{(b)}, \ \ \  {b}_2^{(b)}+ {b}_b = {k}_2^{(b)}, \ \ \  \ {b}_b +{b}_1^{(b)} = {k}_b.\\
\end{split}
\ee
   From this we calculate the relative forms on the triangles, which are just numbers:
  $$
  \Omega_{\tau_a}(\K_a, {\K}_1^{(a)}, {\K}_2^{(a)})= 1, \ \ \   \Omega_{\tau_b}(\K_b, {\K}_1^{(b)}, {\K}_2^{(b)}) = 1.
     $$

Let us also set the coordinates  satisfying the gluing conditions, for the fiber with frozen $k_a, k_b$:
 $$
k_1 = {k}_1^{(a)}= {k}_1^{(b)}, \ \ \ \ k_2 = {k}_2^{(a)}= {k}_2^{(b)}.
$$

\bl 
\la{lA11}
 The form $\Omega_{\A_{1,1}}(\K_a, \K_b)$ is defined and calculated as follows:
 $$
 \Omega_{\A_{1,1}}(\K_a, \K_b) = \frac{2 d x_1\wedge dx_2 \wedge db_a \wedge db_b}{dk_a \wedge dk_b} = \frac{1}{2}   dx_1\wedge dx_2=    dk_1\wedge dk_2.
  $$
  \el
 
  One gets the last equality of the  Lemma by Proposition \ref{PROP2.9}. Let us prove in another way as follows.
  \bpr
Indeed, we have
  $$
 2 b_a + x_1+ x_2=k_a, \ \ \ \   2b_b + x_1 + x_2=k_b.   
  $$
  Therefore 
 $$
 2dx_1\wedge dx_2 \wedge db_a \wedge db_b =  \frac{1}{2}   dx_1\wedge dx_2 \wedge dk_a \wedge dk_b.
  $$

      The ${\cal X}-$coordinates are 
 $
 x_1=  {b}_1^{(a)} + {b}_1^{(b)}, \ \ \  x_2=  {b}_2^{(a)}  + {b}_2^{(b)}.  
 $
 Therefore we have 
 $$
 x_1+ x_2 = {k}_1^{(a)} + {k}_1^{(b)};\ \ \ x_1-x_2   = k_a+ k_b- {k}_2^{(a)} - {k}_2^{(b)}.  
 $$
 This implies that on the fiber with frozen $k_a, k_b$ we have 
 $$
 dx_1 \wedge dx_2 = 2 dk_1\wedge dk_2.
 $$
 \epr

By applying directly Proposition \ref{PROP2.9} and observing that $\Omega_{\Delta}(\K)=1$ for the triangle $\Delta$, we get
\bc
\la{cortri}
Let $\K=(\K_a)_a$ be  the $\K$-parameters for all boundary intervals of the $n-$gon $\P_n$, 
 $n\geq 3$. Given an ideal triangulation of $\P_n$, let $(\K_1,\ldots,\K_{n-3})$ be  the $\K$-parameters for the internal edges. Then  
\be \la{Pn}
\Omega_{\P_n}(\K)= d \log \K_1 \wedge \ldots \wedge d \log \K_{n-3}.
\ee
\ec 
\bpr
The $n-3$  internal edges cut the polygon $\P_n$ into $n-2$ triangles. Thus we get (\ref{Pn}).
\epr
By equation (\ref{RVF2}) we have the following Corollary.
\bc
\la{corcrown}
Let $\K=(\K_a)_a$ be the $\K-$parameters for all  boundary intervals of the once punctured disc $\D_n^*$ with $n$ marked boundary points for $n\geq 1$. Let $\L$ be the exponential of the unique boundary length. Given an ideal triangulation of $\D_n^*$, with the $\K$-parameters  $(\K_1,\ldots,\K_{n})$ at the internal edges, we have
\[\Omega_{\D_n^*}(\K,\L)= 2 d \log \B_1 \wedge \ldots \wedge d \log \B_{n}.\]
\ec

   \subsubsection{Additivity of  local potentials under the cutting and gluing.} 
 Let $v'$ and $v''$ be the vertices of the edges of ${\rm E}'$ and ${\rm E}''$ 
which glue into a vertex $v$ of ${\rm E}$. 
Denote by $W_{\bS', v'}$ the local potential ${\cal P}_{\bS'} $ at $v'$ etc. 

\bl \la{APP} One has 
$$
W_{\bS, v} = {\rm cut}^*_{{\rm E}}(W_{\bS', v'} + W_{\bS', v''}).
 $$
\el

  \begin{figure}[ht]
\centerline{\epsfbox{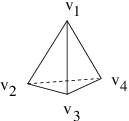}}
\caption{Additivity of the potential.}
\label{sz2}
\end{figure} 

\bpr This boils down to the followoing calculation, illustrated on Figure \ref{sz2}:
\be
\begin{split}
&\frac{\omega(v_2, v_3)}{\omega(v_1, v_2)\omega(v_1, v_3)} + \frac{\omega(v_3, v_4)}{\omega(v_1, v_3)\omega(v_1, v_4)} = \\
&\frac{ \omega(v_2,v_3)\omega(v_1,v_4)+ \omega(v_1, v_2)\omega(v_3, v_4)}{\omega(v_1, v_2)\omega(v_1, v_3)\omega(v_1, v_4)} 
\stackrel{}{=}\\
&\frac{ \omega(v_1,v_3)\omega(v_2,v_4)}{\omega(v_1, v_2)\omega(v_1, v_3)\omega(v_1, v_4)} =  \frac{ \omega(v_2,v_4)}{\omega(v_1, v_2)\omega(v_1, v_4)}.\\
\end{split}
\ee
Here the second equality is the provided by the Pl\"ucker relation.
\epr

Denote by $W^\delta_{\bS'}$   the potential   on ${\cal P}_{\bS'}^\delta$.  Lemma \ref{APP} implies that 
\be \la{-1eq}
{W_\bS} = {\rm cut}^*_{\rm E}(W^\delta_{\bS'}).
\ee
  \bp \la{PROP2.9+} 
One has 
\be 
\begin{split}
& e^{-W_\bS}\Omega_\bS(\K) =  {\rm cut}^*_{\rm E} \Bigl(e^{-W^\delta_{\bS'}}\Omega^\delta_{\bS'}(\K_\rE,\K) \Bigr)\wedge d\log \K_{\rm E} .
\end{split}
\ee
\ep

\bpr  
Follows from  (\ref{EC}) and (\ref{-1eq}). 
  \epr

   \subsubsection{Cutting along a simple closed geodesic.} 
\label{ssscs}

For the punctured disc   with $n$ marked boundary points $\D_n^*$,
the dimension of moduli space ${\cal P}_{\D_n^*}$ is $2n$. There is canonical   ideal triangulation  shown on Figure \ref{sz1}. Let $\K_1,...,\K_n$ and $\B_1,...,\B_n$ be the $\K$- and $\B$-variables at the boundary intervals, and $\X_1,...,\X_n$ the variables at the internal edges of the triangulation. 
Then   $\{\X_1,...,\X_n,\B_1,...,\B_n\}$ are the cluster Poisson coordinates on the moduli space ${\cal P}_{\D_n^*}$  \cite[Section 13]{GS19}. Let $\L_1=e^{l_1}$ be the product of the cluster Poisson coordinates at the edges sharing the  puncture. Let {\em the $\rE$-parameters} be
\[\rE:= 
 \K_1  \cdots \K_{n}.\]
By \cite[Section 20.3.4]{GS19}, the functions $\rE$ and $\L_1$ are algebraically independent and generate the center of the Poisson algebra ${\cal O}({\cal P}_{\D_n^*})$. We call them the Casimirs. 
Thus the cluster Poisson structure induces a symplectic $2$-form $\omega_{\rm cl}$ on the fiber ${\cal P}_{\D_n^*}(\rE,\L_1)$ where $\rE$ and $\L_1$ are fixed.

For a genus $g$ ideal hyperbolic surface $\bS$  with ideal crowns $\rC_1,\ldots, \rC_r$ with $(n_i)_{i=1}^r$ cusps and $m-r$ boundary geodesic circles, the dimension of moduli space ${\cal P}_{\bS}$ is $6g-6+3m+2\sum_{i=1}^r n_i$. We denote the $\rE$-parameters of $\rC_1,\ldots, \rC_r$ by $\rE=(\rE_1,\ldots,\rE_{r})$. Let $\L=(l_{r+1},\ldots, l_m)$ be the geodesic lengths of the boundary geodesic circles, and 
$(\L_{r+1}, ..., \L_{m})$ their exponents.  By \cite[Section 20.3.4]{GS19}, the functions $\rE_1,...,\rE_r$ and $\L_{r+1},...,\L_m$ are algebraically independent Casimirs, generating the center of the Poisson algebra ${\cal O}({\cal P}_{\bS})$. Thus the cluster Poisson structure on ${\cal P}_{\bS}$ induces a symplectic $2$-form $\omega_{\rm cl}$ on ${\cal P}_{\bS}(\rE,\L)$.
 \begin{figure}[ht]
	\centering
	\includegraphics[scale=0.33]{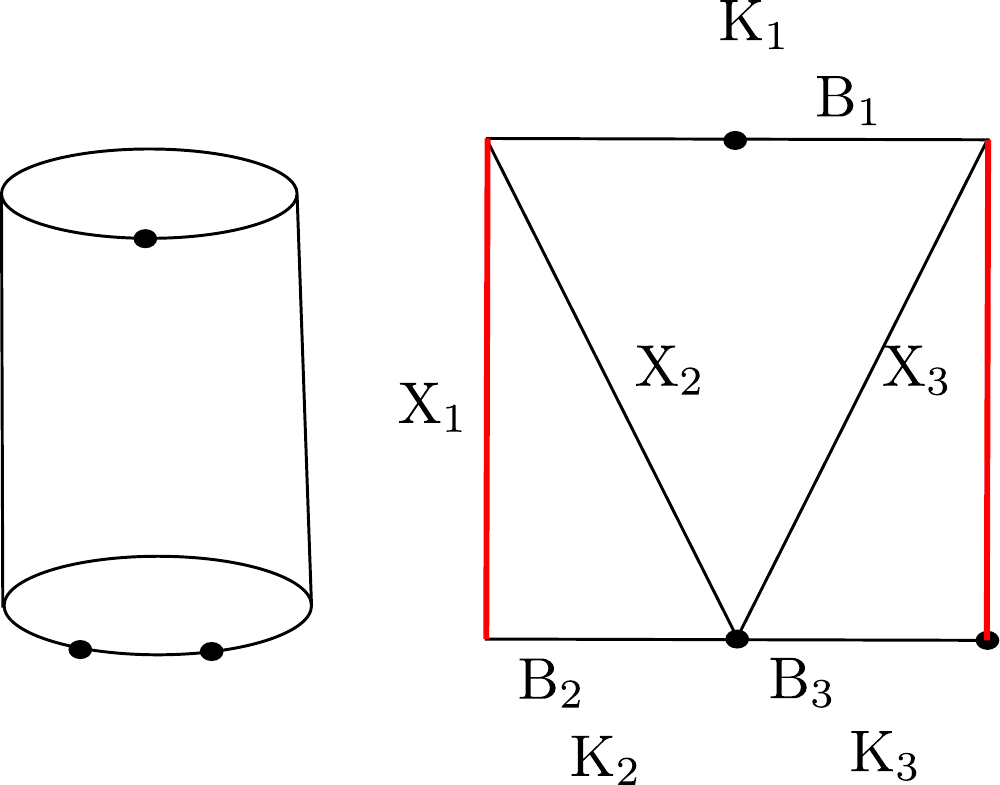}
	\small
	\caption{The cluster Poisson coordinates,  the $\K-$coordinates, and the Casimirs for the annulus $\A_{1,2}$.}
	\label{figure:A12}
\end{figure}

Here is  an example.  The cluster Poisson coordinates on moduli space $\mathcal{P}_{\A_{1,2}}$ for the annulus $\A_{1,2}$ with $2+1$ boundary points, for the triangulation on Figure \ref{figure:A12}, are $\X_1$, $\X_2$, $\X_3$, $\B_1$, $\B_2$, $\B_3$. The $\K-$coordinates are monomials in the cluster Poisson coordinates:
\[\K_1= \B_1^2 \X_1 \X_2 \X_3,\;\;\; \K_2= \B_3 \X_3 \X_2 \B_2,\;\;\; \K_3= \B_2 \X_1 \B_3.\]
The $\K_1$ and $ \K_2 \K_3$ are the Casimirs. 

 For a simple closed geodesic $\gamma$, let  $\bS'$ be the decorated surface obtained by cutting $\bS$ along $\gamma$, which has two new simple closed geodesics $\gamma'$, $\gamma''$ corresponding to the original $\gamma$. We define a rational map
${\cal P}_{\bS} \lra {\cal P}_{\bS'}$
  by restricting a local system  on $\bS$ to $\bS'$, and inducing the decorations and framing. 
Its image lies in the subspace ${\cal P}^\delta_{\bS'}$, defined by the equation $l_\gamma=l_{\gamma'}\not = 0$. We arrive at  the rational  {\it cutting  map}   
\be \la{cut3}
{\rm cut}_\gamma: {\cal P}_{\bS} \lra {\cal P}^\delta_{\bS'}.
\ee 

\begin{proposition}\la{propcc} There is a positive constant $c_{\bS,\gamma}$ depending only on $\bS$ and $\gamma$ such that 
\[\Omega_\bS(\rE,\L) =  c_{\bS,\gamma} \cdot{\rm cut}^*_{\gamma} \Bigl(\Omega^\delta_{\bS'}(\rE,\L,\L_\gamma) \Bigr)\wedge d \l_\gamma \wedge d \theta_\gamma,\]
restricts to 
\[\Omega_\bS(\K,\L) =  c_{\bS,\gamma} \cdot{\rm cut}^*_{\gamma} \Bigl(\Omega^\delta_{\bS'}(\K,\L,\L_\gamma) \Bigr)\wedge d \l_\gamma \wedge d \theta_\gamma.\]
\end{proposition}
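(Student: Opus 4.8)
## Plan for the proof of Proposition \ref{propcc}

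The plan is to reduce the cut-along-a-simple-closed-geodesic statement to the already-established cut-along-an-edge formula (Lemma \ref{L2.16}, Proposition \ref{PROP2.9}) by passing through an ideal triangulation adapted to $\gamma$, and then tracking how Fenchel--Nielsen coordinates $(l_\gamma,\theta_\gamma)$ interact with the cluster volume form. First I would choose an ideal triangulation $\cal T$ of $\bS$ that contains a sub-triangulation of an annular neighborhood $\A_\gamma$ of $\gamma$, so that cutting along $\gamma$ is realized combinatorially by cutting along a finite collection of edges of $\cal T$ crossing $\gamma$ (and, if necessary, first cutting the annulus $\A_{1,1}$-style as in the example worked out after Lemma \ref{L2.16}). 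Iterating Lemma \ref{L2.16} (equivalently Equation (\ref{ECab})) along these edges expresses $\Omega_\bS$ as $\mathrm{cut}^*_\gamma$ of $\Omega^\delta_{\bS'}$ up to a power of $2$ coming from separating versus non-separating edges — this is exactly the source of the constant $c_{\bS,\gamma}$ and of the factor $2^{-\mu_\ell}$ appearing in $\rC_{\bS,\ell}$ in Theorem \ref{NRF2}. The point $\mu_\ell$ counts one-holed-torus pieces, which is precisely where a non-separating configuration forces the extra $2$.

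Next I would identify the relative volume form on the cut surface with the Weil--Petersson/cluster form together with the Fenchel--Nielsen $1$-forms. Concretely: on $\bS'$ the boundary loops $\gamma',\gamma''$ have a common length $l_\gamma$, and the gluing map $\gamma\mapsto\bS$ has a one-parameter family of gluings parametrized by the twist $\theta_\gamma$. By Wolpert's formula — or rather its cluster avatar, which in these coordinates just says that the symplectic/volume form acquires a wedge factor $dl_\gamma\wedge d\theta_\gamma$ transverse to the fiber over fixed $(l_\gamma,\theta_\gamma)$ — the cluster volume form $\Omega_\bS(\rE,\L)$ restricted to the slice with $l_\gamma$ and $\theta_\gamma$ fixed equals (a constant times) the pullback of $\Omega^\delta_{\bS'}(\rE,\L,l_\gamma)$. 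Here the crucial input is that the shear/cluster coordinates crossing $\gamma$ can be repackaged, via a linear change of variables with determinant a power of $2$, into $(l_\gamma,\theta_\gamma)$ plus the shear coordinates on $\bS'$; this is a finite, explicit computation of the same flavor as the $\A_{1,1}$ example already in the paper.

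Then I would pass from the $\rE$-level statement to the $\K$-level statement: since $\Omega_\bS(\K,\L)$ is obtained from $\Omega_\bS(\rE,\L)$ by further dividing by the $d\log\K$ of the crown boundary intervals and restricting to the fiber, and the cutting map $\mathrm{cut}_\gamma$ does not touch the crown boundary intervals (it only affects the loops $\gamma',\gamma''$), the restriction is immediate — exactly as in the proof of Proposition \ref{PROP2.9}. Finally, I would verify that $c_{\bS,\gamma}$ depends only on the topological type of $(\bS,\gamma)$: it is assembled from the number of separating edges used in the iterated cut and the normalization $2^{\pi_0}$ in the cluster form (\ref{CVF}), all of which are combinatorial data of the pair $(\bS,\gamma)$ and independent of the point of moduli space. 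This yields $c_{\bS,\gamma}=2^{-\mu_\gamma}\cdot(\text{explicit power of }2)$, positive and rational, consistent with the $\rC_{\bS,\ell}=2^{-\mu_\ell}c_{\bS,\ell}$ of Theorem \ref{NRF2}.

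The main obstacle I expect is the bookkeeping in the second step: correctly matching the cluster/shear coordinates transverse to $\gamma$ with the Fenchel--Nielsen pair $(l_\gamma,\theta_\gamma)$, including getting the power of $2$ right in all cases (separating, non-separating, crown-adjacent), and checking that the resulting constant genuinely factors out and is point-independent. The conceptual content — that cutting along a geodesic is cutting along finitely many triangulation edges plus a twist integration — is routine given Lemma \ref{L2.16}, but the precise constant requires care, and it is the part that feeds directly into the constants $\rC_{\bS,\ell}$ in the recursion formulas.
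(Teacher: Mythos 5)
Your overall strategy diverges from the paper's, and it contains a genuine gap at its core. Your first step — "cutting along $\gamma$ is realized combinatorially by cutting along a finite collection of edges of $\cal T$ crossing $\gamma$" — does not work: Lemma \ref{L2.16} and Proposition \ref{PROP2.9} apply only to cuts along ideal edges joining marked points, which produce new boundary \emph{intervals} on the cut surface, whereas cutting along a simple closed geodesic produces new boundary \emph{circles} without marked points. A simple closed geodesic is never an edge of an ideal triangulation, and cutting along the triangulation edges that cross $\gamma$ yields a different surface than $\bS'=\bS-\gamma$, so equation (\ref{ECab}) cannot be iterated to produce $\mathrm{cut}^*_\gamma\Omega^\delta_{\bS'}$. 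Your second step is also unjustified: the claim that the cluster log-coordinates transverse to $\gamma$ can be repackaged into $(l_\gamma,\theta_\gamma)$ plus cluster coordinates of $\bS'$ by a \emph{linear} change of variables with determinant a power of $2$ is false in general — $l_\gamma$ is the logarithm of an eigenvalue of the monodromy around $\gamma$, a non-monomial (polynomial) expression in the cluster variables unless $\gamma$ is boundary-parallel (as in (\ref{LX})), and the twist $\theta_\gamma$ is likewise not a linear function of log-coordinates. Showing that the Jacobian relating the two coordinate packages is a \emph{constant} is precisely the content of the proposition, and your plan assumes it rather than proves it. Note also that your conclusion is stronger than what the paper establishes: the explicit value $c_{\bS,\gamma}=2^{\pi_0(\bS-\gamma)-\pi_0(\bS)}$ is only \emph{conjectured} in the Remark following the proposition, and proved only for $n$-holed spheres in Appendix \ref{APPB}, which is a strong indication that the direct bookkeeping you propose is not routine.

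The paper's actual proof avoids coordinates along $\gamma$ entirely. It invokes the extension of the Weil--Petersson form to crowned surfaces (Chekhov), $\omega_{\rm WP}=\sum_i dl_i\wedge d\theta_i+\sum_i\omega_i$, for which the factorization $\Omega^{\rm WP}_\bS(\rE,\L)=\Omega^{\rm WP}_{\bS'}(\rE,\L')\wedge dl_\gamma\wedge d\theta_\gamma$ is immediate when $\gamma$ is a pants curve; it then compares the cluster volume form with the WP volume form on each surface via Penner's identity $\omega_{\rm WP}=2\omega_{\rm cl}$ and an invariance argument (both forms are invariant under cluster transformations, so their ratio $\frac{2^d d_\bS}{\sqrt{\det_\bS}}$ is a constant depending only on $\bS$). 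The constant $c_{\bS,\gamma}=\frac{2d_\bS\sqrt{\det_{\bS'}}}{d_{\bS'}\sqrt{\det_\bS}}$ then appears as a ratio of such normalization constants, with no explicit evaluation. If you want to salvage your route, you would need to supply the missing comparison between the cluster form and the Fenchel--Nielsen pair transverse to $\gamma$ — which in effect means re-proving the Wolpert/Chekhov-type statement — or else adopt the paper's invariance argument, at the price of giving up the explicit power of $2$.
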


\begin{remark}
We conjecture that $c_{\bS,\gamma}=2^{ \pi_0(\bS)-\pi_0(\bS-\gamma)}$. We checked that it holds for any surface with punctures but without marked boundary points. The $n$-holed sphere case is proved in Appendix \ref{APPB}.
\end{remark}
\bpr
To follow the notation in \cite{Ch20}, we will use the lambda lengths parameters, expressed via our $\K-$coordinates by $\lambda_x=\K_x^{-\frac{1}{2}}$. Then, as in Figure \ref{figure:GFGeq}, we have
$$
\B_a=\frac{\lambda_b}{\lambda_a \lambda_e},\;\;\; \B_b=\frac{\lambda_e}{\lambda_a \lambda_b},\;\;\; \X_e=\frac{\lambda_a \lambda_c}{\lambda_b \lambda_d},\;\;\; \X_d=\frac{\lambda_e \lambda_f}{\lambda_c \lambda_g}.
$$ 

The Goldman bracket between two geodesic/$\lambda$-lengths $\{\cdot,\cdot\}$ is given in \cite[formula 2.1]{Ch20}. When two arcs $x, y$ intersect at a boundary cusp $p$ and the arc $x$ orients clockwise towards the arc $y$
we have 
\be
\label{bclam}
\{\lambda_x,\lambda_y\}=\frac{1}{4}\lambda_x \cdot \lambda_y,
\ee
 \begin{figure}[ht]
	\centering
	\includegraphics[scale=0.33]{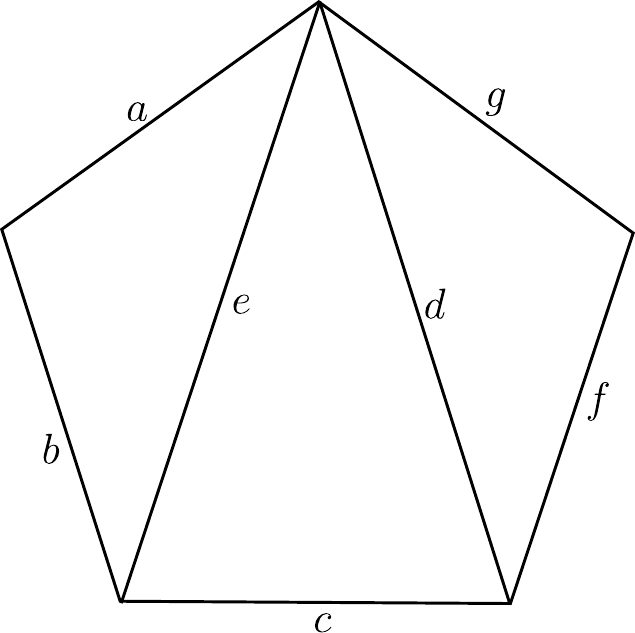}
	\small
	\caption{The $5$-gon.}
	\label{figure:GFGeq}
\end{figure}
Then by formula \eqref{bclam} we obtain
\be
\{\B_a,\B_b\}=\B_a \B_b, \;\;\;
\{\B_b,\X_e\}=\B_b \X_e,\;\;\; \{\X_d,\X_e\}=\X_d \X_e.
\ee
So cluster Poisson structure on ${\cal T}_{\bS}\subset {\cal T}^{\rm en}_{\bS}={\cal P}_{\bS}(\mathbb{R}_{>0})$ coincides with the Goldman Poisson structure defined by curves and their intersections.

 The neck geodesic circles $\gamma_1,...,\gamma_r$ cut $\bS$ into $S_{g,m}\cup \rC_1\cup ...\cup \rC_r$. Let $l_1,...,l_r, \theta_1,...,\theta_r$ be the length and twist parameters of $\gamma_1,...,\gamma_r$ and set $\L_i=e^{l_i}$. Then ${\cal T}_{\bS}$ is parametrised by the following parameters: 
\begin{itemize}
\item
$l_1,...,l_r, \theta_1,...,\theta_r$, 
\item  the parameters $l_{\alpha_1},..., l_{\alpha_{3g-3+m}}$, $\theta_{\alpha_1},..., \theta_{\alpha_{3g-3+m}}$ for ${\cal T}_{S_{g,m}}(\L_1,...,\L_r)$ with the pants decomposition $\{\alpha_{1},...,\alpha_{3g-3+m}\}$,
\item  the parameters for the spaces ${\cal T}_{\rC_1}(\L_1)$, $...$, ${\cal T}_{\rC_r}(\L_r)$.
\end{itemize}
 By \cite[Lemmas 4.2,  4.7]{Ch20}, for any $i=1,...,r$, the twist function $\theta_i$ can be written explicitly as a function of curves near $\gamma_i$. Then by \cite[Section 3 and 4]{Ch20}, 
we have 
\[\{l_i,\theta_i\}=1,\ \ \  \{u, \theta_i\}=0, \ \ \ \{l_i, u\}=0,\]
for any parameter $u$ of ${\cal T}_{S_{g,m}}(\L_1,...,\L_r)$ or any $u$ of $l_1,..., \widehat l_{i},...,l_r$, $\theta_1,...,\widehat \theta_{i},...,\theta_r$. 
When $\gamma_i$ is not the neck geodesic of the crown $\rC_j$, by adding decoration at $\gamma_i^+$, we express any cluster coordinate $u$ of ${\cal T}_{\rC_j}(\L_j)$ as a ratio of products of $\lambda-$lengths of arcs in $\rC_j$. Since the curve $\gamma_i$ does not intersect with the arcs for $u$, 
\[\{u, \theta_i\}=0, \ \ \ \{l_i, u\}=0.\]
When $\gamma_i$ is the neck geodesic of the crown $\rC_j$, the ideal arc $\ell_0$ starting and ending at a cusp $p$ cuts $\rC_j$ into $\D_{n_j+1}\cup \D_1^*$. The parameter $\K_0$ is the $\K$-parameter along $\ell_0$. Then ${\cal T}_{\rC_j}(\L_j)$ is parametrised by $\K_0,\K_1,...,\K_{n_j}$, $\X_1,...,\X_{n_j-2}$ for an ideal triangulation of $\D_{n_j+1}$. The length function $l_i$ is a Casimir  on ${\cal T}_{\rC_j}$ for the cluster Poisson structure. The term of twist function $\theta_i$ in \cite[Lemmas 4.2, 4.7]{Ch20} intersecting with $\rC_j$ is $\X_{A^k B}:=\frac{\lambda_{A^k B}}{\lambda_{0}}$, where $A^k B$ is some curve starting and ending at $p$, surrounding some loop as in \cite[Figure 9]{Ch20}, and $\lambda_{0}=\K_0^{-\frac{1}{2}}$. Both $A^k B$ and $\ell_0$ intersect with $\D_{n_j+1}$ at $p$, thus $\{\X_{A^k B}, u\}=0$
for any paramter $u$ of ${\cal T}_{\rC_j}(\L_j)$. So 
$\{\theta_i, u\}=0$
for any paramter $u$ of ${\cal T}_{\rC_j}(\L_j)$......
Hence we can write twice the cluster symplectic form $2\omega_{\rm cl}$ on ${\cal T}_{\bS}(\rE,\L)$ as:\footnote{The factor $2$ comes from the comparision between the Weil--Petersson form \cite[Appendix A]{Pen92} and the cluster Poisson structure.}
\be
\label{eq:wcut}
2\omega_{\rm cl}=\sum_{i=1}^{r}d l_i \wedge d \theta_i+\omega_{\rm gm}+\sum_{i=1}^r \omega_{i},
\ee
where $\omega_{\rm gm}=\sum_{i=1}^{3g-3+m} d l_{\alpha_i} \wedge d \theta_{\alpha_i}$ is the Weil--Petersson form on ${\cal T}_{S_{g,m}}(\L_1,...,\L_r,\L)$ for the pants curves $\alpha_i$, and $\omega_{i}$ is twice the cluster symplectic form on ${\cal T}_{\rC_i}(\rE_i,\L_i)$, which has a normalized expression in Darboux coordinates. 
The above description of the cluster/Goldman Poisson structure naturally extends from ${\cal T}_{\bS}$ to ${\cal P}_{\bS}$.
Thus we define the {\em Weil--Petersson volume form} on ${\cal P}_{\bS}(\rE,\L)$ to be 
$$
\Omega^{\rm WP}_\bS(\rE,\L):=\frac{(2\omega_{\rm cl})^d}{d!}, \ \ \ \ \ \ d:= \frac{1}{2}{\rm dim}{\cal P}_{\bS}(\rE,\L).
$$

Suppose $\gamma$ is one of the pants curves $\alpha_j$. Then by direct computation, we get
\be
\la{vel0}
\Omega^{\rm WP}_\bS(\rE,\L)=\Omega^{\rm WP}_{\bS'}(\rE,\L') \wedge d l_\gamma \wedge d \theta_\gamma,\ee
where $\L'=(\L,l_{\gamma'},l_{\gamma''})$.
Now let us relate the Weil--Petersson volume form $\Omega^{\rm WP}_\bS(\rE,\L)$ to the cluster volume form $\Omega_\bS(\rE,\L)$. Given an ideal triangulation $\mathcal{T}$ of the decorated surface $\bS$, we have the cluster Poisson structure on ${\cal P}_{\bS}$ induced by the cluster quiver $\epsilon_{ij}$ on $\bS$ with respect to the orientation of $\bS$. The cluster volume form is $\Omega_\bS=2^{\pi_0(\bS)} \bigwedge_i d x_i$ where $x_i$ is the log of the cluster variable $\X_i$. Then we choose $2d$ variables $\{y_i\}_{i=1}^{2d}$ among them such that the determinant $\det_\bS$ of sub matrix of the skew-symmetric matrix for the quiver is non-zero. Thus we have
\be
\la{vel1}
\frac{\bigwedge_{i=1}^{2d} d y_i}{\sqrt{\det_\bS}}= \frac{\omega_{\rm cl}^d}{d!}.
\ee
On the other hand, by
\[
\Omega_\bS(\rE,\L) \wedge \bigwedge_{i=1}^{r} d \log \rE_i  \wedge \bigwedge_{i=r+1}^{m} d l_i=2^{\pi_0(\bS)}\bigwedge_i d x_i.
\]
we get 
\be
\la{vel2}
\bigwedge_{i=1}^{2d} d y_i=d_\bS \Omega_\bS(\rE,\L),
\ee
for some positive rational constant $d_\bS$.
 Combining with equations (\ref{vel1})(\ref{vel2}), we obtain
\be
\la{vel3}
\frac{2^d d_\bS}{\sqrt{\det_\bS}}\Omega_\bS(\rE,\L)= \frac{\omega_{\rm WP}^d}{d!}.
\ee
Since $\frac{\omega_{\rm WP}^d}{d!}$ and $\Omega_\bS(\rE,\L)$ are invariant under the cluster transformations, the number $\frac{2^d d_\bS}{\sqrt{\det_\bS}}$ is also invariant under the cluster transformations, hence a constant depending only on $\bS$.
Plugging equation (\ref{vel3}) into equation (\ref{vel0}), we get 
\[\Omega_\bS(\rE,\L) =  c_{\bS,\gamma} \cdot{\rm cut}^*_{\gamma} \Bigl(\Omega^\delta_{\bS'}(\rE,\L,\L_\gamma) \Bigr)\wedge d \l_\gamma \wedge d \theta_\gamma,\]
where $c_{\bS,\gamma}=\frac{2 d_\bS \sqrt{\det_{\bS'}}}{d_{\bS'} \sqrt{\det_{\bS}}}$ depending only on $\bS$ and $\gamma$.

\epr
\begin{proof}[Proof of Theorem \ref{NRF2}]
By applying Proposition \ref{propcc} to the neck geodesics, combining with \cite[formula 7.5]{Mir07a}, we obtain Theorem \ref{NRF2}.
\end{proof}

 \section{The exponential volumes of moduli spaces and ${\cal B}-$functions}  \la{SecEVII} 
  \subsection{${\cal B}-$functions of decorated surfaces}

Recall the parameters $\K$  in (\ref{KC}), and 
 the  ${\cal B}-$function from Section \ref{SECTT1.4}:
 \be \la{EIa}
\begin{split}
{\mathcal{B}}_\bS({\K}, s_1, ..., s_m; \hbar) :=&
\int_{{\cal M}^{\circ \circ}_\bS}e^{-W/\hbar} \ 
e^{-(l_1 s_1 + ... +l_m s_m)/2} \ \Omega_\bS({\K}).
\\
\end{split}
\ee

\paragraph{Example 1.} Let $\bS= \D^*_n$ be the  punctured disc  with   $n$ marked points $x_1, ..., x_n$, see Figure \ref{sz1}. 
It has an ideal triangulation given by the internal edges ${\rm E}_1, ..., {\rm E}_n$ connecting the puncture  with the points $x_1, ..., x_n$, and 
the boundary edges $\F_i = x_{i-1}x_{i}$. 
 The pure mapping class group  is trivial. So the Teichm\"uller and the moduli spaces are the same. 
 They parametrise ideal hyperbolic structures on the crown with a choice of the eigenvalue of the monodormy around the boundary loop. 
  Denote by $\X_i$ and $\B_i$ the  cluster Poisson coordinates  assigned to the edges ${\rm E}_i$ and  $\F_i$. Then the cluster volume form is 
$$
\Omega_{{\cal P}_{\D_n^*}}= 2 d\log \X_1 \wedge \ldots \wedge d\log \X_n \wedge  d\log \B_1\wedge \ldots \wedge d\log \B_n.
$$
The $\K-$variable along $x_{i-1} x_{i}$, see Figure \ref{sz1}, is calculated by: 
\be \la{KXBB}
\K_{i+1} = \B_{i}\X_{i}\B_{i+1}, \quad i \in \Z/n\Z.
\ee
  \begin{figure}[ht]
\centerline{\epsfbox{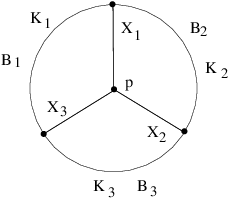}}
\caption{Cluster Poisson coordinates $\B_i, \X_i$ for the punctured disc with $3$ marked points. The variables $\K_{i+1}:= \B_i \X_i\B_{i+1}$  are parameters. The partial potentials are $W_i:= \B_i+\B_i\X_i$. }
\label{sz1}
\end{figure} 
Then 
\be
\X_i = \K_{i+1}/(\B_i\B_{i+1}).
\ee
The potential at the cusp $x_i$ is given by 
\be
W_i:= \B_i + \B_i\X_i = \B_i + \frac{\K_{i+1}}{\B_{i+1}}.
\ee
Therefore  the total potential is given  in the coordinates $\{\B_i, \K_i\}$ by
\be
W:= \sum_{i\in \Z/n\Z}W_i= \sum_{i\in \Z/n\Z} \left(\B_i + \frac{\K_i}{\B_{i}}\right).
\ee
We set 
\be \la{MNOaa}
\begin{split}
&{\bf B}:= \B_1 \cdot \cdot \cdot \B_n, \qquad  {\bf K}:=  \K_1 \cdot\cdot\cdot \K_n.  \\
\end{split}
\ee
The square $\L$ of the largest eigenvalue of the monodromy around the boundary loop is  calculated from the following formulas: 
\be \la{LX}
\L := e^l = \X_1\cdots \X_n={\bf K}/{\bf B}^2.
\ee
So the geodesic length $l$ of the neck geodesic $\ell$  is $l = \log (\X_1 
\cdots \X_n)$. \vskip 2mm

We consider the  variables  $\K = (\K_1, \ldots \K_n)$ as fixed parameters. Then the fiberwise volume form is 
$$
\Omega_\bS(\K) :=2d\log \B_1 \wedge \ldots \wedge d\log \B_{n}.
$$
Indeed, it satisfies the following relation,   where according to (\ref{KXBB}), we have $\K_{i +1}= \X_i\B_i\B_{i+1}$:
$$
d\log \K_1 \wedge \ldots \wedge d\log \K_n \wedge \Omega_\bS(\K)  =  2 d\log \X_1 \wedge \ldots \wedge d\log \X_n \wedge  d\log \B_1\wedge \ldots \wedge d\log \B_n.$$

Recall the Bessel function: 
 \be \la{f22*}
\begin{split}
J_s(z):=&\int_{0}^\infty {\rm exp}\Bigl({-\sqrt{z}(\lambda+\lambda^{-1}} )\Bigr)\lambda^{s} d\log {\lambda} \ = 
\ z^{-s/2}\cdot \int_{0}^\infty {\rm exp}\Bigl({-t-\frac{z}{t}} \Bigr)t^{s} d\log {t}.\\
\end{split}
\ee

\paragraph{1.}  For the  decorated surface $\D_1^*$ the cluster Poisson coordinates are $\B, \X$. So   
$
\L = e^l = \X=\K/\B^2, 
$ and\footnote{The volume form is obtained by dividing $d\log \B \wedge d\log \X$ by $ d\log \K = d\log (\X/\B^2)$. So it is $d\log \B$.}
\be \la{89}
\begin{split}
  {\cal B}_{{\rm D}^*_1}(\K, s) =\ &\int_{0}^\infty e^{-(\B+\frac{\K}{\B})} \L^{-s/2} \cdot 2d \log \B= 2 \int_{0}^{\infty} e^{-(\B+\frac{\K}{\B})} \left(\frac{\K}{\B^2}\right)^{-s/2}  d\log \B 
=\  2 J_{s}(\K). \\
\end{split}
\ee
This recovers formula (\ref{FBFa}).

\paragraph{2.}  Let  $\bS={\rm D}_n^*$.  
We set  ${\Bbb K}:=\{\K_1, \ldots \K_n\}$, and introduce the  notation
\be \la{MNOa}
\begin{split}
& J_{s}({{\Bbb K}}):=    J_{s}(\K_1) \cdot \cdot \cdot  J_{s}(\K_n). \\
\end{split}
\ee
Then we have, using (\ref{LX}):
\be \la{56}
\begin{split}
  {\cal B}_{\D_{n}^*}({\K}, s) = \ &\int e^{-W} \L^{-s/2}\Omega_{{\rm D}_n^*} \\
= \ & \int_{\B_i>0}  e^{-(\B_1+\frac{K_1}{\B_1} +\ldots + \B_n + \frac{K_n}{\B_n}) }  \left( \frac{{\bf K}}{{\bf B}^2}\right)^{-s/2} \cdot 2 d\log \B_1 \wedge \ldots \wedge d\log \B_{n} \\
= \ & 2 J_{s}({\K_1})\cdots J_{s}({\K_n}) \stackrel{(\ref{MNOa})}{=}:\ 2 J_{s}({\Bbb K}).\\
\end{split}
\ee

 Recall that $r$ is the number of crown ends of a decorated surface $\bS$, so that $m-r$ is the number of punctures on $\bS$. 
 We denote by $J_{s_i}({\Bbb  K}_i)$ the function (\ref{MNOa}) for the $i-$th crown, where $1 \leq i \leq r$. 
Recall $\rC_{\bS, \ell}:=2^{-\mu_\ell} c_{\bS, \ell}$ where $\mu_\ell$ is the number of one-holed tori cutting off by $\ell$ and $c_{\bS, \ell}$ is a positive constant depending only on $\bS$ and $\ell$ defined in Proposition \ref{propcc}.

\bt \la{MTHOE} The   integral ${\cal B}_{\bS}({\K}, s_1, ... , s_m)$ is finite if ${\rm Re}(s_i>0)$, and can be analytically continued and calculated 
as  follows, where $|k|=k_1+...+k_m$:
\be \la{MFOEa}
\begin{split}
 &{\cal B}_{\bS}({\K}, s_1, ... , s_m) \ = \rC_\bS  \sum_{k_1+ ... + k_m\leq 3g-3+m} {\cal V}_{g, k_1, ..., k_m} 2^{m} \ 
\prod_{i=1}^r-\Bigl(2\frac{d}{ds_i}\Bigr)^{2k_i+1}   J_{s_i}({\Bbb  K}_i)\cdot  \prod_{j=r+1}^m \frac{(2k_j)!}{s_j^{2k_j+1}}.     \\
\end{split}
 \ee
 \et

\bpr

By Mirzakhani's theorem (\ref{V1}) the volume of the moduli space $\mathcal{M}_S(\L)$ of the genus $g$ surfaces $S$ with boundary  geodesics of the  
lengths $l_1, ..., l_m$ is given by: 
\be \la{V1*}
{\rm Vol}(\mathcal{M}_S(\L)) = \sum_{k_1, ..., k_m \leq 3g-3+m} {\cal V}_{g, k_1, ..., k_m}l_1^{2k_1} \cdots  l_m^{2k_m}.
\ee
Therefore the enhanced variant of the  neck recursion formula \eqref{NRF2*} implies the following  formula:
  \be \la{MFOE}
\begin{split}
  {\cal B}_{\bS}({\K}, s_1, ..., s_m) =  \rC_\bS \sum_{k_1+ ... + k_m\leq 3g-3+m} {\cal V}_{g, k_1, ..., k_m} \cdot &\\
 \prod_{i=1}^r \int_{-\infty}^{\infty} { \rm Vol}_{\mathcal{E}}(\D_{n_i}^*)({\K}_i,l_i) \ e^{-l_is_i/2} \ l_i^{2k_i+1} d l_i\cdot & \prod_{j=r+1}^m \int_0^\infty e^{-l_js_j/2} \ l_j^{2k_j}dl_j. \\
\end{split}
 \ee
Using formula (\ref{56}), the  integral for the punctured disc ${\rm D}_n^*$  appearing in   (\ref{MFOE})  is calculated as follows:
 \be
\begin{split}
& 
 \int e^{-W_{}} e^{-ls/2} \ l^{2k+1} \Omega_{{\rm D}_n^*}
= 2 \Bigl(-2\frac{d}{ds}\Bigr)^{2k+1}  
    J_{s}({\Bbb K}). \\
        \end{split}
\ee   
 Theorem \ref{MTHOE} is proved. \epr

A similar approach to the function ${\cal L}_\bS(\K; s)$ does not lead to a formula expressing it via Bessel functions.
 
  \paragraph{Example 2.} For a once crowned torus $\bS$ with $n$ cusps on  Figure \ref{figure:oct}, we have
\be
\begin{split}
   {\cal B}_{\bS}({\K}, s)\ = \  & \frac{1}{2} \left(\frac{\pi^2}{6}\int_{-\infty}^{+\infty}   { \rm{Vol}}_{\mathcal{E}}({\cal M}_{\D_{n}^*})({\K},l)\ e^{-ls/2} \ l \ d l+\frac{1}{24}\int_{-\infty}^{+\infty}  {\rm{Vol}}_{\mathcal{E}}({\cal M}_{\D_{n}^*})({\K},l)\ e^{-ls/2} \ l^3 \ d l\right)\\
=\ &  -\frac{1}{3}\Bigl(\pi^2 \frac{d}{ds}+ \frac{d^3}{ds^3}\Bigr)J_{s}({\K}). \\
\end{split}
\ee  
\begin{figure}[ht]
	\centering
	\includegraphics[scale=0.3]{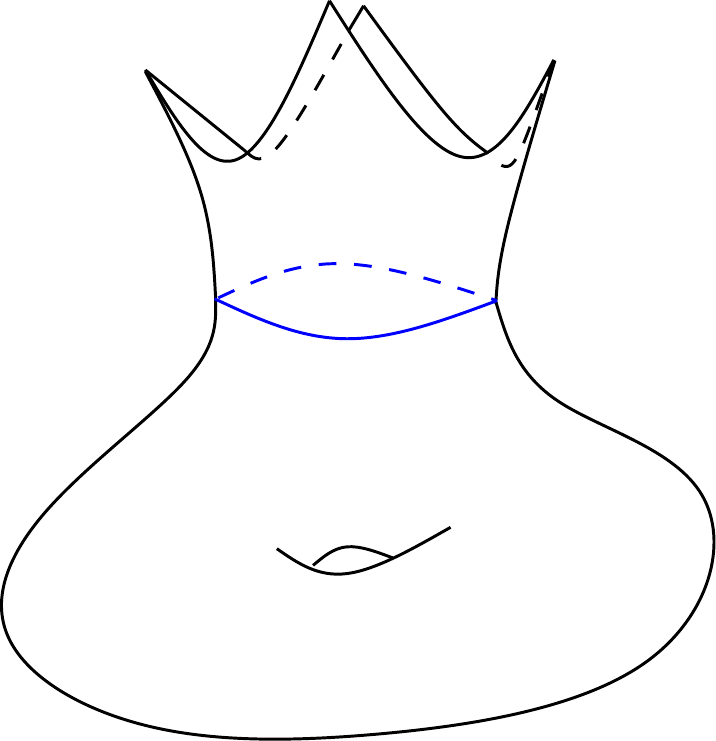}
	\small
	\caption{Once crowned torus. }
	\label{figure:oct}
\end{figure}
Indeed,
 the Weil--Petersson volume of the moduli space ${\cal M}_{1,1}(l) $ of hyperbolic structures on a torus with a single hole of the geodesic length $l$  is given by   \cite[Page 180 Table 1]{Mir07a} by the formula
 $$
{\rm Vol}({\cal M}_{1,1}(l) )= \frac{\pi^2}{6} + \frac{l^2}{24}.
$$

\subsection{The exponential volume of the crown} \la{Sec3.1}

Recall the Bessel function
\be
\begin{split}
J_0(z)= \int_{0}^\infty {\rm exp}\Bigl({-t-\frac{z}{t}} \Bigr) d\log {t}.\\
\end{split}
\ee
We use below both notations $\D_n^*({\K}; \L) = \D_n^*({\K}; l)$, where $\L$ and $l$ are related by $\L=e^l$.
 \vskip 1mm
 
  To get the volume ${\rm Vol}_{\mathcal{E}}(\D_n^*)({\K}; \L)$, we integrate the  form  over the positive locus  $B_i \geq 0$ the form
  $$
  \Omega_\B':= 2 d\log \B_1 \wedge \ldots \wedge d\log \B_{n-1}.
  $$
  \bl For $n=2$, we get
\be \la{FLe4.1}
{\rm{Vol}}_{\mathcal{E}}(\D_2^*)({\rm K}_1, \K_2; \L) = J_0(\K_1+\K_2+\sqrt{\K_1 \K_2}(\L^{1/2}+ \L^{-1/2})).
\ee 
\el

\bpr 
Since $\X_1=\K_2/(\B_1 \B_2)$ and $\X_2=\K_1/(\B_1 \B_2)$, we obtain
$\B_1 \B_2=e^{-l/2}\sqrt{\K_1 \K_2}$. 
So the integral is  
\be
\begin{split}
	&\int_{0}^\infty{\rm exp}\Bigl(-\B_1-\K_1/\B_1-e^{-l/2}\sqrt{\K_1 \K_2}/\B_1- \B_1e^{l/2} \K_2/\sqrt{\K_1 \K_2}\Bigr) d \log \B_1 \\
	&= J_0(\K_1+\K_2+\sqrt{\K_1 \K_2 }(e^{l/2}+e^{-l/2})). 
\end{split}
\ee
\epr

\bl \la{V2} The integral $\rm{Vol}_{\mathcal{E}}(\D_n^*)({\K}; \L)$ is finite. 
\el

\bpr 
Since $\B_n + \frac{\K_{n}}{\B_{n}}> 0$, we obtain
\be
\begin{split}
	&{\rm Vol}_{\mathcal{E}}(\D_n^*)({\K}; \L)=\int_{\B_1 > 0}\cdots\int_{\B_{n-1} > 0}{\rm exp}\Bigl(-\sum_{i=1}^n \Bigl(\B_i + \frac{\K_{i}}{\B_{i}}\Bigr)\Bigr) \Omega'_\B  \\
	&< 2 \prod_{i=1}^{n-1}\int_0^\infty {\rm exp}\Bigl(-\B_i - \frac{\K_{i}}{\B_{i}}\Bigr)d\log \B_i  = 2\prod_{i=1}^{n-1}J_0(\K_i). 
\end{split}
\ee
 \epr

It seems hard to obtain an explicit formula for $n\geq 3$.

\bp
\label{proposition:f}
For any integer $k\geq 0$, the integral $\int_{0}^{+\infty} {\rm Vol}_{\mathcal{E}}(\D_n^*)({\K},l)l^k d l$ is finite. 
\ep

\bpr
Since $\B_1 \cdots \B_n=e^{-l/2}\sqrt{\K_1\cdots \K_n}$, we get
\be \la{120}
\begin{split}
	&\int_{0}^{+\infty} {\rm Vol}_{\mathcal{E}}(\D_n^*)({\K},l)l^k d l =\\
     &\int_{0}^{+\infty} \int_{\B_1 > 0}\cdots\int_{\B_{n-1} > 0}{\rm exp}\Bigl(-\sum_{i=1}^n \Bigl(\B_i + \frac{\K_{i}}{\B_{i}}\Bigr)\Bigr) \Omega'_\B \cdot l^k d l   =\\ 
     &\int_{\B_1 > 0}\cdots\int_{\B_{n-1} > 0}{\rm exp}\Bigl(-\sum_{i=1}^{n-1} \Bigl(\B_i + \frac{\K_{i}}{\B_{i}}\Bigr)\Bigr) \int_{0}^{+\infty} 
     {\rm exp}\Bigl(-\frac{\K_1^{1/2}\cdots \K_n^{1/2}}{\B_1 \cdots \B_{n-1}}e^{-l/2}-\frac{\B_1 \cdots \B_{n-1}\K_n}{\K_1^{1/2}\cdots \K_n^{1/2}}e^{l/2}\Bigr)  l^k d l \cdot  \Omega'_\B.  
\end{split}
\ee
 Note that we have
\be
\begin{split}
	&\int_{0}^{+\infty} {\rm exp}\Bigl(-\frac{\K_1^{1/2}\cdots \K_n^{1/2}}{\B_1 \cdots \B_{n-1}}e^{-l/2}-\frac{\B_1 \cdots \B_{n-1}\K_n}{\K_1^{1/2}\cdots \K_n^{1/2}}e^{l/2}\Bigr)  l^k d l \\
     &\leq \frac{1}{k+1}\int_{0}^{+\infty} {\rm exp}\Bigl(-\frac{\B_1 \cdots \B_{n-1}\K_n}{\K_1^{1/2}\cdots \K_n^{1/2}}\frac{l^{k+1}}{2^{k+1}(k+1)!}\Bigr)  d l^{k+1}\leq {}
\frac{\rm C}{\B_1 \cdots \B_{n-1}}.  
\end{split}
\ee
 Here ${\rm C}$ is a constant. Therefore
\be
\begin{split}
	&\int_{0}^{+\infty}{ \rm Vol}_{\mathcal{E}}(\D_n^*)({\K},l)l^k d l 
     \leq 
     2 {\rm C}\cdot \prod_{i=1}^{n-1} \int_{\B_i > 0} {\rm exp}\Bigl(- \Bigl(\B_i + \frac{\K_{i}}{\B_{i}}\Bigr)\Bigr) \B_i^{-2}  d \B_i  
\end{split}
\ee
which converges.
\epr

\subsection{The general exponential volumes}  \la{Sec3}

Consider the  genus $g$ decorated surface $\bS$ with $m$ boundary components. 
Denote by $n_1, ..., n_r$ the numbers of cusps on the crowns.  
Recall notation (\ref{KC}) for ${\K} = \{{\K}_j\}$. 
Recall $\rC_{\bS, \ell}:=2^{-\mu_\ell} c_{\bS, \ell}$ where $\mu_\ell$ is the number of one-holed tori cutting off by $\ell$ and $c_{\bS, \ell}$ is the positive constant depending only on $\bS$ and $\ell$ defined in Proposition \ref{propcc}.
\bt \la{MTHO} The exponential volume of the moduli space ${\cal M}_{\bS}({\K}, \L)$ is finite.  
One has 
\be \la{MFO}
{\rm Vol}_{\cal E}({\cal M}_{\bS}({\K}, \L)) = \rC_\bS  \sum_{k_1+ ... + k_m\leq 3g-3+m} {\cal V}_{g, k_1, ..., k_m} \prod_{j=1}^r \int^\infty_{0}  {\rm{Vol}}_{\mathcal{E}}(\D_{n_j}^*)({\K}_j,l_j)\  l_j^{2k_j+1} d l_j. 
 \ee
\et
\bpr
The convergence follows from Proposition \ref{proposition:f}. It remains to use the neck recursion formula \eqref{NRF1}.
\epr

The  $e^{-W}$ factor is crucial. 
Without it, the volume is finite  only if there are no cusps. 

\paragraph{Examples.}
1. When $\bS$ is a once crowned pair of pants with $n$ cusps, see Figure \ref{figure:cpp}, the exponential volume is reduced to the following integral:
\be
{ \rm{Vol}}_{\mathcal{E}}({\cal M}_{\bS}({\K}; \L))= \frac{1}{2} \int_0^{+\infty}  {\rm{Vol}}_{\mathcal{E}}(\D_{n}^*)({\K},l)\ l  d l.
 \ee
\begin{figure}[ht]
	\centering
	\includegraphics[scale=0.3]{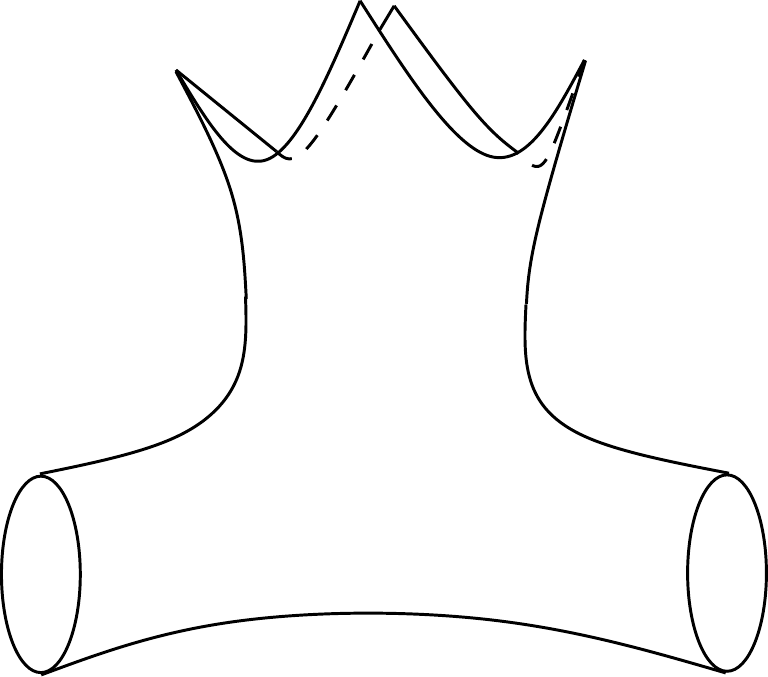}
	\small
	\caption{Once crowned pair of pants with 4 cusps. }
	\label{figure:cpp}
\end{figure}
Indeed, the  hyperbolic structure on a pair of pants is  determined by the lengths of   boundary geodesics.

i) For the next example, it is useful to note that, using the substitution $u:= 1/t$, we get
\be \la{48}
\begin{split}
&\int_{0}^{\infty} {\rm exp}\Bigl({-\sqrt{z}(t+\frac{1}{t})} \Bigr) (\log t)^{2n}d\log {t} = 2\cdot  \int_{0}^{1} {\rm exp}\Bigl({-\sqrt{z}(t+\frac{1}{t})} \Bigr) (\log t)^{2n}d\log {t}.\\
&\int_{0}^{\infty} {\rm exp}\Bigl({-\sqrt{z}(t+\frac{1}{t})} \Bigr) (\log t)^{2n+1}d\log {t} = 0.\\
\end{split}
\ee

For the  crown with one cusp,  the coordinates are $\B,\K$.   The length $l=\log (\K/\B^2)$ of the  neck  geodesic is  positive. So  $\K \geq \B^2$. We get the following integral:
\be
\begin{split}
&\int_{0}^\infty e^{-(\B+\frac{\K}{\B})}  l  d l = \int_{0}^{\K^{1/2}} e^{-(\B+\frac{\K}{\B})} \log \frac{\K}{\B^2}  d\log \frac{\K}{\B^2}\\
&=\ - 2 \log \K \cdot \int_{0}^{\K^{1/2}} e^{-(\B+\frac{\K}{\B})}   d\log \B  + 4 \cdot \int_{0}^{\K^{1/2}} e^{-(\B+\frac{\K}{\B})} \log \B \  d\log \B\\
&\stackrel{(\ref{48})}{ = }\ -  \log \K \cdot J_0(\K^{1/2}) +  4 \cdot \int_{0}^{\K^{1/2}} e^{-(\B+\frac{\K}{\B})} \log \B \  d\log \B. \\
\end{split}
\ee

\vskip 2mm

ii)  For the crown with   $n$ cusps,       
the  coordinates are $\B_1, \ldots, \B_n, \K_1, \ldots \K_n$. 
Since the length of the neck geodesic is 
 $l = \log ({\bf K}/{\bf B}^2)$ is positive, see  (\ref{MNOaa})-(\ref{LX}),  the integral is over the domain 
$$
{\cal B}_{n}:= \{0 \leq \B_1, \ldots , \B_n \ | \ {\bf B}\leq {\bf K}^{1/2}\}. 
$$
We get the integral
\be
\begin{split}
& \int_{{\cal B}_{n}}  e^{-(\B_1+\frac{\K_1}{\B_1} +\ldots + \B_n + \frac{\K_n}{\B_n}) }   l \cdot  d\log \B_1 \wedge \ldots \wedge d\log \B_{n-1} \wedge dl  = \\
&-2\int_{{\cal B}_{n}}  e^{-(\B_1+\frac{\K_1}{\B_1} +\ldots + \B_n + \frac{\K_n}{\B_n}) }  \log \frac{{\bf K}}{{\bf B}^2} \cdot  d\log \B_1 \wedge \ldots \wedge d\log \B_{n}= \\
& -2 \log ({\bf K}) \cdot \int_{{\cal B}_n} e^{-(\B_1+\frac{\K_1}{\B_1} +\ldots + \B_n + \frac{\K_n}{\B_n}) }    d\log \B_1\wedge  \ldots \wedge d \log \B_n + \\
&  4\cdot   \int_{{\cal B}_n}  e^{-(\B_1+\frac{\K_1}{\B_1} +\ldots + \B_n + \frac{\K_n}{\B_n}) }   \log ({\bf B}) d\log \B_1\wedge  \ldots \wedge d \log \B_n. \\
\end{split}
\ee
 \vskip 2mm

 2. 
The  integral we need   to apply formula (\ref{MFO}) for the exponential volume   
 of any decorated surface    is  
 \be \la{UsI}
\begin{split}
&\int_{{\cal B}_{n}} {\rm Vol}_{\mathcal{E}}(\D_{n}^*)({\K},l)\cdot l^{2k+1} \cdot d l = \\
&- 2\int_{{\cal B}_{n}}  e^{-(\B_1+\frac{\K_1}{\B_1} +\ldots + \B_m + \frac{\K_n}{\B_n}) }  \log ^{2k+1}\frac{{\bf K}}{{\bf B}^2} \log \B_1\wedge \ldots \wedge d\log \B_n. \\
        \end{split}
\ee 
Here $\K=(\K_1, ..., \K_n)$.

3. If $\bS$ is a once crowned torus with $n$ cusps as on Figure \ref{figure:oct}, the exponential volume is 
\be
\begin{split}
{\rm Vol}_{\cal E}({\cal M}_{\bS}(\K)) = &  \frac{1}{12} \int_0^{+\infty}  {\rm Vol}_{\mathcal{E}}(\D_{n}^*)({\K},l)\cdot (\pi^2 l + \frac{l^3}{4})d l.\\
\end{split}
\ee

\section{McShane-type identities for  ideal hyperbolic surfaces revisited}\la{Sec4.1}
In \cite{McS91,McS98}, McShane found a remarkable identity for the punctured hyperbolic surfaces by splitting the horocycle (or the cusp region area bounded by that horocycle) \cite[Theorem 4]{McS98}. Following the same strategy, these identities were generalized by Mirzakhani \cite[Theorem 4.2]{Mir07a} to the hyperbolic surfaces with geodesic boundary circles, and by Huang \cite[Theorem 4.5]{Hu14} to the cusps on the crowns and punctures on ideal hyperbolic surfaces. Below we revisit  the McShane identities from our perspective, and  calculate their terms 
in  cluster Poisson coordinates, which is important  for the unfolding. \

Given a puncture or a boundary cusp $p$ at the crown of the ideal hyperbolic surface, let us consider all  geodesics emitting from $p$. There are four different types of geodesics:
\begin{enumerate}
  \setcounter{enumi}{-1}
\item Bi-infinite geodesics without self-intersection;
\item Self-intersecting geodesics;
\item Simple geodesics which hit the boundary circle;
\item Simple geodesics which hit the boundary arc of some crown end.
\end{enumerate}
The first two limiting behavior of geodesics was observed in \cite{McS98}, the third one was observed in \cite[Proof of Theorem 4.2]{Mir07a} while the last one was observed in \cite[Proof of Theorem 4.5]{Hu14}.
The Birman--Series theorem \cite{BS85} tells us that the union of all bi-infinite geodesics without self-intersection is sparse in the hyperbolic surface. The Birman--Series theorem can be extended to  ideal hyperbolic surfaces, as shown in the following Theorem, proved in Appendix A as Theorem \ref{thm:bs}. 
\begin{theorem}
For an ideal hyperbolic surface, let $\mathcal{G}$ be the union of all bi-infinite geodesics without self-intersection. The area of $\mathcal{G}$ with respect to the Lebesgue measure on the surface is equal to zero.

Therefore, given a cusp $p$ of a crown, or a puncture $p$, the area of the union $\mathcal{G}_p$ of all  bi-infinite   geodesics  without self-intersection emitting from $p$ is equal to zero. For the horoarc/horocycle $h_p$ around $p$, the intersection $h_p\cap \mathcal{G}_p$ has Lebesgue measure zero.
\end{theorem}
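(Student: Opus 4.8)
The plan is to adapt the Birman--Series argument \cite{BS85} to the setting of ideal hyperbolic surfaces with crown ends. The original theorem states that the union of complete simple geodesics on a closed hyperbolic surface has Hausdorff dimension one, and in particular Lebesgue measure zero; here I would first pass to the \emph{thick part} of the ideal hyperbolic surface $X$. The key observation is that an ideal hyperbolic surface $X$ can be exhausted by compact subsurfaces: cutting along the neck geodesics, each crown end is a union of a hyperbolic crown $\mathcal{C}$ (a compact core plus the ``spikes'' running out to the cusps), and away from the horocycles the crown is compact. So I would write $X = X_0 \cup (\text{cusp neighborhoods})$, where $X_0$ is compact, and the cusp neighborhoods are standard horoball-type regions. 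Any complete simple geodesic $\gamma$ that enters a cusp neighborhood deeply enough must exit it (since the only complete geodesic that stays in a cusp region forever and is simple would have to spiral, contradicting simplicity/completeness near a genuine cusp), so the portion of $\mathcal{G}$ inside a fixed horoball region is controlled by finitely many geodesic segments, each of measure zero; thus it suffices to prove the measure-zero statement for $\mathcal{G} \cap X_0$.

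On the compact core $X_0$, I would run the standard covering/counting estimate of Birman--Series. Fix $\varepsilon > 0$ and a fine grid of scale $\delta$ on $X_0$; the content of the Birman--Series estimate is that the number of $\delta$-cells meeting $\mathcal{G}$ grows only polynomially (in fact linearly up to logarithmic factors) in $1/\delta$, rather than like $1/\delta^2$, because a complete simple geodesic, once it has crossed $X_0$ a bounded number of times, is forced by the simplicity constraint to lie in an exponentially shrinking neighborhood of the finitely many simple geodesics already traced — the branching is at most ``binary'' at each return to a fixed transversal and the widths decay geometrically. This gives $\mathrm{Area}(\mathcal{G} \cap X_0) \le C \delta \cdot (1/\delta) \to 0$ as $\delta \to 0$, hence $\mathcal{G} \cap X_0$ has measure zero, and combined with the previous paragraph, $\mathrm{Area}(\mathcal{G}) = 0$.

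For the second assertion, fix a cusp $p$ (on a crown or a puncture) and let $\mathcal{G}_p \subset \mathcal{G}$ be the union of complete simple geodesics emanating from $p$. Since $\mathcal{G}_p \subseteq \mathcal{G}$, it has Lebesgue measure zero on $X$ immediately. For the statement about the horoarc $h_p$: parametrize the horoarc by arclength (or by its endpoint on the ideal boundary), and consider the map sending a point $x \in h_p$ to the complete geodesic ray from $p$ through $x$. The set of $x$ for which this ray is simple and complete (i.e. $x \in h_p \cap \mathcal{G}_p$) is exactly the set of directions at $p$ whose forward geodesic is simple; this is a closed set, and by a Fubini-type argument relating the two-dimensional measure of $\mathcal{G}_p$ to the one-dimensional measure of its slice along $h_p$ — using that the exponential map from $p$ is a smooth embedding on each spike with bounded Jacobian near the horoarc — a positive-measure slice would force $\mathcal{G}_p$ to have positive area, contradicting $\mathrm{Area}(\mathcal{G}_p)=0$. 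Hence $h_p \cap \mathcal{G}_p$ has Lebesgue measure zero.

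The main obstacle is making the Birman--Series counting estimate rigorous in the presence of the crown ends: one must check that the ``spikes'' of the crown (where the surface is non-compact but not a standard cusp, since they are bounded by boundary geodesics of the crown end rather than being complete cusps) do not allow simple geodesics to accumulate in a way that escapes the polynomial-growth bound. I expect this is handled exactly as in Huang's extension \cite{Hu14}: a complete simple geodesic entering a spike either terminates at the cusp $p$ itself or crosses the spike in a uniformly bounded time, so the spike regions contribute only finitely many geodesic arcs to $\mathcal{G}$ at any bounded combinatorial complexity, and the estimate on the compact core is unaffected. The appendix reference (Theorem \ref{thm:bs}) indicates this is precisely where the technical work is carried out.
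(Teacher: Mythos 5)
Your overall architecture — control the cusp/crown ends by a collar-type statement, run a Birman--Series counting estimate on what remains, and deduce the horocycle statement by a Fubini argument — is the same as the paper's, but the reduction step in your first paragraph contains a genuine error that breaks the argument as written. You claim that a complete simple geodesic entering a cusp neighborhood deeply enough must exit it, and that a simple geodesic staying in the cusp region forever would have to spiral. Both claims are backwards: the geodesics that stay in the cusp region forever are exactly those running straight into the cusp, and they are simple; what is true (and is the content of the paper's Lemma \ref{lem:co}) is the opposite statement, that a simple bi-infinite geodesic cannot both enter and exit a sufficiently deep collar $C_r$ — if it did, the subarc inside $C_R$ would have length at least $R$ while being endpoint-homotopic to a horocyclic path of length less than $R$. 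Consequently your assertion that ``the portion of $\mathcal{G}$ inside a fixed horoball region is controlled by finitely many geodesic segments'' is false: there are infinitely many simple bi-infinite geodesics with an end at the cusp (e.g.\ all the arcs $\gamma_n$ between two cusps of an annulus), and each of them enters the cusp region. Their union inside $C_r$ is a union of vertical rays over the set $h_r\cap\mathcal{G}$, so showing it has measure zero amounts to showing that $h_r\cap\mathcal{G}$ has linear measure zero — which is essentially the second assertion of the theorem, so your reduction to the compact core is circular (or at least leaves the cusp-region contribution unproved).

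The paper closes exactly this gap differently: after the collar lemma it does \emph{not} discard the cusp regions, but fixes an ideal triangulation, uses the collar lemma to prove Corollary \ref{cor:comtop} (every segment of a simple geodesic crossing an ideal triangle has length bounded below, so an arc cut into $N$ segments has length $\geq CN$), and then runs the covering argument globally in the universal cover: the number of homotopy classes of $N$-segment arcs grows only polynomially, while the middle segment of such an arc is covered by the convex hull of two fundamental domains lying at distance $\geq CN$, whose Euclidean diameters in the Poincar\'e disk decay like $e^{-C_1N}$; geodesic arcs with one or both ends at a cusp/puncture are treated as separate cases of the same covering, rather than being swept into a compact-core reduction. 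Your Fubini argument for $h_p\cap\mathcal{G}_p$ is fine once the two-dimensional statement is established, and your closing paragraph correctly identifies where the real work lies, but as stated the first paragraph does not supply a valid reduction, and the counting estimate on the ``compact core'' is only cited, not adapted — the adaptation (lower bound on segment lengths via the collar lemma, plus the explicit exponential-area covering) is precisely the content of the paper's appendix.
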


Thus generically, a geodesic emitting from $p$ will be one of the three types (1)-(3).

 Let us denote the length of the horoarc $h_p$ by $H_p$. 
It is straightforward to see that 
the length $H_p$ is the same as the value of the local potential $W_p$ at $p$:
\be \la{EF1}
H_p = W_p.
\ee
Let us split the horoarc $h_p$ into intervals reflecting the changes of the type of geodesic emitting from $p$.

\begin{figure}[ht]
	\centering
	\includegraphics[scale=0.3]{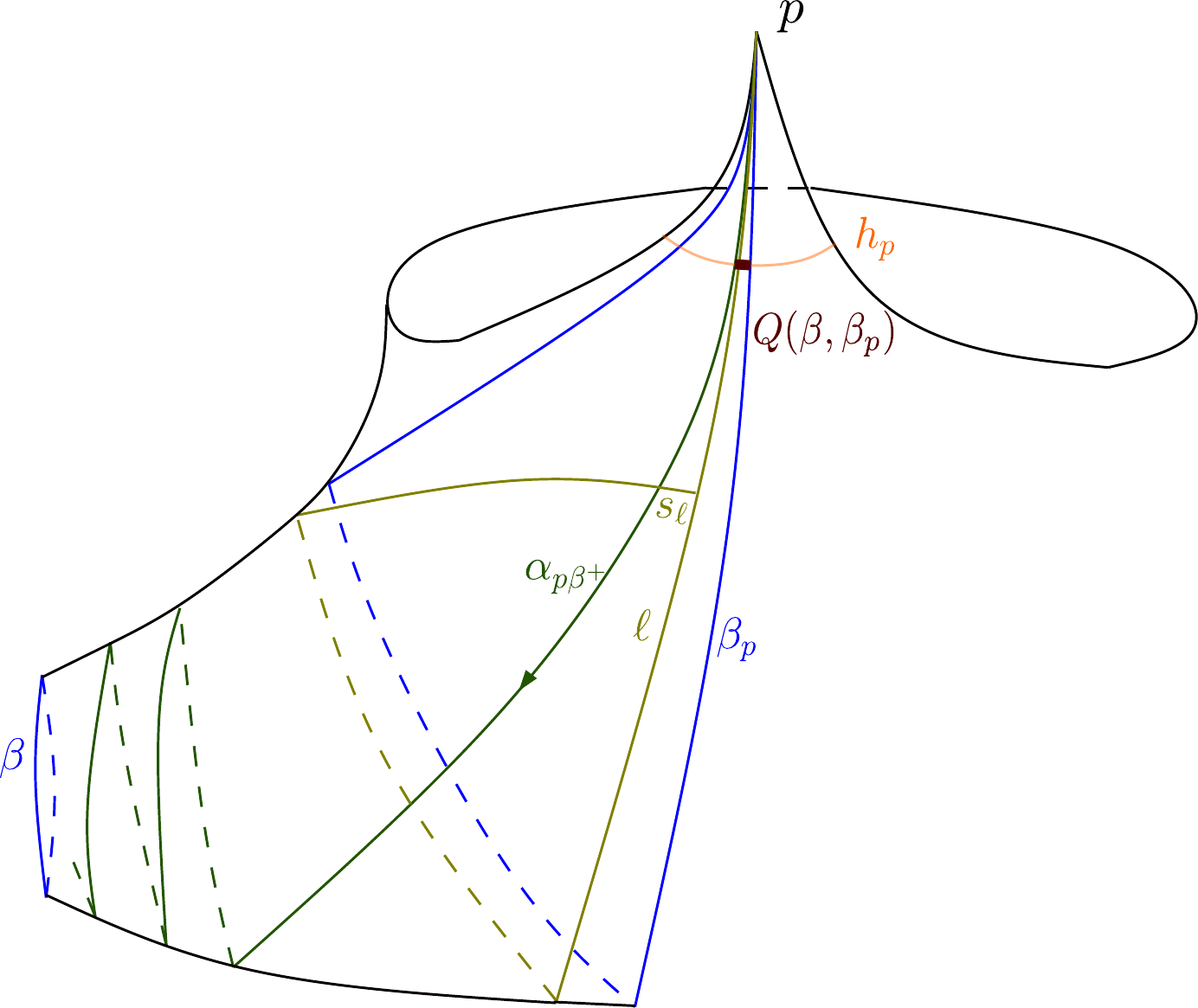}
	\small
	\caption{The geodesic $\ell$ emitting from $p$ and hitting itself at a point $s_\ell$ bounds a simple closed loop $\alpha_\ell$   homotopic to $\beta$. When  $\ell$ moves towards $\beta$, the geodesic  $ps_\ell$ converges to a bi-infinite geodesic $\alpha_{p\beta^+}$ spiraling around $\beta$. When $\ell$ moves in  the opposite direction, the geodesic $ps_\ell$ converges to a bi-infinite geodesic $\beta_p$. These geodesics $ps_\ell$    between $\alpha_{p\beta^+}$ and $\beta_p$ fill the  {\it trouser leg}  $\T(\beta,\beta_p)$. We define the {\it gap  $Q(\beta,\beta_p)$}   as the length of the interval on the horoarc $h_p$ bounded by the intersection points $h_p\cap \beta_p$ and $h_p\cap \alpha_{p\beta^+}$. The interval is shown by a solid arc. The gap  $Q(\beta,\beta_p)$ coincides with the potential 
$W_p(\alpha_{p\beta^+}, \beta_p)$.}
	\label{figure:hp}
\end{figure}

1. 
Suppose that the geodesic $\ell$ is self-intersecting.  We describe the limiting behavior of geodesics following the argument around \cite[Figure 1, 2]{McS98}. Denote by $s_\ell$ the first self-intersection point on $\ell$. Consider the loop $\alpha_\ell$ on   $\ell$ going from $s_\ell$ to itself, see  Figure \ref{figure:hp}. It is homotopic to a simple closed geodesic $\beta$.  When $\ell$ moves towards  the $\beta$,  the geodesic $p s_\ell$, obtained by moving from $p$ along $\ell$ to the first self-intersection at the point $s_\ell$,     converges to a bi-infinite geodesic  $\alpha_{p\beta^+}$ spiraling around $\beta$ infinitely many times. It provides an orientation of $\beta$. When $\ell$ moves in the opposite direction, the loop $\alpha_\ell$  does not change its homotopy class 
till $\ell$ converges to some bi-infinite geodesic $\beta_p$. The geodesic $\beta_p$ is the limit of both loop $\alpha_\ell$ and the path $p s_\ell$. 
We denote by $Q(\beta,\beta_p) $ the {\it gap}, defined as the 
 length of the horoarc between the intersection points of the  two bi-infinite geodesics 
$\alpha_{p\beta^+}$ and $\beta_p$ with the horocycle  $h_p$. 
On the other hand, the ideal triangle bounded by $(\alpha_{p\beta^+}, \beta_p)$ lifts to some $(\tilde{p},\beta^+, \beta \tilde{p})$ in the universal cover. Suppose the decoration $v_p$ at $\tilde{p}$ is the same as the decoration at $p$, the framing at $\beta^+$ and $\beta \tilde{p}$ can be obtained by parallel transporting $\left<v_p\right>$ along $\alpha_{p\beta^+}$ and $\beta_p$ respectively. Then we define the potential $W_p(\alpha_{p\beta^+}, \beta_p)$ by equation \eqref{PEVE}. 
By (\ref{EF1}), the gap is equal to the value of the potential $W_p(\alpha_{p\beta^+}, \beta_p)$. The geodesics $\beta$ and $\beta_p$ bounds an embedded {\em trouser leg},  see Figure \ref{figure:hp1}, which is denoted by 
$$
\T(\beta,\beta_p). 
$$
The decorated surface describing the trouser leg is a punctured disc with a marked point, see Figure \ref{sz6}.  
The pair $(\beta,\beta_p)$ uniquely determines a trouser leg unless we consider the once punctured torus case.\footnote{We change the terminology {\em pair of half-pants} in \cite[Definition 5.6]{HS19}  to trouser leg.}
 The isomorphism class of the ideal hyperbolic surface $\T(\beta, \beta_{p})$ is determined by  the length $l_\beta$ of the neck geodesic and 
the horocycle length ${\rm K}_{\beta_p}$ of the boundary arc. 
Let us set  
$$
\L_\beta:=e^{\l_\beta}.
$$
 \begin{figure}[ht]
	\centering
	\includegraphics[scale=0.3]{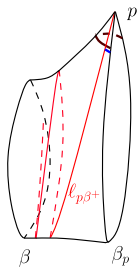}
	\small
	\caption{The trouser leg $\T(\beta, \beta_p)$.}
	\label{figure:hp1}
\end{figure}

Observe that the gap  $Q(\beta,\beta_p)$ is equal to the potential $W_p(\alpha_{p\beta^+}, \beta_p)$ 
\be \la{D=W}
Q(\beta,\beta_p) = W_p(\alpha_{p\beta^+}, \beta_p).
\ee

 \bl \la{L5.31} The potential $W_p(\alpha_{p\beta^+}, \beta_p)$  for the trouser leg $\T(\beta, \beta_p)$ is given by 
\be \la{551}
\begin{split}
W_p(\alpha_{p\beta^+}, \beta_p)=\K^{1/2}_{\beta_p} \L_\beta^{-1/2}.\\
\end{split}
\ee
Therefore the gap $Q(\beta,\beta_p)$ is given by 
\be \la{gapfunct}
\begin{split}
Q(\beta,\beta_p) 
&=\K^{1/2}_{\beta_p}\L_\beta^{-1/2}.\\
\end{split}\ee
\el

  \begin{figure}[ht]
\centerline{\epsfbox{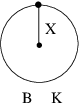}}
\caption{The decorated surface for the trouser leg is a punctured disc with a marked point.}
\label{sz6}
\end{figure} 

\bpr 
 Denote by ${\B},  {\X}$ the cluster Poisson coordinates at the two edges of the decorated surface underlying a trouser leg   on Figure \ref{sz6}. Then  
$$
{\rm K}_{\beta_p} = \B\X\B, \quad \L_\beta = \X. 
$$
The potential is given by $W_p(\alpha_{p\beta^+}, \beta_p)=\B_{\beta_p}$. 
So  we get:  
\be \la{EXVT}
\begin{split}
W_p(\alpha_{p\beta^+}, \beta_p)&=   \K_{\beta_p}^{1/2}\L_\beta^{-1/2}.\\
\end{split}
\ee
The second claim follows immediately from this and (\ref{D=W}).\epr

{\bf Remark}. 
By \cite[Proposition 1]{McS98}, the bi-infinite geodesic $\beta_p$ uniquely determines the trouser leg $\T(\beta,\beta_p)$, but $p$ and $\beta$ do not determine $\beta_p$. 
For example, let $\D_\delta \gamma_p$ be the Dehn-twist of the bi-infinite geodesic $\gamma_p$ on Figure \ref{figure:gammapdt} around the loop $\delta$. Then $\gamma_p \not = \D_\delta \gamma_p$, but $\gamma = \D_\delta \gamma$. 
The related embedded trouser leg is $\T(\gamma,\D_\delta \gamma_p)$.
 
 We could also Dehn-twist $\gamma_p$ around $\eta$. The Dehn twists around  $\delta$ and $\eta$ do not commute since they intersect. 
 So the orbit of a trouser leg by the action of the group ${\rm Mod}(\bS)$ can be  complicated. 
 \begin{figure}[ht]
	\centering
	\includegraphics[scale=0.3]{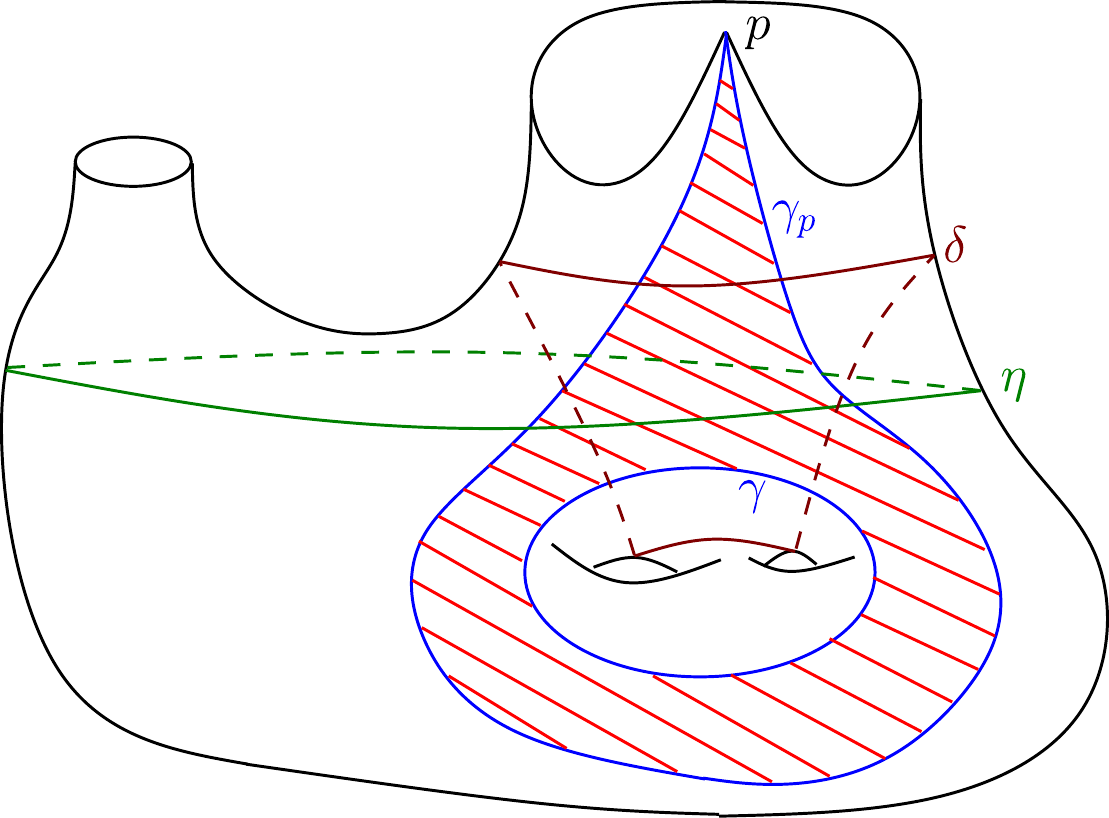}
	\small
	\caption{Trouser leg $\T(\gamma,\gamma_p)$. Dehn twists of $\gamma_p$ by $\delta$ and $\eta$ are different.}
	\label{figure:gammadt}
\end{figure}
\vskip 2mm 

2. 
Suppose that the simple geodesic $\ell$ hits the boundary circle $\beta$ as in Figure \ref{figure:rterm}. If we move $\ell$ towards the left, it will converge to some simple bi-infinite geodesic $\alpha_{p\beta^-}$ spiraling around $\beta^{-}$ - that is $\beta$ with the opposite orientation -  without changing its homotopy class. If we move $\ell$ towards the right, it will converge to a simple bi-infinite geodesic $\alpha_{p\beta^+}$ spiraling around $\beta$ without changing its homotopy class. So there is a unique embedded trouser leg   $\T(\beta,\beta_p)$ containing $\alpha_{p\beta^-}$ and $\alpha_{p\beta^+}$.

\begin{figure}[ht]
	\centering
	\includegraphics[scale=0.3]{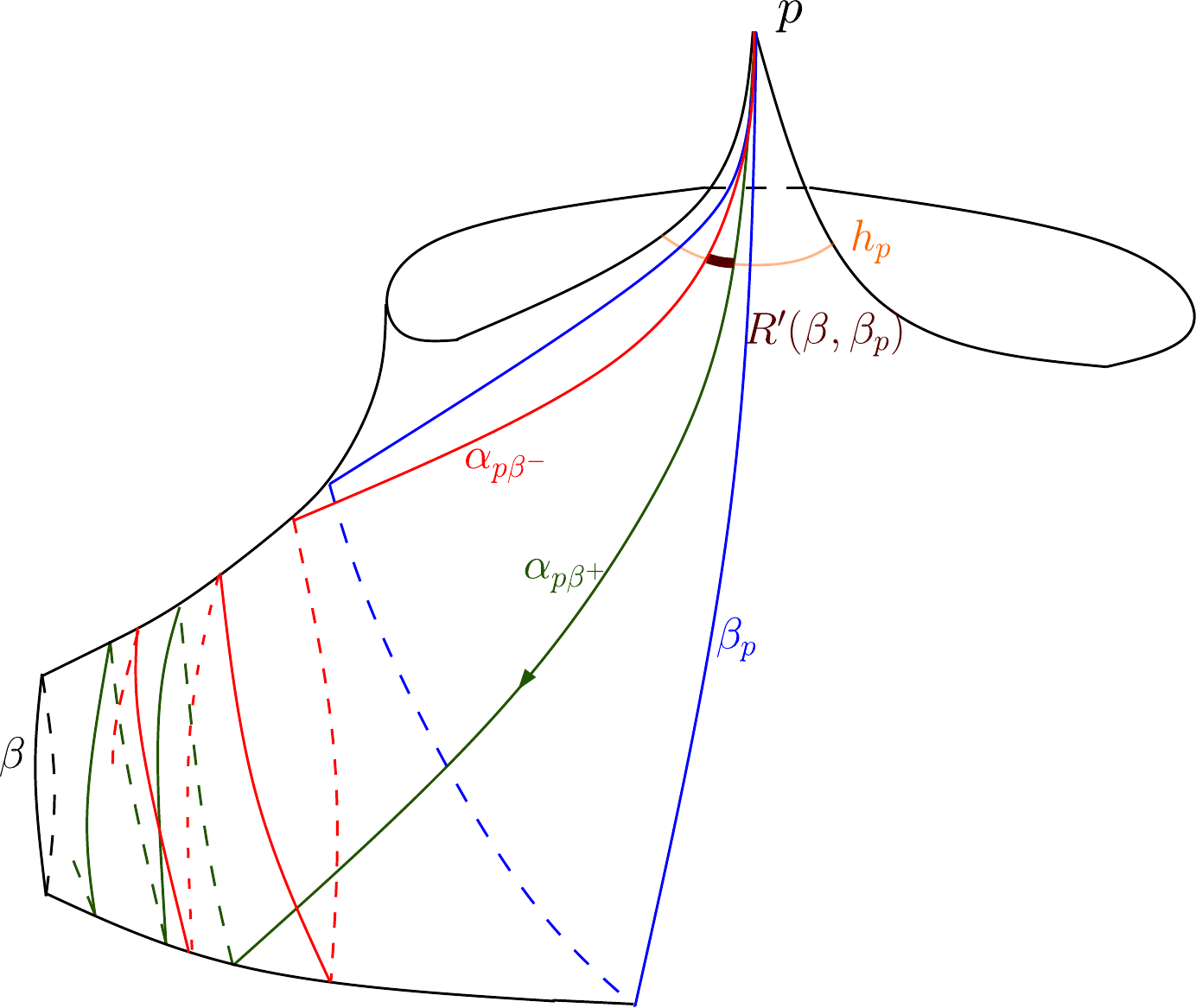}
	\small
	\caption{The geodesic $\ell$ emitting from $p$ hits the boundary circle. Moving $\ell$ between $\alpha_{p\beta^-}$ and $\alpha_{p\beta^+}$ will not change the homotopy class of $\ell$. The trouser leg containing $\alpha_{p\beta^-}$ and $\alpha_{p\beta^+}$ is denoted by $\T(\beta,\beta_p)$. The gap for $\T(\beta,\beta_p)$ is denoted by ${\cal R}'(\beta,\beta_p)$.}
	\label{figure:rterm}
\end{figure}

 Let us denote by $R'(\beta,\beta_p) $ the gap--length of the horoarc between the two bi-infinite geodesics $\alpha_{p\beta^-}$ and $\alpha_{p\beta^+}$ intersecting $h_p$.  
Then the length of the horoarc on $h_p$ with the two ends at $h_p\cap \beta_p$ is
$$
R(\beta,\beta_p):=R'(\beta,\beta_p)+2Q(\beta,\beta_p).
$$
Note that this is just the value of the potential $W_p(\beta, \beta_p)$ at $p$ for the trouser leg $\T(\beta, \beta_p)$:
\be \la{R=W}
R(\beta,\beta_p)=W_p(\beta, \beta_p).
\ee

\bl \la{L5.3} The potential $W_p(\beta, \beta_p)$  for the trouser leg $\T(\beta, \beta_p)$ is given by 
\be \la{EXVT}
\begin{split}
W_p(\beta, \beta_p) =\K^{1/2}_{\beta_p}\cdot (\L_\beta^{-1/2}+\L_\beta^{1/2}).\\
\end{split}
\ee
Therefore we have 
\be \la{5511}
\begin{split}
R(\beta,\beta_p)=\K^{1/2}_{\beta_p}\cdot (\L_\beta^{-1/2}+\L_\beta^{1/2}).\\
\end{split}
\ee\el

\bpr We use the same notation as in the proof of Lemma \ref{L5.31}. Then 
 the potential is given by $W_p(\beta, \beta_p) = \B_{\beta_p}  + \B_{\beta_p} \X$. So it is calculated in terms of the parameters $\K_{\beta_p}$ and $l_\beta$ 
 just by (\ref{EXVT}).  
The second claim follows from this and (\ref{R=W}).
\epr

We observe that this implies the following formula
:
$$
R'(\beta,\beta_p) =  \K_{\beta_p}^{1/2}(\L_\beta^{1/2}-\L_\beta^{-1/2}).
 $$
 \vskip 2mm

3. 
Suppose that the simple geodesic $\ell$ hits the boundary arc $ab$ of some crown end as in Figure \ref{figure:tri}. If we move $\ell$ towards the left, then it will converge to some simple bi-infinite geodesic $\alpha_{pa}$ without changing its homotopy class. Moving $\ell$ towards the right, it will converge to some simple bi-infinite geodesic $\alpha_{pb}$ without changing its homotopy class. Thus we get an embedded ideal triangle   $\tau(\alpha_{pa}, \alpha_{pb})$, with the geodesic sides $\alpha_{pa}$, $\alpha_{pb}$ and $ab$. Denote the gap--length of the horoarc between the two bi-infinite geodesics intersecting $h_p$ by $S(\alpha_{pa},\alpha_{pb}) $. 
It is equal to the  potential $W_{p}(\alpha_{pa},\alpha_{pb})$. 

Let $v_p, v_a, v_b$ be the decoration vectors at the vertices $p, a, b$ of the ideal triangle, see Figure \ref{sz5}. Then  
\be \la{Sfunct}
 S(\alpha_{pa},\alpha_{pb}) =  W_{p}(\alpha_{pa},\alpha_{pb})  = \frac{\omega(v_a, v_b)}{\omega(v_p, v_a) \omega(v_p, v_b)} = \Bigl(\frac{\K_p}{\K_a\K_b}\Bigr)^{-1/2}.
 \ee

\begin{figure}[ht]
	\centering
	\includegraphics[scale=0.25]{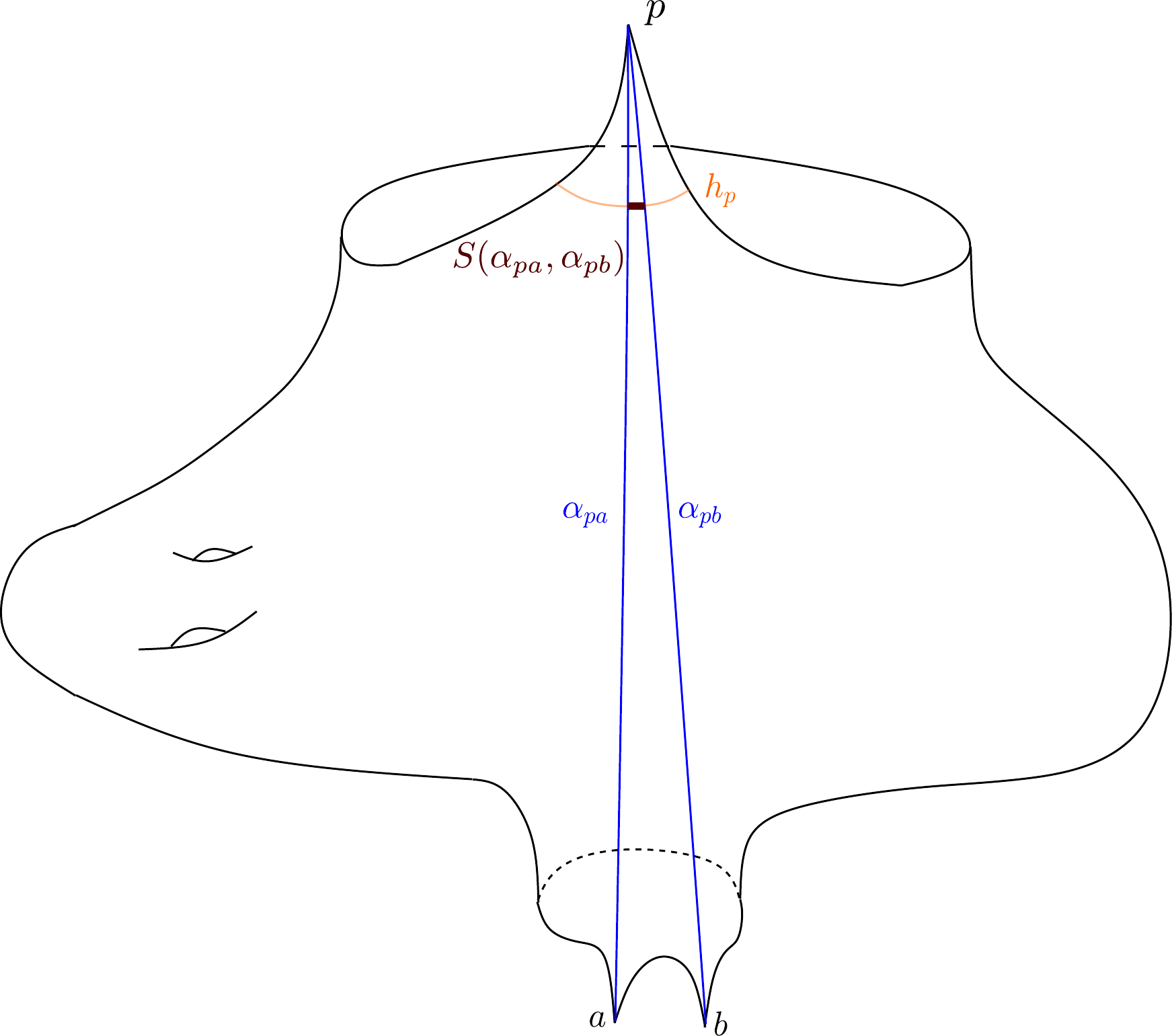}
	\small
	\caption{Embedded ideal triangle $\tau(\alpha_{pa}, \alpha_{pb})$. The gap for $\tau(\alpha_{pa}, \alpha_{pb})$ is denoted by $S(\alpha_{pa}, \alpha_{pb})$.}
	\label{figure:tri}
\end{figure}

  \begin{figure}[ht]
\centerline{\epsfbox{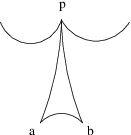}}
\caption{ $S(\alpha_{pa},\alpha_{pb}) = W_{p}(\alpha_{pa},\alpha_{pb})$ is the partial potential at the cusp $p$ for the ideal triangle $apb$.}
\label{sz5}
\end{figure} 

\bd
\label{defn:hp}
An embedded ideal triangle $\tau(\alpha_{pa},\alpha_{pb})$ is {\em $p$-narrowest} if any cusp region at $p$ of any other embedded ideal triangle can not be embedded into the cusp region at $p$ of $\tau(\alpha_{pa},\alpha_{pb})$.   
    \ed

An ideal geodesic triangle $\tau$  id $p$-narrowest if and only if its  opposite  side $ab$  
is  a bi-infinite boundary geodesic. So  it connects two adjacent cusps on the same crown $\rm C$. 
The vertices $a$ and $b$ coincide if and only if the crown $\rm C$ has a single cusp, as shown on the right of Figure \ref{figure:tce}.    
The opposite side $ab$ can be at the same crown as the cusp $p$. The three vertices of the triangle $\tau$ can coincide, as shown  on Figure \ref{figure:crpp}. 
\vskip 2mm

There are the following  sets of elementary decorated surfaces containing the chosen cusp $p$:

 \begin{itemize} 
 
 \item The set ${\cal H}_{{\rm T}, p}$  of  isotopy classes of  trouser legs $\T$ containing  $p$.  
It is   a union of two disjoint subsets:
\be
 {\cal H}_{{\T}, p} ={\cal H}^{\partial}_{{\T}, p}\cup ({\cal H}_{{\T}, p}-{\cal H}^{\partial}_{{\T}, p}).
 \ee
Here ${\cal H}^{\partial}_{{\T}, p}$  is the subset where $\ell_\T\subset \partial \bS$: the geodesic boundary loop $\ell_\T$ lies on the boundary of $\bS$.

 \item The set ${\cal H}_{{\tau}, p}$ of  isotopy classes of  ideal   {\it $p$-narrowest triangles}  triangles $\tau$ containing the cusp $p$. 
It   is a union of two disjoint subsets, depending how many internal sides has the triangle $\tau$:  one or two:
 \be
 {\cal H}_{{\tau}, p} ={\cal H}^{(1)}_{{\tau}, p}\cup {\cal H}^{(2)}_{{\tau}, p}.
 \ee

\end{itemize}
\vskip 2mm

Let us state the McShane identity for a cusp/puncture $p$ on an ideal hyperbolic surface. 

Recall the local potential $W_p$ at  cusp $p$. 

\begin{theorem} \la{MSRF1}
Given an ideal hyperbolic surface, given a cusp $p$ at a crown or a puncture $p$, we get
\be \la{MSRF2}
\sum_{\rm{T}(\beta,\beta_p) \in {\mathcal{H}}_{T, p} -  {\mathcal{H}}_{T, p}^{\partial}} 2 Q(\beta,\beta_p)+\sum_{\T(\beta,\beta_p)\in  {\mathcal{H}}_{T, p}^\partial}R(\beta,\beta_p)+
\sum_{\tau(\alpha_{pa},\alpha_{pb})\in {\cal H}_{\tau, p} }S(\alpha_{pa},\alpha_{pb})=W_p.
\ee
 \end{theorem}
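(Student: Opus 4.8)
The plan is to follow McShane's original strategy of partitioning the horocycle $h_p$ (equivalently, its length $W_p$ by (\ref{EF1})) according to the behaviour of the geodesics emanating from $p$. First I would invoke the Birman--Series type theorem (Theorem \ref{thm:bs}, proved in Appendix A) to conclude that the set $h_p\cap\mathcal{G}_p$ of points on the horoarc $h_p$ through which a simple bi-infinite geodesic passes has Lebesgue measure zero. Hence the full-measure complement $h_p\setminus\mathcal{G}_p$ consists of points $x$ such that the geodesic $\ell_x$ issued from $p$ perpendicular to (or, more precisely, starting at) $x$ is of one of the three generic types: self-intersecting, simple and hitting a boundary geodesic circle, or simple and hitting a boundary arc of a crown end. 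The claim is then that $h_p\setminus\mathcal{G}_p$ decomposes, up to measure zero, into a disjoint union of open subarcs indexed exactly by the three families of embedded elementary surfaces listed before the theorem, with each subarc having length equal to the corresponding gap term.

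Next I would establish the combinatorial core: each point $x\in h_p\setminus\mathcal{G}_p$ determines canonically an embedded trouser leg or $p$-narrowest ideal triangle containing $p$, and conversely each such elementary surface $\zeta$ determines the precise subarc of $h_p$ giving rise to it. For the self-intersecting case, $\ell_x$ has a first self-intersection point $s_{\ell_x}$; the loop on $\ell_x$ from $s_{\ell_x}$ to itself is homotopic to a simple closed geodesic $\beta$, and sliding $x$ keeps this homotopy class constant until $\ell_x$ degenerates, at one end, to the geodesic $\alpha_{p\beta^+}$ spiralling onto $\beta$, and at the other end to a bi-infinite geodesic $\beta_p$; this identifies a maximal open subarc whose length is $W_p(\alpha_{p\beta^+},\beta_p)$ on each side of $\beta_p$, contributing $2Q(\beta,\beta_p)$ when $\ell_T\not\subset\partial\bS$ (because the subarc has two symmetric halves separated by the $\beta_p$ crossing) and $R(\beta,\beta_p)=R'+2Q$ when the limiting loop is a boundary circle. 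For the crown-arc case one gets an embedded ideal triangle $\tau(\alpha_{pa},\alpha_{pb})$ with the third side a boundary arc $ab$, i.e. a $p$-narrowest triangle by Definition \ref{defn:hp}, and the corresponding subarc length is $S(\alpha_{pa},\alpha_{pb})=W_{p}(\alpha_{pa},\alpha_{pb})$ by (\ref{Sfunct}). I would then check that these subarcs are pairwise disjoint (two distinct elementary surfaces cannot share an interior geodesic direction at $p$) and that their union exhausts $h_p\setminus\mathcal{G}_p$, so summing the lengths over all isotopy classes in ${\cal H}_p\setminus{\cal H}_p^\partial$, ${\cal H}_p^\partial$, and ${\cal H}_{\tau,p}$ recovers the total length $W_p$ of $h_p$, which is exactly (\ref{MSRF2}).

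The identification of the gap lengths with the potentials—$Q(\beta,\beta_p)=W_p(\alpha_{p\beta^+},\beta_p)$ in (\ref{D=W}), $R(\beta,\beta_p)=W_p(\beta,\beta_p)$ in (\ref{R=W}), $S=W_{p}(\alpha_{pa},\alpha_{pb})$—is already furnished by Lemmas \ref{L5.31}, \ref{L5.3} and formula (\ref{Sfunct}), together with the observation (\ref{EF1}) that the total horoarc length equals the potential; so the analytic content of those identities is done and I would just cite it. The one subtlety worth spelling out is the bookkeeping for whether the limiting simple closed curve lies on $\partial\bS$ or not: this is what splits ${\cal H}_p$ into ${\cal H}_p^\partial$ (boundary circle, contributing the full $R$) and its complement (interior geodesic, contributing $2Q$ because one traverses the direction of $\beta_p$ twice—once from each side—and the in-between arc degenerates), and similarly splits ${\cal H}_{\tau,p}$ into ${\cal H}^{(1)}_{\tau,p}$ and ${\cal H}^{(2)}_{\tau,p}$ according to the number of internal sides, though both types of triangle contribute a single $S$-term.

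The main obstacle is the topological/measure-theoretic verification that the three families of subarcs really do tile $h_p$ up to a null set—i.e. that every generic direction at $p$ falls into exactly one of the three cases and determines a \emph{maximal} subarc, that distinct elementary surfaces give disjoint subarcs, and that no positive-measure set of directions is left over. This is precisely the point where McShane's and Huang's arguments are delicate, and where the Birman--Series theorem (Theorem \ref{thm:bs}) is essential to discard the exceptional locus; the rest is essentially a careful case analysis of how a geodesic ray from $p$ can behave, organized by its first return/self-intersection or its first hit to the boundary. Since the excerpt ends at the statement of Theorem \ref{MSRF1} and the relevant gap-length computations are already available, I would present the proof as: (i) cite Theorem \ref{thm:bs} to reduce to generic directions; (ii) run the trichotomy and attach to each generic $x$ its elementary surface and maximal subarc; (iii) invoke Lemmas \ref{L5.31}, \ref{L5.3}, (\ref{Sfunct}) and (\ref{EF1}) to evaluate the subarc lengths; (iv) sum and conclude.
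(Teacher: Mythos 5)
Your proposal takes essentially the same route as the paper: the paper's proof is exactly this partition of the horoarc $h_p$ (of length $H_p=W_p$) into the gaps attached to trouser legs and $p$-narrowest ideal triangles, with the Birman--Series theorem (Theorem \ref{thm:bs}) discarding the measure-zero set of simple bi-infinite geodesics, and the gap lengths evaluated by Lemmas \ref{L5.31}, \ref{L5.3} and (\ref{Sfunct}). The only small imprecision is your description of the factor $2$: the two $Q$-gaps of an interior trouser leg are not ``two halves separated by the $\beta_p$ crossing'' but the two intervals adjacent to the two points of $h_p\cap\beta_p$, bounded by $\alpha_{p\beta^+}$ and $\alpha_{p\beta^-}$ respectively, which is how the paper accounts for the doubling (the two orientations of $\beta$); this does not affect the correctness of the argument.
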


\bpr It follows immediately from the above construction and Birman--Series \cite{BS85} theorem, adapted to the surfaces with crowns in Theorem \ref{thm:bs}. Indeed, the sum of the horoarc lengths of all intervals on the horoarc $h_p$ involved in the sum is the horocycle length $\H_p$ of the horoarc $h_p$. Note that $\H_p=W_p$. 
Observe that for non-boundary geodesics $\beta$, the term ${\cal D}(\beta,\beta_p)$ appears twice: for the trouser leg which orients the geodesic $\beta$  one way, and for another trouser leg where $\beta$ is oriented  the other way.  
\epr
\paragraph{Remarks.}

1. On Figure \ref{figure:tce}(1) for the ideal triangle $\tau(\alpha_{pa},\alpha_{pb})$, the potential $W_{p}(\alpha_{pa},\alpha_{pb})$ is the sum of potentials $W_{p}(\alpha_{pa},\ell)$ and $W_{p}(\ell,\alpha_{pb})$ for the two smaller ideal triangles. Counting   potentials $W_{p}(\alpha_{pa},\alpha_{pb})$, $W_{p}(\alpha_{pa},\ell)$ and $W_{p}(\ell,\alpha_{pb})$ doubles the horoarc length count for potential $W_{p}(\alpha_{pa},\alpha_{pb})$.  

2. On Figure \ref{figure:tce}(2) the ideal triangle $\tau(\ell, \alpha_{pb})$ lies inside  of the cylinder bounded by $\ell_1$ and $C_b$. Counting  potentials $W_{p}(\ell,\alpha_{pb})$ and $W_{p}(\alpha_{pa},\alpha_{pb})$ doubles the  horoarc length count for the $W_{p}(\ell,\alpha_{pb})$. 
\begin{figure}[ht]
	\centering
	\includegraphics[scale=0.25]{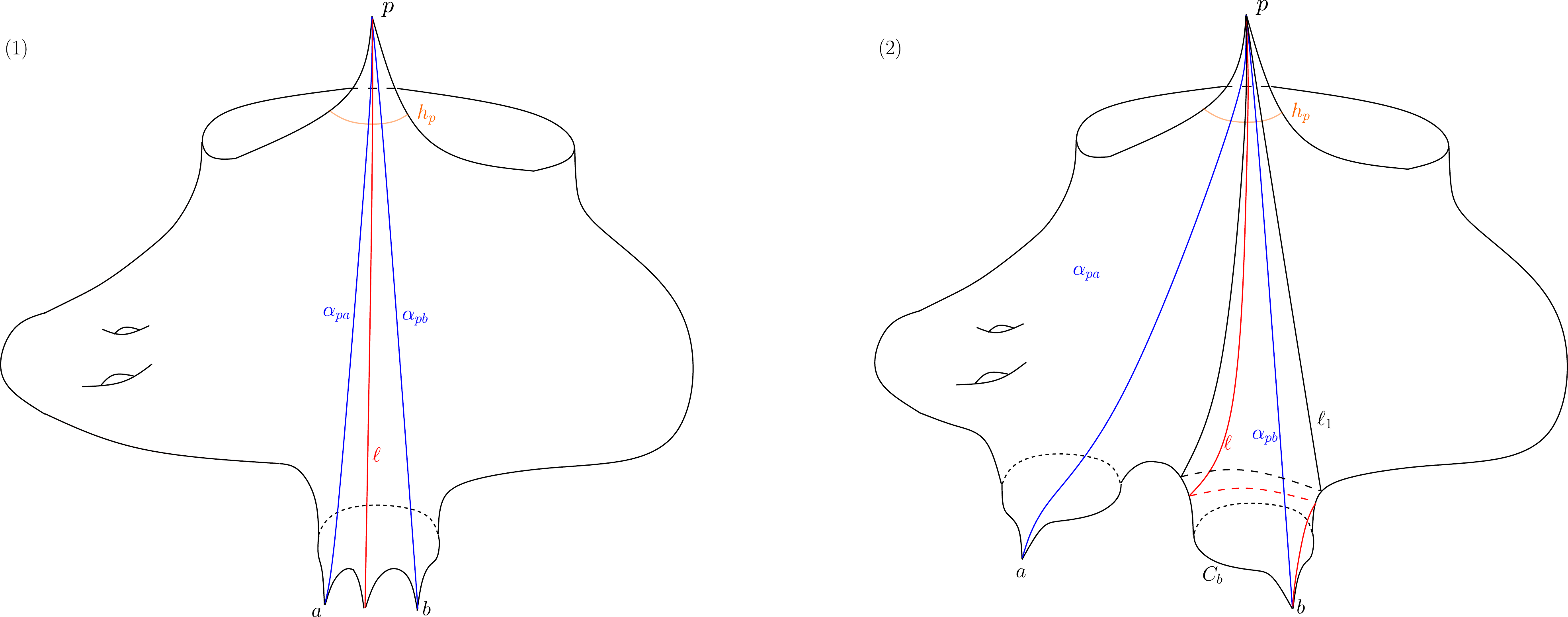}
	\small
	\caption{The opposite side of a $p$-narrowest triangle is a bi-infinite boundary geodesic. For example: 
	1) The two triangles $\tau(\alpha_{pa},\ell)$ and $\tau(\ell,\alpha_{pb})$ are two $p$-narrowest ideal triangles, while $\tau(\alpha_{pa},\alpha_{pb})$ is not. 2) The triangle $\tau(\ell, \alpha_{pb})$ is a $p$-narrowest ideal triangle, while $\tau(\alpha_{pa},\alpha_{pb})$ is not. }
	\label{figure:tce}
\end{figure}

\section{The unfolding formula}\la{Sec4}
 
\subsection{Geometry of  unfolding at the cusp $p$.} \la{cutting}

Let us equip $\bS$ with an ideal hyperbolic structure. Then we refer to  a trouser leg or an ideal triangle  on $\bS$ as a {\it geodesic trouser leg} and {\it ideal geodesic triangle}, respectively.   
  \vskip 1mm

1. Take an ideal geodesic triangle $\tau$, and  cut   $\bS$ along the  $\tau$:
$$
\bS = ({\bS-\tau}) \cup \tau.
$$
 If $\tau$ has two internal sides, then the  $\K-$coordinates $(\K_a, \K_b)$ at the sides of   $\bS-\tau $ and  $\tau$ corresponding to them  provide  two projections:
$$
 {\cal T}_{{\bS-\tau}} \lra (\R_{+}^\ast)^2, \qquad {\cal T}_{{\tau}} \lra (\R_{+}^\ast )^2. 
$$ 
Consider their fibered product
$
{\cal T}_{{\bS-\tau}} \times_{(\R_{+}^\ast)^2} {\cal T}_{{\tau}}. 
$
 It comes with the canonical projection
$$
(\K_a, \K_b):{\cal T}_{{\bS-\tau}} \times_{(\R_{+}^\ast)^2} {\cal T}_{{\tau}} \lra (\R_{+}^\ast)^2. 
$$

If the  triangle $\tau\subset \bS$ has a single internal side, there is a similar fibered product with the projection
$$
\K_a:{\cal T}_{{\bS-\tau}} \times_{\R_{+}^\ast } {\cal T}_{{\tau}} \lra  \R_{+}^\ast. 
$$
Let  ${\rm Stab}_\tau\subset {\rm Mod}(\bS)$ be the subgroup  stabilizing the triangle $\tau$.  One has 
\be \la{gr1}
{\rm Stab}_{\tau} = {\rm Mod}({\bS- \tau}).
\ee

\bp \la{Ptau} Cutting  $\bS$ along the $k$ sides of  an ideal triangle $\tau$ provides an isomorphism 
\be \la{itau}
i_\tau:   {\cal T}_{\bS} \stackrel{\sim}{\lra}    {\cal T}_{\bS-\tau}\times_{(\R^\ast_{+})^k} {\cal T}_{\tau} .
\ee
It is equivariant under the action of the group ${\rm Stab}_\tau$.
One has 
\be \la{118}
\begin{split}
&i_\tau^*\Omega_\bS(\K, \L) =   \Omega_{\bS-\tau}(\K', \L) \wedge d\log \K_a\wedge d\log \K_b; \ \ \mbox{if $k=2$}\\
&i_\tau^*\Omega_\bS(\K, \L) =  \Omega_{\bS-\tau}(\K', \L) \wedge d\log \K_a; \ \ \  \ \ \  \ \ \ \ \ \ \ \ \ \mbox{if $k=1$}.\\
&i_\tau^*W_\bS = W_{\bS-\tau} + W_\tau.\\
\end{split}
\ee
\ep
\bpr  
First two equalities follows from Proposition \ref{PROP2.9} 
The last  follows from Lemma  \ref{APP}.
\epr

  Passing to the quotient by the action of the group (\ref{gr1}) we get  an isomorphism
\be \la{isotau2}
{\cal M}_{\bS-\tau}\times_{(\R^\ast_{+})^k} {\cal M}_{\tau} \stackrel{\sim}{\lra} {\cal M}_{\bS, \tau}:= {\cal T}_{\bS}/{\rm Stab}_\tau.
\ee
There is the {\it unfolding map}: 
$$
{\cal M}_{\bS, \tau} = {\cal T}_{\bS}/{\rm Stab}_\tau\  \lra \ {\cal M}_{\bS} = {\cal T}_{\bS}/{\rm Mod}({\bS}).
$$

2. Similarly,  take a trouser leg $\T\subset \bS$ with  the sides $(\F, \ell_\T)$, and  cut the surface $\bS$ along the    $\T$:
$$
\bS = ({\bS-\T}) \cup \T.
$$
Recall the Fenchel--Nielsen length-twist coordinates  $(l,\theta)$ related to the loop $\ell_\T$. 
The $\K-$coordinate   and the 
length $l$ of the matching boundary components of  ${\bS-\T} $ and  $\T$   provide  projections 
$$
(\K, l):  {\cal T}_{{\bS-\T}} \lra \R_{+}^\ast\times \R_{+}, \qquad (\K, l): {\cal T}_{{\T}} \lra \R_{+}^\ast \times \R_{+}. 
$$ 
Let us consider their fibered product:
$$
{\cal T}_{{\bS-\T}} \times_{(\R_{+}^\ast\times \R_{+})} {\cal T}_{{\T}}. 
$$
The subgroup  ${\rm Stab}_{\T}$   stabilizing  $\T$ contains the Dehn twist 
$\D_\ell$ along the loop $\ell$, acting by $(l, \theta)\lms (l, \theta+l)$, and the subgroup ${\rm Mod}({\bS-\T})$, commuting with the Dehn twist. They generate the  group ${\rm Stab}_{\T}$:
\be
{\rm Stab}_{\T} = \langle \D_\ell\rangle \times {\rm Mod}({\bS-\T}).
\ee

\bl \la{PT} Cutting  $\bS$ along a trouser leg $\T=(a,\ell_T)\subset \bS$ provides a  ${\rm Mod}({\bS- \T})$-equivariant map
\be \la{SPT}
\begin{split}
&i_\T: {\cal T}_{\bS} \stackrel{\R}{\lra} {\cal T}_{\bS-\T}\times_{(\R^\ast_{+}\times\R_{+})} {\cal T}_{\T}, \ \ \ \ \mbox{if $\ell_\T\not \subset \partial \bS$};\\
&i_\T: {\cal T}_{\bS} \stackrel{\sim}{\lra} {\cal T}_{\bS-\T}\times_{\R^\ast_{+}} {\cal T}_{\T} \ \ \ \ \ \  \ \ \ \ \ \ \mbox{if $\ell_\T \subset \partial \bS$}.\\
\end{split}
\ee
 It is a principal  $\R-$fibration  parametrised by the twist parameter $\theta$ in the first case, and an isomorphism in the second. Recall $c_{\bS,\ell_T}$ in Proposition \ref{propcc}
One has 
\be \la{SPT1}
\begin{split}
&i_\T^*  \Omega_\bS(\K, \L)=c_{\bS,\ell_T}\cdot \Omega_{\bS-\T}(\K',\L') \wedge dl \wedge d\theta \wedge d\log \K_a\ \ \ \ \mbox{if $\ell_\T\not \subset \partial \bS$};\\
&i_\T^*  \Omega_\bS(\K, \L)= \Omega_{\bS-\T}(\K',\L) \wedge  d\log \K_a \ \ \ \ \  \ \  \ \ \ \ \ \  \ \ \mbox{if $\ell_\T \subset \partial \bS$}.\\
&i_\T^*W_\bS = W_{\bS-\T} + W_\T.\\
\end{split}
\ee
\el

\bpr The claims (\ref{SPT}) are the standard properties of the Teichmuller spaces. 
Proposition \ref{PROP2.9} and Proposition \ref{propcc} imply the first line in equation (\ref{SPT1}). 
The last two lines in (\ref{SPT1}) follow from Proposition \ref{PROP2.9} and 
Lemma \ref{APP}.
\epr

After the quotient by the action of the group ${\rm Stab}_\T$, the map $i_{\rm T}$ provides  an $S^1-$fibration
\be \la{opp}
{\cal M}_{\bS, \T}:= {\cal T}_{\bS}/{\rm Stab}_\T \stackrel{S^1}{\lra} {\cal M}_{\bS-\T}\times_{(\R^\ast_{+}\times \R_+)} {\cal M}_{\T}.
\ee

Passing  to the quotient by ${\rm Mod}(\bS)$  we get   the {unfolding map} 
${\cal M}_{\bS, \T}\lra {\cal M}_{\bS}$.
\vskip 1mm

Recall the exponential volume form   
$$
{\Bbb E}_\bS= 
\ e^{-W_\bS}\ \Omega_\bS,
$$
and  (\ref{eqelevol}).
Therefore  (\ref{118}) and Proposition \ref{PROP2.9+}   $\&$ (\ref{SPT1}) imply the   {\it factorization property} of exponential volume forms   under the cutting of $\bS$: 
\be \la{FEV}
\begin{split}
&i_\tau^*{\Bbb E}_\bS =
 {\cal E}_{\tau} \cdot {\Bbb E}_{\bS-\tau} \wedge d\log \K_a \wedge d\log \K_b \ \ \mbox{if $k=2$};\\
&i_\tau^*{\Bbb E}_\bS=
 {\cal E}_{\tau} \cdot {\Bbb E}_{\bS-\tau}  \wedge d\log \K_a \  \qquad \qquad \quad \mbox{if $k=1$};\\
&i_\T^*{\Bbb E}_\bS = c_{\bS,\gamma}\cdot {\cal E}_{\T}\cdot {\Bbb E}_{\bS-\T}  \wedge dl \wedge d\theta \wedge d\log \K_a;\\
&i_\T^*{\Bbb E}_\bS = 
 {\cal E}_{\T}\cdot {\Bbb E}_{\bS-\T}   \wedge d\log \K_a.\\
\end{split}
\ee

 Summarizing, we have the following spaces, highlighting the base coordinates:
\be \la{FP31}
\begin{split}
&{\cal M}_{\bS, \T}(\K_a, l, \theta)\stackrel{S^1}{\lra}{\cal M}_{\bS-\T}\times_{(\R^\ast_{+}\times \R_+)} {\cal M}_{\T}, \qquad (\K_a,l, \theta) \in \R^\ast_{+}\times \R_+\times S^1,\ell_\T \not \subset \partial\bS;\\
&{\cal M}_{\bS, \T}(\K_a)\stackrel{\sim}{\lra}{\cal M}_{\bS-\T}\times_{\R^\ast_{+}} {\cal M}_{\T}, \qquad  \  \ \ \qquad \ \ \ \ \K_a \in \R^\ast_{+}, \quad \   \ \ \ \ \ \ \ \ell_\T \subset \partial\bS;\\
&{\cal M}_{\bS,   {\tau}}(\K_a, \K_b)\stackrel{\sim}{\lra} 
{\cal M}_{\bS-\tau}\times_{(\R^\ast_{+})^2} {\cal M}_{\tau}, \qquad\ \  \  \quad \K_a,\K_b \in (\R^\ast_{+})^2, \quad k=2;\\
&{\cal M}_{\bS,  {\tau}}(\K_a)\stackrel{\sim}{\lra} 
{\cal M}_{\bS, \tau}\times_{\R^\ast_{+}} {\cal M}_{\tau}, \qquad\ \ \   \ \quad \qquad \  \ \K_a \in \R^\ast_{+}, \qquad \qquad k=1. \\
\end{split}
\ee
Let us  now elaborate  Theorem \ref{T5.5I}.  Let $d_\T$ be $\frac{1}{2}$ if $\T$ is cutting off a one-holed torus, or $1$ otherwise.

\begin{theorem} \la{T5.5}
For any  decorated surface $\bS$ and a crown with  a single cusp $p$, and any smooth function $f$ on the moduli space ${\cal M}_\bS({\K}, \L)$,  we have the unfolding formula
\begin{equation} \la{recur}
\begin{aligned}
&\int _{{\cal M}_\bS({\K}, \L)}f \ W_p\ {\Bbb E}_\bS=\\
&\sum_{{\T}} 
\int_{  {\cal M}_{\bS,  {\T}}(\K,\L,\K_a)}     f\cdot   (e^{-W_{\rm T}} W_{\T})(\K_a, l, \theta)\cdot 
 \  {\Bbb E}_{\bS-{\rm T}}    \wedge d\log \K
\\
&+\sum_{{\T} }  
 c_{\bS,\gamma} d_\T \cdot \int_{ {\cal M}_{\bS, \T}(\K, \L, l,\K_a) }    f\cdot   (e^{-W_\T}Q_\T)(\K_a, l, \theta)\cdot 
\ {\Bbb E}_{\bS-{\T}}  \wedge   dl \wedge  d\theta\wedge  d\log \K_a\\
&+\sum_{{\tau_1} }    
 \int_{ {\cal M}_{\bS, \tau_1}(\K_a)}  
   f\cdot  (e^{-W_\tau } W_{\tau_1, p})(\K_a, \K_b, \K_p)   \cdot 
  \  {\Bbb E}_{\bS-\tau_1} \wedge d\log \K_a\\
  &+\sum_{{\tau_2} }   
  \int_{ {\cal M}_{\bS, \tau_2}(\K_a, \K_b)}  
f\cdot(e^{-W_\tau } W_{\tau_2, p})(\K_a, \K_b, \K_p)   \cdot 
  \  {\Bbb E}_{\bS-\tau_2} \wedge d\log \K_a\wedge  d\log \K_b.\\
  \end{aligned}
\end{equation}

The first (respectively the second) sum is over topological types of  embedded trouser legs ${\T}$ containing  $p$, where
$\ell_\T\subset \partial \bS$  (respectively $\ell_\T \not \subset \partial \bS$). 

The third (respectively the fourth) sum is over  topological types of the embedded $p$-narrowest ideal triangles $\tau$ containing $p$ with a single internal side (respectively with two internal sides). 
\end{theorem}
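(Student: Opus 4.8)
The plan is to derive the unfolding formula (\ref{recur}) by combining three ingredients that have already been set up in the excerpt: the generalized McShane identity of Theorem \ref{MSRF1}, the factorization property (\ref{FEV}) of the exponential volume form under cutting along an elementary subsurface, and the description of the cut Teichm\"uller spaces as fibered products in (\ref{FP31}). First I would start from the left-hand side $\int_{{\cal M}_\bS({\K},\L)}f\,W_p\,{\Bbb E}_\bS$ and use the McShane identity (\ref{MSRF2}) to rewrite the integrand: since $W_p$ equals the horocycle length $H_p$, Theorem \ref{MSRF1} gives
\[
W_p=\sum_{\T(\beta,\beta_p)\in{\cal H}_p-{\cal H}_p^\partial}2Q(\beta,\beta_p)+\sum_{\T(\beta,\beta_p)\in{\cal H}_p^\partial}R(\beta,\beta_p)+\sum_{\tau(\alpha_{pa},\alpha_{pb})\in{\cal H}_{\tau,p}}S(\alpha_{pa},\alpha_{pb}),
\]
a ${\rm Mod}(\bS)$-invariant pointwise identity on ${\cal T}_\bS({\K},\L)$. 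Substituting this into the integral and interchanging sum and integral (justified by positivity of all terms, or by the finiteness results of Theorems \ref{MTHO1} and \ref{MTHO}) splits the left-hand side into a sum indexed by isotopy classes of embedded trouser legs and $p$-narrowest ideal triangles.

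Next I would organize each of these summands by ${\rm Mod}(\bS)$-orbits. For a fixed orbit represented by an elementary subsurface $\zeta$ (a trouser leg or an ideal triangle containing $p$), the sum over the isotopy classes in that orbit of $\int_{{\cal M}_\bS({\K},\L)}f\cdot(\text{gap term for }\zeta')\,{\Bbb E}_\bS$ unfolds, by the standard orbit-unfolding argument, into a single integral over the intermediate moduli space ${\cal M}_{\bS,\zeta}={\cal T}_\bS/{\rm Stab}_\zeta$ as in (\ref{MZETA}). Here one uses that ${\rm Stab}_\zeta$ acts on ${\cal T}_\bS$ with the gap function (which is ${\rm Stab}_\zeta$-invariant, being intrinsic to $\zeta$) descending, so that $\sum_{\zeta'\in[\zeta]}\int_{{\cal M}_\bS}(\cdots)=\int_{{\cal M}_{\bS,\zeta}}(\cdots)$. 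The factor $2$ in front of $Q$ for non-boundary trouser legs, respectively the factor $d_\T$, is accounted for exactly as in Mirzakhani's argument: each such $\beta$ is counted once for each of its two orientations, and when $\zeta$ cuts off a one-holed torus the hyperelliptic involution halves the stabilizer count — this is the bookkeeping point to be careful with, and it is already flagged in the statement via $d_\T=\tfrac12$ and the constant $\rC_{\bS,\ell}=2^{-\mu_\ell}c_{\bS,\ell}$.

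Then I would invoke the factorization (\ref{FEV}) together with the fibered-product identifications (\ref{FP31}) to rewrite each integral over ${\cal M}_{\bS,\zeta}$ as an integral over the corresponding fibered product ${\cal M}_{\bS-\zeta}\times_{{\cal C}_{\zeta,p}}{\cal M}_\zeta$. Concretely, $i_\zeta^*{\Bbb E}_\bS={\cal E}_\zeta\cdot{\Bbb E}_{\bS-\zeta}\wedge{\rm vol}_{{\cal C}_{\zeta,p}}$ by (\ref{FEV}), and the gap term attached to $\zeta$ — which is $R_{\T}=W_{p}(\beta,\beta_p)$, $Q_{\T}=W_p(\alpha_{p\beta^+},\beta_p)$, or $S=W_{p}(\alpha_{pa},\alpha_{pb})$ according to type — is precisely the recursion kernel ${\cal R}_{\zeta,p}$ of (\ref{gapfunctII}), by Lemmas \ref{L5.31}, \ref{L5.3} and formula (\ref{Sfunct}). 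Since $f$ and the gap term are functions on the Teichm\"uller space that descend appropriately, and since ${\cal M}_\zeta$ is a point so that ${\cal E}_\zeta$ is just the scalar $e^{-W_\zeta}$, pulling everything through the isomorphism/$S^1$-bundle in (\ref{FP31}) (and integrating out $\theta$ over $S^1$, which contributes the $d\theta$ in the second sum and no factor since the form already carries it) yields exactly the four sums on the right-hand side of (\ref{recur}): boundary trouser legs (with kernel $\K^{1/2}(e^{l/2}+e^{-l/2})$ and base form $d\log\K$), interior trouser legs (with kernel $\K^{1/2}e^{-l/2}$ and base form $dl\wedge d\theta\wedge d\log\K_a$, times $c_{\bS,\gamma}d_\T$), one-internal-side triangles, and two-internal-side triangles.

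The main obstacle I expect is the precise combinatorial/group-theoretic bookkeeping of the unfolding step: matching ${\rm Mod}(\bS)$-orbits of embedded elementary subsurfaces with the intermediate quotients ${\cal M}_{\bS,\zeta}$, verifying that each McShane term is counted with the correct multiplicity (the factor $2$ for orientations of a non-boundary $\beta$, and the factor $d_\T$ from the extra symmetry of a one-holed torus), and checking that the measure-theoretic unfolding $\sum_{[\zeta']}\int_{{\cal M}_\bS}=\int_{{\cal M}_{\bS,\zeta}}$ is valid — which requires knowing that the relevant stabilizers act properly and that the ${\rm Mod}(\bS)$-orbit of each $\zeta$ is in bijection with ${\rm Mod}(\bS)/{\rm Stab}_\zeta$ with no fixed-point pathologies beyond the hyperelliptic one. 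The analytic part (Fubini, convergence) is comparatively routine given Theorems \ref{MTHO1} and \ref{MTHO}, and the geometric part (factorization of ${\Bbb E}_\bS$, identification of kernels with local potentials) is already essentially done in Section \ref{Sec2} and Section \ref{Sec4.1}; so the real work is the careful orbit enumeration, exactly as in Mirzakhani's original recursion but now with two families (triangles and trouser legs) instead of one (pairs of pants).
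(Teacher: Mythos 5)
Your proposal follows essentially the same route as the paper's own proof: substitute the generalized McShane identity (\ref{MSRF2}) for $W_p$, unfold each ${\rm Mod}(\bS)$-orbit sum into a single integral over the intermediate quotient ${\cal M}_{\bS,\zeta}={\cal T}_\bS/{\rm Stab}_\zeta$ (the paper does this via the covers (\ref{cov})--(\ref{0011}) and Mirzakhani's Lemma 7.3, with $d_\T$ coming from the hyperelliptic involution as in her Formula 7.5), and then apply the factorization (\ref{FEV}) together with the fibered-product identifications (\ref{itau}), (\ref{isotau2}), (\ref{opp}) to land on the four sums of (\ref{recur}). The ingredients, their order of use, and the bookkeeping points you flag (orientation doubling, $d_\T$, the $S^1$/twist direction) coincide with the paper's argument.
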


\begin{remark}
 \begin{figure}[ht]
	\centering
	\includegraphics[scale=0.5]{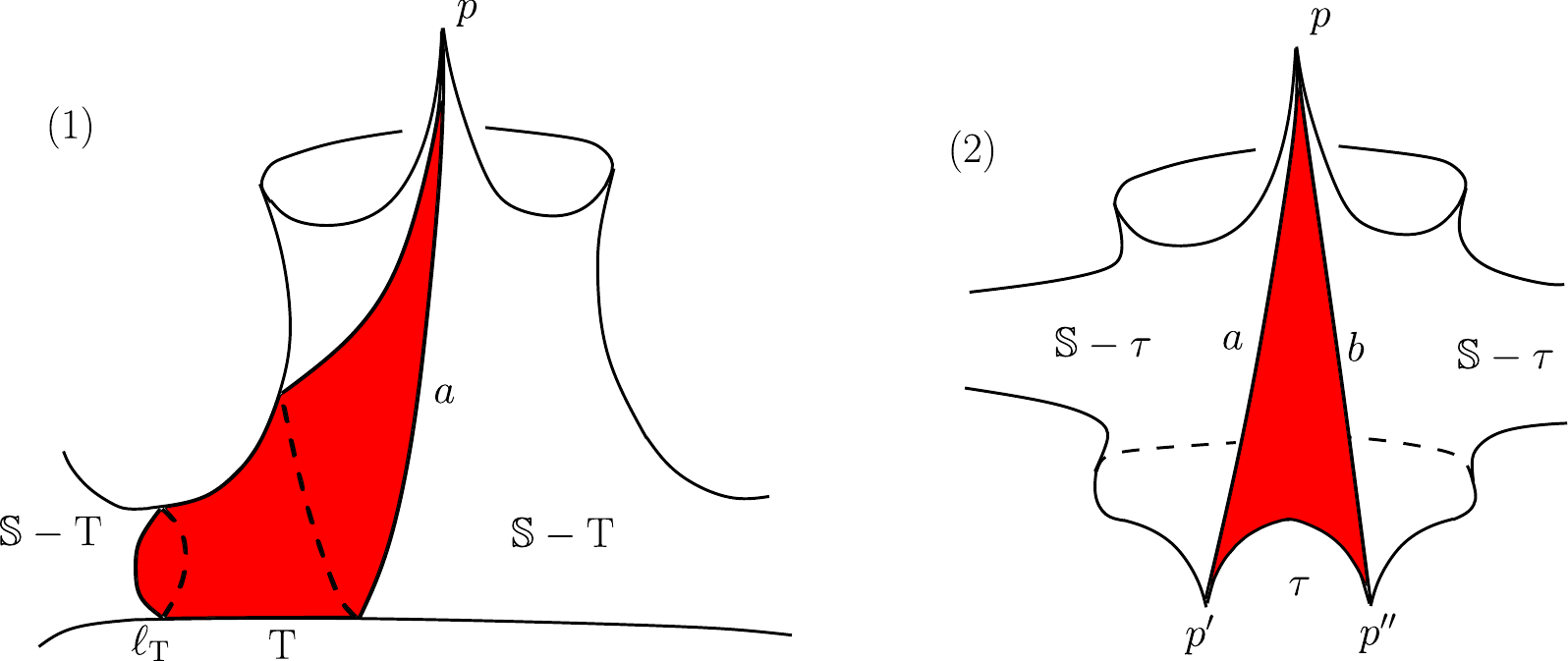}
	\small
	\caption{Figure (1) for cutting the trouser leg. Figure (2) for cutting the $p$-narrowest ideal triangle.}
	\label{figure:cuttriangle}
\end{figure}
Let us denote the decorated surface $\bS$ by $S_{g,m;n_1,...,n_r}$ where the number of boundary components is $m$ and the crowns $\rC_i$ is of type $n_i$.
There are several cases of the surface after cutting off a trouser leg or a $p$-narrowest ideal triangle.

When we cut off a trouser leg $\T$ from the surface $\bS$ as in Figure (1),
\begin{enumerate}
\item if the boundary circle $\ell_\T$ of $\T$ is also contained in the boundary of $\bS$, in this case the ideal edge $a$ is connected to the crown of type $n_1$ through the cusp $p$. Thus $\bS-\T=S_{g,m-1;n_1+1,n_2,...,n_r}$ and it corresponds to the second line of formula (\ref{recur});
\item if $\bS-\T$ is connected and the boundary circle $\ell_\T$ is not contained in the boundary of $\bS$, the left side and the right side of Figure \ref{figure:cuttriangle}(1) is connected somewhere. Cutting off $\T$ will decrease the genus $g$ by $1$, increse the number of boundary components $m$ by $1$, and change the type $n_1$ crown containing $p$ into type $n_1+1$ crown as the last case. Thus $\bS-\T=S_{g-1,m+1;n_1+1,n_2,...,n_r}$ and it corresponds to the part of the third line of formula (\ref{recur});
\item if $\bS-\T$ is not connected, the trouser leg $\T$ cuts $\bS$ into two connected components. Suppose the left side connected component as in Figure \ref{figure:cuttriangle}(1) is $S_{g_1,m_1+1;n_{k+1},...,n_r}$, then $\bS-\T=S_{g_1,m_1+1;n_{k+1},...,n_r} \cup S_{g-g_1,m-m_1;n_{1}+1,n_2,...,n_k}$ and it corresponds to the other part of the third line of formula (\ref{recur}).
\end{enumerate}
As in Figure (2), the ideal points $p'$ and $p''$ could coincide. Moreover $p, p', p''$ could also coincide, then some of $n_k$ has to be $1$, and in this case $\bS-\tau$ could be two connected components. When we cut off a $p$-narrowest ideal triangle $\tau$ from the surface $\bS$,
\begin{enumerate}
\item if one of the ideal edges $a$ and $b$ of $\tau$ is also contained in the boundary of $\bS$ even though the other ideal edge of $\tau$ is already contained in the boundary of $\bS$, then $\tau$ is contained in some crown of type $n_1$($n_1\geq 2$) containing the cusp $p$. Cutting off $\tau$ will only change this type $n_1$ crown into type $n_1-1$ crown. Thus $\bS-\tau=S_{g,m;n_1-1,n_2,...,n_r}$ and it corresponds to the fourth line of formula (\ref{recur});
\item if both of the ideal edges $a$ and $b$ are not contained in the boundary of $\bS$ and $\bS-\T$ is connected, cutting off the ideal triangle $\tau$ will combine the two crowns touching $\tau$ into a single crown. Namely, the type $n_1$ crown and the type $n_2$ crown is combined into the type $n_1+n_2+1$ crown. Thus $\bS-\tau=S_{g,m-1;n_1+n_2+1,n_3,...,n_r}$ and it corresponds to the part of the fifth line of formula (\ref{recur});
\item if $\bS-\tau$ is not connected, then the ideal triangle $\tau$ has to be the ideal triangle with a single vertex $p$ as in Figure \ref{figure:crpp} and $\tau$ cuts $\bS$ into two connected components. In this case, the crown containing $p$ is of type $n_1=1$. Suppose the left side connected component as in Figure \ref{figure:cuttriangle}(2) is $S_{g_1,m_1+1;1, n_{k+1},...,n_r}$, then $\bS-\tau=S_{g_1,m_1+1;1, n_{k+1},...,n_{r}} \cup S_{g-g_1,m-m_1;1,n_{2},...,n_{k}}$ and it corresponds to the other part of the fifth line of formula (\ref{recur}).
\end{enumerate}
\end{remark}

\subsection{Proof of Theorem \ref{T5.5}.}
 
 \bpr Let $\{\gamma\}=(\gamma_1,..., \gamma_s)$ be a collection of disjoint geodesics, where $\gamma_1$, $...$, $\gamma_{s-s'}$ are simple bi-infinite geodesics and the rest are simple closed geodesics, see Figure \ref{figure:sgamma}. 
 Consider a cover  
\be \la{cov}
\pi_{\{\gamma\}}: \mathcal{M}_{\bS}({\K},{\L}; {\{\gamma\}}) \lra \mathcal{M}_{\bS}({\K},{\L})
\ee
given by pairs $(\Sigma,  \{\eta\})$, where  $\Sigma\in \mathcal{M}_{\bS}({\K},{\L})$,  and $\{\eta\}$ is an  element of the ${\rm Mod}(\bS)-$orbit of  $\{\gamma\}$. 
 
 \begin{figure}[ht]
	\centering
	\includegraphics[scale=0.5]{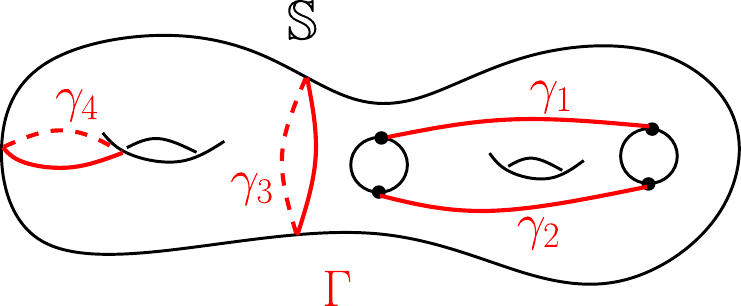}
	\small
	\caption{Bi-infinite geodesics $\gamma_1, \gamma_2$ and simple closed geodesics $\gamma_3, \gamma_4$.}
	\label{figure:sgamma}
\end{figure}

Consider the subgroup    of the pure mapping class group stabilising each  element   of the collection $\{\gamma\}$:
$$
{\rm Stab}_{\{\gamma\}} 
\subset \rm{Mod}(\bS).
$$
Then   
\be
\begin{split}
&\mathcal{M}_{\bS}({\K}, \L)=\mathcal{T}_{\bS}({\K},{\L})/{\rm Mod}(\bS),\\
&\mathcal{M}_{\bS, \{\gamma\}}({\K},{\L})=\mathcal{T}_{\bS}({\K},{\L})/{\rm Stab}_{\{\gamma\}}.\\
\end{split}
\ee
Given an integrable function $f$ on  $\mathcal{M}_{\bS, \{\gamma\}}({\K},{\L})$, let $\pi_{\{\gamma\} \ast}f$ be its push forward by the  map  (\ref{cov}). Then 
\be \la{0011}
\int_{\mathcal{M}_{\bS}({\K},{\L})}\pi_{\{\gamma\} \ast}f \  {\rm{d} Vol} = \int_{\mathcal{M}_{\bS, \{\gamma\}}({\K},{\L})} f \ \rm{d} Vol. 
\ee

The two interesting cases   of the collection  $\{\gamma\}$ are: 
\vskip 1mm

(1)  
 The $k$  internal sides of an ideal triangle $\tau$, which has    a vertex at the cusp $p$. Here $k=1$ or $2$. 

 (2) The boundary of a trouser leg $\rm T$. So it  is  the union of a 
bi-infinite geodesic and a geodesic loop. 
\vskip 1mm

We integrate  the function $fW_p$ over the moduli space $ {\cal T}_\bS({\K}, \L)/{\rm Mod}(\bS)$  over the exponential measure:
 $$
 \int _{{\cal T}_\bS({\K}, \L)/{\rm Mod}(\bS)}f \ W_p\ {\Bbb E}_\bS.
 $$
 Using McShane identity (\ref{MSRF2}), we present $W_p$ as a finite collection of sums corresponding to the topological types of the ideal triangles $\tau$ and trouser legs $\T$ containing the cusp $p$. Each of these sums is over a ${\rm Mod}(\bS)-$orbit of such a $\tau$ or $\T$. Now we have several cases to consider. \vskip 2mm

 1. Take  the sum over the orbit of an ideal triangle $\tau$. The triangle can have two or one internal edges. 
 
 ($1'$) {Let $\{\tau\}=\ell_a\cup\ell_b$ be the union of two ideal edges of $\tau$ other than the boundary edge. Recall 
 \[\left|\mathrm{Stab}_{\{\tau\}}/\left(\mathrm{Stab}_{\ell_a}\cap \mathrm{Stab}_{\ell_b}\right)\right|=1.\]
Then by \cite[Lemma 7.3]{Mir07a}
\[\sum_{\alpha\in \mathrm{Mod}(\bS)\{\tau\}} f= \pi_{\{\tau\} \ast}f. \]}

It can be written as 
 a single integral over the unfolding cover 
 $$
 \pi_{\tau}: {\cal M}_{\bS, \tau}\lra {\cal M}_{\bS}
 $$
 of the product of the function $f$ by the potential function $W_{\tau, p} = S(\alpha_{pa},\alpha_{pb})$: 
 $$
 \int _{{\cal T}_\bS({\K}, \L)/ {\rm Stab}_\tau}f \ W_{\tau,p}\ {\Bbb E}_\bS.
 $$
Using the isomorphism $i_\tau$ in (\ref{itau}), and since $k=2$ in the ($1')$ case,  we can write it as 
 \be \la{opp1}
\begin{split}
&\int _{ {\cal T}_{\bS-\tau}\times_{(\R^\ast_{+})^2} {\cal T}_{\tau} / {\rm Stab}_\tau}f \ W_{\tau,p}\ i_\tau^*({\Bbb E}_\bS) \\
 \stackrel{(\ref{isotau2})}{=}&  \int _{ {\cal M}_{\bS-\tau}\times_{(\R^\ast_{+})^2} {\cal M}_{\tau}}f \ W_{\tau,p}\ i_\tau^*({\Bbb E}_\bS)\\
 \stackrel{(\ref{FEV})}{=}&    \int _{ {\cal M}_{\bS-\tau}\times_{(\R^\ast_{+})^2} {\cal M}_{\tau}}f e^{-W_\tau}\ W_{\tau,p}\ {\Bbb E}_{\bS-\tau} \wedge d\log \K_a \wedge d\log \K_b.\\
  \end{split}
 \ee
 
The bottom line here is exactly the bottom line in formula (\ref{recur}). \vskip 2mm
 
($1''$)  The same argument for the ideal triangles $\tau$ with a single internal side gives the last line in  (\ref{recur}).\vskip 2mm

 2. Take  the sum over the orbit of a trouser leg $\T$. It can be written as 
 a single integral over the unfolding cover 
 $$
 \pi_{\T}: {\cal M}_{\bS, \T}\lra {\cal M}_{\bS}
 $$
  of the product of the function $f$ by either the function ${\cal Q}_{\T}$ or ${W}_\T$. {By \cite[formula 7.5]{Mir07a}, the factor $d_{\T}$ comes from the hyperelliptic evolution of the one-holed torus cut out from the closed geodesic of the touser leg $\T$.} The rest of the argument follows the same lines as above, using (\ref{opp}) instead of (\ref{isotau2}) in (\ref{opp1}). Theorem \ref{T5.5} is proved. \epr

\subsection{The reduced unfolding formula.} \la{Sect5.2} 
Given a crown with a single cusp $p$, there is a canonical trouser leg $\T_p^\sharp$ for which   the boundary geodesic loop  is  the neck geodesic for the crown. 
 The trouser leg $\T_p^\sharp$ is preserved by the mapping class group ${\rm Mod}(\bS)$. 
All other trouser legs  at the cusp $p$ have infinite orbits  under the mapping class group action. So the related terms in the unfolding formula 
are  integrals over  moduli spaces which are  simpler than the original one. 
However this is not the case for the two terms assigned to the  $\T_p^\sharp$, which we will discuss momentarily.  We subtract them from  the unfolding formula, getting {\it the reduced unfolding formula}. Then 
 we can express the exponential volume of the moduli space inductively  via  simpler surfaces.

 \begin{figure}[ht]
	\centering
	\includegraphics[scale=0.23]{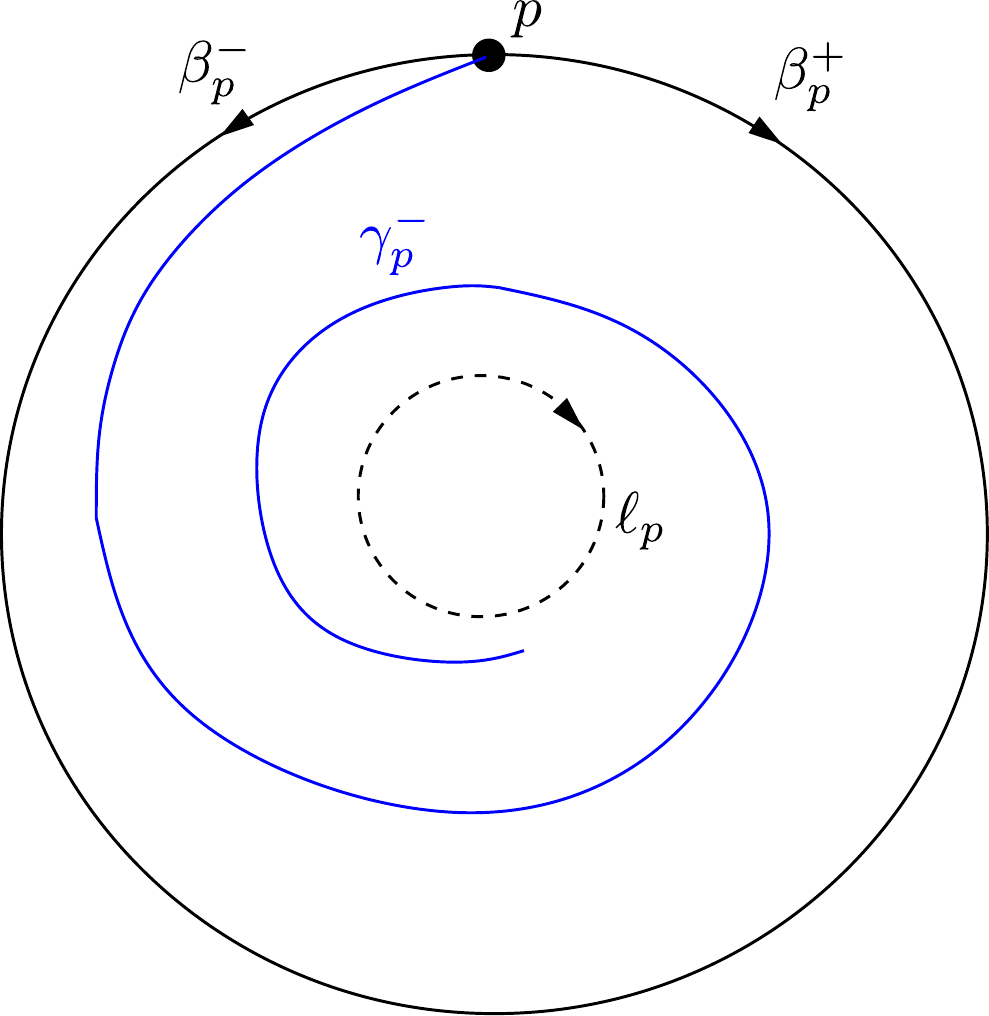}
	\small
	\caption{Boundary geodesics $\beta_p^-, \beta_p^+$ and the geodesic $\gamma_p^-$ winding around the neck geodesic $\ell_p$. The geodesic $\gamma_p^+$ is not shown.}
	\label{figure:tls}
\end{figure}

There are four   ${\rm Mod}(\bS)-$invariant {\it oriented} bi-infinite geodesics starting at  $p$, see Figure \ref{figure:tls}, see also 
 paragraph 3 of Section \ref{SECT6}  for the key example when $\bS$ is an annulus:

\begin{itemize}
\item The geodesics $\gamma^-_{p}$ and $\gamma^+_{p}$ winding in the opposite directions around the neck geodesic $\ell_p$. 

\item 
 The boundary geodesic  with two possible orientations, denoted by   $\beta^-_{p}$ and $\beta^+_{p}$.  
\end{itemize}

The orientation of  $\bS$ near $p$  induces the following order  of the oriented geodesics:
 $
\beta^-_p, \gamma^-_{p}, \gamma^+_{p}, \beta_p^+$. 
\vskip 2mm

We denote by  $
W^-_p, W^\sharp_p, W^+_p
$ 
 are the areas of  triangles cut  by the horocycle $h_p$ and  the  geodesics
$
(\beta_p^-, \gamma^-_{p}), \ (\gamma^-_{p}, \gamma_p^+),  \ (\gamma^+_{p}, \beta_p^+), 
$ respectively. 
Clearly  one has 
$$
W_p = W^-_p + W^\sharp_p + W^+_p, \quad 
W_p^-=W_p^+.
$$
 So 
  the {\it reduced potential} $W_p^\sharp$ is given by 
\be \la{15}
W^\sharp_p = W_p - 2W^+_p.
\ee

   The sum of the two terms   in   the first line of the RHS of (\ref{recur**}) corresponding to the trouser leg $\T^\sharp_p$ with   two possible orientations of the boundary bi-infinite geodesic $\beta_p$,  which we call   {\it the $\T^\sharp_p-$term}, is given by
\be \la{MUF}
\int _{{\cal M}_\bS({\K}, \L)}f \ (W^+_p + W_p^-) \  {\Bbb E}_\bS.
\ee

Subtracting (\ref{MUF}) from  the both parts of   (\ref{recur**}), and using (\ref{15}), we get the {\it reduced unfolding formula}: 
\begin{equation} \la{recur***}
\begin{aligned}
&\int _{{\cal M}_\bS({\K}, \L)}f \ W^\sharp_p\  {\Bbb E}_\bS\ = \ \mbox{the RHS of formula (\ref{recur**})} - \{\mbox{the $\T^\sharp_p-$term}\}.\\
  \end{aligned}
\end{equation}
Then each integral on the right hand side is  the integration over the moduli space for a  surface which is simpler than the original one.

   \section{Tropicalized exponential volumes of moduli spaces} \la{tropical}

    \subsection{Tropicalized moduli spaces and their geometric interpretations} \la{sect6.1}
 
  Recall that a {\it semifield $\Bbb P$} is a set with the operations of addition, multiplication and division, satisfying the usual axioms. In particular $\Bbb P$ is an abelian group 
 for the multiplication and division.  
 
 The  most important example for us is the semifield of positive real numbers $\R_{>0}$.  
 
 Other important examples  are   the {\it tropical semifields} ${\Bbb A}^t$ associated with the following  abelian groups: 
$$
{\Bbb A}= \Z, \quad {\Bbb A}=\Q, \quad {\Bbb A}=\R.
$$
The semifield structure is given by 
$
\alpha \oplus \beta:= {\rm max}(\alpha, \beta), \ \ \alpha\otimes b := \alpha+\beta, \ \ \alpha : \beta := \alpha-\beta.\ \ 
$ \vskip 2mm

 Since the space $\mathcal{P}_\bS$ has ${\rm Mod}(\bS)-$equivariant cluster Poisson structure, and since the cluster Poisson transformations    are subtraction free, we can  consider the set ${\mathcal{P}}_{\bS}(\Bbb P)$ of  points   of the space  ${\cal P}_{ \bS}$ with values in any semifield $\Bbb P$, see  \cite[Section 1.1]{FG03b}. 
 Namely, a tropical $\Bbb P-$point is defined by assigning   to each cluster coordinate system ${\bf c}$ a  collection of  elements 
 $$
 (\chi^{\bf c}_1, ..., \chi^{\bf c}_n)\in {\Bbb P}^n, \ \ \ \ \ n = {\rm dim}{\cal P}_{\bS},
 $$
 related 
by  tropicalized cluster Poisson transformations.  For example,  positive points of ${\cal P}_{ \bS}$ are the  points of ${\cal P}_{  \bS}$ whose coordinates in one, and hence any, cluster Poisson coordinate system are positive real numbers.  
The modular group ${\rm Mod}(\bS)$ acts on the set ${\cal P}_{\bS}(\Bbb P)$. So we can 
consider the {\it enhanced moduli orbifold/orbiset}
$$
{\cal M}^{\rm en}_\bS(\Bbb P):= {\cal P}_{\bS}(\Bbb P)/{\rm Mod}(\bS).
$$

 In particular we have sets of positive, and tropical real, rational and integral  points of the space ${\cal P}_\bS$:
 \be \la{es}
 {\cal P}_\bS(\R_{>0}), \quad  {\cal P}_\bS(\R^t), \quad {\cal P}_\bS(\Q^t), \quad {\cal P}_\bS(\Z^t).
\ee
 Here $ {\cal P}_\bS(\R_{>0})$ is a manifold, ${\cal P}_\bS(\R^t)$ is a piecewise linear manifold, and ${\cal P}_\bS(\Z^t)$ is  its discrete 
 subset:
 $$
 {\cal P}_\bS(\Z^t)\subset {\cal P}_\bS(\R^t).
  $$ 
Each of the sets (\ref{es}) has a geometric interpretation via the Teichm\"uller theory 
\cite[Section 12]{FG03a}, see also   \cite{FG05}, briefly discussed below. 
   \vskip 2mm

 First, the  triple $({\cal T}^{\rm en}_\bS, W, \Omega)$, where  ${\cal T}^{\rm en}_\bS$ is the enhanced Teichmuller space,  
has an algebraic geometric avatar:   
  the stack ${\cal P}_{ \bS}$ with  the   ${\rm Mod}(\bS)-$invariant   volume form $\Omega_\bS$, and 
  a  regular potential function $
  {W}= \sum_{p}{W}_p$. Indeed, by Theorem \ref{P=X},  
$$
{\cal T}^{\rm en}_\bS= {\cal P}_{ \bS}(\R_{>0}), 
 $$   
and the volume form and  local potentials    on the enhanced Teichm\"uller space are the restrictions of the form $\Omega_\bS$ and the functions $W_p$  to the  positive locus.      
\vskip 1mm

Next, the  points of the space ${\cal P}_{  \bS}$ with values in the tropical semifields $\Bbb A^t$, where $\Bbb A=\Z, \Q, \R$, are identified,  respectively, with the sets of  {\it  integral, rational, and real laminations} on $\bS$. 
 This follows from the description of   ${\cal X}-$laminations on a decorated surface, and coordinates  parametrising them, see \cite{FG05}.

\subsection{The exponential tropical volumes}  \la{sect6.2}
The potential $W$,  the functions $\K$, and the exponent of the boundary geodesic  length  $\L:=e^l$ are  Laurent polynomials with positive integral coefficients in any cluster Poisson coordinate system. So they  can be tropicalized, providing 
a ${\rm Mod}(\bS)-$invariant functions
\be
W^t, \ \K^t, \ \L^t: {\cal P}_{  \bS}({\Bbb A}^t)\lra \Bbb A.
\ee
So we can consider 
 the corresponding spaces of negative laminations
  \be \la{msms}
 {\cal M}^-_\bS({\Bbb A}^t)(\kappa, \alpha):= \frac{\{\ell\in  {\cal P}_\bS({\Bbb A}^t)\ | \ \K^t(\ell) = \kappa, \ \ \L^t(\ell) = \alpha, \ \ \ W^t(\ell) \leq 0\}}{\mbox{modulo the action of the group ${\rm Mod}(\bS)$}}. 
  \ee
  The cluster volume form $\Omega_\bS$ induces a volume form $\Omega_\bS^t$ on the space of real tropical points, which provides in each of the tropical cluster Poisson coordinate systems $(\chi_1 \ldots \chi_k)$  the Lebesgue measure $2\cdot d\chi_1 ... d\chi_k$.

\bd Given a decorated surface $\bS$, the {\it exponential  tropical volume}  is the volume of the real tropical moduli space ${\cal M}^-_\bS(\R^t)(\kappa, \alpha)$ for the tropical volume form $\Omega^t$:
 \be \la{def4.1}
 \begin{split}
 {\rm Vol}^t_{\cal E}({\cal M}^{\rm en}_\bS(\kappa, \alpha)):= 
 &\int_{{\cal M}^-_{\bS}(\R^t)(\kappa, \alpha)}\Omega_\bS^t.\\
\end{split}
 \ee
 \ed

\begin{remark}
The tropicalised exponential volume can be obtained by starting with the exponential volume integral, substituting $$
\X_i = e^{\chi_i t}, \ \ \B_i = e^{\beta_it}, \ \ \K_i=e^{\kappa_it}, 
$$
dividing the integrand by $t^n$, and then take the limit of the integral when $t \lra \infty$. Indeed,  the  cluster volume form is: 
$$
2\cdot d\log \X_1 \wedge \ldots \wedge d\log \X_n = 2\cdot t^n d\chi_1\wedge \ldots \wedge d\chi_n.
$$
This is why we  divide the integral by $t^n$ before taking the limit. 
\end{remark}

 \bt \la{MTRTH}
 For any decorated  surface $\bS$, the  exponential tropical volume  (\ref{def4.1})  is finite: 
   $$
   {\rm Vol}^t_{\cal E}({\cal M}^{\rm en}_\bS(\kappa, \alpha))<\infty. 
    $$
 \et
When $\bS=S$ is a surface with punctures but without the special boundary points this follows from Theorem \ref{TH2.26} below. The general case reduces to this,  see Section \ref{SECT3.3.1}.

\subsection{Tropical volumes of moduli spaces for punctured surfaces = Kontsevich volumes} \la{sect2.4}

  In Section \ref{sect2.4} let  $\bS:=S$  be a connected genus $g$ surface with $n$ punctures but without boundary points. 
  \vskip 1mm

In his proof of Witten's Conjecture, M. Konstevich \cite{K} introduced a combinatorial model ${\cal M}^{\rm comb}_{g,n}$ of the moduli space ${\cal M}_{g,n}$. 
It is an orbifold,  parametrising equivalence classes of metrised connected ribbon graphs $\Gamma$, that is ribbon graphs with  vertices of valency $\geq 3$ and positive real numbers  at the edges, so that the associated oriented surface has genus $g$ and $n$ holes.  By  \cite[Theorem 2.2]{K}, there is a homeomorphism 
$$
\zeta: {\cal M}_{g,n}\times \R_+^n \stackrel{\sim}{\lra} {\cal M}^{\rm comb}_{g,n}.
$$ 
It assignes to a surface $S$ and a collection of positive numbers $\alpha=\{\alpha_1, ..., \alpha_n\}$ the Jenkins-Strebel differential with the critical graph $\Gamma$, whose  perimeters of  boundary components are given by the set $\L$. See also \cite[Section 3]{ABCGLW20}.
\vskip 1mm

 The space ${\cal M}^{\rm comb}_{g,n}$ has a  Poisson structure 
$\{*,*\}_\K$  given by the  bivector
 \be \la{PSK}
 \beta_\K= \sum_{e} \frac{\partial}{\partial \alpha_e}\wedge  \frac{\partial}{\partial \alpha_{s(e)}}.
  \ee
  Here the sum is over all oriented edges $e$ of the graph $\Gamma$, $\alpha_e>0$ is the number assigned to the edge, and $s$ is the cyclic clockwise  for the ribbon 
  structure  of $\Gamma$ shift by one, acting on the set of oriented edges sharing the same initial vertex. 
 
 The subspace of metrised graphs with a given set of perimeters $\alpha$ is denoted by ${\cal M}^{\rm comb}_{g,n}(\alpha)$. They are the symplectic leaves for the Poisson structure.  Their volumes  are defined by
\be \la{118a}
{\rm Vol}({\cal M}^{\rm comb}_{g,n}(\alpha)):= \int_{{\cal M}^{\rm comb}_{g,n}(\alpha)}e^{8\beta_\K^{-1}}.
\ee
Here 
$\beta_\K^{-1}$ is the symplectic form induced by the Poisson bivector $\beta_\K$  restricted to the fibers. The  coefficient $8$ comes  from  \cite[Section 8]{K91}.
\vskip 1mm

  Note that since the surface $S$ has no boundary points ${\cal P}_S={\cal X}_S$. 
  By \cite[Lemma 12.3]{FG03a}, for each puncture of $S$  the group $\Z/2\Z$ acts by positive birational transformations of the moduli space $\mathcal{X}_S$.  
  The set $\mathcal{X}_S(\R^t)$ of  real tropical points  of the  space $\mathcal{X}_S$ parametrises measured ${\cal X}-$laminations on $S$   \cite[Section 12]{FG03a}. So 
   the group $\Z/2\Z$ acts  on the  space   
  $\mathcal{X}_S(\R^t)$ by the  tropicalization. So the group ${\rm Mod}(S) \times (\Z/2\Z)^n$ acts 
on the space of $\mathcal{X}_S(\R^t)$. 

Let us expand the punctures to holes. Consider the set ${\cal C}_S$ of pairs (an ideal triangulation of $S$, a choice of boundary orientations  of all holes on $S$).
The group ${\rm Mod}(S) \times (\Z/2\Z)^n$ acts  on the set ${\cal C}_S$.  The set ${\cal C}_S$ parametrises coordinate systems on $\mathcal{X}_S(\R^t)$, called {\it ideal coordinate systems}. So for each ideal triangulation ${\cal T}$ of $S$ there are $2^n$ ideal coordinate systems. 

There are two flavors of  moduli spaces related to $S$: the classical one, and its enhanced variant:\footnote{Here we abuse the notation by denoting   by ${\cal M}_S$ the space ${\cal M}_S(\R_{>0})$, and 
by ${\cal M}^{\rm en}_S$ the space ${\cal M}^{\rm en}_S(\R_{>0})$.}
\be
\begin{split}
&{\cal M}_S = {\cal M}_S(\R_{>0}) \ := {\cal T}^{\rm en}_S/({\rm Mod}(S)\times (\Z/2\Z)^n), \\
&{\cal M}^{\rm en}_S = {\cal M}^{\rm en}_S(\R_{>0}):= {\cal T}^{\rm en}_S/{\rm Mod}(S). \\
\end{split}
 \ee
The moduli space ${\cal M}^{\rm en}_S$ is a $2^n:1$ ramified cover of the classical one ${\cal M}_S$. Specifying the lengths $\L=(l_1, ..., l_n)$ of boundary geodesics, we get the fibers 
${\cal M}_S(\L)$ and ${\cal M}^{\rm en}_S(\L)$. When $\L=0$, we have 
\be
{\cal M}_S(0) = {\cal M}^{\rm en}_S(0) = {\cal M}_{g,n}.
\ee

If   $\alpha_1...\alpha_n\not = 0$,  the tropical moduli space ${\cal M}^{\rm en}_S(\R^t)(\alpha)$ has $2^n$  components. They are  permuted by the $(\Z/2\Z)^n-$action,   altering the signs of $(\alpha_1, ..., \alpha_n)$.  The classical component is the one with $\alpha_1, ..., \alpha_n\geq 0$. 

\vskip 2mm
 Let  $d=3g-3+n$, so that  $2d$  is the dimension of the  symplectic fiber. In particular, $d  = {\rm dim}{\cal M}_{g,n}$. 
  
 The volume form $\Omega^t_S(\alpha)$ induced on the symplectic fibers ${\cal M}^{\rm en}_S(\R^t)(\alpha)$ by the tropical cluster volume form $\Omega^t_S = 2\cdot d\chi_{\rE_1} \wedge ... \wedge d\chi_{\rE_n}$ is  a  multiple  of the tropical volume form on the symplectic fibers:  
 \be
 \frac{(8\beta_\K^{-1})^d}{d!}= {\sigma}_S 
\cdot \Omega^t_S(\alpha).
 \ee
By \cite[Appendix C]{K} and \cite[Section 9]{K91}, the constant $\sigma_S$ is given by\footnote{Note that $\sigma_S = \frac{1}{2}\rho_S$ where $\rho_S$ is the constant used by Kontsevich. The factor $\frac{1}{2}$ is forced by the factor $2= 2^{\pi_0(S)}$ in the definition (\ref{CVF***}) of the volume form $\Omega_S$, and hence of $\Omega^t_S$. Note that since $S$ is connected,  $\pi_0(S) =1$.} 
\be
\sigma_S:=\frac{\prod dp_i \cdot \frac{(8 \beta_\K^{-1})^{3g-3+n}}{(3g-3+n)!}}{2\cdot \prod_{\rE} |d \chi_{\rE}|}\ 
= 2^{5g-6+2n} 
= 4^d\cdot 2^{-g}.
\ee
 The parameters $p_i$ are  perimeters of the metrised ribbon graphs.  
 \vskip 1mm
 
 Adapting definition (\ref{def4.1}) for the moduli space ${\cal M}_S(\R^t)(\alpha)$, we have
 \be \la{def4.1a}
 \begin{split}
 {\rm Vol}^t({\cal M}_S(\R^t)(\alpha)):= 
 &\int_{{\cal M}_{S}(\R^t)(\alpha)}\Omega_S^t(\alpha).\\
\end{split}
 \ee
 Similar formula holds for the tropical space ${\cal M}^{\rm en}_S(\R^t)(\alpha)$.

  \bt \la{TH2.24} Let $S$ be a connected genus $g$ oriented topological surface with $n$ punctures. Then the volume of the orbispace ${\cal M}^{\rm comb}_{g,n}(\alpha)$ in (\ref{118a}) relates 
  to the   tropical volume of the space  ${\cal M}_S(\R^t)(\alpha)$ by:
  \be \la{85}
  \begin{split}
    {\rm Vol}({\cal M}^{\rm comb}_{g,n}(\alpha)) =  
  &
\sigma_S\cdot {\rm Vol}^t({\cal M}_S(\R^t)(\alpha))\\
  =& 
\sigma_S\cdot{\rm Vol}^t({\cal M}^{\rm en}_S(\R^t)(\alpha)).\\
  \end{split}
  \ee \et

\bpr The tropicalization of the  cluster Poisson coordinates $\{X_\rE\}$ on the space ${\cal X}_S$  delivers  the cluster Poisson coordinates $\{\chi_\rE\}$ on the real tropical space $\mathcal{X}_S(\R^t)$ \cite{FG03a}. It follows from
  Theorem \ref{CPS} that  the induced cluster Poisson bracket on the space $\mathcal{X}_S(\R^t)$ is given by
 \be \la{PSFG}
 \{\chi_\rE, \chi_\F\}_{\rm cl} = \varepsilon_{\rE\F}. 
 \ee

An ${\cal X}-$lamination $\ell$ on $S$ is called {\it positive}, if there exists an ideal coordinate system on $S$ such that the coordinates $\chi_\rE(\ell)$ of $\ell$, 
assigned to the edges $\rE$ of the underlying ideal triangulation,   are positive: $\chi_\rE(\ell)>0$. 
By \cite[Theorem 14.1]{FG03a} there is a dense open subset of {positive ${\cal X}-$laminations}. For any positive  ${\cal X}-$lamination such an ideal coordinate system is unique. The complement to the set of positive laminations  has measure zero. 

There is a bijection between  ideal triangulations ${\cal T}$ of $S$ and trivalent ribbon graphs $\Gamma$ of genus $g$ with $n$ holes. It assigns to an ideal triangulation ${\cal T}$ its dual  graph $\Gamma_{\cal T}$. The orbits   of ideal triangulations of $S$  under the action of the  group ${\rm Mod}(S)$ correspond to the isomorphism classes of trivalent  ribbon graphs. 

Trivalent metrised ribbon graphs form an open dense subset   of full measure of  ${\cal M}^{\rm comb}_{g,n}$. 
The  correspondence between ideal triangulations of $S$ and ribbon graphs of type $(g,n)$ extends to an isomorphism 
\be \la{IDI}
\begin{split}
&\mbox{The open dense part  ${\cal M}^{\times}_S(\R^t)\subset  \mathcal{M}_S(\R^t)$, parametrising positive ${\cal X}-$laminations} \stackrel{\sim}{\lra} \\
&\mbox{The open dense part ${\cal M}^{\times, {\rm comb}}_{g,n}\subset {\cal M}^{\rm comb}_{g,n}$, 
parametrising (metrised trivalent ribbon graphs)/${\rm Iso}$}.\\
\end{split}
\ee
For an ideal triangulation ${\cal T}$ of $S$, it assigns to a positive lamination $\ell$ with  the coordinates   $\chi_\rE(\ell)>0$  the 
 trivalent ribbon graph $\Gamma_{\cal T}$ metrised by the numbers  $\chi_\rE(\ell)$ at the  edges $\rE$.

 \bl \la{Kle} Isomorphism (\ref{IDI})  provides a Poisson isomorphism 
\be \la{PKISO}
 ({\cal M}^{\times}_S(\R^t), \{*,*\}_{\rm cl}) \lra    ({\cal M}^{\times, {\rm comb}}_{g,n}, \{*,*\}_\K).
\ee
 \el
 \bpr
Follows immediately comparing formulas (\ref{PSK}) and (\ref{PSFG}) for the Poisson brackets.   \epr
 
The isomorphism (\ref{PKISO}) identifies the   perimeters $p_i$ of metrised ribbon graphs with the  values of  tropical Casimir functions. 
This  and Lemma \ref{Kle} imply  formula (\ref{85}). \epr

By Kontsevich's theorem \cite[Section 8]{K91}, \cite{K} we have, where $d  = {\rm dim}{\cal M}_{g,n}=3g-3+n$:
\be
\begin{split}
 {\rm Vol}({\cal M}^{\rm comb}_{g,n}(\alpha)) = & \sum_{d_1+ ...+d_n =d}  \frac{\alpha_1^{2d_1}}{d_1!} \ldots \frac{\alpha_n^{2d_n}}{d_n!} \int_{\overline {\cal M}_{g,n}}\psi_1^{d_1}\cdots \psi_n^{d_n}\\
=& \frac{1}{d!} \ \int_{\overline {\cal M}_{g,n}}(\alpha_1^2\psi_1 + \ldots + \alpha_n^2\psi_n)^{d} .\\
\end{split}
  \ee
We recall  that Kontsevich \cite[Section 7]{K91} defines the volume using the  form ${\rm exp}(8\beta_\K^{-1})$, see (\ref{118a}).  

 This and Theorem \ref{TH2.24} immediately imply
 \bt \la{TH2.26} Tropical volumes of  {moduli spaces}  ${\cal M}_S(\R^t)(\alpha)$ of $\mathcal{X}-$laminations on a genus $g$ surface with $n$ punctures $S$ carry  the same information as  the intersection theory on $\overline {\cal M}_{g,n}$. Precisely, 
 \be \la{TRV}
 \begin{split}
  \int_{\overline {\cal M}_{g,n}}e^{\alpha_1^2\psi_1 + \ldots + \alpha_n^2\psi_n}  = \frac{1}{d!}\ \int_{\overline {\cal M}_{g,n}}(\alpha_1^2\psi_1 + \ldots + \alpha_n^2\psi_n)^{d} = &
\sigma_S\cdot{\rm Vol}^t({\cal M}_S(\R^t)(\alpha))\\
 =& 
 \sigma_S \cdot {\rm Vol}^t({\cal M}^{\rm en}_S(\R^t)(\alpha)).\\
\end{split} \ee  
\et
Let us now compare the  top  degree $d$ part  ${\rm Vol}_{\rm top}(\mathcal{M}_S(\L))$ of the volume polynomial   
${\rm Vol}_{\rm WP}(\mathcal{M}_S(\L))$ for the Weil-Peterssen volume form, and the tropical volume   ${\rm Vol}^t({\cal M}_S(\R^t)(\alpha))$. Mirzakhani 
calculated the volume polynomial ${\rm Vol}_{\rm WP}(\mathcal{M}_S(\L))$.  
Therefore by \cite{Mir07a} and     \cite{Mir07b}, see  (\ref{WCM})  
we get the formula for the top degree part of the Weil-Peterssen volume polynomial:
\be \la{V1z}
 \begin{split}
 {\rm Vol}_{\rm top}(\mathcal{M}_S(\L)) = &\sum_{d_1+\ldots + d_n  = d} {\cal V}_{g, d_1, ..., d_n}\cdot l_1^{2d_1} \ldots  l_n^{2d_n};\\
&   {\cal V}_{g, d_1, ..., d_n}  \stackrel{(\ref{WCM})}{=} \frac{1}{2^dd!}\cdot \int_{\overline {\cal M}_{g,n}}\psi_1^{d_1}\cdots \psi_n^{d_n}.\\\end{split}
 \ee
It is handy to introduce the following notation:
 \be \la{10.24.24}
{\cal V}^*_{g, d_1, ..., d_n} \stackrel{}{=}  \frac{2^dd!}{d_1!...d_n!}{\cal V}_{g, d_1, ..., d_n}.
 \ee
 
 \begin{remark} Mirzakhani's   $\L=(l_1, ..., l_n)$  are the lengths of boundary geodesics on hyperbolic surfaces homeomorphic to $S$. Our  $\alpha=(\alpha_1, ..., \alpha_n)$ have different nature: 
 they are  parameters on   measured laminations on $S$.  
 We  relate them  using Kontsevich's perimeter parameters. Namely, we relate the  real tropical space  of measured laminations with Kontsevich's combinatorial moduli space, identifying  $\alpha$'s with Kontsevich's perimeter parameters,  also denoted by 
 $\alpha = (\alpha_1, ..., \alpha_n)$. Then we match the two sets:  
 $$
 \L=(l_1, ..., l_n) \longleftrightarrow  \alpha = (\alpha_1, ..., \alpha_n).
 $$
 Note that in Section \ref{tropical} we used systematically the small greek letters for the tropical parameters, reserving the latin letters for the geometric one. Here we match the two, in a somewhat mysterious way. 
 \end{remark}
 
 Keeping this remark in mind, and combining (\ref{10.24.24}) with (\ref{V1z})  and   formula (\ref{TRV})       for the tropical volume polynomial, we get:  
\be \la{V1za}
 \begin{split}
 {\rm Vol}^t(\mathcal{M}_S(\R^t)(\alpha)) 
  =  & 
\sigma_S^{-1} \sum_{d_1+\ldots + d_n  = d}  \int_{\overline {\cal M}_{g,n}}\frac{(\alpha_1^2\psi_1)^{d_1}\cdots (\alpha^2_n\psi_n)^{d_n} }{d_1! \ldots d_n!}\\
\stackrel{( \ref{V1z})}{=}&
\sigma_S^{-1}  
\sum_{d_1+\ldots + d_n  = d}  {\cal V}^*_{g, d_1, ..., d_n}\cdot \alpha_1^{2d_1} \ldots  \alpha_n^{2d_n}.\\
\end{split}
 \ee

\subsection{Calculating tropicalised   exponential volumes} \la{SECT3.3.1}

  If $\bS=\D_n^*$ is a punctured disc with $n$ marked  points, the Teichmuller space coincides with the moduli space.  The tropical   moduli space ${\cal M}_{{\D_n^*}}(\R^t)( {\kappa}, \alpha)$    carries  real  functions $\chi_i, \beta_i, \kappa_i$, $i \in \Z/n\Z$.  
 We consider the  variables  $ \kappa= (\kappa_1, \ldots \kappa_n)$ as fixed parameters, so the fiberwise tropical cluster volume form is 
$$
\Omega^t_\bS(\kappa) :=2d\log \beta_1 \wedge \ldots \wedge d\log \beta_{n}.
$$

\bl \la{LEMDN} a) The tropicalization of the exponential volume of the moduli space for $\bS=\D_n^*$ is given by
$$
 {\rm{Vol}}^t_{\mathcal{E}}(\D_{n}^*)({\kappa}) = 2\prod_{i=1}^n \kappa_i.
 $$

b) For any integer $d \geq 0$, the following integral is a polynomial in $\kappa= (\kappa_1, ..., \kappa_{n-1})$.
\be
 \int_0^\infty  {\rm{Vol}}^t_{\mathcal{E}}(\D_{n}^*)({\kappa},\alpha)\  \alpha^{d} d \alpha. \ee
\el
  
\bpr
 a)
The tropicalized potential $W^t$ provides inequalities:
$$
W^t= {\rm max}\Bigl(\beta_1, \kappa_1 - \beta_1, \ldots \beta_n, \kappa_n-\beta_n\Bigr)\leq 0 \ \ \ \ \longleftrightarrow \ \  \ \ \kappa_i \leq \beta_i \leq 0, \ \ \forall i \in \Z/n\Z.
$$

b)  The tropicalization of  integral (\ref{120}) delivers system of inequalities
\be
\begin{split}
&\kappa_{i} \leq \beta_{i} \leq 0, \ \  \ \ \ i=1, ..., n-1,\\
 &\kappa_n + \sum_{i=1}^{n-1}\kappa_{i}  \leq \alpha+2\sum_{i=1}^{n-1} \beta_i  \leq  - \kappa_n + \sum_{i=1}^{n-1}\kappa_i. \\
\end{split}
\ee 
So given $\kappa$, this determines a finite polyhedron in the space $\R^n$ with the coordinates $(\beta_1, ..., \beta_{n-1}, \alpha)$.  
\epr

\bp \la{NRFT} For any decorated surface $\bS$ we have the tropical neck recursion formula:\footnote{See  (\ref{10.24.24}) for the modified coefficients $ {\cal V}^*_{g, d_1, ..., d_m}$ for the tropical volume polynomial.}
\be \la{MFOa}
{\rm Vol}^t_{\cal E}({\cal M}_{\bS}({\kappa}, \alpha)) = \rC_\bS  \sum_{d_1+ ... + d_m= 3g-3+m} 
{\cal V}^*_{g, d_1, ..., d_m} \prod_{j=1}^r \int^\infty_{0}  {\rm{Vol}}^t_{\mathcal{E}}(\D_{n_j}^*)({\kappa}_j, \alpha_j)\  \alpha_j^{2d_j+1} d \alpha_j. 
 \ee
 \ep

\bpr Let $\bS_\ell:= \bS-{\rm C}_\ell$ be the decorated surface obtained by cutting out from $\bS$ the crown ${\rm C}_\ell$ with the neck loop $\ell$. 
Denote by $r$ the number of cusps  on the crown ${\rm C}_\ell$. There is the real tropical analog of the Fenchel--Nielsen coordinates, see \cite[Section 6]{FLP} for the classical formulation, adopted to our setting as follows: there is an isomorphism:
\be
{\cal P}_\bS(\R^t) \stackrel{\sim}{=} {\cal P}_{\bS_\ell}(\R^t)\times \R^2\times {\cal P}_{{\rm D}^*_r}(\R^t).
\ee
Here the first coordinate in $\R^2$ is given by the intersection number of the measured lamination $\mu$ with the loop $\ell$, that is the total measure of the loop for the transverse measure $\mu$.
 We use it, together with the fact that ${\rm Mod}(\bS)= {\rm Mod}(\bS_\ell)\times \mathbb{Z}$, similarly to the classical case, except that we use the tropical volume form.
\epr
 \paragraph{Proof of Theorem \ref{MTRTH}.} It follows from Lemma \ref{LEMDN},  Proposition \ref{NRFT}, and Theorem \ref{TH2.26}. 
 \vskip 2mm

\section{Examples} \la{SECT6} 

 \subsection{The annulus $\A_{1,1}$ with a cusp on each boundary component}
 
Recall   the coordinates ${\rm K}_{a}, {\rm K}_{b},  {\rm K}_{p}$ at the  sides opposite to the vertices $(a,b,p)$ of the geodesic triangle 
$\tau$ on Figure \ref{figure:tri}.   The potential of the triangle $\tau$ is given by
\be \la{EXVt}
\begin{split}
{W}_{\tau}(\K_a, \K_b, \K_p)  =& \Bigl(\frac{{\rm K}_{p}}{{\rm K}_{a}{\rm K}_{b}}\Bigr)^{-\frac{1}{2}} + \Bigl(\frac{{\rm K}_{a}}{{\rm K}_{p}{\rm K}_{b}}\Bigr)^{-\frac{1}{2} }+\Bigl(\frac{{\rm K}_{b}}{{\rm K}_{a}{\rm K}_{p}}\Bigr)^{-\frac{1}{2}}.\\
\end{split}
\ee
\vskip 2mm

\subsubsection{The annulus $\A_{p,q}$ and  ${\rm Mod}(\A_{p,q})$.} Denote by  $\A_{p,q}$  the annulus with $p$ marked points on one of the components, and $q$ on the other. There is a unique neck geodesic loop $\ell$. Its length is denoted by $l$. The  pure mapping class group  ${\rm Mod}(\A_{p,q})$ is generated by the Dehn twist $\D_\ell$ around $\ell$. 
It is trivial if one of the integers $m,n$ is zero, and is isomorphic to $\Z$ otherwise:
\be \begin{split}
&{\rm Mod}(\A_{p,q})=\Z, \qquad p,q\not =0. \\
& {\rm Mod}(\A_{0,q})={\rm Mod}(\A_{p,0})= 0. \\
\end{split}
\ee
If the mapping class group is trivial, the Teichm\"uller space is the same as the moduli space. 

\begin{figure}[ht]
\centerline{\epsfbox{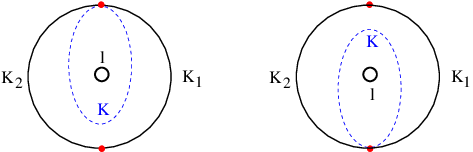}}
\caption{Calculating the exponential volume of the Teichm\"uller space for the  annulus $\A_{0,2}$, by   cutting  along the  loop starting at the top cusp.  
Starting at the other cusp gives the  same integrand. }
\label{sz9}
\end{figure} 

 Calculation of the exponential volume for the annulus $\A_{p,q}$ reduces to the case when $p,q \leq 1$. Namely, if, for example,  $p\geq 2$, we cut out an ideal $(p+1)-$gon  which has 
$p$ boundary sides, and one side given by the bi-infinite geodesic $\gamma$ at a cusp,  surrounding the other boundary component. 
This way we get the decorated surface 
$\A_{1,q}$, which we can reduce to $\A_{1,1}$ by cutting out an ideal $(q+1)-$gon. Let us elaborate first this step in the simplest case, and then concentrate in the most interesting case of the annulus $\A_{1,1}$.

\subsubsection{The annulus $\A_{0,2}$.} The cluster Poisson coordinates on the Teichm\"uller space of $\A_{0,2}$ are 
$\{\K_1, \K_2, \B, \X\}$. We have $\X = \L=e^{l}$. 
Using  (\ref{EXVt}) and (\ref{EXVT}), we get   the following integral for its volume,  see Figure \ref{sz9}:
\be \la{5555}
\begin{split}
& {\rm Vol}_{\cal E}({\cal M}_{\A_{0,2}})(\K_1, \K_2; \L) =\\
&  \int_{0}^\infty {\rm exp}\Bigl( -W_\tau(\K_1, \K_2, \K) - \K^{\frac{1}{2}}(\L^{\frac{1}{2}}+ \L^{-\frac{1}{2}})\Bigr) d\log \K.\\
\end{split}
\ee
Starting  at the other cusp we get  the same integrand. The two  are related by a sequence of two flips. \vskip 2mm

\subsubsection{The McShane identity for the annulus $\A_{1,1}$.} 

There is a collection of bi-infinite geodesics 
$\{\gamma_n\}$, $n \in \Z$, connecting the two cusps $p$ and $q$. They represent all isotopy classes of arcs connecting the two cusps, and 
form a principal homogeneous set for the action of  ${\rm Mod}(\A_{1,1})$, so that 
$\D_l(\gamma_n) = \gamma_{n+1}$. In the limit when $n \to \pm \infty$, we get  bi-infinite geodesics $\gamma_p^-$ and $\gamma_p^{+}$ starting at $p$ and winding around the geodesic  $\ell$. There is a unique bi-infinite boundary geodesic $\beta_p$  from the cusp $p$ to itself. \vskip 2mm

 Moving the end of the geodesic $\gamma_n$   along the boundary geodesic $\beta_q$ till  we get the geodesic $\gamma_{n+1}$, we 
 fill the ideal geodesic triangle $\tau_{p, n}$. Denote by $h_p$ the horoarc at the cusp $p$, and by  $h_{{\tau}_n}$ its intersection with the   triangle $\tau_{p,n}$. 
 Denote by $d_{p, \pm}$  the arcs on the horocycle $h_p$ between $\beta_p$ and $\gamma_{\pm\infty}$. 
 Then we  have
\be
\mbox{length}(h_p) = \mbox{length}(d_{p, -}) + \sum_{n \in \Z} \mbox{length}(h_{{\tau}_n}) +  \mbox{length}(d_{p, +}).
 \ee
 This is the  McShane identity for the cusp $p$. It can be rewritten via potentials as follows. 
Let $W_p$ be the potential at  $p$,  and by 
$W_{p,n}$ the potential at $p$ of the geodesic triangle $\tau_{p, n} = (\gamma_n, \gamma_{n+1}, \beta_q)$. Then we get:
\be
\begin{split}
&W_p = 2 \K_p^{\frac{1}{2}}\L^{-\frac{1}{2}} +  \sum_{n \in \Z} W_{p,n}.\\
&W_p^\sharp =  \sum_{n \in \Z} W_{p,n}.\\
\end{split}
\ee

\subsubsection{Annulus $\A_{1,1}$.}   There are two ways to calculate the exponential volume: using the neck recursion formula, and using 
unfolding formula (\ref{recur}). Let us  elaborate each of them. 

1. Denote by $\K_1, \K_2$ the frozen coordinates at the boundary circles, see Figure \ref{sz8}, and by $\X, \Y$ the cluster Poisson coordinates for the geodesics $\gamma_x, \gamma_y$ 
(which were denoted by $\gamma_0, \gamma_1$ above) of the triangulation given by these two geodesics  and boundary arcs, shown on the right  on Figure \ref{sz8}. 
\begin{figure}[ht]
\centerline{\epsfbox{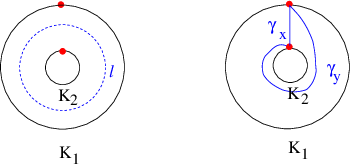}}
\caption{Cutting  the annulus $\A_{1,1}$ by the neck geodesic $\ell$. Triangulating $\A_{1,1}$  by the geodesics $\gamma_x, \gamma_y$.}
\label{sz8}
\end{figure} 

 One has  
$$
\{\X, \Y\} = -2 \X\Y.  
$$
The other Poisson brackets are 
$$
\{\B_1, \X\} = -\B_1\X, \ \ \ \{\B_1, \Y\} = \B_2\Y,  \ \ \ \{\B_2, \X\} = -\B_2\X,\ \ \ \{\B_2, \Y\} = \B_2\Y, \ \ \ \{\B_1, \B_2\}=0.
$$
The following elements  $\K_1, \K_2$ are in the center of the Poisson bracket $\{*,*\}$:
\be \la{K1K2}
\K_i = \B_i^2\X\Y, \ \ \ i =1,2.
\ee
 The Dehn twist acts by the cluster Poisson transformation given by the flip at $\gamma_y$ followed by the symmetry $(\X,\Y) \lms (\Y,\X)$. It preserves the frozen variables $\K_1, \K_2$. 
 \vskip 2mm
 
Then the  exponential volume is given by\footnote{The coefficient $\frac{1}{2}$ comes from Lemma \ref{lA11}.} 
\be \la{101a}
\begin{split}
 {\rm Vol}_{\cal E}({\cal M}_{\A_{1,1}})(\K_1, \K_2)  =  \frac{1}{2} \  &\int_{{\cal M}_{\A_{1,1}}}e^{-(\B_1+\B_2)(1+\X+\X\Y)}  d\log \X\wedge d\log \Y. \\
=  \frac{1}{2} \ & \int_{{\cal M}_{\A_{1,1}}}{\rm exp}\Bigl({-(\K_1^{\frac{1}{2}}+\K_2^{\frac{1}{2}})(\X\Y)^{-\frac{1}{2}}(1+\X+\X\Y)} \Bigr) d\log \X\wedge d\log \Y.\\
\end{split}
\ee
To calculate this integral we have to pick a fundamental domain for the Dehn twist. 
For the annulus $\A_{1,1}$ this is easy using the Fenchel--Nielsen coordinates, and leads to the neck recursion formula
\be\la{100}
 {\rm Vol}_{\cal E}({\cal M}_{\A_{1,1}})(\K_1, \K_2)  =  \frac{1}{2} \int^\infty_{1}
  {\rm exp}\Bigl(-(\K_1^{\frac{1}{2}} + \K_2^{\frac{1}{2}}  )(\L^{\frac{1}{2}}+ \L^{-\frac{1}{2}}) \Bigr)   \log \L \ d \log \L.
\ee
If the limits of the integration were $(0,  \infty)$, this will be the derivative at $s=0$ of the Bessel function. However the limits are 
$(1, \infty)$, and this integral can not be reduced to the Bessel integral. \vskip 2mm

\bl
 The variable $\L^{\frac{1}{2}}$ is related to the variables $(\X,\Y)$ by the equation
\be \la{tt1}
(\X\Y)^{-\frac{1}{2}}(1+\X+\X\Y) = \L^{\frac{1}{2}} +\L^{-\frac{1}{2}}.
\ee
\el

\bpr 
The potential $W_1$ at the cusp  at the  crown supporting the coordinate $\K_1$ can be calculated in two ways: 
by cutting $\A_{1,1}$ along the neck geodesic $\ell$ of the length $l$, as shown on the left of Figure \ref{sz8}, or by formula (\ref{fw}) applied to the triangulated annulus $\A_{1,1}$  on the right of Figure \ref{sz8}. So we get 
\be \la{103}
\begin{split}
& W_1=\K_1^{\frac{1}{2}}(\L^{\frac{1}{2}}+\L^{-\frac{1}{2}}).\\
&W_1= \B_1(1+\X+\X\Y).  \\
\end{split}
\ee
Using (\ref{K1K2}), and  comparing the  two equations (\ref{103}) we get the formula.\epr

This is, of course, equivalent to relating the integrands in (\ref{101a}) and (\ref{100}). \\

2. For the  fourth term of unfolding formula (\ref{recur}) we use the $\K-$coordinates for the triangle sides, denoted   by $\A,\B, \K_1, \K_2$.  
We  denote by $\X,\Y$ the cluster Poisson coordinates, so that 
the coordinates $\A, \X$ are assigned to the edge $\gamma_x$, and $\B, \Y$ to $\gamma_y$ on Figure \ref{sz8}.
Then we have:
\be \la{tt}
\X= \frac{(\K_1\K_2)^{\frac{1}{2}}}{\B}, \quad \Y= \frac{\A}{(\K_1\K_2)^{\frac{1}{2}}}.
\ee
Then using the unfolding formula (\ref{recur})  for $f=1/W_p^\sharp$, and  formula (\ref{EXVt}) for the potential $W_\tau$,  we get\footnote{
 The coefficient $1$ comes from Lemma \ref{lA11}.}
\be \la{101}
\begin{split}
&  {\rm Vol}_{\cal E}({\cal M}_{\A_{1,1}})(\K_1, \K_2)  \\
&=  \int_0^\infty\int_0^\infty \left(\frac{\A\B}{\K_1\K_2}\right)^{\frac{1}{2}}\ \frac{
 {\rm exp}\Bigl(- W_\tau(\A,\B,\K_1) - W_\tau(\A,\B,\K_2)\Bigr)}{ (\L^{\frac{1}{2}} - \L^{-{\frac{1}{2}} })} d \log \A\wedge d \log \B.\\
\end{split}
\ee

3. Let us  check  unfolding formula (\ref{recur}) for the function $f=1$ at the cusp $p$.  We start with 
$$
\int_{{\cal M}_{\A_{1,1}}}W^\sharp_pe^{-W}\Omega.$$
Cutting along the geodesic loop $\alpha$ and using   the neck recursion formula, and  using (\ref{EXVT})  twice,  we get 
\be \la{555}
\begin{split}
& \int_{{\cal M}_{\A_{1,1}}}W^\sharp_pe^{-W}\Omega = \int^\infty_{0}  W^\sharp_pe^{-W_p-W_q} ldl \\
&=  \int^\infty_{0}
  \K_1^{\frac{1}{2}}(e^{l/2}- e^{-l/2} ) {\rm exp}\Bigl(-(\K_1^{\frac{1}{2}} + \K_2^{\frac{1}{2}}  )(e^{l/2}+ e^{-l/2}) \Bigr)   l d l.\\
  \end{split}
 \ee
This integral is calculated  via the Bessel function as follows. 
Observe that 
$$
-\frac{d}{dl} \ {\rm exp}(-\K(e^{l/2}+ e^{-l/2})) = \frac{1}{2}\K(e^{l/2}- e^{-l/2} ) {\rm exp}(-\K(e^{l/2}+ e^{-l/2})). 
$$
So integrating by parts, and observing that   $l {\rm exp}(-\K (e^{l/2}+ e^{-l/2}))$ vanishes at $l=\infty, 0$,  we get
\be \la{UFF1}
\begin{split}
&   \frac{ 2\K_1^{\frac{1}{2}}}{\K_1^{\frac{1}{2}} + \K_2^{\frac{1}{2}} }
 \int^\infty_{0} {\rm exp}\Bigl(-(\K_1^{\frac{1}{2}} + \K_2^{\frac{1}{2}}  )(e^{\frac{l}{2}}+ e^{-\frac{l}{2}}) \Bigr)    d l \\
&=   \frac{2\K_1^{\frac{1}{2}}}{\K_1^{\frac{1}{2}} + \K_2^{\frac{1}{2}} }
 \int^\infty_{-\infty} {\rm exp}\Bigl(-(\K_1^{\frac{1}{2}} + \K_2^{\frac{1}{2}}  )(e^{l}+ e^{-l}) \Bigr)    d l. \\
\end{split}
\ee
Note  that the integrand in the first line, denoted  ${\rm I}(l)$, is an even function of $l$. So 
$\int_{0}^\infty {\rm I}(l) dl = \frac{1}{2}\int_{-\infty}^\infty {\rm I}(l) dl$. Then we change variables $l/2\to l$. 
\vskip 2mm

On the other hand,   cutting out the triangle with the sides  supporting the variables $\A,\B, \K_2$, and using formula (\ref{recur}) for the function $f=1$, we get
$$
  \K^{-{\frac{1}{2}}}_2\int (\A\B)^{{\frac{1}{2}}}
{\rm exp}\Bigl(- W_\tau(\A,\B,\K_1) - W_\tau(\A,\B,\K_2)\Bigr)\frac{d\A}{\A} \wedge \frac{d\B}{\B}.
$$
Changing the  variables $\P:= (\A/\B)^{1/2}$, ${\rm Q}:=(\A\B)^{1/2}$ we get the same result as in (\ref{UFF1}):
$$
 2 \K^{-{\frac{1}{2}}}_2\int_0^\infty {\rm exp}\Bigl(-(\K_1^{{\frac{1}{2}}} +\K_2^{{\frac{1}{2}}} )(\P +\P^{-1} )  \Bigr)d\log \P \cdot \int_0^\infty {\rm exp}\Bigl( - (\K^{-{\frac{1}{2}}} _1+\K^{-{\frac{1}{2}}} _2){\rm Q}\Bigr)\  \  d  {\rm Q} = (\ref{UFF1}).
$$
Indeed, set $\P=e^l$. This  confirms  unfolding formula (\ref{recur}).\\
}

{\it Problem.}  Check directly that  (\ref{100}) = (\ref{101}):
\be \la{99}
\begin{split}
&2 \int_0^\infty\int_0^\infty \left(\frac{\A\B}{\K_1\K_2}\right)^{\frac{1}{2}}\ \frac{
 {\rm exp}\Bigl(- W_\tau(\A,\B,\K_1) - W_\tau(\A,\B,\K_2)\Bigr)}{ (\L^{\frac{1}{2}} - \L^{-{\frac{1}{2}} })} d \log \A\wedge d \log \B\\
 &=\int^\infty_{1}
  {\rm exp}\left(-(\K_1^{\frac{1}{2}} + \K_2^{\frac{1}{2}}  )(\L^{\frac{1}{2}}+ \L^{-\frac{1}{2}}) \right)   \log \L \ d \log \L.\\
  \end{split}
\ee

\begin{figure}[ht]
	\centering
	\includegraphics[scale=0.26]{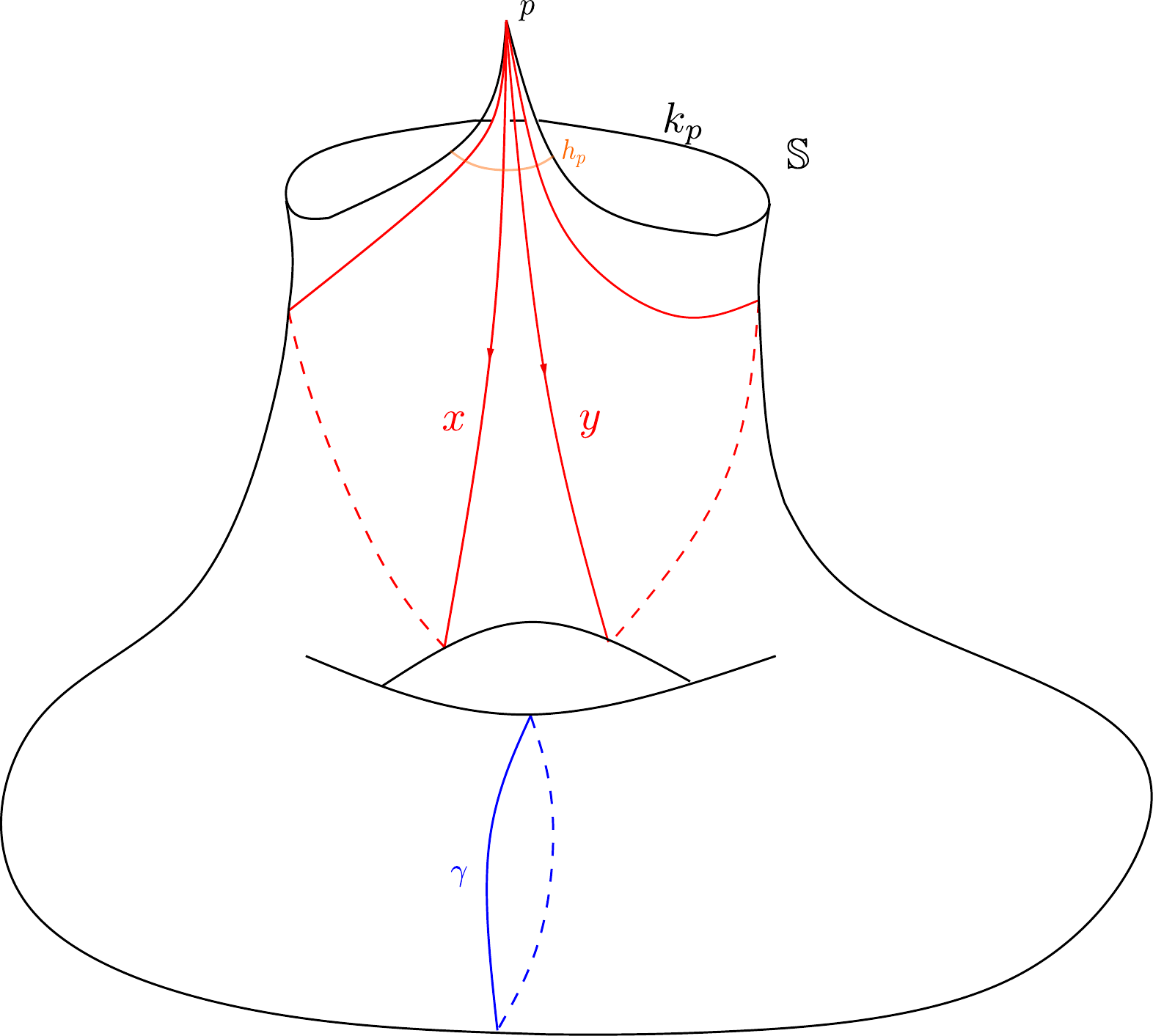}
	\small
	\caption{ $\bS$ is a once crowned torus  with  a single cusp $p$, and  $k_p$ is the geodesic of the crown end.}
\label{figure:1c1p1g}
\end{figure}

\subsection{The once crowned torus with a single cusp} Let $\bS$ be the once crowned torus with a  cusp $p$. Pick a non-peripheral simple geodesic $\gamma$. 
The subgroup of the  group ${\rm Mod}(\bS)$  stabilizing the loop $\gamma$ is isomorphic to $\Z \oplus \Z$. It is generated by the Dehn twists $\D_\gamma$ and 
$\D_{\rm C}$ along the loop $\gamma$, and   the neck loop for the crown ${\rm C}$. 
Denote by $k_p$ the  crown geodesic. 
Choose an embedded ideal triangle $xyk_p$, see Figure \ref{figure:1c1p1g}. Figure \ref{figure:crpp} helps to visualize it. 
Any trouser leg contains the  cusp $p$. There are two trouser legs ${\rm T}({\gamma, x})$ and ${\rm T}({\gamma, y})$, see   
Figure \ref{figure:1c1p1g}. The Dehn twist $\D_\gamma$ preserves them. The subgroup $\langle \D_{\rm C}\rangle $ generated by   the Dehn twist $\D_{\rm C}$ acts  freely on the set of 
 trouser legs  with two orbits $\langle \D_{\rm C}\rangle \cdot {\rm T}({\gamma, x})$ and  $\langle \D_{\rm C}\rangle \cdot {\rm T}({\gamma, y})$. 
 The moduli space $\mathcal{M}_{\bS}(\K_p)$ with the fixed coordinate $\K_p$ at the  geodesic $k_p$ has  dimension $4$.  It  is parameterized by the $\rm{K}$-coordinates $\K_x$, $\K_y$, the length $l_\gamma = \log\L_\gamma$ of the geodesic $\gamma$, and the twist parameter $\theta_\gamma$. \vskip 2mm
 
 \begin{figure}[ht]
\centerline{\epsfbox{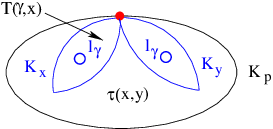}}
\caption{Unfolding  for the torus with a single cusped crown using trouser leg $\T(\gamma,x)$ and triangle $\tau(x,y)$.}
\label{sz10}
\end{figure}

 Applying Theorem \ref{T5.5} as illlustrated on Figure \ref{sz10},  we get
\begin{equation} \la{58}
\begin{aligned}
&\int_{{\cal M}_\bS}W^\sharp_pe^{-W_\bS}\Omega_\bS =\\
&4\int_{\mathbb{R}^2_+} Q(\gamma,x)
\cdot {\rm{Vol}}_{\mathcal{E}}{\cal M}_{\T(\gamma,x)}(\K_x,l_\gamma)
\cdot {\rm{Vol}}_{\mathcal{E}}{\cal M}_{\bS -  
\T(\gamma,x)}(\K_p, \K_x,l_\gamma)\cdot l_{\gamma}  \cdot d l_{\gamma} \wedge d \log \K_x
\\+& \int_{\mathbb{R}^2_+}  S(x,y)\cdot {\rm{Vol}}_{\mathcal{E}}{\cal M}_{\tau(x,y)}(\K_x, \K_y, \K_p) \cdot 
{\rm{Vol}}_{\mathcal{E}}{\cal M}_{\bS-\tau(x,y)}(\K_x, \K_y)  \cdot d\log \K_x \wedge d\log \K_y. 
\end{aligned}
\end{equation}
The coefficient $4$  reflects that the first integral in (\ref{58}) 
is equal to the one obtained by changing $x\to y$.  
According to (\ref{gapfunct}) and (\ref{Sfunct}) we have: 
$$
Q(\gamma,x)=\K_x^{\frac{1}{2}} \L_\gamma^{-\frac{1}{2}},\;\;\;\;\;\; S(x,y)= \left(\frac{\K_p}{\K_x \K_y}\right)^{-\frac{1}{2}}.
$$
\begin{figure}[ht]
	\centering
	\includegraphics[scale=0.3]{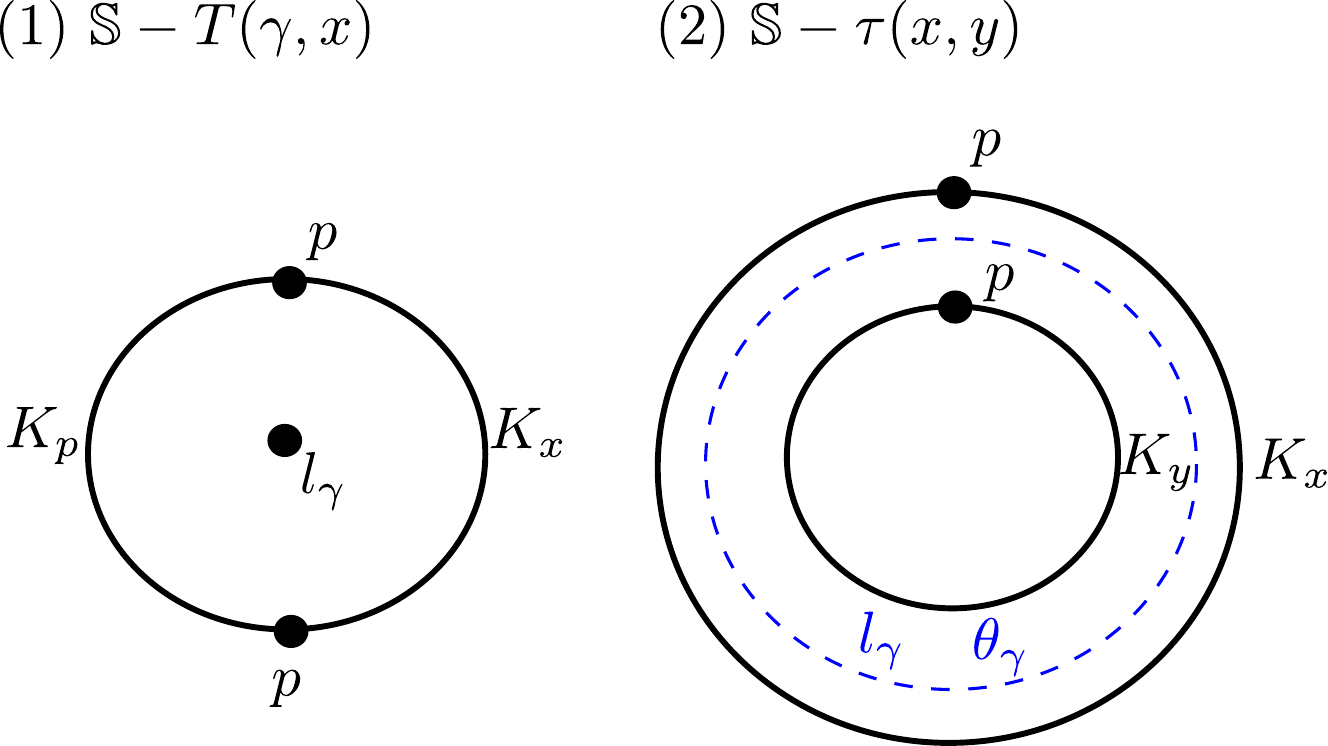}
	\small
	\caption{Decorated surfaces $\bS -  \T(\gamma,x)$ and $\bS-\tau(x,y)$, and coordinates on the related moduli spaces.}
\label{figure:1c1p1gcut}
\end{figure}

I) Let us elaborate the second line in (\ref{58}). Recall that 
$$
{\rm{Vol}}_{\mathcal{E}}{\cal M}_{  \T(\gamma,x)}(\K_x,l_\gamma)\ \stackrel{(\ref{EXVT})}{=}\ {\rm exp}\left(-\K^{\frac{1}{2}}_x(\L_\gamma^{\frac{1}{2}}+ \L_\gamma^{-\frac{1}{2}})\right).
$$
The surface $\bS -  \T(\gamma,x)$ is an annulus $\A_{0,2}$, see  Figure \ref{figure:1c1p1gcut}(1). We parameterize the space ${\cal M}_{\bS-  \T(\gamma,x)}$ as in Figure \ref{figure:1c1p1gcut}(1). 
Using   formula (\ref{5555}) for ${\rm{Vol}}_{\mathcal{E}}{\cal M}_{\bS -  \T(\gamma,x)}(\K_p, \K_x,l_\gamma)$, we write the second line in (\ref{58}) as

\begin{equation} \la{58a}
\begin{aligned}
&4 \cdot \int_{\mathbb{R}^3_{>0}} \K_x^{\frac{1}{2}} \L_\gamma^{-\frac{1}{2}}\cdot 
{\rm exp}\Bigl( -(\K^{\frac{1}{2}}_x +\K^{\frac{1}{2}}_y) (\L_\gamma^{\frac{1}{2}} + \L_\gamma^{-\frac{1}{2}} )  -W_\tau(\K_p, \K_x, \K_y) \Bigr)  
\cdot   l_{\gamma}  d l_{\gamma} \wedge \frac{d\K_x}{\K_x}\wedge \frac{d\K_y}{\K_y}.
\end{aligned}
\end{equation}

\begin{figure}[ht]
\centerline{\epsfbox{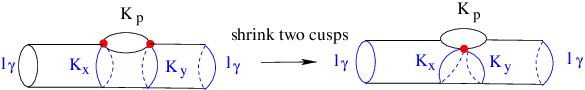}}
\caption{Let $\bS$ be  the torus with a single cusped crown.  Then $\bS -  \tau(x,y)$ is  an annulus $\A_{1,1}$. One can get it by shrinking one arc connected two cusps on the left.} 
\label{sz11}
\end{figure} 

II)  Let us elaborate the last line in (\ref{58}). The  exponential volume of the moduli space for the triangle $\tau(x,y)$:
$$
{\rm{Vol}}_{\mathcal{E}}{\cal M}_{\tau(x,y)}(\K_x, \K_y, \K_p)\ \stackrel{(\ref{EXVt})}{=}  \ \exp\left(-W_\tau(\K_x, \K_y, \K_p)\right).
$$
The surface $\bS -  \tau(x,y)$ is  an annulus $\A_{1,1}$,  see     Figure \ref{figure:1c1p1gcut}(2).  
So using    (\ref{101}),   the last line in (\ref{58})  
becomes  an integral over $\A,\B, \K_x, \K_y>0$:  
\be \la{101q}
\begin{split}
& \int  \left(\frac{\A\B}{\K_p}  \right)^{\frac{1}{2}}  \ \frac{
 {\rm exp}\Bigl(- W_\tau(\A,\B,\K_x) - W_\tau(\A,\B,\K_y)-W_\tau(\K_x, \K_y, \K_p)\Bigr)}{ (\L_\gamma^{\frac{1}{2}} - \L_\gamma^{-{\frac{1}{2}} })} \frac{d \A}{\A} \wedge  \frac{d \B}{\B} \wedge 
  \frac{d \K_x}{\K_x}\wedge  \frac{d \K_y}{\K_y}.  \\
\end{split}
\ee

\subsection{The McShane  identity for a pair of pants with cusps} \la{McPP}
Let $\bS$ be a pair of pants with one marked point on each boundary component as in Figure \ref{figure:cpp1}(1). Let $\gamma_p$, $\gamma_q$, $\gamma_r$ be the loops surrending the crown ends $k_p$, $k_q$, $k_r$ respectively, whose Dehn twists generate the mapping class group $\mathrm{Mod}(\bS)\cong\mathbb{Z} \times \mathbb{Z} \times \mathbb{Z}$.  Our  goal is to describe all terms of the McShane identity for the cusp $p$, and hence all terms of the unfolding formula. We consider all geodesics emitting from $p$.
\vskip 2mm

The bi-infinite geodesics $k_q'$, $k_r'$, $k_p$ form a $p$-narrowest ideal triangle as in Figure \ref{figure:cpp1}(2) with the potential $\theta_b:=W_p(k_q', k_r')$ at the cusp $p$. Any geodesic emitting from $p$ within the arc between $k_q'$ and $k_r'$ hits the boundary $k_p$. The annulus $\A^q_{1,1}$ is bounded by a bi-infinite geodesic $k_q'$ and the crown end $k_q$ with the potential $W_{k_q'}$ at $p$, while the annulus $\A^r_{1,1}$ is bounded by a bi-infinite geodesic $k_r'$ and the crown end $k_r$ with the potential $W_{k_r'}$ at $p$. 
\begin{figure}[ht]
	\centering
	\includegraphics[scale=0.6]{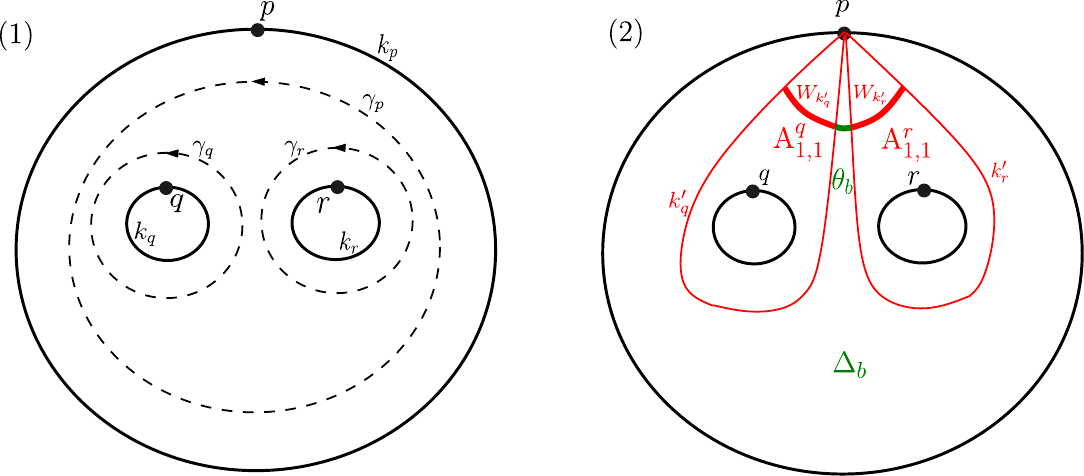}
	\small
	\caption{(1) The counterclockwise oriented loops $\gamma_p$, $\gamma_q$, $\gamma_r$ surround the crown ends $k_p$, $k_q$, $k_r$. 
	(2) The bi-infinite geodesics $k_q'$, $k_r'$, $k_p$ form a $p$-narrowest ideal triangle with the potential $\theta_b:=W_p(k_q', k_r')$ at the cusp $p$. The annulus $\A^q_{1,1}$ is bounded by the bi-infinite geodesic $k_q'$ and the crown end $k_q$ with the potential $W_{k_q'}$ at $p$, while the annulus $\A^r_{1,1}$ is bounded by the bi-infinite geodesic $k_r'$ and the crown end $k_r$ with the potential $W_{k_r'}$ at $p$.}
	\label{figure:cpp1}
\end{figure}

The Dehn twist of $a$ by $\gamma_p$ is denoted by $\gamma_p a$. Then  bi-infinite geodesics $\gamma_p k_r'$, $k_q'$, $k_p$ form a $p$-narrowest ideal triangle as in Figure \ref{figure:cpp2}(1) with the potential $\theta_a:=W_p(\gamma_p k_r', k_q')$ at the cusp $p$. 
\begin{figure}[ht]
	\centering
	\includegraphics[scale=0.6]{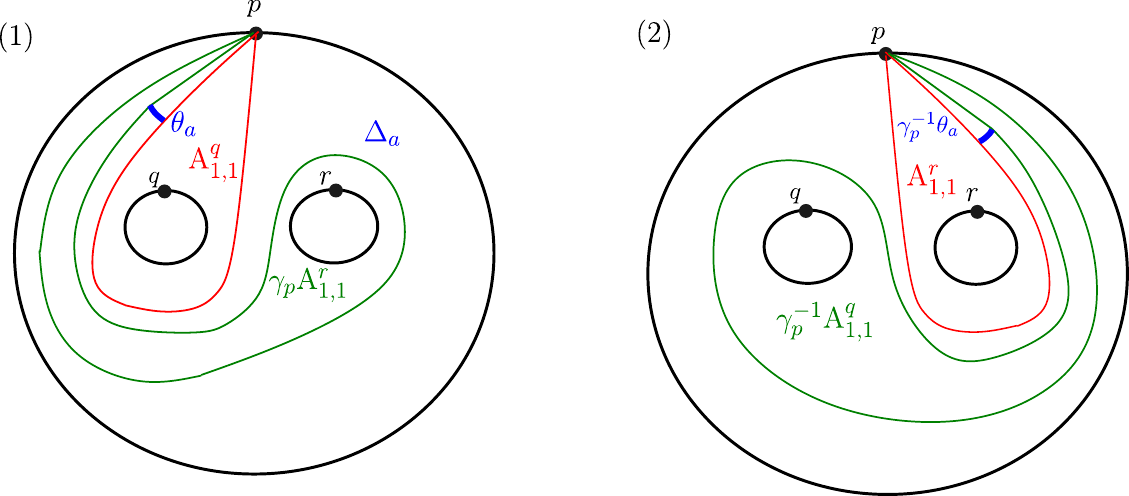}
	\small
	\caption{The potentials $\theta_a$ and $\gamma_p^{-1}\theta_a$ are indicated by the blue arcs.}
	\label{figure:cpp2}
\end{figure}
Any geodesic emitting from $p$ within the arc between $\gamma_p k_r'$ and $k_q'$ hits the boundary $k_p$. Similarly, the bi-infinite geodesics $k_r'$, $\gamma_p^{-1} k_q'$, $k_p$ form a $p$-narrowest ideal triangle as in Figure \ref{figure:cpp2}(2) with the potential $\gamma_p^{-1} \theta_a:=W_p(k_r', \gamma_p^{-1} k_q')$ at the cusp $p$. Any geodesic emitting from $p$ within the arc between $k_r'$ and $\gamma_p^{-1} k_q'$ hits the boundary $k_p$. Inductively, we have
\be
W_p=2 Q(\gamma_p, k_p)+ \sum_{i\in \mathbb{Z}}  (\gamma_p^i\theta_a+W_{\gamma_p^i k_q'}+ \gamma_p^i\theta_b+W_{\gamma_p^i k_r'}).  
\ee

Within $\gamma_p^i \A_{1,1}^q$, let $\{\ell_{i,q}^j\}_{j\in \mathbb{Z}}$ be a family of simple arcs connecting $p$ and $q$ where $\ell_{i,q}^{j+1} = \gamma_q \ell_{i,q}^j$. Inside the ideal triangle $\Delta_{i,q}^j$ formed by $\ell_{i,q}^j$, $\ell_{i,q}^{j+1}$ and $k_q$, every geodesic emitting from $p$ between $\ell_{i,q}^j$ and $\ell_{i,q}^{j+1}$ hits $k_q$. Then
\[W_{\gamma_p^i k_q'}\ =\ 2 Q(\gamma_q, \gamma_p^i k_q') + \sum_{j\in \mathbb{Z}} W_p(\ell_{i,q}^j, \ell_{i,q}^{j+1} ).\]
Similarly, we obtain 
\[W_{\gamma_p^i k_r'}\ =\ 2 Q(\gamma_r, \gamma_p^i k_r') + \sum_{j\in \mathbb{Z}} W_p(\ell_{i,r}^j, \ell_{i,r}^{j+1} ).\]
Combining the above three equations, we obtain the McShane identity
\be
\begin{aligned}
\label{eqcpp1}
W_p-2Q(\gamma_p, k_p)& \ = \ \sum_{i\in \mathbb{Z}} \gamma_p^i\theta_a+  \sum_{i\in \mathbb{Z}} \gamma_p^i\theta_b + 2 \sum_{i\in \mathbb{Z}} Q(\gamma_q, \gamma_p^i k_q')+ 
2 \sum_{i\in \mathbb{Z}} Q(\gamma_r,\gamma_p^i k_r')
\\& \ + \ \sum_{i\in \mathbb{Z}}  \sum_{j\in \mathbb{Z}} W_p(\ell_{i,q}^j, \ell_{i,q}^{j+1} )+ \sum_{i\in \mathbb{Z}}  \sum_{j\in \mathbb{Z}} W_p(\ell_{i,r}^j, \ell_{i,r}^{j+1}).
\end{aligned}
\ee

\section{Exponential volumes and     Whittaker--Hecke   algebra for ${\rm SL}_2(\R)$}   \la{WHalgebra}

 Recall the three elementary decorated surfaces: the triangle $\tau$, the trouser leg $\T$, and a pair of pants.  
The volume of the last is equal to $1$. The exponential volumes of the first two are non-trivial functions ${\cal E}_\tau(\K_1, \K_2, \K_3)$ and ${\cal E}_\T(\K, \L)$. The related 
${\cal B}-$function is the Bessel function:
 \be \la{BESS}
{\cal B}_\T(\K, s) = \int_{\R_{>0}} {\cal E}_\T(\K, \L)\L^{s/2}d\log \L \stackrel{\eqref{FBFa}}{=} 2J_s(\K).
\ee
 We define an algebra ${\cal E}$,  consisting of functions $f$ on  $\R_{>0}$ with exponential decay at infinity,  
with the product $\ast$   given by   
\be \la{PFE9}
(f\ast g)(\K_3):= \int_{\R_{>0}\times \R_{>0}} {\cal E}_\tau(\K_1, \K_2, \K_3)f(\K_1)g(\K_2) d\log\K_1 d\log \K_2.
\ee

 \bt The product $\ast$ is commutative and associative. 
  Functions  ${\cal B}_\T(\K, s)$     
  satisfy the  product formula 
\be \la{PF10}
 {\cal B}_\T(\K_1, s)\cdot {\cal B}_\T(\K_2, s) = \int_{\R_{>0}} {\cal E}_\tau(\K_1, \K_2, \K_3){\cal B}_\T(\K_3, s)d\log \K_3.
\ee
The function ${\cal B}_\T(\K, s)$ provides a homomorphism $\psi_s$ of the algebras $({\cal E}, \ast)$ to $\C$:
\be \la{PF13}
\psi_{s}(f):=  \int_{\R_{>0}} f(\K){\cal B}_\T(\K, s)d\log \K.
\ee
\et  

\bpr The product $\ast$ is  commutative since the function ${\cal E}_\tau(\K_1, \K_2, \K_3)$ is symmetric in $\K_1, \K_2, \K_3$. 

\vskip 1mm
{\it The associativity}. Recall  the exponential volume form for the rectangle $\square$:
\be \la{EVFREC}
 e^{-W_\square}\Omega_\square(\K_1, \K_2, \K_3, \K_4).
\ee
  There are  two ways to cut the rectangle  
  into two triangles, see Figure \ref{RT11}: $\square = \tau_1 \cup \tau_2 = \tau_3\cup \tau_4$.  
 So the cutting and gluing properties of  exponential volume forms provide two   presentations of the exponential volume form of the rectangle: 
   \be
   \begin{split}
(\ref{EVFREC}) 
= &{\cal E}_{\tau_1}(\K_1, \K_2, \K) {\cal E}_{\tau_2}(\K, \K_3, \K_4) d\log \K   = {\cal E}_{\tau_3}(\K_2, \K_3, \K) {\cal E}_{\tau_4}(\K, \K_4,  \K_1) d\log \K. \\
\end{split}
\ee
Multiplying it by  $f_1(\K_1)f_2(\K_2) f_3(\K_3)f_4(\K_4)$ and integrating we get the associativity of the product $\ast$. 

  \begin{figure}[ht]
\centerline{\epsfbox{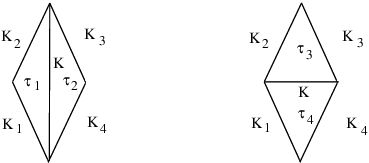}}
\caption{Associativity of the $\ast-$product  is equivalent  to the  independence of the exponential volume form of a  triangulation of the rectangle.}
\label{RT11}
\end{figure} 
\vskip 1mm

{\it The product formula}. 
Cut the disc $\D_1^*$ along the radius, getting a triangle $\tau'$ with a vertex $p$ corresponding to the puncture of $\D_1^*$.  Let ${\cal T}_{\tau'}$ be the space parametrising ideal geodesic triangles 
$[a,b,p]$ with horocycles $h_a, h_b$ at the vertices $a,b$. 
The notation $\tau'$ stresses that one of the cusps,  the cusp $p$,  does not carry a horocycle.    Pick any horocycle $h_p$ at  $p$. Recall  the signed geodesic length $\kappa_{xy}$  between horocycles 
$h_y$ and $h_y$.   The space ${\cal T}_{\tau'}$ carries a well defined  function, independent of the choice of horocycle $h_p$:  
\be \la{wdfT} 
 \ \L_{\tau'}= e^{-(\kappa_{ap}-\kappa_{pb})}.
\ee

  \begin{figure}[ht]
\centerline{\epsfbox{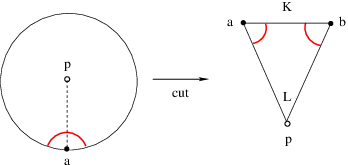}}
\caption{Cutting a trouser leg $\T = \D_1^*$ by a radius we get a triangle with potentials at two vertices.}
\label{RT8}
\end{figure} 

We fix the value of the coordinate $\K:= e^{-\kappa_{ab}}$. Then there is the exponential volume form
\be \la{WWWa}
e^{-W_{\tau'}}\Omega_{\tau'}(\K) := e^{-W_{\tau'}}d\log \L_{\tau'}, \ \ \ \  W_{\tau'}= W_a+W_b. 
\ee

Consider the space ${\cal T}_{\square'}$ parametrising ideal geodesic rectangles $(a,b,c,p)$ with horocycles $h_a, h_b, h_c$ at the vertices $a,b,c$. We fix the value of the coordinate 
$\K_1:= e^{-\kappa_{ab}}$ and   $\K_2:= e^{-\kappa_{bc}}$.  The space ${\cal T}_{\square'}$  carries  
 a  well defined function $  \L_{\square'}$ similar to (\ref{wdfT}), and the volume form:
\be
  \L_{\square'}:= e^{-(\kappa_{ap}-\kappa_{pc})}, \ \ \ \ \ \ \Omega_{\square'}(K_1, \K_2) := d\log \K\wedge d\log \L_{\square'}. \ee

 \begin{figure}[ht]
\centerline{\epsfbox{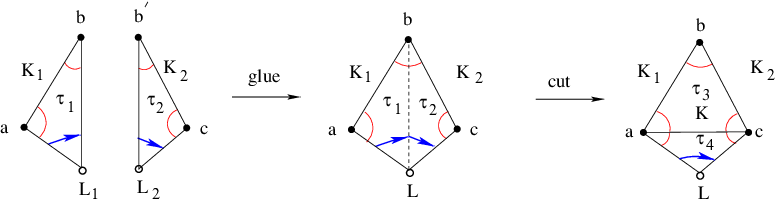}}
\caption{Gluing triangles with horocycles $\tau_1'$ and $\tau_2'$, and then cutting by the diagonal $ac$.}
\label{RT9}
\end{figure} 

Take two triangles with horocyles $\tau_1'=(a,b_1, p; h_a, h_{b_1})$ and $\tau_2'= (b_2,c,p; h_{b_2}, h_c)$. Glue them along the sides $b_1p$ and $b_2p$ matching the horocyles $h_{b_1}$ and $h_{b_2}$, see Figure \ref{RT9}. 
We get a rectangle  $\square'  = (a,b,c, p)$  with three  horocycles $h_a, h_b, h_c$. 
 The gluing provides an isomorphism of spaces with volume forms:
 \be
{\cal T}_{\square'} = {\cal T}_{\tau_1'}\times {\cal T}_{\tau_2'}; \ \ \ \Omega_{\square'}(\K_1, \K_2) = \Omega_{\tau_1'}(\K_1) \wedge \Omega_{\tau_2'}(\K_2).
\ee
Then we have the  gluing conditions for the potentials and the  functions  $\L$:

$$
W_{\square'} = W_{\tau_1} + W_{\tau_2} = W_{\tau_3} + W_{\tau_4},\ \ \L_{\square'} = \L_{\tau_1} \L_{\tau_2}.
$$ 
Therefore  using (\ref{WWWa}) we have a factorization of the exponential volume form: 
\be \la{evfa}
e^{-W_{\square'}}\Omega(\K_1, \K_2)  = e^{-W_{\tau_1}(\K_1,\L_{\tau_1})}  d\log(\L_{\tau_1}) \wedge  e^{-W_{\tau_2}(\K_2, \L_{\tau_2})}   d\log\L_{\tau_2}. 
\ee

 \begin{figure}[ht]
\centerline{\epsfbox{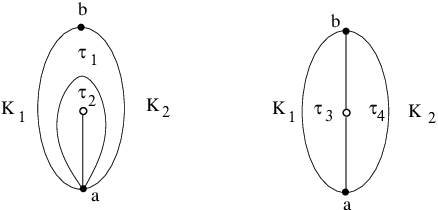}}
\caption{The product formula results from   calculating the  exponential volume form for $\D_2^*$ in two ways.}
\label{RT10}
\end{figure} 
On the other hand, cutting the rectangle $\square'$ by the diagonal $ac$ into two triangles $\tau_3$ and $\tau_4$, see Figure \ref{RT9}, where  $\tau_3 = (a,b,c)$ with horocycles $h'_a, h_b, h_c'$ and $\tau'_4=(a,c, p)$ with horocycles $h_a'', h_c''$, we get 
\be \la{evfb}
e^{-W_{\square'} }\Omega(\K_1, \K_2)   = e^{-W_{\tau_3}(\K_1, \K_2, \K) }\cdot e^{-W_{\tau_4}(\K, \L_{\square'})} d\log \L_{\square'} \wedge d\log \K.
\ee
Comparing (\ref{evfa}) and (\ref{evfb}) and multiplying by $\L_{\tau_1}^{s/2}\L_{\tau_2}^{s/2} = \L_{\square'}^{s/2}$ we get 
$$
 e^{-W(\K_1; \L_{\tau_1})}\L_{{\tau_1}}^{s/2} d\log \L_{\tau_1} \wedge e^{-W_2(\K_2; \L_{\tau_2} )}\L^{s/2}_{\tau_2} d\log \L_{\tau_2} =  {\cal E}_{\tau_3}(\K_1, \K_2, \K) \cdot e^{-W_{\tau_4}(\K,\L_{\square'})} 
 \L_{\square'}^{s/2}d\log (\L_{\square'}) \wedge d\log \K. 
 $$
 Integrating these exponential volume forms we get the product formula, see Figure \ref{RT10}.
 \vskip 2mm

{\it The multiplicativity of the map $\psi_s$}. Equality (\ref{PF13}) follows  from (\ref{PFE9}) and (\ref{PF10}):
\be \la{PF3}
\begin{split}
\kappa_{s}(f\ast g) = &\int_{\R_{>0}} (f\ast g)(\K_3) {\cal B}_\T(\K_3, s)   d\log \K_3 \\
=&\int_{\R_{>0}} {\cal E}_\tau(\K_1, \K_2, \K_3) f(\K_1)g(\K_2) {\cal B}_\T(\K_3, s) d\log \K_1 d\log \K_2 d\log \K_3  \\
=&\int_{\R_{>0}}  {\cal B}_\T(\K_1, s){\cal B}_\T(\K_2, s) f(\K_1)g(\K_2)   d\log \K_1 d\log \K_2  \\
=&\kappa_{s}(f) \cdot \kappa_s(g). \  \\
\end{split}
\ee
\epr

Few comments are in order.

\begin{enumerate} \item  Cutting the decorated surface $\D_2^*$ along the radius connecting the special point and the puncture we get the rectangle $\square'$. The product formula just means that
 calculating the exponential volume of the moduli space for  $\D_2^*$  
  using the two  triangulations of $\D_2^*$  on Figure \ref{RT10} leads to the same result.  
  
  \item The Cartan group $\H(\R)$ of  ${\rm PGL}_2(\R)$ has two components. One of them is  the positive part of the Cartan group $\H(\R_{>0}) = \R^\times_{>0}$. 
   Let $\N \subset {\rm PGL}_2$ be the upper triangular unipotent subgroup, so $\N(\R) = \R$. Let $\psi(a) = e^{2\pi i a}$ be an additive character of $\N(\R)$.  
    The Laplace operator $\Delta_{{\rm sl}_2}$ is  the generator of the center of the universal enveloping algebra ${\cal U}({\rm sl}_2(\R))$.  
    
    Recall the Whittaker function ${\cal S}(g, s)$, where $g \in {\rm PGL}_2(\R)$, of the principal series representation $V_s$ of  ${\rm PGL}_2(\R)$. 
  It has the following properties:
   
\begin{itemize} \item  The function ${\cal S}(g, s) $   is $(\N(\R), \psi)$ bi-invariant:  $$
   {\cal S}(n_1gn_2, s) = \psi(n_1)\psi(n_2){\cal S}(g, s)\ \ \ \ \ \ \forall n_{1}, n_2 \in \N(\R).
   $$
 
 \item  Its restriction to  the coset $w_0\H(\R_{>0}) $ of the positive Cartan subgroup  is  the Bessel function: 
   $$
    {\cal S}(w_0h(\K), s) = {\cal B}_\T(\K, s), \ \ \ \K\in \R^\times_{>0}, \ \ \ \ \ \ \ h(\K):=   \begin{pmatrix}  
      \K & 0 \\
      0 & \K^{-1}\\
   \end{pmatrix},   \ \ w_0= \begin{pmatrix}  
      0 & 1 \\
      -1 & 0\\ \end{pmatrix}. $$

 \item The function ${\cal S}(g, s) $ is an eigenfunction of the Laplace operator $\Delta_{{\rm sl}_2}$. 
   \end{itemize}
    
   Let us elaborate the analogy with the classical zonal spherical functions \cite{Ge50}.

  {\it The classical Hecke algebra  ${\cal H}_{\rm SO(2)}$}.   It is   
   given by compactly supported functions on ${\rm SL}_2(\R)$ which are left and right invariant under the action of the maximal compact subgroup ${\rm SO}(2)\subset {\rm SL}_2(\R)$. The product is given by the convolution. It is associative and  commutative.   Any irreducible unitary spherical principal series representation $V_s$ of the group ${\rm SL}_2(\R)$  contains a unique 
  normalised   spherical vector $v_s$. The corresponding matrix element  is called the {\it zonal spherical function}: 
   \be
   {\cal S}_{\rm SO(2)}(g, s):=  \langle v_s, gv_s\rangle.
      \ee
      It is an eigenfunction of the Laplace operator, 
     bi-invariant under the action of the group ${\rm SO}(2)$. So it is determined by its restriction to the subgroup $\H(\R)$. Therefore the product in the algebra ${\cal H}_{\rm SO(2)}$ can be written in terms 
   of the functions on  $\H(\R)$ using a certain kernel $a(\K_1, \K_2, \K_3)$:
  \be \la{SU2aa}
(f\ast g)(\K_3):= \int_{\R^\times\times \R^\times} {a}(\K_1, \K_2, \K_3)f(\K_1)g(\K_2) d\log\K_1 d\log \K_2.
\ee

  The zonal function  $  {\cal S}_{\rm SO(2)}(\K, s)$     
  satisfies the  product formula 
\be \la{PF10a}
   {\cal S}_{\rm SO(2)}(\K_1, s)\cdot   {\cal S}_{\rm SO(2)}(\K_2, s)= \int_{\R^\times} {a}(\K_1, \K_2, \K_3)  {\cal S}_{\rm SO(2)}(g, s)d\log \K_3.
\ee
It provides a homomorphism  of the algebras $({\cal H}_{\rm SO(2)}, \ast)$ to $\C$:
\be \la{PF13a}
\varphi_{s}(f):=  \int_{\R^\times} f(\K){\cal S}_{\rm SO(2)}(\K, s)d\log \K, \ \ \ \ \varphi_{s}(f\ast g) = \varphi_{s}(f)\varphi_{s}(g).
\ee
  
\item {\it Conclusion}. The algebra ${\cal E}$ is the analog of the Hecke algebra ${\cal H}_{\rm SO(2)}$ where the ${\rm SO}_2$ bi-invariance is replaced by the $(\N(\R), \psi)$ bi-invariance. 
However the  subgroup $\N(\R)$ is not compact, and so the convolution of $(\N(\R), \psi)$ bi-invariant functions is divergent. Nevertheless the restriction to the $w_0-$coset of the positive part of the Cartan torus is well defined, and there is  a commutative algebra with exactly the same   properties, which  we call    the {\it positive Hecke-Whittaker algebra}.

{\it The positive Hecke-Whittaker algebra ${\cal E}$ for ${\rm PGL}_2(\R)$  is given by the exponential volumes of elementary decorated surfaces.   
The exponential volumes of moduli spaces for all decorated surfaces, together with the unfolding formula, provide an extension  of the algebra ${\cal E}$ to all decorated surfaces}. 

 \end{enumerate}

\begin{appendices}

\section{Birman--Series theorem for ideal hyperbolic surfaces}

The main result of the Appendix is the following theorem, generalising the Birman--Series theorem \cite{BS85} to ideal hyperbolic surfaces. 
\begin{theorem}
\label{thm:bs}
Given an ideal hyperbolic structure on the decorated surface $\bS$, let $\mathcal{G}$ be the union of all bi-infinite geodesics without self-intersection. Then the area of $\mathcal{G}$ with respect to the measure on the surface induced by the hyperbolic structure  is equal to zero.
\end{theorem}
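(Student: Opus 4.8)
The strategy is to reduce the statement to the classical Birman--Series theorem \cite{BS85} by a compactification argument: an ideal hyperbolic surface $\bS$ with crown ends and geodesic boundary can be cut along its boundary geodesics and neck geodesics, leaving a ``core'' which is a complete hyperbolic surface of finite area (with cusps), together with finitely many crown pieces attached along bi-infinite geodesics. The bi-infinite simple geodesics on $\bS$ are of two kinds: those entirely contained in (a neighbourhood of) the core, and those that enter the crown ends. First I would handle the core. Since the core is a finite-area complete hyperbolic surface, the original Birman--Series theorem applies after truncating each cusp by a small horocycle: the set of points lying on \emph{some} complete simple geodesic has Hausdorff dimension $1$, hence Lebesgue measure zero. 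So $\mathcal{G}$ restricted to any compact part of the core contributes nothing, and the cusp regions (which can be made arbitrarily small in measure) are handled by a limiting argument over shrinking horocycles.

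\textbf{The crown ends.} The main new input is controlling simple bi-infinite geodesics inside a crown end $\rm C$. Here I would use the explicit geometry of the hyperbolic crown described in the introduction: its universal cover has a fundamental domain which is a geodesic $(n+3)$-gon, and the deck group is $\Z$ generated by the monodromy around the neck geodesic. A complete simple geodesic $g$ that enters $\rm C$ either eventually exits back through the neck geodesic (so only a compact subarc lies in $\rm C$, reducing to the previous case once we also bound how such arcs accumulate), or it is asymptotic to a cusp of the crown or spirals onto the neck geodesic or a boundary geodesic. The set of simple geodesics spiralling onto a fixed geodesic, or asymptotic to a fixed cusp, forms a $1$-parameter family whose union is a measure-zero ``fan''; and the set of arcs crossing $\rm C$ and returning is controlled by lifting to the $(n+3)$-gon and applying a Cantor-set counting estimate à la Birman--Series: the number of homotopy classes of simple arcs crossing a fixed compact region with bounded length grows only polynomially, while the area they sweep out in a region of diameter $\le \epsilon$ is $O(\epsilon)$ times that count. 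I expect the cleanest route is to imitate Birman--Series's original covering argument directly: fix a small $\epsilon$, tile $\bS$ (including the crowns) by pieces of diameter $\le \epsilon$, and show the number of such pieces met by simple bi-infinite geodesics of the relevant combinatorial type is $o(\epsilon^{-2})$, so that the total area tends to $0$.

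\textbf{Main obstacle.} The hard part will be the non-compactness coming simultaneously from the crown ends and from the infinite length of the geodesics: unlike the closed or finite-area cusped case, a bi-infinite simple geodesic can wander arbitrarily deep into a crown end along arcs of unbounded length, so one cannot simply quote \cite{BS85} as a black box. I would deal with this by a two-scale decomposition: (i) a ``deep'' part of each crown, a neighbourhood of the cusps of the crown, where simple geodesics are nearly parallel to the boundary bi-infinite geodesics and the set $\mathcal{G}$ is essentially a countable union of smooth curves (hence measure zero), controlled by an elementary estimate in the upper half-plane model; and (ii) a ``shallow'' compact part, where the Birman--Series counting/covering argument applies verbatim after noting that simple arcs through a compact set have the exponential-of-length growth bound that forces the Cantor set of their endpoints to have dimension $1$. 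Gluing (i) and (ii) and letting the horocyclic/cusp-neighbourhood parameters shrink gives area zero. The second assertion of the theorem --- that $h_p \cap \mathcal{G}_p$ has measure zero on the horoarc --- then follows by a Fubini-type argument applied along the foliation of a cusp neighbourhood by horocycles, since the set of directions at $p$ giving a simple complete geodesic is a Cantor set of measure zero in the circle of directions, and the horocycle is transverse to this family.
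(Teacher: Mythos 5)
Your proposal contains a genuine gap, and it is concentrated exactly where the new difficulty of the theorem lies: the crown ends and cusps. You assert that the simple geodesics asymptotic to a cusp, or spiralling onto the neck geodesic, form ``a $1$-parameter family whose union is a measure-zero fan,'' and later that near the cusps $\mathcal{G}$ is ``essentially a countable union of smooth curves.'' Neither claim is justified, and both essentially beg the question. The union of a $1$-parameter family of geodesics is in general of positive measure (the union of \emph{all} geodesics ending at a fixed cusp sweeps out the whole surface), and the set of complete simple geodesics with one end in a cusp is typically uncountable (for each simple closed geodesic or lamination there are geodesics from the cusp spiralling onto it, all simple). Moreover, no local argument inside a crown can give measure zero: two geodesics sharing an ideal endpoint are disjoint, so every ray spiralling onto the neck is automatically simple \emph{within the crown}; the only thing that cuts this family down is global simplicity on $\bS$, which is precisely what a Birman--Series-type counting argument must be invoked for. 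Similarly, you cannot quote \cite{BS85} for the ``core'': the trace of a simple bi-infinite geodesic of $\bS$ on the core is an infinite union of arcs crossing the neck geodesics, not a complete geodesic of the core (you acknowledge this, but your two-scale fix then leans on the unjustified claims above). Finally, your stated ``main obstacle'' -- that a simple geodesic can wander arbitrarily deep into a crown along arcs of unbounded length -- is a misdiagnosis: depth in a crown is depth in a cusp, since the crown minus horoball neighbourhoods of its cusps is compact.

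The missing idea, which is the paper's key new input, is a collar lemma (Lemma \ref{lem:co}): for every cusp/puncture there is a small horoarc/horocycle neighbourhood which no bi-infinite geodesic can enter and exit without self-intersecting (a simple geodesic arc crossing the collar twice would be homotopic rel endpoints to a horocyclic path of length $<R$ while having length $\geq R$). Hence every geodesic in $\mathcal{G}$, apart from at most two ends running straight into cusps, stays in the compact set obtained by deleting these neighbourhoods; this removes your ``obstacle'' and yields a uniform lower bound $l_\gamma\geq C\cdot N$ for simple geodesic arcs cut into $N$ pieces by an ideal triangulation (Corollary \ref{cor:comtop}). With that in hand the paper runs a single global Birman--Series-style estimate, with no core/crown decomposition: the number of homotopy classes of $N$-segment simple arcs grows only polynomially in $N$, while in the Poincar\'e disk model the middle segment of such an arc is confined to the convex hull of two fundamental domains lying at hyperbolic distance $\geq CN$, whose Euclidean area is $O(e^{-cN})$; ends that fall into cusps within $N$ segments are covered by convex hulls with the cusp point in the same way. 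Polynomial count times exponentially small area gives measure zero. Your covering/counting paragraph is in the right spirit for the compact part, but without the collar lemma and the resulting length bound the argument does not close up, and the cusp/crown contributions are not disposed of by countability or by one-parameter-family heuristics.
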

Let us start with the collar lemma.
Given $R>0$ and a cusp/puncture $p$, there is a unique horoarc/horocycle $h_R$ with the length $R$. Let us define the {\em collar neighbourhood} $C_R$ be the annular neighbourhood region bounded by $h_R$. 

\begin{lemma}
\label{lem:co}
Given an ideal hyperbolic surface, for any cusp/puncture $p$, there exists a collar neighborhood $C_r$ of a cusp/puncture $p$ such that for any bi-infinite geodesic $\ell$ entering and exiting $C_r$, the geodesic $\ell$ has self-intersection. 
\end{lemma}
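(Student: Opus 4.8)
The statement is a collar lemma for cusps and punctures on an ideal hyperbolic surface: I want a collar neighbourhood $C_r$ of $p$ so small that any bi-infinite geodesic $\ell$ that both enters and exits $C_r$ must self-intersect. The plan is to reduce everything to an explicit computation in the universal cover $\mathbb{H}$, where the cusp region lifts to a horodisc (for a true cusp) or the thin part of a hyperbolic annulus lifts to a region bounded by an equidistant curve to a geodesic axis (for a puncture of boundary length close to $0$), and the deck group near $p$ is the cyclic group $\langle g\rangle$ generated by a parabolic or hyperbolic element fixing $p$. A geodesic $\ell$ that enters and exits $C_r$ lifts to a geodesic arc $\widetilde\ell$ in $\mathbb{H}$ with both endpoints on the bounding horocycle/equidistant curve of the lifted collar; I will show that if $r$ is small enough then $\widetilde\ell$ and its translate $g\widetilde\ell$ (or $g^{-1}\widetilde\ell$) must cross, which produces a self-intersection of $\ell$ downstairs.

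First I would normalise: in the parabolic case put $p=\infty$, $g:z\mapsto z+1$, and let the collar $C_r$ correspond to the horoball $\{\mathrm{Im}\,z>1/r\}$ (so the boundary horocycle has Euclidean height $1/r$, hence hyperbolic length $r$). A bi-infinite geodesic entering and exiting $C_r$ lifts to a Euclidean semicircle with both feet on the real line and apex of height $>1/r$; a short estimate on the separation of the two feet (it is at least $2/r$, comparable to the apex height) shows that the horizontal translate of this semicircle by $1$ intersects it once $r$ is small, giving a transverse crossing. In the puncture case I would do the same in the upper half-plane with $g$ hyperbolic with axis the imaginary axis, the collar being a wedge $\{\theta_0<\arg z<\pi-\theta_0\}$ around the axis with $\theta_0\to\pi/2$ as $r\to 0$; again a geodesic crossing the full wedge has endpoints far enough apart (on the two half-lines on either side of the axis) that a $g$-translate of its lift must cross it. The key input I would invoke is that near $p$ the surface is either a genuine hyperbolic cusp or, if $p$ is a puncture with geodesic boundary, we first shrink the boundary length or use the standard collar around the short geodesic; in either case the local geometry is a quotient of the above standard model.

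The main obstacle, and where I would spend the most care, is the uniformity of the choice of $r$: I must ensure the chosen $C_r$ is actually embedded in $\bS$ (so that a crossing of the two lifts really descends to a self-intersection, rather than being cancelled by the deck action) and that no other element of $\pi_1(\bS)$ sends a chunk of $C_r$ back into $C_r$. This is the usual Margulis-lemma style argument: the thin part near a cusp/puncture is embedded once $r$ is below a universal threshold, and on a fixed ideal hyperbolic surface one can take $r$ small enough that $C_r$ misses the crown ends and all other thin parts. Once $C_r$ is embedded, the stabiliser of the lifted collar is exactly $\langle g\rangle$, and the crossing-of-translates computation above applies verbatim. A secondary point to check is that the crown ends of $\bS$ do not interfere: a bi-infinite geodesic could in principle run off to a crown end rather than spiral, but that is irrelevant here because the lemma only concerns the behaviour of $\ell$ inside the collar $C_r$ of $p$, and $C_r$ is chosen disjoint from the crown ends.

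Finally I would assemble the pieces: fix $p$, choose $r$ below the embeddedness threshold and small enough that $C_r\cap(\text{crown ends})=\varnothing$ and $C_r$ meets no other thin part; lift, normalise to the standard model, and invoke the elementary Euclidean estimate showing that a geodesic spanning $C_r$ has a lift whose $g^{\pm1}$-translate crosses it transversally; conclude that $\ell$ self-intersects. I expect Theorem~\ref{thm:bs} then follows from Lemma~\ref{lem:co} by the standard Birman--Series covering argument: outside the finitely many collars the surface is compact, so the original Birman--Series theorem applies there, and inside each collar Lemma~\ref{lem:co} forces any simple bi-infinite geodesic to enter $C_r$ at most once (it cannot enter and exit), so the part of $\mathcal{G}$ inside each collar is also controlled and contributes zero area in the limit $r\to0$.
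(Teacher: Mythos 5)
Your universal-cover computation for a genuine parabolic cusp is correct and would give an alternative proof there: the portion of a lift crossing the horoball $\{\mathrm{Im}\,z>1/r\}$ is a semicircle of radius $>1/r$, its $g$-translate has center at distance $1$, and two equal semicircles with centers $1$ apart meet once the radius exceeds $1/2$, so any $r<2$ forces a transverse crossing, which descends to a self-intersection. The genuine gap is that this is not the main case of the lemma. In this paper a \emph{cusp} means a marked point on a crown end, decorated by a horoarc, and no element of $\pi_1(\bS)$ fixes such a cusp: the region bounded by the horoarc is simply connected, and each of its lifts sits between two lifts of the adjacent boundary bi-infinite geodesics. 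So for crown cusps there is no deck transformation $g$ whose translate of $\widetilde\ell$ you could intersect, and your argument says nothing. This case is not optional -- it is exactly what is needed to conclude that $\mathcal G$ stays in the compact set $Q=\bS-\cup_p C_r$ when $\bS$ has crown ends. It can be repaired cheaply (normalising the crown cusp to $\infty$, the universal cover is trapped in the strip of Euclidean width $a$ between the two adjacent vertical boundary-geodesic lifts, so any complete interior geodesic is a semicircle of apex height at most $a/2$ and cannot even reach the horoarc at height $a/r$ once $r<2$), but one has to notice the case and argue it.

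Your second case is also misdirected. A \emph{puncture} here is a boundary circle without marked points which, in the lemma, carries horocycles -- i.e.\ it is a parabolic cusp, which is precisely your first case. Modelling it by a hyperbolic element with an equidistant wedge collar is not what the statement asks, and the key claim there is false: a lift crossing the wedge roughly perpendicular to the axis and its translate along the axis are disjoint (two perpendiculars to a common geodesic never meet), and indeed simple geodesics do cross collars of closed geodesics, so a collar lemma of this form fails for geodesic boundary circles; nor can you ``shrink the boundary length'', since the hyperbolic structure is fixed. For comparison, the paper's proof avoids the universal cover and translates altogether: it nests $C_r$ inside $C_R$ at depth at least $R/2$, so a simple geodesic entering and exiting $C_r$ contains a geodesic arc of length at least $R$ with endpoints on $h_R$, while simplicity prevents wrapping around the cusp region, so this arc is homotopic rel endpoints to a subarc of $h_R$ of length less than $R$, contradicting the length minimality of geodesic segments. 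That length comparison treats horoarcs (crown cusps) and horocycles (punctures) uniformly, which is exactly where your approach needs extra work.
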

\begin{figure}[ht]
	\centering
	\includegraphics[scale=0.7]{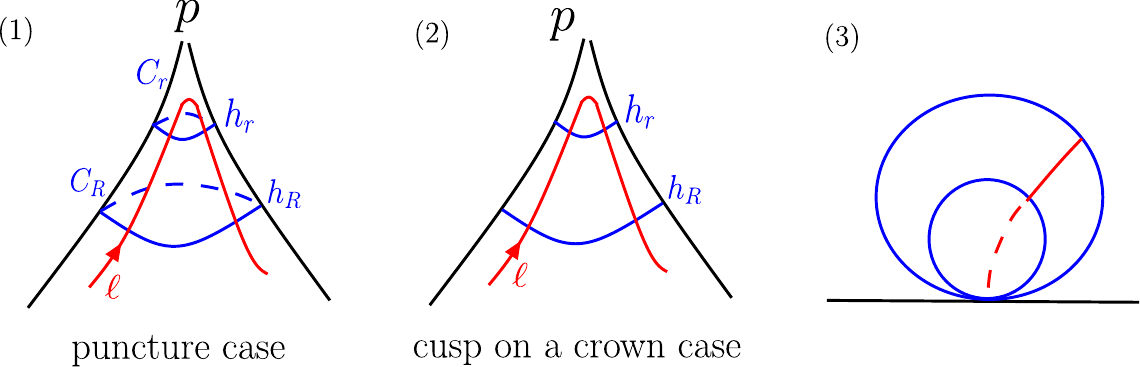}
	\small
	\caption{(1) The geodesic $\ell$ enters and exits $C_r$ for the puncture case. (2) The geodesic $\ell$ enters and exits $C_r$ for the cusp on a crown case case. (3) Lift of $\ell^{\pm}$ in the universal cover if stay within $C_R$ at infinity. }
	\label{figure:clem}
\end{figure}
\begin{proof}
Let us choose some horoarc/horocycle $h_R$ for $R>0$. Since the collar neighbourhood $C_R$ is infinitely long, we can choose a smaller collar neighbourhood $C_r\subset C_R$ such that the distance between $h_r$ and $h_R$ is at least $R/2$. If a geodesic $\ell$ enters and exits $C_r$ and does not have self-intersection, then $C_r\cap \ell$ contains at least two different points as in Figure \ref{figure:clem}(1)(2). We define $\ell^+$ ($\ell^-$ resp.) to be the geodesic ray starting from the first (last resp.) intersection point of $\ell \cap C_r$ towards the (opposite resp.) direction of $\ell$. We claim that both $\ell^+$ and $\ell^-$ must leave $C_R$ for any of the two directions. If $\ell^{\pm}$ does not leave $C_R$, the ideal end point of any lift of $\ell^{\pm}$ is the unique ideal boundary point of the horodisk in the universal cover as in Figure \ref{figure:clem}(3). This characterizes $\ell^{\pm}$ as a geodesic going straight up to the cusp, and thus hitting every horocycle at most once. This is a contradiction as $\ell^{\pm}$ meets $C_r$ in two places. Hence both $\ell^+$ and $\ell^-$ must leave $C_R$. Let $\bar{\ell}$ be the subarc of $\ell$ which 
lies completely within $C_R$, has both its endpoints on $h_R$, and enters and exits $C_r$. Since $\bar{\ell}$ has two subarcs between $h_r$ and $h_R$, it has length at least $R$. On the other hand, the geodesic arc $\bar{\ell}$ is endpoint-fixing homotopic to a horocyclic path along $h_R$ without wrapping around $h_R$. This implies the length of $\bar{\ell}$ is strictly less than $R$, leading to a contradiction. We conclude that any geodesic $\ell$ entering and exiting $C_r$ has self-intersection.
\end{proof}
As a consequence, any geodesic in $\mathcal{G}$ lies in a compact set $Q=\bS-\cup_p C_r$. Now, we fix an ideal triangulation $\mathcal{T}$ of the ideal hyperbolic surface. The ideal triangulation $\mathcal{T}$ cuts any geodesic in $\mathcal{G}$ into segments. Then we define $\mathcal{G}(N)$ to be the set of geodesic arcs in $\mathcal{G}$ that are cut up into $N$ geodesic segments by $\mathcal{T}$.
\begin{corollary}
\label{cor:comtop}
There exists a constant $C>0$, such that for any $\gamma \in \mathcal{G}(N)$, we have the length $l_\gamma\geq C \cdot N$ for any $N\geq 1$.
\end{corollary}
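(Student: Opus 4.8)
\textbf{Proof proposal for Corollary \ref{cor:comtop}.}

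The plan is to reduce the length estimate to a purely combinatorial/geometric fact about geodesic segments crossing a fixed ideal triangle, exploiting the compactness provided by Lemma \ref{lem:co}. First, recall that by Lemma \ref{lem:co} every geodesic $\gamma\in\mathcal{G}$ is contained in the compact core $Q=\bS-\cup_p C_r$. The ideal triangulation $\mathcal{T}$ restricted to $Q$ consists of finitely many truncated ideal triangles (each ideal triangle of $\mathcal{T}$ meets $Q$ in a compact hexagon-like region, obtained by chopping off the three cusp/puncture corners with the horoarcs/horocycles $h_r$), together with finitely many edge arcs. So there are only finitely many isometry types of ``truncated cell'' that a geodesic segment of $\gamma$ can traverse.

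Next, I would establish a uniform positive lower bound on the length of a single geodesic segment. A segment of $\gamma$ cut out by $\mathcal{T}$ either enters and exits a truncated triangle through two of its (finitely many) boundary arcs, or it runs along near an edge; in any case it crosses a compact region from one boundary component to another. The key point is that a \emph{nondegenerate} crossing of a compact truncated triangle has length bounded below by the minimal distance between the relevant pairs of boundary arcs, and this minimum over the finitely many triangles and finitely many pairs of sides is a positive constant $c_0>0$. One must rule out ``short'' segments that cut off a tiny corner near a vertex of a hexagon: but such a segment would have both endpoints on sides adjacent to a common corner, and for a geodesic \emph{without self-intersection} lying entirely in $Q$ this cannot happen infinitely often along $\gamma$, and when it does happen the corner in question is a genuine geodesic vertex of the truncated triangle (an intersection point with $h_r$ contributes no such corner since $C_r$ was removed). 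More cleanly: I would argue that consecutive segments cannot both be arbitrarily short, so that two consecutive segments together have length at least some $c_0>0$; this suffices. Setting $C:=c_0/2$ then gives $l_\gamma \geq C\cdot N$ when $\gamma$ is cut into $N$ segments.

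The main obstacle, and the step requiring the most care, is precisely this uniform lower bound on segment (or consecutive-pair) length: one has to handle the degenerate cases where a geodesic segment clips a small neighbourhood of a vertex of the truncated triangulation. The resolution uses that $\gamma$ is simple and infinite, so it cannot accumulate at a vertex or repeatedly take tiny shortcuts without either self-intersecting or leaving $Q$ (contradicting Lemma \ref{lem:co}); alternatively one invokes that there are only finitely many homotopy classes of ``short'' arcs in each truncated triangle and each has a definite minimal geodesic length. Once this lemma is in hand the corollary is immediate: summing the lower bounds over the $N$ segments of $\gamma$ yields $l_\gamma\geq C\cdot N$, with $C>0$ depending only on the ideal hyperbolic surface and the chosen triangulation $\mathcal{T}$, as required. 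This is exactly the input needed for the Birman--Series counting argument in the proof of Theorem \ref{thm:bs}, since it bounds $\#\mathcal{G}(N)$-type quantities against geodesics of length $O(N)$.
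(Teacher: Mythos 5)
Your core argument is the same as the paper's: confine $\gamma$ to the compact core $Q=\bS-\cup_p C_r$ via Lemma \ref{lem:co}, and then bound each of the $N$ segments from below by the minimal distance between pairs of truncated sides of the finitely many truncated ideal triangles, a minimum which is positive by compactness. The degenerate case you hedge against does not actually occur: every segment of $\gamma\in\mathcal{G}(N)$ has both endpoints on edges of $\mathcal{T}$ (never on a horoarc $h_r$, since $\gamma$ stays in $Q$), a geodesic chord cannot have both endpoints on the same geodesic edge, and two \emph{distinct} truncated edges of an ideal triangle are disjoint compact arcs --- they meet only at the ideal vertex, which lies inside the removed neighbourhood $C_r$ (in the upper half-plane model the distance between the two truncated sides is minimized exactly at the level of the horocycle and is strictly positive there). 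Hence the distance function on the compact set of admissible endpoint pairs has a strictly positive minimum, uniform over the finitely many triangles; this is precisely the paper's one-line compactness argument. Consequently your fallback claim that two consecutive segments cannot both be short (leading to $C=c_0/2$) is unnecessary, and since as stated it is left unproved, you should drop it rather than rely on it.
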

\begin{figure}[ht]
	\centering
	\includegraphics[scale=0.5]{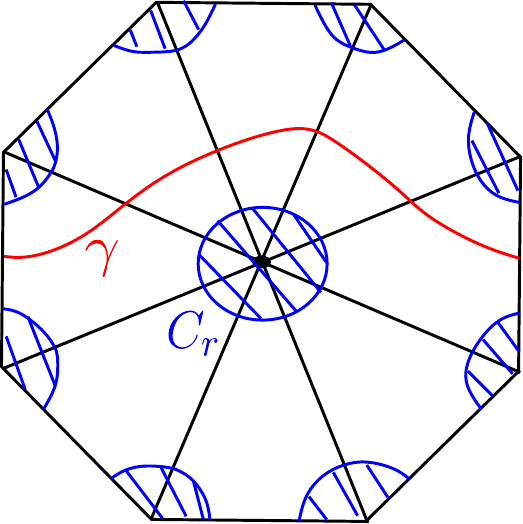}
	\small
	\caption{$\gamma \in \mathcal{G}(N)$ and $\gamma\subset \bS-\cup_p C_r$.}
	\label{figure:gammapdt}
\end{figure}

\begin{proof}

Any ideal triangle of the ideal triangulation $\mathcal{T}$ is cut into three compact intervals by $\cup_p C_r$. By Lemma \ref{lem:co}, the length of a segment of $\gamma \in \mathcal{G}(N)$ is determined by the pair of points on two of these three compact intervals. Since there are only finitely many ideal triangles, the length of a segment of $\gamma \in \mathcal{G}(N)$ is a function on a compact set. Thus the length of a segment is bounded below by a constant $C>0$. Hence $l_\gamma\geq C \cdot N$. 
\end{proof}

\begin{proof}[Proof of Theorem \ref{thm:bs}]
Given an ideal triangulation $\mathcal{T}$ of the ideal hyperbolic surface, any homotopy class $[\gamma]\in [\mathcal{G}(N)]$ is determined by 
\begin{enumerate}
\item the multiset of $N$ segments in $[\mathcal{G}(1)]$, thus $x_1+\ldots+ x_{\#[\mathcal{G}(1)]}=N$ for $x_i$ being the number of segments in these $N$ segments of the given type;
\item the starting and ending segments of $[\gamma]$. We have
\end{enumerate}
\[\#[\mathcal{G}(N)]\leq N^2 \cdot \tbinom{\#[\mathcal{G}(1)]+N-1}{N-1}  =p_0(N),\]
where $p_0$ is a polynomial in $N$.

Consider one fundamental domain $F$ of the universal cover of the ideal hyperbolic surface in the Poincar\'e disk model. Let $\tilde{\mathcal{T}}$ be the universal cover of the ideal triangulation $\mathcal{T}$. Then $\tilde{\mathcal{T}}$ cuts $F$ into finitely many ideal triangles $F\backslash \tilde{\mathcal{T}}=\cup_{i=1}^l \Delta_i$.
\[I=\{\sigma=\tilde{\gamma}\cap \Delta_i \;|\; \text{ for some } i=1,...,l, \;\;\tilde{\gamma} \text{ lift of } \gamma \in \mathcal{G}\}.\]
To prove the theorem is equivalent to prove the Euclidean area of $I$ is zero. For any integer $N>0$, any $\sigma \in I$ is 
\begin{enumerate}
\item uniquely expressed as the $(N+1)$-th segment of $\gamma \in \mathcal{G}(2N+1)$;
\item only one side of $\sigma$ could extend to $N$ segments, while the other side ends at some cusp/puncture;
\item both sides end at some cusp/puncture before $N$ segments.
\end{enumerate}
\begin{figure}[ht]
	\centering
	\includegraphics[scale=0.7]{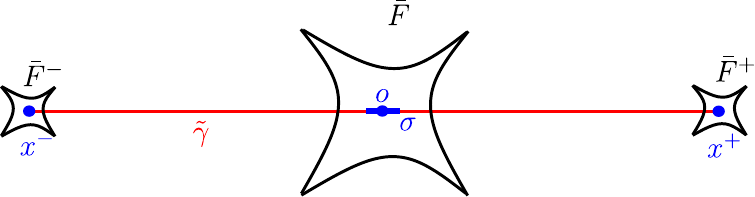}
	\small
	\caption{$\tilde{\gamma}$ is a lift of $\gamma \in \mathcal{G}(2N+1)$ and $\sigma \subset \tilde{\gamma}$ is the lift of $(N+1)$-th segment.}
	\label{figure:bsp}
\end{figure}
For Case (1),  consider the lift $\tilde{\gamma}$ of $\gamma$ where $\sigma \subset \tilde{\gamma}$ as in Figure \ref{figure:bsp}. The segment $\sigma$ is contained in the closure $\bar{F}$ of the fundamental domain $F$. Let $x^-$ and $x^+$ be two end points of $\gamma$ which are contained in two fundamental domains $\bar{F}^-$ and $\bar{F}^+$ respectively. Choose any point $o\in \sigma$, by Corollary \ref{cor:comtop}, we have $d(x^-,o)\geq C\cdot N$ and  $d(x^+,o)\geq C\cdot N$. The {\em Euclidean diameter} $\rm{diam}_E{\bar{F}}$ of $\bar{F}$ is finite in the Poincar\'e disk model. Since the ideal hyperbolic surface has negative constant curvature $-1$, we get the Euclidean diameters $\rm{diam}_E{\bar{F}^-}\leq C_0 e^{-C_1 N}$ and $\rm{diam}_E{\bar{F}^+}\leq C_0 e^{-C_1 N}$ for some constant $C_0, C_1>0$. Then for any representive $\gamma$ of $[\gamma]$, we cover any $\sigma(\subset\gamma)$ by the convex hull $\square_{[\gamma]}$ of $\bar{F}^-$ and $\bar{F}^+$. The Euclidean area of $\square_{[\gamma]}$ is less than $C_2 e^{-C_1 N}$ for some constant $C_2>0$. Since
\[\lim_{N\rightarrow +\infty} \# [\mathcal{G}(2N+1)] \cdot C_2 e^{-C_1 N}=0,\]
for any integer $N>0$, we could cover all these kinds of $\sigma$ by all these convex hulls.

For Case (2), let $\sigma\subset \gamma$ and $\gamma\in \mathcal{G}(M)$ for $M\leq 2N$, where $\gamma$ is a geodesic ray ending at the cusp/puncture $p$ and $\sigma$ is the $(N+1)$-th segment of $\gamma$. We cover $\gamma$ by the convex hull of $\bar{F}^-$ and $p$ which has the Euclidean area less than $C_2 e^{-C_1 N}$. Then we have 
\[\lim_{N\rightarrow +\infty} \sum_{M=N+1}^{2N} \# [\mathcal{G}(M)] \cdot C_2 e^{-C_1 N}=0,\]

For Case (3), we cover these finitely many bi-infinite geodesics by themselves.

Combining all these cases, we cover $I$ by a sequence of measuable sets $U_N$ such that the Euclidean area of $U_N$ converges to zero.
\end{proof}

\section{Relative volume forms} \la{APPB}
Recall the relative volume form in  \eqref{RVF2}. Let  $S$ be a sphere with $n$ punctures,  $\ell$  a simple loop on $S$, and $S':=S-\ell$. The monodromy map ${\cal X}_S\lra (\C^\times)^n$ maps an element in ${\cal X}_S$ into $(\L_1,..., \L_n)$ where, given any ideal triangulation, the $\L_i$  is the product of the cluster $\mathcal{X}$-coordinates at the edges sharing the $i-$th puncture. The $\L_i$  does not depend on the triangulation. 
The fibers of the monodromy map 
 ${\cal X}_S\lra (\C^\times)^n$ are the generic symplectic fibers on the cluster Poisson space ${\cal X}_S$. 
 Given a point  of ${\cal X}_S(\mathbb{R}_{>0})$, we have $\L_i=e^{l_i}$ where $l_i\in \mathbb{R}$ is the signed boundary geodesic  length. Let $l$ be the length  of $\ell$ and $\theta$  the twist parameter of $\ell$. 
Let $\L=(l_1,...,l_n)$ and $\L'=(l_1,...,l_n, l,l)$.

 \bl \la{WP=cl} 
 The cluster volume forms $\Omega_S(\L)$ on ${\cal X}_S(\mathbb{R}_{>0})(\L)$ and $\Omega_{S'}(\L')$ on ${\cal X}_{S'}(\mathbb{R}_{>0})(\L')$ 
 are related by
 \be
 \Omega_S(\L) = \frac{1}{2} \cdot \Omega_{S'}(\L') \wedge dl \wedge d\theta.
  \ee
 \el
 
 \bpr

 By Wolpert's formula \cite{Wol83}, we have the Weil--Petersson form and the corresponding volume form: 
 \be
\begin{split}
&\omega_{\rm WP} =  \sum_{i=1}^{n-3} d l_i\wedge d\theta_i,\;\;\; \Omega^{\rm WP}_S(\L) := \omega_{\rm WP}^{n-3}/(n-3)!.\\
 &\Omega^{\rm WP}_S(\L) =    \Omega^{\rm WP}_{S'}(\L')  \wedge dl \wedge d\theta.\\
    \end{split}
 \ee
 
   \begin{figure}[ht]
	\centering
	\includegraphics[scale=0.5]{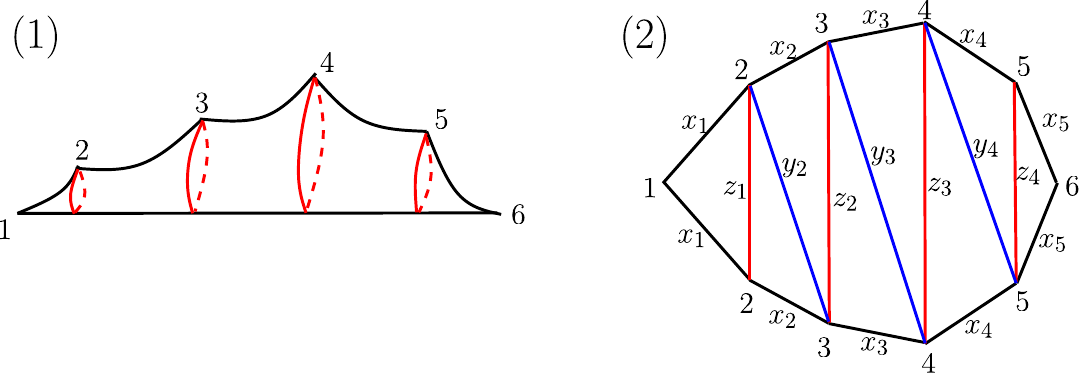}
	\small
	\caption{(1) The $6$-th punctured sphere. (2) The ideal triangulation.}
	\label{figure:catep}
\end{figure}

Let us relate the cluster volume form $\Omega_S(\L)$ to $\Omega^{\rm WP}_S(\L)$.
Given any ideal triangulation as in Figure \ref{figure:catep}, we have
\[l_1=x_1, l_n=x_{n-1},\]
\[l_i=2 z_{i-1}+x_{i-1}+x_{i}+y_{i-1}+y_{i}\]
for $i=2,..., n-1$ with $y_1=y_{n-1}:=0$.
By definition
\[\Omega_S(\L)\wedge \bigwedge_{i=1}^{n} d l_i=2\bigwedge_{i=2}^{n-2} \left(d x_i \wedge d y_i \right) \bigwedge_{i=1}^{n-2}d z_i \wedge d x_1 \wedge d x_{n-1}.\]
Thus 
\[\Omega_S(\L)= \frac{1}{2^{n-3}}\bigwedge_{i=2}^{n-2} \left(d x_i \wedge d y_i \right).\] 
So we get
\[\omega_{\rm cl}= \frac{1}{2} \sum_{i=2}^{n-2} d x_i \wedge d y_i.\]
By \cite[Appendix A]{Pen92}, $2\omega_{\rm cl} = \omega_{\rm WP}$. So we obtain
\be \la{WPCLn}
\Omega^{\rm WP}_S(\L) = \omega_{\rm WP}^{n-3}/(n-3)!=(2\omega_{\rm cl})^{n-3}/(n-3)!=\bigwedge_{i=2}^{n-2} \left(d x_i \wedge d y_i \right)=2^{n-3}\Omega_S(\L) .
\ee
Since $\alpha$ cuts $S$ into $S'=S_{0,n_1}\cup S_{0,n_2}$ where $n=n_1+n_2-2$, we obtain 
\[\Omega_S^{\rm WP}(\L)=\Omega_{S_{0,n_1}}^{\rm WP}(\L'') \wedge \Omega_{S_{0,n_2}}^{\rm WP}(\L''') \wedge d l_\alpha \wedge d \theta_\alpha.\]
Thus
\[2^{n-3}\Omega_S(\L)= 2^{n_1-3}\Omega_{S_{0,n_1}}(\L'') \wedge 2^{n_2-3}\Omega_{S_{0,n_2}}(\L''') \wedge d l_\alpha \wedge d \theta_\alpha,\]
which implies
 \be
 \Omega_S(\L) = \frac{1}{2}\cdot \Omega_{S'}(\L') \wedge dl \wedge d\theta.
  \ee
 \epr

\end{appendices}

 \end{document}